\newcommand{\bd}{\boldsymbol}
\def\R{{\mathbb R}}
\def\bd{\boldsymbol}
\newcommand{\cred}{\color{black}}
\newcommand{\ep}{\varepsilon}
\newcommand{\bu}{{\bd{u}}}
\newcommand{\bq}{{\bd{q}}}
\newcommand{\bg}{{\bf g}}
\newcommand{\bx}{{\bf x}}
\newcommand{\bP}{{\mathbb P}}
\newcommand{\sF}{\mathcal F}
\newcommand{\sO}{{\mathcal{O}}}
\newcommand{\bE}{\mathbb{E}}
\newcommand{\sB}{\mathcal{B}}
\newcommand{\bp}{\boldsymbol{p}}
\theoremstyle{definition}
\newtheorem{theorem}{Theorem}[section]
\newtheorem{corollary}{Corollary}[section]
\newtheorem{definition}{Definition}[section]
\newtheorem{proposition}{Proposition}[section]
\newtheorem{remark}{Remark}[section]
\newtheorem{lemma}{Lemma}[section]
\date{}
\author[J. Kuan and K. Tawri]{Jeffrey Kuan$^1$ and Krutika Tawri$^2$}
\address{\newline	$^1$ Department of Mathematics, University of Maryland, MD, USA.\\
\newline
$^2$  Department of Mathematics, University of California Berkeley, CA, USA.}
\email{jeffreyk@umd.edu (Jeffrey Kuan), ktawri@berkeley.edu (Krutika Tawri)}
\begin{document}
\title[Stochastic Compressible FSI]{Existence of weak martingale solutions to a stochastic fluid-structure interaction problem with a compressible viscous fluid}

\begin{abstract}
We study the existence of weak martingale solutions to a stochastic moving boundary problem arising from the interaction between an isentropic compressible fluid and a viscoelastic structure. In the model, we consider a three-dimensional compressible isentropic fluid with adiabatic constant $\gamma > 3/2$ interacting dynamically with an elastic structure on the boundary of the fluid domain described by a plate equation, under the additional influence of stochastic perturbations which randomly force both the compressible fluid and elastic structure equations in time. The problem is nonlinearly coupled in the sense that the a priori unknown (and random) displacement of the elastic structure from its reference configuration determines the a priori unknown (and random) time-dependent fluid domain on which the compressible isentropic Navier-Stokes equations are posed. We use a splitting method, consisting of a fluid and structure subproblem, to construct random approximate solutions to an approximate Galerkin form of the problem with artificial viscosity and artificial pressure. We introduce stopped processes of structure displacements which handle the issues associated with potential fluid domain degeneracy.
In this splitting scheme, we handle mathematical difficulties associated with the a priori unknown and time-dependent fluid domain by using an extension of the fluid equations to a fixed maximal domain and we handle difficulties associated with imposing the no-slip condition in the stochastic setting by using a novel penalty term defined on an external tubular neighborhood of the moving fluid-structure interface. To the best of our knowledge, this is the first existence result for stochastic fluid-structure interaction with compressible fluids. Novelties of this paper further include a first temporal regularity result for the structure displacement and velocity.
\end{abstract}
\maketitle
\section{Introduction}
In this article, we consider a fluid-structure interaction (FSI) problem involving isentropic compressible fluid in a three-dimensional domain, where part of the fluid domain boundary consists of an elastic deformable structure, and where the system is perturbed by stochastic effects. The stochastic forcing is applied to both the momentum equations as a volumetric body force and the structure as an external load to the deformable fluid boundary. The noise coefficients depend on the structure displacement, the structure velocity, the density of the fluid, and its momentum. The isentropic compressible fluid flow is governed by the 3D Navier-Stokes equations whereas Koiter shell-type equations give the elastodynamics of the structure. The adiabatic constant describing the power law for the fluid pressure is assumed to be $>\frac32$. The fluid interacts with the elastic structure at the fluid-structure interface via a two-way coupling, in which the fluid dynamics and elastodynamics of the structure mutually interact with each other. This two-way coupling ensures the continuity of velocities and of contact forces at the randomly moving fluid-structure interface. 
The main result of this manuscript is a  proof of the existence of a local-in-time (until an almost surely positive stopping time) weak martingale solution to this nonlinearly coupled stochastic fluid-structure interaction problem. That is, we prove the existence of solutions that are weak in the analytical sense and in the probabilistic sense, thereby proving the robustness of the underlying deterministic benchmark fluid-structure interaction problem to external stochastic noise.
To the best of our knowledge, {\bf this is the first result in the field of stochastic moving boundary problems involving compressible fluids, specifically when the random and time-dependent displacement of the fluid domain is not known a priori and is itself an unknown in the problem.}


One of the fundamental difficulties in this problem is the random moving domain, which is a priori unknown, on which the fluid equations are posed. 
The fluid-structure interaction is described by two-way coupling conditions, namely, the dynamic coupling condition and the kinematic coupling condition, which results in a problem that is highly nonlinear and presents numerous mathematical challenges. 
In order to handle the dynamic coupling condition between the fluid and the structure, we use a splitting scheme approach in the spirit of \cite{SarkaHeatFSI}, which separates the elastodynamics of the structure and the fluid after discretizing in time via an approximation parameter $\Delta t$. This involves running two subproblems, a fluid subproblem and a structure subproblem, over discretized time intervals of length $\Delta t$, such that the net total contribution of both subproblems on a single time interval, summed over all time intervals, is a discretized approximation of the limiting weak formulation that will converge as the parameter $\Delta t \to 0$.
In the first (structure) subproblem, only the elastodynamics equations are updated while keeping the fluid entities the same as in the previous step. In the second (fluid) subproblem, which is posed on a maximal domain,
the fluid density and velocity are updated, where the fluid subproblem is supplemented with a viscous regularization term for the density in the continuity equation via parameter $\ep$ and an artificial pressure term in the momentum equations via parameter $\delta$. 
These two layers of approximations, inspired by the method of Feireisl \cite{FeireislCompressible}, are intended to be compatible with the Galerkin approximation and to improve the integrability of the pressure.

We address here two additional issues that we come across in the construction of the second subproblem, caused by the stochasticity in the problem. Firstly, we note that the test functions for the fluid and the structure problems are required to satisfy the kinematic coupling condition, which is the essential no-slip boundary condition imposed at the random and time-varying fluid-structure interface. This condition, which is typically embedded in the test space, requires that we define this subproblem in terms of {\it stochastic test functions}.
This is not amenable to the fact that we are constructing martingale solutions.
We overcome this issue by decoupling the structure and the fluid equations along the kinematic coupling condition via a penalty method that essentially treats the structure as a semi-permeable material of thickness of some order of $\delta$.
In particular, at the approximate level, the no-slip condition is not satisfied but it is recovered in the final limit passage of the approximate solutions.
To be precise, we penalize the boundary behavior of the fluid and the structure velocities in a tubular neighborhood of size of some order of $\delta$ outside of the moving interface. This gives us a better control on the fluid density outside of the moving domain (caused by the seepage) that other Brinkman-type penalization terms fail to offer. We emphasize that {\bf the penalty term that we use is new and unique to our current problem}. While this penalization term in the approximate weak formulations allows us to consider a decoupled pair of deterministic test functions, it causes further analytical issues, discussed below, as it allows for seepage of the fluid through the structure.

The second issue is associated with the fact that the fluid domain can degenerate in a random fashion i.e. when the top compliant boundary comes in contact with the bottom rigid boundary. We handle this no-contact condition by introducing appropriate stopped processes that we call ``artificial structure displacements". The construction of these processes provides a deterministic upper bound for the $H^s$-norm, for a fixed $s<2$, of the structure displacement, and ensures that self-interaction of the artificial structure does not occur {\it at any time} by maintaining a minimum distance of $\alpha>0$ between the lateral walls. The discrepancy between the artificial and the original structure displacement is eventually resolved by using a stopping time argument, as in \cite{TC23}, which provides the time-length of existence of the solution. That is, we prove that the two displacements coincide until an almost surely positive stopping time. These deterministic bounds further enable us to work with a maximal domain that contains all the artificial structure displacements.
We are thus able to extend the viscosity coefficients with respect to the artificial structure displacements and pose the second (momentum) subproblem on a {\it fixed maximal domain} containing all associated artificial fluid domains. The method of extending the fluid problem to a maximal domain via extension of the viscosity coefficients has been previously employed in the study of deterministic compressible Navier-Stokes equations on time-dependent domains \cite{FNS2011, FKNNS13, KMNWK18} and deterministic compressible fluid-structure interaction, for example in \cite{SarkaHeatFSI}.


We then derive tightness results for the laws of the approximate solutions, in appropriate phase spaces, by employing compactness arguments. Our noise coefficient depends on the structure displacement, structure velocity, fluid momentum, and fluid density. Passage of the approximation parameters to their limits and the structure of our noise coefficient require almost sure convergence of the structure velocity strongly in $L^2_tL^2_x$. For that purpose, we derive a {\bf novel regularity result for the structure velocity which gives uniform bounds for the fractional time derivative, of some order strictly less than $\frac12$}. This result combined with the Aubin-Lions theorem gives us the tightness of the laws of the structure velocity in $L^2_tL^2_x$ (see Lemma \ref{lem:vtight}). {\bf This is the first temporal regularity result for the structure in this setting of FSI involving a compressible fluid}. Next, due to the nature of compressible Navier-Stokes equations, we obtain tightness of the laws of the fluid entities in the weak topology of their respective phase spaces. 
Thus, upgrading to almost sure convergence on these non-Polish phase spaces requires a variant of the Skorohod Representation theorem which is obtained by a composition of the results in \cite{NTT21} and \cite{J97}, that provides the existence of a sequence of new random variables (on a new probability space) with the same laws as the original variables that converge almost surely in the topology of the phase space, where the new probability space can be taken canonically to be the same probability space $[0, 1]^2$ with the Borel sigma algebra and Lebesgue measure, {\it and where the new Brownian motions on the same probability space are the same (i.e. parameter independent)}. This will be important in order to construct filtrations which are compatible with the process of taking the limit, in the final limit $\delta \to 0$, where we have to construct and work with {\it random} test processes so that the penalty term drops out. More precisely, the approximate random test functions will depend on both the $\delta$-approximate and the limiting structure displacements. Since the stochastic force appears in the momentum equations as a volumetric body force, we must ensure that these test processes are adapted to the filtration that we construct. Hence we require the filtrations for the approximate problems at each $\delta > 0$ approximation level on the new probability space to contain the filtration generated by the limiting solution too. However, constructing a filtration by enlarging the natural filtrations of the approximate solutions by that of the limiting solutions may give us a filtration which is not non-anticipative with respect to the Brownian motions i.e. that their increments in time may not be independent with respect to this enlarged filtration.
Having the same Brownian motion on the new probability space which is independent of $\delta$ fixes this issue and allows us to rigorously pass to the limit as $\delta \to 0$ using random test functions. 

We note here that the usual use of the Bogovski operator to obtain higher integrability of the pressure fails due to the fact that our structure displacement is not Lipschitz continuous in space.  Hence, by constructing an appropriate random test function, we provide better pressure estimates in the interior of the moving domain away from the fluid-structure interface. We then prove that the pressure does not concentrate at the moving interface by using the ideas given in \cite{K09} to deal with rough boundaries and then adapted to the moving domain (deterministic) case in \cite{BreitSchwarzacherNSF}.

Using these almost sure convergence results we are able to pass the approximation parameters to their respective limits in the following order: first we pass the time-discretization step $\Delta t \to 0$ in Section \ref{sec:timedis}, then the Galerkin parameter to $N\to\infty$ in Section \ref{sec:galerkin}, then the viscous regularization parameter to $\ep\to 0$ in Section \ref{sec:viscousreg}, and finally, in Section \ref{sec:delta}, the pressure regularization, the penalty and the extension parameter to $\delta\to 0$.

 
We must also address that even though we have extended the approximate fluid subproblem to a maximal domain, the dynamics, in the limit, really are occurring in only the physical domain. In the {\it deterministic} case, this is accomplished by a vanishing of density result (see  Lemma 3.1 in \cite{SarkaHeatFSI} and Lemma 4.1 in \cite{FKNNS13}), which shows that the vacuum outside of the initial domain is transported outside of the physical domain at all times.
In contrast to the case of deterministic analysis, we do not have that our $\delta$-level approximate densities are zero on the part of the maximal domain that is outside of the physical domain in the stochastic regime, since we cannot handle the random kinematic coupling of test functions until the final limit passage. Thus, we must develop new estimates that show that the total mass of the fluid in the maximal domain that is outside of the physical domain, while not identically zero necessarily, converges to zero, as some order of $\delta$, in the limit as $\delta \to 0$, see Proposition \ref{vacuum2}. These estimates are important in particular, for considering the limit of the advection term, which at the approximate level is posed on the entire maximal domain.

In addition, the final limit passage as $\delta \to 0$ requires careful consideration of moving domains. Because of the way that we have extended the viscosity coefficients for example, we only have uniform estimates of the gradient of the fluid velocity on the moving domain and not on the entire fixed maximal domain. 
Throughout the limit passage as $\delta \to 0$, we require careful consideration of how indicator functions of the moving domain and the exterior tubular neighborhood on which the penalty term is defined, affect the analysis. 
The convergence of stochastic integrals in the limit as $\delta \to 0$ in Lemma \ref{stochint} is in the spirit of convergence results for stochastic compressible fluid dynamics on fixed domains in Chapter 4 of \cite{BFH18}, which involve rigorously identifying how having uniform bounds on approximate solutions allows for weak convergence of compositions of Carath\'{e}odory functions (satisfying appropriate growth conditions) with the approximate solutions, see Theorem \ref{skorohod} for the statement of this result in the context of our current stochastic moving boundary problem. 
This is combined with deterministic convergence lemmas for functions defined on moving domains from \cite{BreitFSI} in order to properly identify the weak limits of \textit{random} nonlinear quantities on \textit{random} moving domains, see Lemma \ref{rhousquared}. To handle the fact that we only have uniform bounds on the fluid velocity on moving domains rather than the entire maximal domain, we also prove a \textit{new extension result} in Theorem \ref{thm:extension}, which states that the extension by zero of an $H^{s}(\mathcal{O})$ function, for a domain $\sO$ which is a hypograph of an $\alpha$-H\"{o}lder continuous function, is in $H^{s\alpha}(\R^{3})$ for all $0 \le s < \alpha/2$. Since domains with boundaries that are $\alpha$-H\"{o}lder continuous but not Lipschitz are found often in fluid-structure interaction problems, we believe that such an extension by zero regularity result is also of independent interest.

One of the final challenges in the last limit passage as $\delta \to 0$ is the fact that we must construct random test functions for the approximate and limiting weak formulations with appropriate measurability and convergence properties and boundary behaviour. 
Namely, the random test pair for the approximate weak formulation at this stage must be adapted, must satisfy the kinematic coupling condition so that the penalty term drops out, and it must converge to the test pair for the limiting weak formulation in appropriately regular spaces as $\delta\to 0$. 
One cannot find these approximate test functions by using usual tricks involving restricting or transforming, via the Arbitrary Lagrangian-Eulerian map, the limiting fluid test function on the approximate fluid domains as these methods inherit the regularity of the structure displacement which, in our case, is only H\"older continuous in space.
Hence, we provide a detailed construction for such a test pair in Section \ref{sec:testfunction}, previously missing from existing literature, that satisfies the requirements mentioned above while possessing appropriate measurability properties (adaptedness) as necessitated by the stochastic integral defined on moving domains.

\section{The nonlinearly coupled stochastic FSI model}

We will consider a fluid-structure interaction model for a compressible fluid in a three-dimensional domain, where part of the boundary is an elastic deformable boundary, and where the system is perturbed by stochastic effects, involving stochastic perturbations in the balance of momentum of the compressible fluid and in the elastodynamics of the structure. 
\begin{figure}[h]
    \centering
    \includegraphics[scale=0.5]{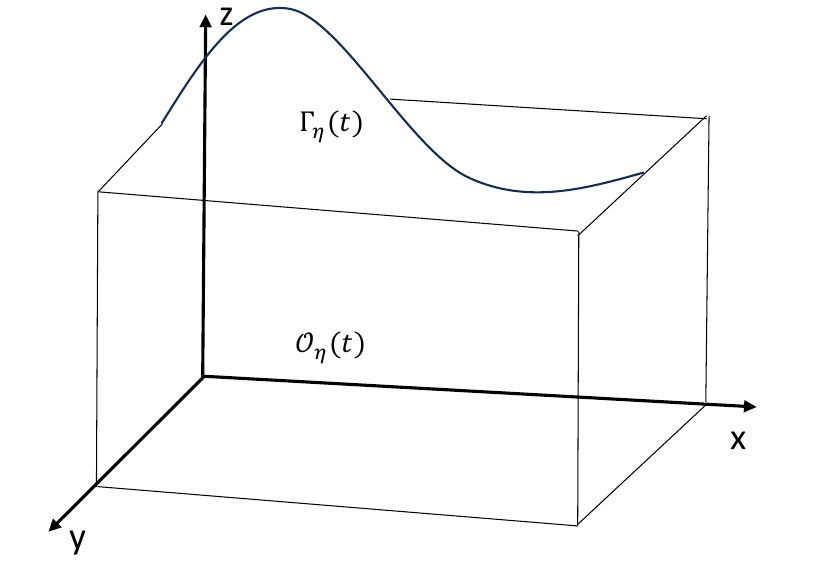}
    \caption{A realization of the fluid domain at some time $t\in[0,T]$}
    \label{domain}
\end{figure}

We will consider the fluid flow in a periodic channel interacting with a complaint structure that sits atop the fluid domain. 
The fluid reference domain, is given by 
$$\sO =  \Gamma\times (0,1),\qquad {\cred \Gamma= \mathbb{T}^2},$$
where $\mathbb{T}^2$ is the 2D torus.
The bottom and top parts of the boundary $\partial\sO$ will be denoted by
{\cred \begin{equation*}
 \Gamma_b = \Gamma \times \{z=0\}, \quad \Gamma_{top} = \Gamma \times \{z=1\}.
\end{equation*}}
We will assume that the elastic structure at the top boundary of $\mathcal{O}$ displaces in only the transverse $z$ direction so that $\eta$ will be the time-dependent (scalar) displacement of the elastic structure from its reference configuration $\Gamma$ in the $z$ direction. Given a particular function $\eta(t, x, y)$ describing the scalar transverse displacement of the elastic structure from $\Gamma$, we can define the time-dependent fluid domain at time $t\in[0,T]$ by (see Fig. \ref{domain}):
\begin{equation}\label{Oeta}
\mathcal{O}_\eta(t) = \{(x, y, z) \in {\mathbb{R}^{3}}: (x, y) \in \Gamma, 0 \le z \le 1 + \eta(t, x, y)\},
\end{equation}
and the top boundary of $\mathcal{O}_\eta(t)$ will be the physical time-dependent configuration of the elastic structure at time $t$, given by
\begin{equation}\label{gammaeta}
\Gamma_{\eta}(t) = \{(x, y, 1 + \eta(t, x, y) : (x, y) \in \Gamma\}.
\end{equation}
We will assume that a compressible, isentropic, viscous fluid occupies the three-dimensional domain $\mathcal{O}_\eta(t)$ at time $t$ and that the dynamics of the fluid and the structure influence each other.

\medskip

\noindent \textbf{Description of the fluid and structure subproblems.} We now describe each of the subproblems separately. 

For the \textbf{fluid subproblem}, we model the fluid velocity $\bu$ and the fluid density $\rho$ by the compressible Navier-Stokes equations for a viscous fluid:
\begin{equation}\label{eqn:fluid}
\left\{\begin{array}{l}
\partial_{t} \rho + \text{div}(\rho u) = 0 \\
\partial_{t}(\rho \bu) + \text{div}(\rho \bu \otimes \bu) - \nabla \cdot \bd{\sigma}(\rho, \bu) = F_{f} \\
\end{array}\right. \quad \text{ on } \mathcal{O}_\eta(t),
\end{equation}
where the Cauchy stress tensor $\bd{\sigma}(\rho, \bu)$ is defined by
\begin{equation*}
\bd{\sigma}(\rho, \bu) = 2\mu \left(\bd{D}(\bu) - \frac{1}{3} \text{div}(\bu) \bd{I}\right) + \left(\pi + \frac{2}{3}\mu\right)  \text{div}(\bu) \bd{I} - p \bd{I},
\end{equation*}
where $\displaystyle \bd{D}(\bu) = \frac{1}{2} (\nabla \bu + (\nabla\bu)^T)$ is the symmetrized gradient of the velocity and $F_{f}$ is an external force, to be specified later, which will eventually represent stochastic perturbations of the fluid momentum. Moreover, we assume that $$\lambda=\pi+\mu>0.$$
The first equation is the \textit{continuity equation}, which represents the conservation of mass, and the second equation represents the balance of momentum. Here, $\mu, \lambda > 0$ are the viscosity coefficients, and $p$ is the fluid pressure, which is given as a function of the fluid density $\rho$ through a constitutive relationship for the problem. In the current setting, we will assume that the viscous compressible fluid is \textit{isentropic} so that the pressure law is given by a power law:
\begin{equation*}
p(\rho) = \rho^{\gamma}, \text{ for some positive constant } \gamma > \frac{3}{2}.
\end{equation*}
{\cred We prescribe periodic boundary condition for the fluid velocity in $x$ and $y$ direction and prescribe the no-slip boundary condition on the bottom boundary i.e. $\bu|_{\Gamma_b}=0$.}

For the \textbf{structure subproblem}, we model the evolution of the structure displacement $\eta$ by the Koiter shell equation:
\begin{equation}\label{eqn:structure}
\partial_{tt} \eta - \alpha \partial_{t}\Delta \eta + \Delta^{2} \eta = F_{s}, \qquad \text{ on } \Gamma,
\end{equation}
where we recall that $\eta$ is the transverse scalar displacement in the $z$ direction of the elastic structure from its reference configuration $\Gamma$. We emphasize that while the fluid equations are posed on the physical time-dependent domain, the elastodynamics of the structure are prescribed in the Eulerian coordinates on the reference domain. Here, the constant $\alpha$ represents a viscoelasticity parameter and we assume that {{$\alpha > 0$}} in the current manuscript to obtain an existence result. 
We supply this elastodynamics equation for the structure displacement $\eta$ with periodic boundary conditions.

\medskip

\noindent \textbf{Description of the coupling conditions.} We next couple these two subproblems together with the following coupling conditions, which are the kinematic coupling condition describing continuity of velocities and the dynamic coupling condition describing the load of the fluid on the elastodynamics of the structure:

\begin{itemize}
\item The \textit{kinematic coupling condition} describes the continuity of velocities via the no-slip condition along the moving (time-dependent) fluid-structure interface so that:
\begin{equation}\label{eqn:kinbc}
\bu = (\partial_{t}\eta) \bd{e}_{z} \qquad \text{ on } \Gamma_\eta(t),
\end{equation}
where $\bd{e}_{z}$ is the unit normal vector in the $z$ direction, and where we recall that the structure is assumed to displace in only the transverse $z$ direction.
\item The \textit{dynamic coupling condition} describes the load of the fluid on the structure via the Cauchy stress tensor. This condition specifies the form of the external force on the structure as
\begin{equation}\label{eqn:dynbc}
F_{s} = -\bd{\sigma}(\rho, \bu) \bd{n}_\eta(t) \cdot \bd{e}_{z} + F_{s}^{stoch},
\end{equation}
where $\bd{n}_\eta(t)$ is the time-dependent {{outward}} normal vector to the moving fluid-structure interface $\Gamma_\eta(t)$, and where $F_{s}^{stoch}$ is an external stochastic forcing acting on the structure that will be specified later.

\end{itemize}

\medskip

\noindent \textbf{Description of the maximal domain and the stochastic forcing.} Next, we specify the nature of the stochastic forcing on the structure and the fluid. Let $W^1$ and $W^2$ be independent cylindrical Wiener processes in a separable Hilbert space $\mathcal{U}_0$. Letting $\{\bd{e}_{k}\}_{k = 1}^{\infty}$ be an orthonormal basis for $\mathcal{U}$, we can consider a Wiener process to formally be the sum of $\beta_{k}(t) \bd{e}_{k}$ over the positive integers $k$, where $\beta_k$ is a standard 1D Brownian motion. We will let the dynamics of the problem depend nonlinearly on the stochastic noise, as a function of the fluid density and the fluid momentum pointwise in the moving domain $\mathcal{O}_\eta(t)$. 

We consider nonlinear noise intensity operators {{$\bd{F}$ and $G$}} such that $\bd{F}(\rho,\rho\bu): \mathcal{U}_{0} \to {H}^{-l}(\R^3)$ for $l>\frac32$ and $G{(\eta,v)}: \mathcal{U}_{0} \to L^{2}(\Gamma)$ are Hilbert-Schmidt. 
\ 
We define, 
\begin{equation*}
F_{f} = \bd{F}(\rho, \rho \bu ) dW^1(t), \qquad F_s^{{\cred stoch}} = G(\eta, v) dW^2(t),
\end{equation*}
For given functions $\rho: \mathcal{O}_{\alpha} \to \mathbb{R}$ and $\bu: \mathcal{O}_{\alpha} \to \mathbb{R}^{3}$, we define
\begin{equation*}
f_{k}(\rho, \rho \bu, \cdot) = \bd{F}(\rho, \rho \bu) \bd{e}_{k}, \qquad g_{k}(\eta, v, \cdot) = G(\eta, v) \bd{e}_{k}.
\end{equation*}
We assume the following noise structure: There exists a sequence of positive constants $\{c_{k}\}_{k = 1}^{\infty}$ with $\displaystyle \sum_{k = 1}^{\infty} c_{k}^{2} < \infty$ such that for all positive integers $k$, the functions {$f_{k}: [0, \infty) \times \R^{3} \times \R^3 \to \R$ and $g_{k}: \R \times \R \times \Gamma \to \R$, which are  $C^{1}$ functions} in all of the inputs, satisfy: 
\begin{align}
&|f_{k}(\rho, \bd{q},x)| \le c_{k}(\rho + |\bd{q}|), \quad |\nabla_{\rho, \bd{q}} f_{k}(\rho, \bd{q}, x)| \le c_{k}.
\label{fassumption} \\
&
|g_{k}(\eta, v, x)| \le c_{k}(|\eta| + |v|), \quad |\nabla_{\eta, v} g_{k}(\eta, v, x)| \le c_{k}.\label{gassumption}
\end{align}

Note in particular that \eqref{fassumption} implies that the intensity of the noise in the momentum equations is identically zero whenever the fluid density is equal to zero which is consistent with deterministic theory. Moreover, this allows us to naturally extend the noise $\bd{F}dW$ outside of the moving domain to the entire space {{$\R^3$}} by assuming that the fluid density is equal to 0 outside of the moving domain. 

\subsection{Literature review}

In this manuscript, we consider a stochastic model of fluid-structure interaction involving the coupled dynamics interaction between a viscous, compressible, isentropic fluid and a viscoelastic Koiter shell, perturbed by stochastic effects in both the fluid and structure elastodynamics. Problems involving compressible fluid dynamics and fluid-structure interaction in both the deterministic and stochastic settings have been of considerable interest in the past mathematical literature.

The mathematical study of existence of weak solutions to the (deterministic) compressible Navier-Stokes equations in $\R^{3}$ goes back to the seminal work of P.-L. Lions in \cite{LionsCompressible, LionsBook}, whose work first establishes the existence of weak solutions to the compressible Navier-Stokes equations where an isentropic compressible fluid is considered with a constitutive pressure law of $p(\rho) = \rho^{\gamma}$ for $\gamma > 9/5$. These results were extended by another important work in \cite{FeireislCompressible}, which uses a multilayer approximation scheme involving a Galerkin approximation along with artificial viscosity and artificial pressure to boost integrability of the pressure to establish existence of weak solutions for adiabatic constants $\gamma > 3/2$, and this multilayer approximation scheme forms the basis for many existence results in the literature for systems involving compressible Navier-Stokes equations, including the one found in the current manuscript. 

Analysis of compressible viscous fluid flows was later extended to the case of compressible fluid flows perturbed by stochastic (random in time) noise. Preliminary work in \cite{Tornatore1, Tornatore2} considered compressible Navier-Stokes equations in one spatial dimension with random perturbations, and work in higher dimensions was achieved first in \cite{Tornatore2DCompressible} in two spatial dimensions and in \cite{FeireislStochastic} for three spatial dimensions with random noise of the form $\rho dW$, though the techniques presented in these two works are ``semi-deterministic" in the sense that the weak formulation can be written without stochastic integrals using an appropriate transformation. 

A subsequent fundamental work in the mathematical analysis of stochastic compressible fluid flows is \cite{BreitHofmanova}, which studies the stochastic compressible Navier-Stokes equations with multiplicative noise. In this work, the existence of \textit{``finite energy weak martingale solutions"} for stochastic isentropic compressible Navier-Stokes equations with adiabatic constant $\gamma > 3/2$ is established following the aforementioned multilevel approximation scheme, but a new application of stochastic analysis techniques. Independently, another fundamental work in the analysis of such stochastic compressible flows is \cite{Smith}, which establishes global existence of weak martingale solutions for $\gamma > 9/5$ for the 3D stochastic isentropic compressible Navier-Stokes, using a similar multilevel approximation, but also using a new application of a \textit{operator splitting technique} (inspired by work in \cite{BerthelinVovelle}), which separates the stochastic and deterministic components of the problem in a time-discretized splitting scheme. 

The study of well-posedness for compressible flows was later extended to the study of coupled deterministic systems involving elastic structures and solids interacting dynamically with compressible viscous fluids, in the context of fluid-structure interaction (FSI). These models of compressible fluid-structure interaction are of two types: (1) immersed elastic bodies in a compressible viscous fluid, and (2) compressible viscous fluids interacting with shells, plates, or more generally, elastic structures of lower spatial dimension than the fluid domain (an example of which could be a compressible viscous fluid flowing through a tube with elastic walls). 

For models of the first type involving immersed solids, work was first done for well-posedness of models involving rigid bodies immersed in surrounding barotropic compressible flows \cite{BoulakiaFSI2Rigid} and heat-conducting compressible flows \cite{HaakFSIRigid}. Elastic bodies immersed in compressible flows were later considered, first in the context of higher order spatial derivative structure regularization of the structure elastodynamics \cite{BoulakiaFSI1} and later without this extra higher order spatial regularity in the elastodynamics equations \cite{BoulakiaFSI3}. These results were improved in terms of initial data regularity in the case of initial structue displacement being equal to zero in \cite{KukavicaCompressible}, and there were also extensions to nonlinearly elastic immersed solids in compressible viscous barotropic flows in the work of \cite{BoulakiaFSI4}.

For models of the second type involving elastic structures of lower spatial dimension than the compressible fluid, one of the first works is \cite{FloriFSI1}, which features a 2D modified compressible Navier-Stokes equation with a linear pressure law interacting with a 1D elastic structure on the boundary of the fluid domain, and this work was later extended to the case of a 3D fluid domain \cite{FloriFSI2} with modified isentropic compressible Navier-Stokes equations. The first analogue of the classical result by Feireisl \cite{FeireislCompressible} for existence of weak solutions to a deterministic compressible FSI model for $\gamma > 12/7$ appears in \cite{BreitFSI}, with the 3D compressible isentropic Navier-Stokes equations coupled to equations for a surrounding Koiter shell with the no-slip condition, where the compressible fluid equations posed on a time-dependent fluid domain that evolves depending on the a priori unknown displacement of the elastic Koiter shell on the boundary of the moving fluid domain, and these results were extended to the case of a similar compressible fluid-structure system with slip boundary conditions in \cite{SarkaSlip}. These results were extended to the existence of weak solutions for the interaction between a heat-conducting compressible viscous fluid and an elastic Koiter shell with the no-slip condition in \cite{BreitSchwarzacherNSF} and the interaction between a heat-conducting compressible viscous fluid and a thermoelastic shell with the no-slip condition in \cite{SarkaHeatFSI}. We also remark that there has also been a recent result on existence of strong solutions to a 2D model of compressible isentropic viscous Navier-Stokes equations interacting with a viscoelastic beam for sufficiently regular initial data with initial displacement equal to zero in \cite{MitraFSI}. 

All of the aforementioned works described have been in the context of deterministic dynamics, and we make hence make some general comments about the literature relating to the study of stochastic fluid-structure systems in general. The study of stochastic coupled fluid-structure dynamics is relatively new, and until the present work, has exclusively involved incompressible viscous fluids, rather than compressible viscous fluids. Work on stochastic incompressible fluid-structure interaction began first with a fully coupled model \cite{KuanCanicFullyCoupled} involving the coupled interaction between the linear Stokes equations and a wave equation with external additive stochastic structure forcing. The problem considered in this paper is linearly coupled in the sense that the movement of the structure displacement is ``linearized" and the fluid equations are posed on a fixed reference domain. These results were then extended to the more challenging case of stochastic incompressible fluid-structure interaction involving the Navier-Stokes equations interacting with an elastic shell, with multiplicative stochastic forcing acting on both the fluid/structure in \cite{TC23}, where the existence of weak martingale solutions is established. The well-posedness of an analogous moving-boundary stochastic model was also established in the different context of noise of transport type perturbing the structure elastodynamics \cite{BreitMoyo}. Later, existence of weak martingale solutions to stochastic coupled moving boundary systems of incompressible Navier-Stokes equations interacting with elastic shells was established for more general models, including models with unrestricted structure displacements \cite{T24} and models with more general Navier-slip boundary conditions \cite{T24slip}.

We emphasize that so far, the past mathematical literature on stochastic fluid-structure dynamics has been restricted the study of incompressible fluids. Hence, the goal of the current manuscript is to extend the study of stochastic fluid-structure interaction to the compressible regime. 

\section{Weak martingale solutions and the main result} 
\begin{definition}[Definition of a weak martingale solution]\label{def:martingale}
    Let the {{deterministic}} initial structure configuration $\eta_0\in H^2(\Gamma)$
	be such that for some $\alpha_0>0$ we have,
	\begin{align}	 \label{etainitial}
\alpha_0<1+\eta_0(w),\quad \forall w\in \Gamma,\quad \text{ and }\quad { \|\eta_0\|_{H^2(\Gamma)}<\frac1{\alpha_0}}.
	\end{align}
 Let  $\rho_0 \in L^{\gamma}(\sO_{\eta_0})$ and $ v_0\in L^2(\Gamma)$ be 
	 given deterministic initial data.
	Assume that when $\rho_0>0$,  {{the initial momentum}} $\bp_0$ satisfies $\frac{|\bp_0|^2}{\rho_0} \in L^1(\sO_{ \eta_0})$. 
	We say that  $(\mathscr{S},\rho,\bu,\eta,\tau^\eta)$ is a  martingale solution to the system \eqref{eqn:fluid}-\eqref{eqn:dynbc} if: 
	\begin{enumerate}
		\item  $\mathscr{S}=(\Omega,\sF,(\sF_t)_{t\geq 0},\bP,W^1,W^2)$ is a stochastic basis, that is, $(\Omega,\sF,(\sF_t)_{t\geq 0},\bP)$ is a filtered probability space satisfying the usual conditions and $W^1$ and $W^2$ are independent $\mathcal{U}$-valued $(\sF_t)_{t\geq 0}-$Wiener processes.
   \item $\eta\in L^2(\Omega;L^\infty(0,T;H^2(\Gamma))\cap W^{1,\infty}(0,T;L^2(\Gamma)) {{\cap H^{1}(0, T; H^{1}(\Gamma))}})$
\item $\bu \in 
L^2(0,T;H^{1}(\sO_\eta(\cdot)))$ and $\rho \in L^\infty(0,T;L^{\gamma}(\sO_\eta(\cdot)))$ such that {{$\rho|\bu|^2\in L^\infty(0,T;L^1(\sO_\eta(\cdot)))$}} $\bP$-almost surely.
		\item $\tau^\eta$ is a $\bP$-almost surely strictly positive $(\sF_t)_{t\geq 0}-$stopping time;
		\item The random distributions
		$ \rho, \bu,\partial_t\eta,$ (in the sense of \cite{BFH18}) and the stochastic process $\eta$, are $(\sF_t)_{t \geq 0}-$progressively measurable.
		\item {{The renormalized continuity equation holds $\bP$-almost surely, for almost every $t \in [0, \tau^\eta)$:}}
		\begin{align}\label{renorm_cont}
			\int_0^t\int_{\sO_\eta(t)} b(\rho)(\partial_t\phi+\bu\cdot\nabla \phi)=\int_0^t\int_{\sO_\eta(t)} (b'(\rho)\rho-b(\rho))(\nabla \cdot\bu)\phi +\int_{\sO_{\eta_0}}\rho_0b(\rho_0)\phi(0),
		\end{align} 
		for any essentially bounded process $\phi $ taking values in $C^\infty_c([0,T)\times \bar\sO_{\eta}(\cdot))$ and $b \in C(\R)$ with $b'(z)=0$ when $z\geq M_b${{, for some constant $M_{b}$ (varying with the specific choice of $b$).}}
		\item	For every $(\sF_t)_{t \geq 0}-$adapted, essentially bounded smooth process $(\bq,\psi)$ such that 
		$\bq|_{\Gamma_\eta}=\psi\bd{e}_z$, and the following equation holds for $\bP$-almost surely, for almost every {\cred $t \in[0,\tau^\eta)$}:
		\begin{equation}\label{weaksol}
			\begin{split}
				&{\int_{\sO_\eta(t)}\rho(t)\bu(t) \cdot \bq(t) d\bx +\int_\Gamma\partial_t\eta(t)\psi(t)dz}= \int_{\sO_{\eta_0}}\bp_0 \cdot \bq(0) d\bx  + \int_\Gamma v_0\psi(0) d\bd{z} \\
				&+\int_0^{t }\int_{\sO_\eta(t)}\rho\bu\cdot \partial_t\bq d\bx dt+\int_0^{t }\int_{\sO_\eta(t)} \rho\bu\otimes\bu:\nabla\bq d\bx dt
				- { \mu}\int_0^{t } \int_{\sO_\eta(t)}\nabla\bu: \nabla\bq d\bx dt\\
				&+\int_0^t\int_{\sO_\eta(t)} \rho^\gamma (\nabla \cdot\bq )d\bx dt- \lambda \int_0^t\int_{\sO_\eta(t)} \text{div}(\bu)\text{div}(\bq)d\bx dt \\
				&+\int_0^{t }\int_\Gamma\partial_t\eta\partial_t\psi d\bd{z}dt - {{ \int_0^{t }\int_\Gamma \nabla\partial_{t}\eta \cdot\nabla \psi d\bd{z} dt}} - \int_{0}^{t} \int_{\Gamma} (\nabla\eta\nabla \psi +\Delta\eta\Delta \psi) d\bd{z}dt\\
				&+\int_0^t\int_{\sO_\eta(t)} F(\rho,\rho\bu)\cdot\bq\,dW^1(t) + \int_0^t\int_\Gamma G(\eta, \partial_t\eta)\psi dW^2(t).
		\end{split}\end{equation}
  \item The kinematic coupling condition holds: $\bu|_{\Gamma_{\eta}}=\partial_t\eta {\bf e}_z$, $\bP$-almost surely {\cred for almost every $t\in[0,\tau^\eta)$.}
	\end{enumerate}
\end{definition}
We are now in position to state the main result of this paper.
\begin{theorem}\label{mainthm}
   Let the initial deterministic structure configuration $\eta_0\in H^2(\Gamma)$
	be such that for some $\alpha_0>0$, we have:
	\begin{align}	 
		{\alpha_0<1+\eta_0(w),\quad \forall w\in \Gamma,\quad \text{ and }\quad { \|\eta_0\|_{H^2(\Gamma)}<\frac1{\alpha_0}}.}
	\end{align}
Consider deterministic initial data, consisting of an initial density $\rho_{0} \in L^{\gamma}(\mathcal{O}_{\eta_{0}})$ with $\rho_{0} \ge 0$, an initial momentum $\bd{p}_{0} \in L^{\frac{2\gamma}{\gamma + 1}}(\sO_{\eta_0})$ such that $\bd{p}_{0}$ vanishes whenever $\rho_{0}$ vanishes and $\frac{|\bp_0|^2}{\rho_0} \in L^1(\sO_{\eta_0})$ (where this quotient is defined to be zero when $\rho_0$ vanishes), 
and an initial structure velocity $ v_0\in L^2(\Gamma)$. Assume that the noise coefficients $\bd{F}$ and $G$ 
  satisfy the growth conditions \eqref{fassumption}-\eqref{gassumption}. Then there exists at least one martingale solution to the system \eqref{eqn:fluid}-\eqref{eqn:dynbc} in the sense of Definition \ref{def:martingale}.
\end{theorem}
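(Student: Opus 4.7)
The plan is to construct a martingale solution via a four-layer approximation scheme with parameters $(\Delta t, N, \varepsilon, \delta)$ and pass to the limit in the order indicated in the introduction. At the outermost level I would introduce an artificial structure displacement $\eta_\alpha$ obtained by a cutoff/stopping construction that enforces both $\inf_{\Gamma}(1+\eta_\alpha)\ge \alpha$ and $\|\eta_\alpha\|_{H^s(\Gamma)}\le \alpha^{-1}$ for a fixed $s\in(3/2,2)$. Choosing $\alpha<\alpha_0$ compatible with the initial data \eqref{etainitial}, these deterministic bounds define a \emph{fixed maximal domain} $\mathcal{O}_\alpha\supset\mathcal{O}_{\eta_\alpha(t)}$ on which the fluid subproblem can be posed, with viscosity coefficients extended (e.g.\ by a smooth cutoff vanishing outside a tubular neighborhood of $\mathcal{O}_{\eta_\alpha(t)}$) and an artificial pressure $\delta\rho^\beta$ added with $\beta$ large enough to boost integrability.

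At each time step of size $\Delta t$, I would solve a \textbf{structure subproblem} (an SDE for $\eta,v$ with the plate operator and the fluid stress from the previous step as the load) and then a \textbf{fluid subproblem} for $(\rho,\bu)$ on $\mathcal{O}_\alpha$ at the Galerkin level $N$, with the continuity equation regularized by $\varepsilon\Delta\rho$. The kinematic coupling is replaced by the \emph{new penalty term} of Brinkman type, supported on the exterior tubular neighborhood of thickness $\sim\delta^\kappa$ of $\Gamma_{\eta_\alpha(t)}$, which decouples the test functions so that both subproblems can be solved with deterministic test pairs. Existence at the $(\Delta t, N, \varepsilon, \delta)$ level follows by alternating a linear parabolic SPDE in the structure step with a Galerkin ODE/SDE in the fluid step, concatenating the solutions on the grid, and checking measurability/adaptedness inductively.

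Next, I would derive the It\^o energy balance (following the formal a priori computation sketched in the excerpt) for the $(\Delta t, N, \varepsilon, \delta)$-approximation, together with its $p$-th moment version obtained by BDG and Gronwall up to the stopping time $\tau_\alpha$. This gives uniform-in-parameter bounds on $\sqrt{\rho}\bu$ in $L^\infty_t L^2_x$, on $\rho$ in $L^\infty_t L^\gamma$ (plus the $\delta\rho^\beta$ improvement), on $\bu$ in $L^2_t H^1$ via the Poincar\'e inequality \eqref{poincare}, and on $(\eta,v)$ in the natural energy space, plus the \emph{novel fractional-in-time regularity} of $v$ of order strictly less than $1/2$ quoted in the introduction. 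Tightness of the laws is then obtained on the phase spaces (weak topologies for the fluid entities, strong topologies for the structure via Aubin-Lions and the fractional bound), after which I pass $\Delta t\to 0$, then $N\to\infty$, and then $\varepsilon\to 0$ using the variant of Skorohod representation from \cite{NTT21,J97} with a canonical probability space and $\delta$-independent Brownian motions. At each stage the limiting equation is identified by standard compressible techniques (Feireisl's effective viscous flux, Lions-type renormalization, div-curl), adapted to the moving domain.

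The main obstacle is the final limit $\delta\to 0$. Here I would: (i) prove the vacuum-outside-$\mathcal{O}_{\eta}$ result of Proposition \ref{vacuum2}, showing that the total mass in $\mathcal{O}_\alpha\setminus\mathcal{O}_{\eta_\delta(t)}$ vanishes at a polynomial rate in $\delta$, which is what makes the advection term in the maximal domain converge to its physical-domain counterpart; (ii) apply the new extension-by-zero result Theorem \ref{thm:extension} to put $\bu_\delta\mathbf{1}_{\mathcal{O}_{\eta_\delta}}\in H^{s\alpha}(\mathbb{R}^3)$ despite the Hölder-only boundary regularity; (iii) carry out the careful construction of Section~\ref{sec:testfunction} of \emph{random, adapted} test pairs $(\bq_\delta,\psi_\delta)$ converging to the limiting test pair in the right topology and satisfying $\bq_\delta|_{\Gamma_{\eta_\delta}}=\psi_\delta\mathbf{e}_z$ so the penalty term drops out; (iv) invoke Lemma \ref{stochint} and the Carath\'eodory composition result of Theorem \ref{skorohod} to identify the stochastic integral, together with Lemma \ref{rhousquared} for the nonlinear convective term; (v) treat the pressure by a random Bogovski-type test function to obtain interior equi-integrability and then rule out concentration at $\Gamma_\eta$ à la \cite{K09,BreitSchwarzacherNSF}, yielding strong $L^1$ convergence of $\rho^\gamma$. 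Finally, I would remove the artificial displacement by defining $\tau^\eta$ as the first time $\eta$ violates either the non-contact bound or the $H^s$ bound strictly inside $\alpha_0$ and $\alpha_0^{-1}$; on $[0,\tau^\eta)$ one has $\eta=\eta_\alpha$, so the constructed limit is a martingale solution in the sense of Definition \ref{def:martingale}, and $\tau^\eta>0$ almost surely by the $H^s$-continuity of $\eta$ and the strict initial inequalities in \eqref{etainitial}.
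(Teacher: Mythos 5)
Your proposal follows the paper's proof essentially step by step: the same four-layer $(\Delta t, N, \varepsilon, \delta)$ scheme, the stopped ``artificial displacement'' enforcing the $\alpha$-bounds, the exterior tubular penalty decoupling the kinematic condition, the extension of viscosity coefficients to the fixed maximal domain, the same order of limits with the canonical Skorohod representation, the fractional-time regularity of $v$, Proposition \ref{vacuum2} on vanishing mass outside the physical domain, Theorem \ref{thm:extension} for the H\"older-boundary extension, the adapted random test pair construction of Section \ref{sec:testfunction}, and the stopping-time argument to recover the true displacement. The one small mismatch is calling the interior pressure test function ``Bogovski-type'': the paper explicitly notes that the Bogovski operator on the moving domain fails (the boundary is only H\"older), and instead uses $\chi_\delta\,\nabla\Delta^{-1}\hat\rho_\delta^\Theta$ with $\Delta^{-1}$ the inverse Dirichlet Laplacian on the fixed box $\sO_\alpha$ and an adapted interior cutoff $\chi_\delta$; but this is a cosmetic naming difference, since both constructions serve the same divergence-realization purpose on the fixed Lipschitz maximal domain, and the concentration argument \`a la \cite{K09,BreitSchwarzacherNSF} you cite is exactly what the paper does.
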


\section{The approximation scheme}
Before presenting our splitting scheme in Section \ref{splittingscheme}, we will present the necessary setup. We fix some parameter $\alpha > 0$, which will be the parameter for which the approximate structure displacements we work with at all steps of the existence proof will satisfy $\|\eta\|_{H^{s}(\Gamma)} \le \frac1\alpha$, for some $s \in (3/2, 2)$.  Since $s \in (3/2, 2)$, we have the Sobolev embedding of $H^{s}(\Gamma) \subset C(\Gamma)$. Setting the embedding constant to 1 for simplicity, the aforementioned $H^s$-bound allows us to define a maximal domain $\mathcal{O}_{\alpha}$: 
\begin{equation*}
 \mathcal{O}_{\alpha} = \mathbb{T}^2 \times (0 ,2 + \alpha^{-1}).
\end{equation*}

\subsection{Extension to the maximal domain}\label{extension}

Because we are extending the problem from the physical moving domain to a fixed maximal domain, we will need to extend both the viscosity coefficients and initial data to the maximal domain $\mathcal{O}_{\alpha}$. We first consider the extension of the viscosity coefficients from the time-dependent domain $\sO_{\eta}(t)$ to the maximal domain $\mathcal{O}_{\alpha}$, and for this, it is useful to define smooth bounding functions $a^{\eta}_{\kappa}$ and $b^{\eta}_{\kappa}$ with $a^{\eta}_{\kappa} > \eta > b^{\eta}_{\kappa}$, which define a tubular neighborhood (with width controlled by the parameter $\kappa$) around a given physical structure location determined by the displacement $\eta$. The bounding functions $a^{\eta}_{\kappa}$ will be useful for the extension of viscosity coefficients, and both bounding functions $a^{\eta}_{\kappa}$ and $b^{\eta}_{\kappa}$ will also be useful later for the construction of test functions in Section \ref{sec:testfunction}.

\medskip

\noindent \textbf{Definition of the bounding functions.} We now define the bounding functions $a^{\eta}_{\kappa}$ and $b^{\eta}_{\kappa}$. Given a structure displacement $\eta \in H^{s}(\Gamma) \subset C^{ \frac12}(\Gamma)$ with $\|\eta\|_{H^{s}(\Gamma)} \le \frac1\alpha$ for some $s \in (3/2, 2)$ , there exists $C_{\alpha}$ (depending only on $\alpha$) such that
\begin{equation}\label{Calphadef}
|\eta(w_1) - \eta(w_2)| \le C_{\alpha}|w_1 - w_2|^{1/2}, \quad \text{ for all } w_1, w_2\in \Gamma.
\end{equation}
Given an extension parameter $\kappa>0$, we use this $1/2$-H\"{o}lder continuity constant $C_{\alpha}$ to define
\begin{equation*}
\eta^{\sharp}_{\kappa} = \eta + 2C_{\alpha}\kappa^{1/2}, \quad \eta^{\flat}_{\kappa} = \eta - 2C_{\alpha}\kappa^{1/2},
\end{equation*}
and to define the smooth bounding functions
\begin{equation}\label{abbounding}
a^{ \eta}_{\kappa} = 1 + (\eta^{\sharp}_{\kappa} * \zeta_{\kappa}), \quad \quad
 b^{\eta}_{\kappa} = 1 +(\eta^{\flat}_{\kappa} * \zeta_{\kappa}),
\end{equation}
where $\zeta_{\kappa} = \kappa^{-2} \zeta(\kappa^{-1} \cdot)$ is the usual smooth convolution kernel in $\R^{2}$ with support in a ball of radius $\kappa$.
Since given ${ w_1}, w_2 \in \Gamma$, for all $|w_2 - w_1| \le \kappa$, we have that $\eta^{\sharp}_{\kappa}(w_2) > \eta({ w_1})$ and $\eta^{\flat}_{\kappa}(w_2) < \eta({ w_1})$ by \eqref{Calphadef}. Hence,
\begin{equation}\label{abbound0}
a^{\eta}_{\kappa} > 1 + \eta > b^{ \eta}_{\kappa}.
\end{equation}
 Furthermore, the estimate \eqref{Calphadef} also implies that for $w\in \Gamma$, 
\begin{equation*}
|(\eta^{\sharp}_{\kappa} * \zeta_{\kappa})(w) - \eta^{\sharp}_{\kappa}(w)| \le C_{\alpha}\kappa^{1/2}, \quad |(\eta^{\flat}_{\kappa} * \zeta_{\kappa})(w) - \eta^{\flat}_{\kappa}(w)| \le C_{\alpha}\kappa^{1/2},
\end{equation*}
and hence:
\begin{equation}\label{abound}
C_{\alpha} \kappa^{1/2} \le a^{ \eta}_{\kappa}(w) - (1 + \eta(w)) \le 3C_{\alpha} \kappa^{1/2},
\end{equation}
\begin{equation}\label{bbound}
C_{\alpha} \kappa^{1/2} \le (1 + \eta(w)) - b^{ \eta}_{\kappa}(w) \le 3C_{\alpha} \kappa^{1/2}.
\end{equation}
We end this note by observing the following property of these bounding functions.
\begin{lemma}\label{abconv}
Assume that $\eta_n\to\eta$ in $C(0,T;C(\Gamma))$ as $n\to\infty$. Then, for a fixed but arbitrary  $\kappa \ge 0$, $a^{ \eta_n}_{\kappa} \to a^{ \eta}_{\kappa}$ and $ b^{ \eta_n}_{\kappa} \to b^{ \eta}_{\kappa}$ in $C(0, T; C^k(\Gamma))$ almost surely for any $k\geq 0$.
\end{lemma}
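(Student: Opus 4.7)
The plan is to reduce the convergence of the smoothed bounding functions to the convergence of convolutions against a fixed mollifier. The key observation is that the shifts by $\pm 2C_{\alpha}\kappa^{1/2}$ in the definitions of $\eta^{\sharp}_{\kappa}$ and $\eta^{\flat}_{\kappa}$ are constants (independent of $w$), so after convolution with $\zeta_{\kappa}$ (which has integral one) they contribute only an $\eta$-independent constant. Concretely, I would first rewrite
\begin{equation*}
a^{\eta}_{\kappa}(w) = 1 + 2C_{\alpha}\kappa^{1/2} + (\eta * \zeta_{\kappa})(w), \qquad b^{\eta}_{\kappa}(w) = 1 - 2C_{\alpha}\kappa^{1/2} + (\eta * \zeta_{\kappa})(w),
\end{equation*}
so that the differences telescope nicely:
\begin{equation*}
a^{\eta_n}_{\kappa} - a^{\eta}_{\kappa} = (\eta_n - \eta) * \zeta_{\kappa}, \qquad b^{\eta_n}_{\kappa} - b^{\eta}_{\kappa} = (\eta_n - \eta) * \zeta_{\kappa}.
\end{equation*}

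Next, I would move the derivatives onto the mollifier. For any multi-index $\beta$ with $|\beta| \le k$, differentiation under the convolution gives
\begin{equation*}
D^{\beta}\bigl(a^{\eta_n}_{\kappa} - a^{\eta}_{\kappa}\bigr)(t,w) = \bigl((\eta_n(t,\cdot) - \eta(t,\cdot)) * D^{\beta}\zeta_{\kappa}\bigr)(w),
\end{equation*}
which is permissible because $\zeta_{\kappa} \in C^{\infty}_c(\R^2)$ for the fixed $\kappa > 0$ under consideration. Applying Young's convolution inequality yields
\begin{equation*}
\|D^{\beta}\bigl(a^{\eta_n}_{\kappa}(t) - a^{\eta}_{\kappa}(t)\bigr)\|_{L^{\infty}(\Gamma)} \le \|\eta_n(t) - \eta(t)\|_{L^{\infty}(\Gamma)} \, \|D^{\beta}\zeta_{\kappa}\|_{L^{1}(\Gamma)}.
\end{equation*}
Since $\kappa$ is fixed, the constant $\|D^{\beta}\zeta_{\kappa}\|_{L^{1}(\Gamma)}$ is finite and independent of $n$ and $t$.

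Finally, I would take the supremum over $t \in [0,T]$ and sum over $|\beta| \le k$, obtaining
\begin{equation*}
\|a^{\eta_n}_{\kappa} - a^{\eta}_{\kappa}\|_{C(0,T;C^k(\Gamma))} \le C_{k,\kappa} \, \|\eta_n - \eta\|_{C(0,T;C(\Gamma))},
\end{equation*}
and similarly for $b^{\eta_n}_{\kappa}$. The hypothesis $\eta_n \to \eta$ in $C(0,T;C(\Gamma))$ then forces the left-hand side to zero. Since this entire argument is pointwise in $\omega \in \Omega$, the almost sure hypothesis on the convergence of $\{\eta_n\}$ transfers directly to almost sure convergence of $a^{\eta_n}_{\kappa}$ and $b^{\eta_n}_{\kappa}$. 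There is no real obstacle here — the lemma is essentially a routine consequence of the fact that convolution against a smooth, compactly supported kernel is a bounded linear map from $C(\Gamma)$ into $C^k(\Gamma)$; the only mild care needed is to verify that the constant shifts drop out of the difference so that the size of the mollifier (rather than the displacement itself) controls the $C^k$ norm.
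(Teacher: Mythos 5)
Your proof is correct and follows essentially the same route as the paper: the paper likewise observes that $\eta_n \to \eta$ in $C(0,T;C(\Gamma))$ gives $(\eta_n)^\sharp_\kappa \to \eta^\sharp_\kappa$ (since the shift is a constant) and then invokes the standard fact that convolution against the fixed smooth kernel $\zeta_\kappa$ is bounded from $C(\Gamma)$ to $C^k(\Gamma)$. You simply make the estimate explicit via Young's inequality, which is a nice touch but not a new idea.
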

\begin{proof}
Since ${\eta}_{n} \to \eta$ in $C(0, T; C(\Gamma))$,
\begin{equation}\label{convergencesharp}
({ \eta_n})^{\sharp}_{\kappa} \to (\eta)^{\sharp}_{\kappa} \qquad \text{ in $C(0, T; C(\Gamma))$, as } n\to\infty,
\end{equation}
which implies that $\nabla^{k} [(\eta_n)^{\sharp}_{\kappa} * \zeta_{\kappa}] \to \nabla^{k} [(\eta)^{\sharp}_{\kappa} * \zeta_{\kappa}]$ for all positive integers $k$, in $C(0, T; C(\Gamma))$ as $n\to\infty$ by properties of convolution.
The result for $b_\kappa^{\eta_n}$ is analogous.
\end{proof}
\medskip

\noindent \textbf{Extension of viscosity coefficients.} We will use the bounding function $a^{\eta}_{\kappa}$ to extend the viscosity coefficients from the physical fluid domain to the fixed maximal domain $\mathcal{O}_{\alpha}$ via an extension map. To define this extension map, for a parameter $\kappa\ge 0$, we consider a smooth function $\phi_\kappa: \R \to \R$ such that $\phi_\kappa(w) = 1$ when $w \le \frac14$, $\phi_\kappa$ is decreasing on $ [\frac14,\frac34]$, and $\phi_\kappa(w) = \kappa$ for $w \ge \frac34$. Then, given $\eta \in H^{s}(\Gamma)$ for $s \in (3/2, 2)$, we can define an extension operator by defining $\chi^{\eta}_{\kappa}$, for any $\kappa>0$, to be a smooth {{compactly supported}} function on $\Gamma \times [0, \infty)$, given by
\begin{equation}\label{chi}
\chi^{\eta}_{\kappa}(x, y, z) = \phi_\kappa\left(\frac{z - a^{ \eta}_{\kappa}(x, y)}{\kappa^{1/2}}\right). 
\end{equation}
For an appropriately chosen value of $\kappa$, $\chi_\kappa^\eta$ will be used to extend the viscosity parameters $\mu$ and $\lambda$ \eqref{viscosityextension} in the fluid sub-problem \eqref{fluidsubproblem}.
\medskip

\noindent \textbf{Extension and approximation of the initial data.} Recall that we are given deterministic initial data for the fluid density $\rho_{0} \in L^{\gamma}(\mathcal{O}_{\alpha})$ and the fluid momentum $\bp_{0} \in L^{\frac{2\gamma}{\gamma + 1}}(\mathcal{O}_{\alpha})$ defined on the initial moving fluid domain $\mathcal{O}_{\eta_{0}}$, so we must extend the initial data to the maximal domain $\mathcal{O}_{\alpha}$. We must perform this extension of the initial data to the maximal domain $\mathcal{O}_{\alpha}$ carefully, as we will initially need the fluid density to be bounded below by some positive constant in order to apply a comparison principle to obtain existence of first-level approximate solutions, but in a final limit passage (which will later correspond to sending a parameter $\delta \to 0$), we will require the initial density to vanish outside the initial physical fluid domain. So we use two layers of approximation of the initial data, with two parameters $\ep > 0$ and $\delta > 0$, where the following approximations of the initial data are motivated by techniques found in the beginning of Section 4 of \cite{FeireislCompressible}, and (3.6)-(3.8) in \cite{SarkaHeatFSI}.

First, we extend the initially given deterministic initial data $\rho_{0}$ and $\bp_0$ on $\mathcal{O}_{\eta_{0}}$ by zero to get initial data on $\mathcal{O}_{\alpha}$. For each parameter $\delta > 0$, we approximate this initial data by a pair $(\rho_{0, \delta}, \bd{p}_{0, \delta})$, where $\rho_{0, \delta} \in C^{2 + \nu}(\mathcal{O}_{\alpha})$, for some $\nu>0$ 
satisfying 
\begin{align}
&0 \le \rho_{0, \delta} \le \delta^{-1/\beta}, \quad \rho_{0, \delta}|_{\mathcal{O}_{\alpha} \setminus \mathcal{O}_{ \eta_0}} = 0,\quad\lim_{\delta \to 0} |\{\rho_{0, \delta} < \rho_{0}\}| \to 0, \quad \rho_{0, \delta} \to \rho_{0} \text{ in } L^{\gamma}(\mathcal{O}_{\alpha}),\notag\\
&\bd{p}_{0, \delta} = \bd{p}_{0}, \text{ if } \rho_{0, \delta} \ge \rho_{0} \text{ and } \bp_0 = \bd{0} \text{ otherwise}, \quad \int_{\mathcal{O}_{\alpha}} \frac{|\bd{p}_{0, \delta}|^{2}}{\rho_{0, \delta}} \le C \text{ independently of $\delta$.}
\label{p0}
\end{align}
These properties imply that the modified energy $\displaystyle \frac{1}{2} \int_{\mathcal{O}_{\alpha}} \frac{|\bd{p}_{0, \delta}|^{2}}{\rho_{0, \delta}} + \frac{a}{\gamma - 1} \int_{\mathcal{O}_{\alpha}} \rho_{0, \delta}^{\gamma} + \frac{\delta}{\beta - 1} \int_{\mathcal{O}_{\alpha}} \rho_{0, \delta}^{\beta}$ is uniformly bounded independently of $\delta$, and that $\bd{p}_{0, \delta} \to \bd{p}_{0}$ in $L^{\frac{2\gamma}{\gamma + 1}}(\mathcal{O}_{\alpha})$.

For the next layer of approximation involving the parameter $\ep > 0$ for fixed but arbitrary $\delta > 0$, we choose $\rho_{0, \delta, \ep} \in C^{2 + \nu}(\mathcal{O}_{\alpha})$ such that
\begin{equation}
0 < \ep \le \rho_{0, \delta, \ep} \le \delta^{-1/\beta}, \quad \text{ and } \rho_{0, \delta, \ep} \to \rho_{0, \delta} \text{ in $L^{\gamma}(\mathcal{O}_{\alpha})$ as } \ep\to 0, \label{rho0conv}
\end{equation}
and the initial fluid momentum $\bd{p}_{0, \delta, \ep}$, using the argument in the beginning of Section 4 in \cite{FeireislCompressible} and the beginning of Section 6 in \cite{BreitHofmanova}, can be chosen to satisfy
\begin{align*}
&\frac{|\bd{p}_{0, \delta, \ep}|^{2}}{\rho_{0, \delta, \ep}} \text{ is uniformly bounded (independently of $\ep$) in $L^{1}(\mathcal{O}_{\alpha})$},\\
&\bd{p}_{0, \delta, \ep} \to \bd{p}_{0, \delta} \text{ in } L^{1}(\mathcal{O}_{\alpha}), \quad \frac{\bd{p}_{0, \delta, \ep}}{\sqrt{\rho_{0, \delta, \ep}}} \to \frac{\bd{p}_{0, \delta}}{\sqrt{\rho_{0, \delta}}} \text{ in } L^{2}(\mathcal{O}_{\alpha}), \quad \text{ as $\ep \to 0$.}
\end{align*}
We remark that the modified energy $\displaystyle \frac{1}{2} \int_{\mathcal{O}_{\alpha}} \frac{|\bd{p}_{0, \delta, \ep}|^{2}}{\rho_{0, \delta, \ep}} + \frac{a}{\gamma - 1}\int_{\mathcal{O}_{\alpha}} \rho_{0, \delta, \ep}^{\gamma} + \frac{\delta}{\beta - 1} \int_{\mathcal{O}_{\alpha}} \rho_{0, \delta, \ep}^{\beta}$ is similarly bounded uniformly, independently of $\ep$ and $\delta$. 

Given the extended initial data $(\rho_{0, \delta, \ep}, \bd{p}_{0, \delta, \ep})$ on the maximal domain $\mathcal{O}_{\alpha}$, our splitting scheme will include extension of the fluid equations to the larger domain $\mathcal{O}_{\alpha}$, by penalizing the kinematic coupling condition via the parameter $\delta > 0$.  
While this extension seems unnatural at this stage, we will eventually prove that the fluid momentum exists only in the time-dependent moving domain by proving that its density vanishes outside as $\delta\to 0$ (cf. Proposition \ref{vacuum2}) thus retrieving the original formulation on the moving domain.

\subsection{Description of the Galerkin approximation.}

Let {{$\{\bd{\psi}_{i}\}_{i = 1}^{\infty}$}} be an orthonormal basis for $L^{2}(\mathcal{O}_{\alpha})$ and an orthogonal basis for ${ H^l_0(\mathcal{O}_{\alpha})}$ for $l > 5/2$, and let {{$\{\xi_{i}\}_{i = 1}^{\infty}$}} be an orthonormal basis for $L^{2}(\Gamma)$ and an orthogonal basis for $H^2(\Gamma)$. 
Define
\begin{align}\label{Xkf}
X_{n}^{f} &:= \text{span}\{\bd{\psi}_{1}, \bd\psi_{2}, ..., \bd\psi_{n}\} \subset { H^l_0(\mathcal{O}_{\alpha})},\\
X_{n}^{{st}} &:= \text{span}\{{ \varphi_{1}, \varphi_{2}, ..., \varphi_{n}}\} \subset H^2(\Gamma),\label{Xks} 
\end{align}
where $X_{n}^{f}$ is endowed with the $L^{2}(\mathcal{O}_{\alpha})$ inner product and $X_{n}^{st}$ is endowed with the $L^{2}(\Gamma)$ inner product. Define
\begin{equation}\label{galerkinXk}
X_{n} := X_n^{f} \times X_n^{st}
\end{equation}
to be the full fluid-structure Galerkin space with the usual product norm. Let $P^{f}_{n}: L^{2}(\mathcal{O}_{\alpha}) \to X^{f}_{n}$ be the orthogonal projection operator from $L^{2}(\mathcal{O}_{\alpha})$ onto $X^{f}_{n}$ and let $P^{st}_{n}: L^{2}(\Gamma) \to X^{st}_{n}$ be the orthogonal projection operator from $L^{2}(\Gamma)$ onto $X^{st}_{n}$. Let $(X_n^{f})^*$ and $(X_n^{st})^*$ denote the dual spaces of linear functionals on $X^{f}_{n}$ and $X^{st}_{n}$ respectively. We remark that the choice of the function space $H^l_0(\mathcal{O}_{\alpha})$ for $l > 5/2$ {{used for the fluid Galerkin space}} is compatible with the comparison principle for the continuity equation with viscosity stated in \eqref{comparison}, as functions in $H^l_0(\mathcal{O}_{\alpha})$ {{for $l > 5/2$}} have divergences that are in $L^{\infty}(\mathcal{O}_{\alpha})$ by standard Sobolev embedding in three spatial dimensions. 

\subsection{The operator splitting scheme}\label{splittingscheme}

Upon defining the problem on the extended domain $\mathcal{O}_{\alpha}$, we will then use a splitting scheme to construct approximate solutions defined on a fixed time interval $[0, T]$, where $T$ will be a fixed, but arbitrary final time. We will use an operator splitting scheme that divides the entire time interval $[0, T]$ for a parameter $N$ into $N$ subintervals $[t_{j}, t_{j + 1}]$ of length $\Delta t$. For each $j =  0, 1, 2, ..., N-1$, we will run decoupled structure and fluid subproblems to update all of these approximate quantities on $[t_{j}, t_{j + 1}]$.

We will keep track of four approximate solution quantities defined continuously in time on $[0, T]$: the fluid density $\rho_{N}$, the fluid velocity $\bu_{N}$, the structure displacement $\eta_{N}$, and the structure velocity $v_{N}$. In addition, we will have a ``stopped" process $\eta^{*}_{N}$, which is the structure displacement stopped at the first time of leaving desired bounds on the displacement of the structure. 

\medskip

\noindent \textbf{The structure subproblem.} 
We update the structure displacement by solving the following weak formulation for $\eta_{N} \in L^{2}(\Omega; W^{1, \infty}(t_{j}, t_{j + 1}; X_n^{st})))$ for the Galerkin space $X_n^{st}$ defined in \eqref{Xks}:
\begin{equation}\label{structuresubproblem}
\begin{split}
    &\int_{\Gamma} \partial_{t} \eta_{N}((j+1)\Delta t) \psi=\int_{\Gamma} \partial_{t} \eta_{N}(j\Delta t) \psi- \int_{j\Delta t}^{(j+1)\Delta t} \int_{\Gamma} \nabla \eta_{N} \cdot \nabla \psi - \int_{j\Delta t}^{(j+1)\Delta t}\int_{\Gamma} \Delta \eta_{N} \Delta \psi\\
   &- \int_{j\Delta t}^{(j+1)\Delta t} \int_{\Gamma}\nabla \partial_t\eta_N\cdot \nabla \psi -\frac1\delta\int_{j\Delta t}^{(j+1)\Delta t} \int_{T^\delta_{\mathcal{T}_{\Delta t}\eta^*_N}}(\partial_t\eta_N \bd{e}_z-{ \mathcal{T}}_{\Delta t}\bu_N)\cdot\psi \bd{e}_z\\
&+ \int_{j\Delta t}^{(j+1)\Delta t}\int_{\Gamma}G_n(\eta_{N}, \partial_{t}\eta_{N}) \psi dW^2(t) ,
\end{split}
\end{equation}
holds $\bP$-almost surely for every spatial test function $\psi \in  X_n^{st}$. { We let
$\bu_N(t) = { P^f_n}\bu_0$, $\eta_N(t) = P_n^{st}\eta_0$ and $\partial_t\eta_N(t) = P_n^{st}v_0$ for $t \leq 0.$ }
The noise coefficient is 
\begin{equation}\label{Gndef}
    G_n:=P^{st}_{n}G,
\end{equation}
and the time shifts are defined by $\mathcal{T}_{\Delta t} f = f\left(t - {\Delta t}\right)$. {{We remark that this definition \eqref{Gndef} of the projected structure noise operator makes sense, because the structure velocity and displacement will both be in $L^{\infty}(t_{j}, t_{j + 1}; L^{2}(\Gamma))$, and hence $g_{k} \in L^{\infty}(t_j, t_{j+1}; L^{2}(\Gamma))$ by the assumption \eqref{gassumption} on the noise. Thus, it makes sense to apply the projection operator $P^{st}_{n}: L^{2}(\Gamma) \to X^{st}_{n}$ to each $g_{k}(\eta_N, \partial_t \eta_N)$.}} The \textit{penalty term} that decouples the structure equations from the momentum equation
is defined on the following \textbf{(exterior) tubular neighborhood}: for any $\delta>0$,
$T^\delta_{\eta}$ is the tubular neighborhood 
\begin{align}\label{tube}
    T^\delta_{\eta}:=\{(x,y,z) \in   \sO_\alpha\setminus\sO_\eta: 0< (z-1-\eta)<\delta^{(\frac12-\frac1\beta)}\},
\end{align}
 where artificial pressure parameter $\beta>\max\{4,\gamma\}$ also appears in the next subproblem \eqref{fluidsubproblem}.
Finally, the so-called artificial structure displacement is the following stopped process:
\begin{equation}\label{etastar}
\eta^*_N(t) = \eta_{N}(\tau^\eta_N \wedge t),
\end{equation}
where for a fixed $s\in(3/2,2)$, the stopping time $\tau^\eta_N$ is defined as follows:
\begin{align*}
\tau^\eta_N &:=T\wedge \inf\left\{t> 0:\inf_{\Gamma}(1+{\eta_N}(t,\cdot))\le \alpha \text{ or } \|{\eta}_N(t,\cdot)\|_{H^s(\Gamma)} \ge \frac1{\alpha}\right\}.
\end{align*}
\begin{figure}
    \centering
    \includegraphics[scale=0.4]{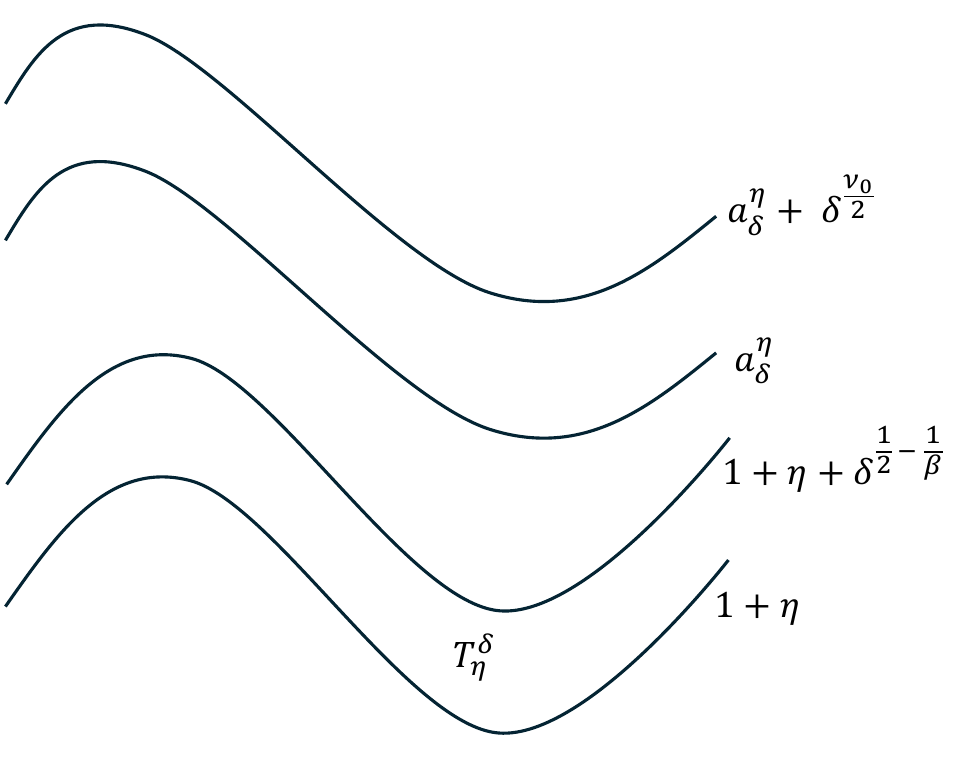}
    \caption{Graphs of functions bounding any structure displacement}
    \label{layers}
\end{figure}
The reason behind using this stopped processes, which will stop the process $\eta_{N}$ at the first time at which the approximate solution leaves the desired bounds, indicated by the parameter $\alpha > 0$, is because we want to avoid self-collision and want to be able to define the fluid subproblem by extending it to a fixed bounded domain. 
Because $\eta_N \in C(0, T; H^{s}(\Gamma))\subset  C(0, T; C(\Gamma))$ for any $s \in (3/2, 2)$, this is indeed a stopping time. 

Then on $[t_{j}, t_{j + 1}]$ we set 
$$v_{N}(t) = \partial_{t}\eta_{N}.$$ 



We use the stopped structure displacement $\eta^*_N$ because this allows us to define the splitting scheme (in particular for the fluid subproblem) on the entire time interval $[0, T]$, as we can use $\eta^*_N$ to define a moving fluid domain that does not exhibit domain degeneracies on the whole time interval $[0, T]$. However, we emphasize the important point that in order to obtain uniform estimates on $[0, T]$ for the structure elastodynamics, we \textit{do not} stop the structure subproblem at the stopping time $\tau^\eta_N$ and instead continue to update and evolve the structure displacement all the way until the final time $T$. 

\medskip

\noindent \textbf{The fluid subproblem.} For the fluid subproblem on the interval $[t_{j}, t_{j+1}]$, we will solve for the fluid density and velocity
\begin{equation*}
\rho_{N} \in L^{2}(\Omega; C(t_{j}, t_{j+1}; { C^{2+\nu}(\mathcal{O}_{\alpha})})), \qquad
\bu_{N} \in L^{2}(\Omega; C(t_{j}, t_{j+1};  X_n^f)),
\end{equation*}
so that the weak formulation of the continuity equation holds for all smooth test functions $\varphi \in C_{c}^{\infty}(\overline{\mathcal{O}}_{\alpha})$:
\begin{align*}
\int_{\mathcal{O}_{\alpha}} \rho_{N}(t) \varphi &= \int_{\mathcal{O}_{\alpha}} \rho_{N}(t_{j}) \varphi
+ \int_{t_{j}}^{t} \int_{\mathcal{O}_{\alpha}} \rho_{N} \bu_{N} \cdot \nabla \varphi - \varepsilon \int_{t_{j}}^{t} \int_{\mathcal{O}_{\alpha}} \nabla \rho_{N} \cdot \nabla \varphi,
\end{align*}
for all $t \in [t_{j}, t_{j + 1}]$ which is the weak formulation for the continuity equation with artificial viscosity, $\varepsilon>0$, posed on the maximal fixed domain $\mathcal{O}_{\alpha}$ with Neumann boundary conditions:
\begin{equation}\label{reg_conteqn}
 \rho_{t} + \text{div}(\rho \bu) = \ep \Delta \rho \ \ \text{ on } \mathcal{O}_{\alpha}, \quad \nabla \rho \cdot \bd{n} = 0 \ \ \text{ on  } \partial \mathcal{O}_{\alpha}.
 \end{equation}
For fixed a fixed artificial pressure parameter $\beta>\max\{4,\gamma\}, \delta>0,\ep>0$, time step $\Delta t = T/N$ and Galerkin parameter $n$, we solve the following weak formulation of the momentum equation holds $\bP$-almost surely for all deterministic and purely spatial test functions $\bq \in  X^f_n$:
\begin{equation}
\begin{split}\label{fluidsubproblem} 
&\int_{\mathcal{O}_{\alpha}} \rho_{N} \bu_{N}((j+1)\Delta t) \cdot \bd{q} = \int_{\mathcal{O}_{\alpha}} \rho_{N} \bu_{N}(j\Delta t) \cdot \bd{q} + \int_{j\Delta t}^{(j+1)\Delta t} \int_{\mathcal{O}_{\alpha}} (\rho_{N} \bu_{N} \otimes \bu_{N}) : \nabla \bd{q} \\
&+ \int_{j\Delta t}^{(j+1)\Delta t}\int_{\mathcal{O}_{\alpha}} \left(a\rho_{N}^{\gamma} + \delta \rho_{N}^{\beta}\right) (\nabla \cdot \bd{q}) - \int_{j\Delta t}^{(j+1)\Delta t} \int_{\mathcal{O}_{\alpha}} \mu_\delta^{\eta^*_{N}}\nabla \bu_{N} : \nabla \bd{q}\\
&- \int_{j\Delta t}^{(j+1)\Delta t}\int_{\mathcal{O}_{\alpha}}  \lambda_\delta^{\eta^{*}_{N}} \text{div}(\bu_{N}) \text{div}(\bd{q})+\ep\int_{j\Delta t}^{(j+1)\Delta t}\int_{\sO_\alpha}\ \rho_N \bu_N \cdot \Delta\bq \\
&- \frac1{\delta} \int_{j\Delta t}^{(j+1)\Delta t}\int_{T^\delta_{\eta^*_N}}(\bu_N-v_N\bd{e}_z) \cdot \bq 
+  \int_{j\Delta t}^{(j+1)\Delta t}\int_{\mathcal{O}_{\alpha}} \mathbbm{1}_{\sO_{\eta_N^*}}\bd{F}_{n}(\rho_{N}, \rho_{N} \bu_{N}) \cdot \bd{q} \,dW^1,
\end{split}
\end{equation}
where {we recall the notation $T^{\delta}_{\eta^{*}_{N}}$ from \eqref{tube} for the tubular neighborhood of width $\delta^{\left(\frac{1}{2} - \frac{1}{\beta}\right)}$ around the moving boundary determined by $\eta^{*}_{N}$, and where} $\bd{F}_n$ defined below in \eqref{Fndef}, appropriately approximates the operator $\bd{F}$ using the Galerkin projection operator. Moreover, recalling definition \eqref{chi}, for some $\nu_0>0$ we let the extended viscosity coefficients 
\begin{equation}\label{viscosityextension}
\mu^{\eta}_{\delta} := \chi^{\eta}_{\delta^{\nu_0}} \mu, \quad \lambda^{\eta}_{\delta} := \chi^{\eta}_{\delta^{\nu_0}} \lambda.
\end{equation}
We will choose $\nu_0 \sim \left(\frac{1}{2} - \frac{1}{\beta}\right)^{2}>0$ which will be justified later (cf. \eqref{nu0} and Proposition \ref{vacuum2}).
To define the noise coefficient, we let,
\begin{equation}\label{multiplier}
(\mathcal{M}[\rho] \bu, \bd{q}) = \int_{\mathcal{O}_{\alpha}} \rho \bu \cdot \bd{q}  \quad \text{ for } \bu,\bd{q} \in X^f_n,\quad { \rho \in L^1(\sO_\alpha)}.
\end{equation}
Note that by identifying $(X_n^f)^*$ with $X^f_{n}$, we can also view $\mathcal{M}[\rho]$ as a linear operator on $X_{n}^f$. 
Now, for each fixed but arbitrary Galerkin parameter $n$, we define $\bd{F}_{n}(\rho, \rho \bu): \mathcal{U}_{0} \to \bd{L}^{1}(\mathcal{O}_{\alpha})$ to be the ``projected" noise operator, defined via the orthonormal basis elements $\{\bd{e}_{k}\}_{k = 1}^{\infty}$ of $\mathcal{U}_0$, by
\begin{equation}\label{Fndef}
\bd{F}_n(\rho, \bd{q}) \bd{e}_{k} = f_{n, k}(\rho, \bd{q}) := \mathcal{M}^{1/2}[\rho] \left(P_{n}^{f}\left(\frac{f_{k}(\rho, \bd{q})}{\rho^{1/2}}\right)\right),
\end{equation}
We note that by the assumption \eqref{fassumption} on the noise coefficients $f_{k}(\rho, \bd{q})$, we have that if $\rho, \rho |\bu|^{2} \in L^{1}(\mathcal{O}_{\alpha})$, as is expected from the a priori estimates, then $\displaystyle \frac{f_{k}(\rho, \bd{q})}{\rho^{1/2}} \in L^{2}(\mathcal{O}_{\alpha})$ so that the orthogonal projection $P_{n}^{f}$ in the above expression makes sense and therefore, {{$\bd{F}_{n}(\rho, \bd{q})\bd{e}_{k} \in (X_n^f)^*$}}. Here, $\mathcal{M}^{1/2}[\rho]: X^f_{n} \to X^f_{n}$ is defined as the square root of $\mathcal{M}[\rho]: X^f_{n} \to X^f_{n}$, see Section 3 in \cite{BreitHofmanova} for details.

\subsection{Solving the structure subproblem}

Since, the noise coefficient $G$ satisfies the conditions \eqref{gassumption}, for any $j< N$ and any $N\in \mathbb{N}$ the existence of a unique solution $\eta^j_{N} \in L^{2}(\Omega; W^{1, \infty}(t_j,t_{j+1}; X^{st}_{n}))$ adapted to the filtration $\{\sF_t\}_{t\geq 0}$ is given by standard methods such as the Picard iteration and we refer the interested reader to \cite{Walsh} (pg. 323) for details.

\subsection{Solving the fluid subproblem}
 Since $\bd{F}$ satisfies the conditions \eqref{fassumption}, there exists a solution $\bu^j_N\in L^2(\Omega,C(t_j,t_{j+1};X^f_n))$ and $\rho^j_N\in L^2(\Omega;C(t_j,t_{j+1};C^{2+\nu}(\bar\sO_\alpha)))$ for any $\nu>0$, adapted to the filtration $\{\sF_t\}_{t\geq 0}$. This existence result is obtained by applying the results from Section 3 in \cite{BreitHofmanova} by identifying the term $\displaystyle \frac1{\delta} \int_{j\Delta t}^{(j+1)\Delta t}\int_{T^\delta_{\eta^*_N}}v_N\bd{e}_z\cdot\bq$ as an external force.
\section{Passage to the time discretization limit $N \to \infty$}\label{sec:timedis}

For each time discretization parameter $N$, we have an approximate solution $(\rho_{N}, \bu_{N}, v_{N}, \eta_{N})$ defined on the initially given probability space $(\Omega, \mathcal{F}, \mathbb{P})$ with Wiener processes $\{W^1\}_{t \ge 0}$ and $\{W^2\}_{t \ge 0}$ with respect to the filtration $\{\mathcal{F}_{t}\}_{t \ge 0}$. Our goal is to pass to the limit as $N \to \infty$, where we omit the explicit dependence of these random approximate solutions on the remaining parameters $k$, $\ep$, and $\delta$, which we will take to be fixed but arbitrary in the limit passage as $N \to \infty$. We have the following semidiscrete formulation for the approximate solutions $(\rho_{N}, \bu_{N}, v_{N}, \eta_{N})$ defined on $[0, T]$, where we emphasize that we are keeping the Galerkin parameter $n$ constant.

\begin{itemize}
\item \textbf{Continuity equation.} For the (approximate) initial data $\rho_{0, \delta,\ep} \in C^{2 }(\overline{\mathcal{O}_{\alpha}})$ we have that $\rho_N \in C(0, T; C^{2+\nu}(\overline{\mathcal{O}_{\alpha}}))$, for any $\nu>0$, and $\bu_{N} \in C(0, T; X_{n}^{f})$ $\mathbb{P}$-almost surely, and they satisfy the following weak formulation $\mathbb{P}$-almost surely for all $\varphi \in C_{c}^{\infty}(\overline{\mathcal{O}}_{\alpha})$ and for all $t \in [0, T]$:
\begin{equation}\label{continuityN}
\int_{\mathcal{O}_{\alpha}} \rho_{N}(t) \varphi = \int_{\mathcal{O}_{\alpha}} \rho_{0, \delta, \ep} \varphi + \int_{0}^{t} \int_{\mathcal{O}_{\alpha}} \rho_{N} \bu_{N} \cdot \nabla \varphi - \epsilon \int_{0}^{t} \int_{\mathcal{O}_{\alpha}}  \nabla \rho_{N} \cdot \nabla \varphi,
\end{equation}
where $\rho_{N}$ satisfies the Neumann boundary condition at the top and the bottom boundaries of the maximal domain $\sO_\alpha$: $\nabla \rho_{N} \cdot \bd{n}|_{\partial\sO_\alpha} = 0$ for all $t \in [0, T]$.\\
\item \textbf{Fluid and structure momentum equations.} For all deterministic test functions $(\bd{q}, \psi) \in X_n$ defined by \eqref{galerkinXk}, the following weak formulation holds almost surely and for every $t\in [0,T]$:
\begin{multline}\label{Nlimitweak}
\int_{\mathcal{O}_{\alpha}} \rho_{N}(t) \bu_{N}(t) \cdot \bd{q} + \int_{\Gamma} v_{N}(t) \psi = \int_{\mathcal{O}_{\alpha}} \bp_{0,\delta,\ep} \cdot \bd{q} + \int_{\Gamma} v_{0}\psi -  \int_{0}^{t} \int_{\Gamma}  \nabla \eta_{N} \cdot \nabla \psi -  \int_{0}^{t} \int_{\Gamma}  \Delta \eta_{N} \Delta \psi \\
+  \int_{0}^{t} \int_{\mathcal{O}_{\alpha}}  (\rho_{N} \bu_{N} \otimes \bu_{N}) : \nabla \bd{q} +  \int_{0}^{t} \int_{\mathcal{O}_{\alpha}} \Big(a \rho_{N}^{\gamma} + \delta \rho_{N}^{\beta}\Big) (\nabla \cdot \bd{q}) \\
-  \int_{0}^{t} \int_{\mathcal{O}_{\alpha}} \mu_\delta^{\eta^*_N}  \nabla \bu_{N} : \nabla \bd{q} -  \int_{0}^{t} \int_{\mathcal{O}_{\alpha}}\lambda_\delta^{\eta^*_{ N}}  \text{div}(\bu_{N}) \text{div}(\bd{q}) +\ep\int_0^t\int_{\sO_\alpha}{{\rho_N\bu_N}} \cdot \Delta\bq\\
- \frac{1}{\delta} \int_{0}^{t} \int_{T_{\mathcal{T}_{\Delta t}\eta^{*}_{N}}^{\delta}}  \left( v_{N} \bd{e}_{z}-\mathcal{T}_{\Delta t}\bu_{N} \right) \cdot \psi \bd{e}_{z} - \frac{1}{\delta} \int_{0}^{t} \int_{T_{\eta^{*}_{N}}^{\delta}}  \left(\bu_{N} - v_{N} \bd{e}_{z}\right) \cdot \bq\\
-  \int_{0}^{t} \int_{\Gamma}  \nabla v_N \cdot \nabla \psi +  \int_{0}^{t} \int_{\mathcal{O}_{\alpha}} \mathbbm{1}_{\sO_{\eta_N^*}}\bd{F}_{n}(\rho_{N}, \rho_{N}\bu_{N}) \cdot \bd{q} dW^1 +  \int_{0}^{t} \int_{\Gamma} G_{n}(\eta_{N}, v_{N}) \psi dW^2,
\end{multline}
where $v_N=\partial_t\eta_N$.
\end{itemize}

To pass to the limit in the approximate solutions as $N \to \infty$, we obtain uniform energy estimates for the solutions that are independent of the parameter $N$.

\medskip

\noindent \textbf{Energy estimate.} We apply the It\^o formula to the functional $\Psi_{st}(v)=\|v\|^2_{L^2(\Gamma)}$ in \eqref{structuresubproblem} and to $\Psi_f(\rho,\bu)=\frac12\langle\mathcal{M}^{-1}[\rho]\bu,\bu\rangle$ in \eqref{fluidsubproblem}. {{Since this computation is standard, we will only describe how the penalty terms are treated and refer the reader to \cite{BFH18, BreitHofmanova} for the rest of the details.}} We will describe how the penalty terms in \eqref{structuresubproblem} and \eqref{fluidsubproblem} are treated. Observe that, since $\partial_t\eta_N = v_N$, we can write 
\begin{align*}
   & \frac1\delta\int_{j\Delta t}^{(j+1)\Delta t} \int_{T^\delta_{\mathcal{T}_{\Delta t}\eta^*_N}}(\partial_t\eta_N\bd{e}_z-\mathcal{T}_{\Delta t}\bu_N)\cdot\partial_t\eta_N \bd{e}_z \\
    & = \frac1{2\delta}\int_{j\Delta t}^{(j+1)\Delta t} 
   \int_{T^\delta_{\mathcal{T}_{\Delta t}\eta^*_N}}\left( |\partial_t\eta_N-\mathcal{T}_{\Delta t}\bu_N|^2+|\partial_t\eta_N|^2-|\mathcal{T}_{\Delta t}\bu_N|^2\right)\\
   &= \frac1{2\delta}\int_{j\Delta t}^{(j+1)\Delta t} 
   \int_{T^\delta_{\mathcal{T}_{\Delta t}\eta^*_N}}\left( |\partial_t\eta_N-\mathcal{T}_{\Delta t}\bu_N|^2-|\mathcal{T}_{\Delta t}\bu_N|^2\right) +\frac1{2\delta^{\frac12+\frac1\beta}}\int_{j\Delta t}^{(j+1)\Delta t}\int_\Gamma |\partial_t\eta_N|^2\\
     &= \frac1{2\delta}\int_{j\Delta t}^{(j+1)\Delta t} 
   \int_{T^\delta_{\mathcal{T}_{\Delta t}\eta^*_N}}|v_N-\mathcal{T}_{\Delta t}\bu_N|^2-\frac1{2\delta}\int_{(j-1)\Delta t}^{j\Delta t} 
   \int_{T^\delta_{\eta^*_N}}|\bu_N|^2+\frac1{2\delta^{\frac12+\frac1\beta}}\int_{j\Delta t}^{(j+1)\Delta t}\int_\Gamma |v_N|^2,
\end{align*}
{{where we used the definition of the tubular neighborhood \eqref{tube} and the fact that $v_N$ is constant along the $z$ coordinate since it is defined only on $\Gamma$}}. Similarly, for the penalty term appearing in \eqref{fluidsubproblem} we obtain,
\begin{align*}
    &\frac1{\delta} \int_{j\Delta t}^{(j+1)\Delta t}\int_{T^\delta_{\eta^*_N}}(\bu_N-v_N\bd{e}_z) \cdot \bu_N = \frac1{2\delta}  \int_{j\Delta t}^{(j+1)\Delta t}\int_{T^\delta_{\eta^*_N}} |\bu_N-v_N\bd{e}_z|^2+|\bu_N|^2-|v_N\bd{e_z}|^2\\
    &= \frac1{2\delta}  \int_{j\Delta t}^{(j+1)\Delta t}\int_{T^\delta_{\eta^*_N}} |\bu_N-v_N\bd{e}_z|^2+ \frac1{2\delta}  \int_{j\Delta t}^{(j+1)\Delta t}\int_{T^\delta_{\eta^*_N}} |\bu_N|^2-\frac1{2\delta^{\frac12+\frac1\beta}} \int_{j\Delta t}^{(j+1)\Delta t}\int_\Gamma |v_N|^2.
\end{align*}
This gives us the following energy estimate for any $t\in [j\Delta t,(j+1)\Delta t]:$
\begin{multline*}
\frac{1}{2} \int_{\mathcal{O}_{\alpha}} \rho_{N}(t) |\bu_{N}(t)|^{2} + \frac{1}{2} \int_{\Gamma} |v_{N}(t)|^{2} + \int_{\mathcal{O}_{\alpha}} \frac{a}{\gamma-1}  \Big(\rho_{N}(t)\Big)^{\gamma} + \int_{\mathcal{O}_{\alpha}} \frac{\delta}{\beta-1} \Big(\rho_{N}(t)\Big)^{\beta} +\int_\Gamma (|\nabla \eta_N|^2 + |\Delta \eta_N|^2)\\ 
+ \int_{0}^{t} \int_{\mathcal{O}_{\alpha}}\mu_\delta^{\eta^*_N}  |\nabla \bu_{N}|^{2}
 + \int_{0}^{t} \int_{\mathcal{O}_{\alpha}}  \lambda_\delta^{\eta^*_N} |\text{div}(\bu_{N})|^{2} +\ep\int_0^t\int_{\sO_\alpha} \rho_N|\nabla\bu_N|^2 +
\int_{0}^{t} \int_{\Gamma}|\nabla v_N|^{2}\\
+ \varepsilon \int_{0}^{t} \int_{\mathcal{O}_{\alpha}}  (a\gamma \rho_{N}^{\gamma - 2} + \delta \beta \rho_{N}^{\beta - 2}) |\nabla \rho_{N}|^{2} + \frac{1}{2\delta} \int_{0}^{t} \int_{T^{\delta}_{\mathcal{T}_{\Delta t}\eta^{*}_{N}}}  |\mathcal{T}_{\Delta t}\bu_{N} - v_{N} \bd{e}_{z}|^{2}+\frac{1}{2\delta} \int_{0}^{t} \int_{T^{\delta}_{\eta^{*}_{N}}}  |\bu_{N} - v_{N} \bd{e}_{z}|^{2} \\
+\frac1{2\delta} \int_{j\Delta t}^t\int_{T^\delta_{\eta^*_N}}|\bu_N|^2+ \int_{0}^{t} \int_{\mathcal{O}_{\alpha}} \mathbbm{1}_{\sO_{\eta_N^*}}\bd{F}_{n}(\rho_{N}, \rho_{N} \bu_{N}  )\cdot \bu_{N} dW^1 +  \int_{0}^{t} \int_{\Gamma}  G_n(\eta_{N}, v_{N}) v_{N} dW^2 \\
+ \sum_{k = 1}^{\infty} \int_{0}^{t} \int_{\mathcal{O}_{\alpha}} \rho_{N}^{-1} \mathbbm{1}_{\sO_{\eta^*_N}}|f_{n, k}( \rho_{N},  \rho_{N} \bu_{N})|^{2} + \sum_{k = 1}^{\infty} \int_{0}^{t} \int_{\Gamma}  |g_{n, k}(\eta_{N}, v_{N})|^{2} 
\\ \leq \frac{1}{2} \int_{\mathcal{O}_{\alpha}} \frac{|\bp_{0,\delta,\ep}|^{2}}{\rho_{0,\delta,\ep} } + \frac{1}{2} \int_{\Gamma} |v_{0}|^{2} + \int_{\mathcal{O}_{\alpha}} a \Big(\rho_{0,\delta,\ep}\Big)^{\gamma} + \int_{\mathcal{O}_{\alpha}} \delta \Big(\rho_{0,\delta,\ep}\Big)^{\beta} {{ + \frac{\Delta t}{2\delta^{\left(\frac{1}{2} + \frac{1}{\beta}\right)}}\int_{T^{\delta}_{\eta^{*}_{N}}} |P_n^f\bd{u}_{0}|^{2}}} , 
\end{multline*}
 where we recall the definition of $f_{n, k}$ from \eqref{Fndef} and where $g_{n, k}(\rho, \bd{q}) := G_{n}(\rho, \bd{q}) \bd{e}_{k}$ for $G_{n}$ as defined in \eqref{Gndef}. 
 Here we have additionally used the facts that, due to the no-slip boundary conditions for $\bu_N$ and the Neumann boundary conditions for $\rho_N$ on $\partial\sO_\alpha$, we have,
\begin{align*}
&	\int_{\mathcal{O}_{\alpha}} (\rho_{N} \bu_{N} \otimes \bu_{N}) : \nabla {\bu_N} = -\int_{\mathcal{O}_{\alpha}} \text{div}((\rho_{N} \bu_{N} \otimes \bu_{N}))  {\bu_N} =-\frac12 \int_{\mathcal{O}_{\alpha}} \text{div}(\rho_N\bu_N)|\bu_N|^2,\\
&	\int_{\sO_{\alpha}}\bu_N\rho_N {\cdot \Delta\bu_N} =-\frac12\int_{\sO_\alpha}\nabla\rho_N\cdot\nabla|\bu_N|^2-\int_{\sO_{\alpha}}\rho_N|\nabla\bu_N|^2.
\end{align*}
Now we raise this equation to a power of $p\geq 1$, then take $\sup_{0\leq t\leq T}$ and expectation, on both sides of this equation. We deal with the stochastic term by applying the Burkholder-Davis-Gundy (BDG) inequality and by using the growth assumptions \eqref{fassumption} on the noise coefficients as follows:
\begin{align*}
	\bE\Bigg(\sup_{0\leq t\leq T} \Bigg|\int_0^t\sum_{k=1}^\infty \int_{\sO_{{\alpha}}}& \mathbbm{1}_{\sO_{\eta^*_N}} f_{n,k}( \rho_{N},  \rho_{N} \bu_{N})\cdot\bu_N dW^1\Bigg|\Bigg)^p \\
& \leq \bE\left(\int_0^T  \sum_{k=1}^\infty\left( \int_{\sO_{{\alpha}}} \mathbbm{1}_{\sO_{\eta^*_N}} f_{n,k}( \rho_{N},  \rho_{N} \bu_{N})\cdot\bu_N\right) ^2 dt \right)^{\frac{p}2} \\
&\leq \bE\left(\int_0^T \sum_{k=1}^\infty\left( \|P^f_n(\rho_N^{-\frac12}f_k( \rho_{N},  \rho_{N} \bu_{N}))\|_{L^2(\sO_\alpha)}\|\rho_N^{\frac12}\bu_n\|_{L^2(\sO_\alpha)}\right) ^2 dt \right)^{\frac{p}2}\\
	&\leq \frac12\bE\left(\sup_{0\leq t\leq T}\int_{\sO_{{\alpha}}}\rho_N|\bu_N|^2\right)^p+ {{C \cdot}} \bE\int_0^T\left(\int_{\sO_\alpha}(\rho_N + \rho_N|\bu_N|^2)dx\right)^pdt,
	\end{align*}
where we used the definition of the operator $\mathcal{M}[\rho]$ in \eqref{multiplier} and the definition of $\displaystyle f_{n, k}(\rho, \bd{q}) := \mathcal{M}^{1/2}[\rho] \left(P^{f}_{n}\left(\frac{f_{k}(\rho, \bd{q})}{\rho^{1/2}}\right)\right)$ in \eqref{Fndef}. We can also estimate the quadratic variation term using the growth condition \eqref{fassumption} and the definition \eqref{Fndef} of $f_{n, k}$:
\begin{align*}
    \bE\sup_{0\leq t\leq T} \left(\sum_{k = 1}^{\infty} \int_{0}^{t} \int_{\mathcal{O}_{{\alpha}}}  \rho_{N}^{-1} \mathbbm{1}_{\mathcal{O}_{\eta^{*}_{N}}} |f_{n, k}(\rho_{N}, \rho_{N}\bu_{N})|^{2}\right)^p & \leq   \sum_{k = 1}^{\infty} \bE\left(\int_{0}^{T} \int_{\mathcal{O}_{{\alpha}}}  \left|\left( \rho_{N}^{-\frac12} f_{k}( \rho_{N},  \rho_{N} \bu_{N})\right) \right|^{2}\right)^p\\
    &\leq \bE \int_0^T\left(\int_{\sO_\alpha}(\rho_N  + \rho_N|\bu_N|^{2})\right)^p dt.
\end{align*}
The other stochastic integral for the structure dynamics is treated identically.

An application of the Gronwall inequality then implies for any $1 \le p < \infty$ that
\begin{multline}\label{moment_timedis}
	\bE\Bigg[\sup_{0\leq t\leq T}\Big( 	\int_{\mathcal{O}_{\alpha}} \rho_{N}(t) |\bu_{N}(t)|^{2} + \int_{\Gamma} |v_{N}(t)|^{2} + \int_{\mathcal{O}_{\alpha}} \frac{a}{\gamma - 1} \Big(\rho_{N}(t)\Big)^{\gamma} + \frac{\delta}{\beta - 1} \Big(\rho_{N}(t)\Big)^{\beta} \\
 +\int_\Gamma (|\nabla \eta_N (t)|^2 + |\Delta \eta_N(t)|^2) +\int_0^t\int_\Gamma |\nabla v_N|^2 
	 +  \int_{0}^{t} \int_{\mathcal{O}_{\alpha}} \mu^{\eta^*_N}_\delta |\nabla \bu_{N}|^{2} + \int_{0}^{t}\int_{\mathcal{O}_{\alpha}} \lambda^{\eta^*_N}_\delta | \text{div}(\bu_{N})|^{2}\\ +\ep\int_0^t\int_{\sO_\alpha}\rho_N|\nabla\bu_N|^2
		+ \varepsilon \int_{0}^{t} \int_{\mathcal{O}_{\alpha}} \left(\frac{4a}{\gamma} |\nabla(\rho^{\gamma/2}_{N})|^{2} + \frac{4\delta}{\beta} |\nabla (\rho^{\beta/2}_{N})|^{2}\right) \\
  + \frac{1}{2\delta} \int_{0}^{t} \int_{T^{\delta}_{\mathcal{T}_{\Delta t}\eta^{*}_{N}}}  |\mathcal{T}_{\Delta t}\bu_{N} - v_{N} \bd{e}_{z}|^{2}+\frac{1}{2\delta} \int_{0}^{t} \int_{T^{\delta}_{\eta^{*}_{N}}}  |\bu_{N} - v_{N} \bd{e}_{z}|^{2} \Big)\Bigg]^p\\
		\le C_p \Bigg( \frac{1}{2} \int_{\mathcal{O}_{\alpha}}  \frac{|\bp_{0,\delta,\ep}|^{2}}{\rho_{0,\delta,\ep}} + \frac{1}{2} \int_{\Gamma} |v_{0}|^{2} + \int_{\mathcal{O}_{\alpha}} a\Big(\rho_{0,\delta,\ep}\Big)^{\gamma} + \delta \Big(\rho_{0,\delta,\ep}\Big)^{\beta}\Bigg)^p,
\end{multline}
for a constant $C_p$ depending on $p$. 
Our next goal is to upgrade the weak convergences that are implied by the uniform in $N$ bounds \eqref{moment_timedis} to almost sure convergence results in appropriate topologies. This will be done by proving that the laws of the approximate solutions are tight in their respective phase spaces.
{ \begin{remark}
    Due to the weak lower semicontinuity of norm, the inequality \eqref{moment_timedis} will hold as we pass $N\to \infty$ in this Section and then as $\ep\to0, \delta\to 0$ in the subsequent sections.
\end{remark}}
\subsection{Tightness result for the limit passage $N \to \infty$}

To pass to the limit, we will use the following path space, which will be the path space for the approximate solutions $(\rho_{N}, \bu_{N}, \eta_{N}, \eta_{N}^{*}, v_{N}, W^1, W^2)$:
\begin{equation}\label{phaseN}
\mathcal{X} = \mathcal{X}_{\rho} \times \mathcal{X}_{\bu} \times \mathcal{X}_{\eta} \times \mathcal{X}_{\eta} \times \mathcal{X}_{v} \times \mathcal{X}_{W},
\end{equation}
where 
\begin{align*}
&\mathcal{X}_{\rho} = C(0, T; L^{\beta}(\sO_\alpha)) \cap L^{\beta}(0, T; W^{1, \beta}(\sO_\alpha)), \qquad \mathcal{X}_{\bu} = C(0, T; L^{2}(\mathcal{O}_{\alpha})),\\
 &\mathcal{X}_{\eta} =  C(0, T; H^s(\Gamma)) \cap (L^{\infty}(0, T; H^2(\Gamma)),{w^*}), \qquad s\in\left(\frac32,2\right),\\
&\mathcal{X}_{v} = C(0, T; L^{2}(\Gamma)), \qquad 
 \mathcal{X}_{W} = C(0, T; \mathcal{U}_0)^2,
\end{align*}
We will denote the law of the approximate solution $(\rho_{N}, \bu_{N}, \eta_{N}, \eta_{N}^{*}, v_{N}, W^1, W^2)$ in the path space $\mathcal{X}$ by $\mu_{N}$, and we will prove in this subsection the following tightness result.

\begin{proposition}
The collection of laws $\{\mu_{N}\}_{N = 1}^{\infty}$ on the path space $\mathcal{X}$ is tight.
\end{proposition}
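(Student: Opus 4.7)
My plan is to reduce the tightness of the family of laws $\{\mu_N\}$ on the product space $\mathcal{X}$ to tightness of each marginal on its respective factor. The Wiener-process marginal on $\mathcal{X}_W$ is trivially tight since $W^1,W^2$ do not depend on $N$. For the remaining marginals the key input is the uniform-in-$N$ moment bound \eqref{moment_timedis} at fixed Galerkin parameter $n$ and fixed $\varepsilon,\delta>0$, combined with time regularity extracted from the semi-discrete equations \eqref{continuityN} and \eqref{Nlimitweak}. For the density marginal on $\mathcal{X}_\rho$, the dissipative contribution $\varepsilon\int_0^T\!\int\delta\beta\rho_N^{\beta-2}|\nabla\rho_N|^2$ in \eqref{moment_timedis} bounds $\rho_N^{\beta/2}$ in $L^2(0,T;H^1(\mathcal{O}_\alpha))$ and hence $\rho_N$ in $L^\beta(0,T;W^{1,\beta}(\mathcal{O}_\alpha))$. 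The continuity equation \eqref{continuityN} then controls $\partial_t\rho_N$ in a negative Sobolev space, using that $\bu_N\in X_n^f$ is uniformly bounded in $L^\infty_{t,x}$ thanks to the equivalence of all norms on the finite-dimensional Galerkin space $X_n^f$. Aubin-Lions-Simon, interpolated with the uniform $L^\infty(0,T;L^\beta)$ bound, yields relative compactness in the topology of $\mathcal{X}_\rho$, and tightness follows by Chebychev.

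For the velocity marginal on $\mathcal{X}_{\bu}$, the pointwise lower bound $\mu_\delta^{\eta^*_N}=\chi^{\eta^*_N}_{\delta^{\nu_0}}\mu\ge\delta^{\nu_0}\mu$ built into \eqref{viscosityextension} converts the weighted dissipation in \eqref{moment_timedis} into a uniform $L^2(0,T;H^1(\mathcal{O}_\alpha))$ bound for $\bu_N$, which on the finite-dimensional $X_n^f$ is equivalent to any stronger norm. Rewriting the Galerkin projection of \eqref{Nlimitweak} as an SDE in $X_n^f$ by means of the invertible operator $\mathcal{M}[\rho_N]$ (invertible because the comparison principle for \eqref{reg_conteqn} forces a strict positive lower bound on $\rho_N$ at this level), and applying the Burkholder-Davis-Gundy inequality to the stochastic integral under the growth assumption \eqref{fassumption}, one obtains uniform H\"older-in-time bounds of exponent strictly less than $1/2$. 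Arzel\`a-Ascoli then produces tightness in $C(0,T;L^2(\mathcal{O}_\alpha))$. The displacement marginals on $\mathcal{X}_\eta$ are handled as follows: \eqref{moment_timedis} controls $\eta_N$ uniformly in $L^p(\Omega;L^\infty(0,T;H^2(\Gamma)))$ and $\partial_t\eta_N=v_N$ uniformly in $L^p(\Omega;L^2(0,T;H^1(\Gamma)))$, so Aubin-Lions-Simon using $H^2(\Gamma)\hookrightarrow\hookrightarrow H^s(\Gamma)$ for $s\in(3/2,2)$ gives relative compactness in $C(0,T;H^s(\Gamma))$, while Banach-Alaoglu applied to the separable predual $L^1(0,T;H^{-2}(\Gamma))$ supplies weak-$\ast$ sequential compactness in $L^\infty(0,T;H^2(\Gamma))$; tightness in the intersection topology follows through the Jakubowski-Skorokhod framework. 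The stopped process $\eta^*_N=\eta_N(\cdot\wedge\tau^\eta_N)$ inherits tightness from $\eta_N$ by continuity of the stopping map, aided by the built-in deterministic bound $\|\eta^*_N\|_{H^s}\le\alpha^{-1}$ from \eqref{etastar}.

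The main obstacle is the structure-velocity marginal on $\mathcal{X}_v=C(0,T;L^2(\Gamma))$, since $v_N=\partial_t\eta_N$ carries a stochastic-integral contribution from $G_n\,dW^2$ and we need pointwise-in-time $L^2$-continuity rather than merely $L^2_{t,x}$-convergence. The plan is to exploit the parabolic regularization of the viscoelastic term $-\int\nabla\partial_t\eta_N\cdot\nabla\psi$ in \eqref{structuresubproblem}: testing with elements of $X_n^{st}$ and writing $v_N(t)-v_N(s)$ as the sum of deterministic integrals, controlled in the dual of $X_n^{st}$ by $|t-s|$ times uniform energy quantities from \eqref{moment_timedis}, and the stochastic increment $\int_s^t G_n(\eta_N,v_N)\,dW^2$, controlled via Burkholder-Davis-Gundy and the growth assumption \eqref{gassumption} by $C|t-s|^{1/2}$ in $L^p(\Omega;L^2(\Gamma))$. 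Because $v_N$ is valued in the finite-dimensional space $X_n^{st}$, norm equivalence promotes this to uniform H\"older bounds of any exponent $\alpha<1/2$ with values in $L^2(\Gamma)$, and Kolmogorov's continuity criterion combined with the uniform $L^\infty(0,T;L^2(\Gamma))$ bound yields tightness of the laws of $v_N$ in $C(0,T;L^2(\Gamma))$. I expect the present finite-dimensional shortcut to break down in the subsequent $n\to\infty$ passage, where the sharper fractional-in-time regularity result advertised in the introduction (cf.\ Lemma \ref{lem:vtight}) will be required; at the current level, however, these four marginal tightness statements combine directly to give tightness of $\{\mu_N\}$ on $\mathcal{X}$.
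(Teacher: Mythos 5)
Your outline matches the paper's strategy for three of the four nontrivial marginals: for $\eta_N,\eta^*_N$ you use Aubin–Lions with $L^\infty_t H^2$ and $W^{1,\infty}_t L^2$ bounds to get compactness in $C(0,T;H^s(\Gamma))$, for $v_N$ you decompose increments into deterministic terms of order $|t-s|$ and a stochastic increment of order $|t-s|^{1/2}$ via Burkholder--Davis--Gundy, then use Kolmogorov's criterion, finite-dimensionality of $X_n^{st}$, and Arzelà--Ascoli, and for $\bu_N$ you get Hölder-in-time bounds in $X_n^f$ from the semi-discrete momentum equation and the comparison-principle lower bound on $\rho_N$. All of this is essentially what the paper does.

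The gap is in the density marginal. You assert that because $\varepsilon\delta\beta\int\rho_N^{\beta-2}|\nabla\rho_N|^2$ is bounded (equivalently $\rho_N^{\beta/2}\in L^2(0,T;H^1(\mathcal{O}_\alpha))$), ``hence $\rho_N\in L^\beta(0,T;W^{1,\beta}(\mathcal{O}_\alpha))$.'' This implication does not hold: having $\rho^{(\beta-2)/2}\nabla\rho\in L^2_{t,x}$ does not give $\nabla\rho\in L^\beta_{t,x}$, and the pathwise positive lower bound on $\rho_N$ from the comparison principle only recovers $\nabla\rho_N\in L^2_{t,x}$, which is strictly weaker than $L^\beta$ for $\beta>4$. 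Moreover, controlling $\partial_t\rho_N$ ``in a negative Sobolev space'' is too weak to run Aubin--Lions into $L^\beta(0,T;W^{1,\beta})$: with spatial regularity only at the $H^1$ level on $\rho^{\beta/2}$, the compact target would at best be $L^\beta L^\beta$ or a fractional-order space, not $W^{1,\beta}$. What is actually needed, and what the paper invokes, is parabolic maximal regularity for the Neumann problem $\partial_t\rho_N-\varepsilon\Delta\rho_N=-\mathrm{div}(\rho_N\bu_N)$ (Theorem A.2.2 in \cite{BFH18}, Lemma B.7 in \cite{Smith}), yielding bounds on $\|\rho_N\|_{L^\beta(0,T;W^{2,\beta})}$, $\|\partial_t\rho_N\|_{L^\beta_{t,x}}$, and the corresponding $L^r$ estimates for $\nabla\rho_N$ with $r<\beta$, all in probability and uniformly in $N$. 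Aubin--Lions with $W^{2,\beta}$ in space and $\partial_t\rho_N\in L^\beta_{t,x}$ then gives tightness in $L^\beta(0,T;W^{1,\beta})$, while the $L^r(0,T;W^{1,r})$ bounds on $\rho_N$ and $\partial_t\rho_N$ give equicontinuity and hence tightness in $C(0,T;L^\beta)$. Your energy-estimate shortcut skips this and does not deliver the phase space $\mathcal{X}_\rho$ of the proposition.
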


\begin{proof}
We show tightness for each component separately.

\medskip
\noindent \textbullet{} \textbf{Tightness for structure displacements.} 
First, we show tightness of the laws for the structure displacements $\mu_{\eta}$ in $L^{2}(0, T; L^{2}(\Gamma))$. 
Recall that we have,
\begin{equation}\label{etaNH02}
\mathbb{E}\Big(\|\eta_{N}\|_{W^{1, \infty}(0, T; L^{2}(\Gamma))}^{2} + \|\eta_{N}\|_{L^{\infty}(0, T; H^{2}(\Gamma))}^{2}\Big) \le C
\end{equation}
uniformly in $N$. Combined with the compact embedding 
\begin{equation*}
W^{1, \infty}(0, T; L^{2}(\Gamma)) \cap L^{\infty}(0, T; H^2(\Gamma)) \subset \subset  C(0,T;H^s(\Gamma)),\quad \text{ for any }  s < 2 ,
\end{equation*}
provided by the Aubin-Lions compactness theorem, this shows tightness of $\mu_{\eta}$ in $C(0, T; H^s(\Gamma))$ for our fixed choice of $s\in(\frac32,2)$.

\medskip

\noindent \textbullet{} \textbf{Tightness for structure velocities.} To show the tightness of the laws $\mu_{v}$ in $C(0, T; L^{2}(\Gamma))$, we show tightness of the laws in $C(0, T; X_n^{st})$ for the Galerkin space $X_n^{st}$ defined in \eqref{Xks}. We consider the weak formulation \eqref{Nlimitweak} and note that for all $\psi \in X_n^{st}$ and $t_1 < t_2$:
\begin{multline*}
\int_{\Gamma} (v_{N}(t_2) - v_{N}(t_1)) \psi = \frac{1}{\delta} \int_{t_1}^{t_2} \int_{T^{\delta}_{\mathcal{T}_{\Delta t}\eta^{*}_{N}}} (v_{N} \bd{e}_{z} - \mathcal{T}_{\Delta t}\bu_{N}) \cdot \psi \bd{e}_{z} \\
+  \int_{t_1}^{t_2} \int_{\Gamma}  \nabla \eta_{N} \cdot \nabla \psi +  \int_{t_1}^{t_2} \int_{\Gamma}  \Delta \eta_{N} \Delta \psi + \int_{t_{1}}^{t_{2}} \int_{\Gamma} \nabla v_{N} \cdot \nabla \psi -  \int_{t_1}^{t_2}\int_{\Gamma} G(\eta_{N}, v_{N}) \psi dW^2(t). 
\end{multline*}
If $\|\psi\|_{X_n^{st}} \le 1$, we have the following estimates:
\begin{enumerate}
    \item By using the uniform estimates of $v_{N} \in L^{2}(\Omega; L^{\infty}(0, T; L^{2}(\Gamma)))$ and $\bu_N \in L^{2}(\Omega; L^{2}(0, T; H^{1}(\mathcal{O}_{\alpha})))$,
    \begin{equation*}
        \mathbb{E} \left|\int_{t_1}^{t_2} \int_{T^{\delta}_{\mathcal{T}_{\Delta t}\eta^{*}_{N}}} (v_{N}\bd{e}_{z} - \mathcal{T}_{\Delta t} \bu_{N}) \cdot \psi \bd{e}_{z}\right| \le C|t_1 - t_2|^{1/2},
    \end{equation*}
    for a constant $C$ that is independent of $N$.
    \item Since $\eta_{N} \in L^{2}(\Omega; L^{\infty}(0, T; H^2(\Gamma)))$ by \eqref{etaNH02}, for a constant $C$ that is independent of $N$, we estimate $\displaystyle \mathbb{E} \left|\int_{t_1}^{t_2} \int_{\Gamma} \nabla \eta_{N} \cdot \nabla \psi\right|$ and $\displaystyle \mathbb{E} \left|\int_{t_1}^{t_2} \int_{\Gamma}  \Delta \eta_{N} \Delta \psi\right|$ by 
    \begin{equation*}
         \le \mathbb{E} \int_{t_1}^{t_2} \|\eta_{N}\|_{L^{\infty}(0, T; H^2(\Gamma))} \|\psi\|_{H^2(\Gamma)} \le |t_1 - t_2| \cdot \mathbb{E} \|\eta_{N}\|_{L^{\infty}(0, T; H^2(\Gamma))}^{2} \le C |t_1 - t_2|. 
    \end{equation*}
    Similarly, by the viscoelasticity estimate $v_{N} \in L^{2}(\Omega; L^{2}(0, T; H^{1}(\Gamma)))$, we obtain that 
    \begin{equation*}
    \mathbb{E}\left|\int_{t_{1}}^{t_{2}} \int_{\Gamma} \nabla v_{N} \cdot \nabla \psi\right| \le C|t_{1} - t_{2}|^{1/2}.
    \end{equation*}
    \item Finally, we use the BDG inequality and the fact that $\|\psi\|_{C(\Gamma)} \le C\|\psi\|_{H^2(\Gamma)} \le C\|\psi\|_{X^{st}_n} \le C$ for a constant $C$ that depends only on $n$ (and is independent of $N$) to estimate:
    {{\begin{equation*}
    \mathbb{E} \left|\int_{t_1}^{t_2} \int_{\Gamma} G_n(\eta_{N}, v_{N}) \psi dW^2\right|^{p} \le C\mathbb{E} \left|\int_{t_1}^{t_2} \left(\int_{\Gamma} G_n(\eta_{N}, v_{N})\right)^{2}\right|^{p/2} \le C\mathbb{E} \left|\int_{t_1}^{t_2} \int_{\Gamma} \Big(G_n(\eta_N, v_N)\Big)^{2} \right|^{p/2}. 
    \end{equation*}
    We then use \eqref{gassumption}, \eqref{Gndef}, and the fact that $P_{n}^{st}$ is a bounded operator from $L^{2}(\Gamma)$ to itself, to estimate that this is:
    \begin{align*}
       & \le C\mathbb{E} \left(\int_{t_1}^{t_2} \int_{\Gamma} \sum_{k = 1}^{\infty} |g_{k}(\eta_{N}, v_{N})|^{2}\right)^{p/2} \le C\mathbb{E} \left(\int_{t_1}^{t_2}\int_{\Gamma} (1 + |\eta_{N}|^{2} + |v_{N}|^{2})\right)^{p/2} \\
        &\le C|t_1 - t_2|^{p/2} \mathbb{E}\Big(1 + \|\eta_{N}\|_{L^{\infty}(0, T; H^2(\Gamma))}^{2} + \|v_{N}\|^{2}_{L^{\infty}(0, T; L^{2}(\Gamma))}\Big)^{p/2} \le C|t_1 - t_2|^{p/2}.
    \end{align*}}}
\end{enumerate}
\textit{Conclusion of equicontinuity estimate.} Therefore, by applying the Kolmogorov continuity criterion, we obtain the following equicontinuity estimate on the {}{$C^{1/3}$-seminorm of $v_N$:
\begin{equation*}
        \mathbb{E} [v_{N}]_{C^{1/3}(0, T; (X_n^{st})^{*})} \le C, \qquad \text{ independently of $N$}.
\end{equation*}
Furthermore, since $v_{N}$ are uniformly bounded in $L^{2}(\Omega; L^{\infty}(0, T; X_n^{st}))$ where $X_n^{st}$ is the finite dimensional subspace of $L^{2}(\Gamma)$ defined in \eqref{Xks}, we can use the finite-dimensionality of $X_n^{st}$ as follows. We can identify $(X_n^{st})^{*}$ with $X_n^{st}$ and since $X_n^{st}$ is a finite-dimensional subspace of $L^{2}(\Gamma)$, we can conclude tightness of the laws $\mu_{v}$ in $C(0, T; X_n^{st})$ and hence $C(0, T; L^{2}(\Gamma))$, from the equicontinuity estimate using the Arzela-Ascoli compactness criterion.

\medskip

\noindent \textbullet{} \textbf{Tightness for fluid densities/velocities.} The proof of tightness for the components involving the fluid density and fluid velocity closely follow the proofs given in Section 4.3.2 in \cite{BFH18} and Lemma 4.4 in \cite{Smith} on stochastic compressible isentropic Navier-Stokes equations on a fixed domain, so we just outline the main ideas here, without providing explicit details. 

The first difficulty is that we have a bound on the kinetic energy $\displaystyle \sup_{t \in [0, T]} \int_{\sO_\alpha} \rho_{N}(t) |\bu_{N}(t)|^{2}$ but we want a bound on $\displaystyle \sup_{t \in [0, T]} \int_{\sO_\alpha} |\bu_{N}(t)|^{2}$, which requires an $L^{\infty}$ estimate on the inverse of the density $\rho_{N}^{-1}$ that is uniform in $N$. {{In particular, we want to establish the following uniform convergence estimate (in probability) for the fluid velocity:
\begin{equation}\label{uNbound}
\lim_{M \to \infty} \mathbb{P}\left(\sup_{t \in [0, T]} \int_{\mathcal{O}_{\alpha}} |\bd{u}_{N}(t)|^{2} \ge M\right) = 0, \qquad \text{ uniformly in $N$},
\end{equation}
as this bound will be useful for establishing tightness of the fluid densities/velocities.}} To establish \eqref{uNbound}, it suffices to estimate the following probabilities (uniformly, independently of $N$):
\begin{equation}\label{Nprob}
\mathbb{P}\left(\sup_{t \in [0, T]} \int_{\sO_\alpha} \rho_{N}(t)|\bu_{N}(t)|^{2} \ge M\right), \qquad \mathbb{P}\left(\|\rho_{N}^{-1}\|_{L^{\infty}([0, T] \times \mathcal{O}_{\alpha})} \ge M \right).
\end{equation}
We can estimate the first probability in \eqref{Nprob} by using the uniform boundedness of the kinetic energy, and then to estimate the second probability, we recall that by construction, for fixed $\ep$ and $\delta$, we have that the initial data $\rho_{0, \ep, \delta}$ (which is the initial data for all $N$) satisfies
\begin{equation*}
0 < \ep \le \rho_{0,\delta,\ep} \le \delta^{-1/\beta},
\end{equation*}
see \eqref{rho0conv}. By using the fluid dissipation estimate and Poincar\'{e}'s inequality, $\mathbb{E}\left(\|\bu_{N}\|_{L^{2}(0, T; X_{n}^{f})}^{2}\right) < \infty$. So by equivalence of norms in $X_{n}^{f}$ and the embedding $H^l(\sO_\alpha)$ for $l > 5/2$ into $W^{1, \infty}(\sO_\alpha)$:
\begin{equation*}
\mathbb{E}\left(\|\text{div}(\bu_{N})\|_{L^{2}(0, T; L^{\infty}(\mathcal{O}_{\alpha}))}^2\right) < \infty, \quad \text{uniformly in $N$.} 
\end{equation*}
Hence, by the comparison principle (see e.g. Lemma 2.2 in \cite{FeireislCompressible}),
\begin{equation}\label{comparison}
0<{ \ep} \exp\left(-\int_{0}^{t} \|\text{div} \bu_N(s)\|_{L^{\infty}(\mathcal{O}_{\alpha})} ds\right) \le \rho_N \le { \delta^{-\frac1\beta}} \exp\left(\int_{0}^{t} \|\text{div}\bu_N(s)\|_{L^{\infty}(\mathcal{O}_{\alpha})} ds\right).
\end{equation}
we have a bound on the second probability in \eqref{Nprob}. Thus, we can conclude that the desired uniform convergence \eqref{uNbound} holds as a result of the estimates of both probabilities in \eqref{Nprob}.
We can use this bound along with the following observations to show tightness of the laws of the fluid densities and the fluid velocities. 

\medskip

\noindent \textbf{Tightness of fluid densities.} Using a regularity result (Theorem A.2.2 in \cite{BFH18}, Lemma B.7 in \cite{Smith}) and performing several standard estimates as in Lemma 4.4 in \cite{Smith}, we can obtain the following two estimates: For $\frac{\beta}{\beta + 1} = \theta \left(1 - \frac{2}{3}\right) + (1 - \theta)$ and for some $\beta < q < \infty$ and $1 \le r < \beta$,
\begin{align*}
\|\partial_{t}\rho_{N}\|&_{L^{\beta}((0, T) \times \mathcal{O}_{\alpha})} + \|\rho_{N}\|_{L^{\beta}(0, T; W^{2, \beta}(\mathcal{O}_{\alpha}))} \\
&\le C\left(\|\rho_{0, \delta, \ep}\|_{W^{2, q}(\mathcal{O}_{\alpha})} + \|\bu_{N}\|^{2}_{C(0, T; X^f_{n})} \| \rho_{N}\|^{\theta}_{L^{\infty}(0, T; L^{\beta}(\mathcal{O}_{\alpha}))} \| \rho_{N}^{\beta/2}\|^{\frac{1 - \theta}{2\beta}}_{L^{2}(0, T; H^{1}(\mathcal{O}_{\alpha}))}\right),
\end{align*}
\begin{equation*}
\|\partial_{t} \nabla \rho_{N}\|_{L^{r}((0, T) \times \mathcal{O}_{\alpha})} + \|\nabla \rho_{N}\|_{L^{r}(0, T; W^{2, r}(\mathcal{O}_{\alpha}))} \le C\|\bu_{N}\|_{C(0, T; X^f_{n})} \|\rho_{N}\|_{L^{\beta}(0, T; W^{2, \beta}(\mathcal{O}_{\alpha}))},
\end{equation*}
for a constant $C$ that is independent of $\omega$, $N$, and the remaining approximation parameters. Hence, by using \eqref{uNbound} and the bounds on $\rho_{N}$ in the uniform energy estimate \eqref{moment_timedis}, this gives a uniform estimate on $\|\partial_{t}\rho_{N}\|_{L^{\beta}((0,T) \times \sO_\alpha)}$, $\|\partial_{t} \nabla \rho_{N}\|_{L^{r}((0,T) \times \sO_\alpha)}$, and $\|\rho_{N}\|_{L^{\beta}(0,T;W^{2, \beta}(\sO_\alpha))}$ in probability independently of $N$, which allows us to deduce tightness in the parameter $N$. We obtain tightness of $\rho_{N}$ in $L^{\beta}(0, T; W^{1, \beta}(\mathcal{O}_{\alpha}))$ via Aubin-Lions, using bounds of $\rho_{N}$ in $L^{\beta}(0, T; W^{2, \beta}(\mathcal{O}_{\alpha}))$ and $\partial_{t}\rho_{N}$ in $L^{\beta}((0,T) \times \mathcal{O}_{\alpha})$, and we obtain tightness of $\rho_{N}$ in $C(0, T; L^{\beta}(\mathcal{O}_{\alpha}))$ via Arzela-Ascoli compactness arguments and Sobolev embedding, using bounds of $\rho_{N}$ and $\partial_{t}\rho_{N}$, both in $L^{r}(0, T; W^{1, r}(\mathcal{O}_{\alpha}))$ where we use the regularity of the time derivative to get a bound on $\rho_{N}$ in $C(0, T; W^{1, r}(\mathcal{O}_{\alpha}))$ for an appropriately chosen $1 \le r < \beta$. 
\medskip

\noindent \textbf{Tightness of fluid velocities.} We can use the weak formulation for the momentum equation and $L^{\infty}$ estimates on $\rho_{N}^{-1}$ uniformly in $N$, to get an estimate on the increments of the fluid velocity. Such an increment estimate would allow us to conclude that
\begin{equation*}
\lim_{M \to \infty} \mathbb{P}\left([\bu_{N}]_{C^{1/3}(0,T;X_{n}^{f})} \ge M\right) = 0, \quad \text{ uniformly in $N$},
\end{equation*}
where $[\cdot]_{C^{1/3}(0,T;X_{n}^{f})}$ denotes the $1/3$-H\"{o}lder seminorm for a function taking values in $X^f_{n}$. Combining this with the uniform bound \eqref{uNbound} and the fact that $X^f_{n}$ is finite-dimensional allows us to conclude the tightness of the laws of the approximate fluid velocities via Arzela-Ascoli.
}
\end{proof}

\subsection{Identification of the limit as $N \to \infty$}

Next, we use the Skorohod representation theorem in conjunction with the result of \cite{J97}, in order to obtain a limiting random variable as $N \to \infty$ keeping all other approximation parameters fixed, on a new probability space $(\tilde{\Omega}, \tilde{\mathcal{F}}, \tilde{\mathbb{P}})${\footnote{ Note, here and later, that in the proof of this version of the Skorohod representation theorem $\tilde{ \Omega}=[0,1)\times [0,1)$, $\tilde{\mathcal{F}}$ is the Borel algebra {on $\tilde \Omega$} and $\tilde \bP$ is the Lebesgue measure on $\tilde \Omega$ and thus $(\tilde \Omega, \tilde{\mathcal{F}},\tilde \bP)$ is independent of all the approximate parameters.}}.

Since we are only allowing the time discretization parameter $N$ to vary while keeping all other parameters $n, \ep, \delta$ fixed, we will notate only the dependence of the approximate solutions on $N$, as $\mathcal{U}_{N} := (\rho_{N}, \bu_{N}, \eta_{N}, \eta^{*}_{N}, v_{N}, W^1, W^2)$,  which we consider as random variables taking values in $\mathcal{X}$ defined by \eqref{phaseN}. 

\begin{theorem}\label{skorohod_time}
There exists a probability space $(\tilde{\Omega}, \tilde{\mathcal{F}}, \tilde{\mathbb{P}})$ and random variables
\begin{equation*}
\tilde{\mathcal{U}}_{N} = (\tilde{\rho}_{N}, \tilde{\bu}_{N},   \tilde{\eta}_{N}, \tilde{\eta}^{*}_{N},  \tilde{v}_{N}, \tilde W_N^1, \tilde W_N^2) \text{ for $N = 1, 2, ...$}, \quad 
{\mathcal{U}} = (\rho, \bu,\bu, \eta,  \eta^{*}, v, \tilde W^1, \tilde W^2),
\end{equation*}
defined on this new probability space, such that 
\begin{enumerate}
\item $\tilde{\mathcal{U}}_{N}$ has the same law in $\mathcal{X}$ as $\mathcal{U}_{N}$,
\item $
\tilde{\mathcal{U}}_{N} \to \mathcal{U} \text{ in the topology of $\mathcal{X}$, $\tilde{\mathbb{P}}$-almost surely as $N \to \infty$},$
\item $\tilde \eta^*_N=\tilde \eta_N$ for every $t<\tau^\eta_N$ where, for the fixed $s\in(\frac32,2)$,
		\begin{align*}
			\tau^{\eta}_N &:=T\wedge \inf\left\{t> 0:\inf_{\Gamma}(1+{\tilde\eta_N}(t))\leq \alpha \text{ or } \|\tilde{\eta}_N(t)\|_{H^s(\Gamma)}\geq \frac1{\alpha}\right\}.
		\end{align*}
\item $\partial_{t}\eta = v$, $\tilde{\mathbb{P}}$-almost surely.
\end{enumerate}
\end{theorem}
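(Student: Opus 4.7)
\medskip

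\noindent\textbf{Proof strategy for Theorem \ref{skorohod_time}.}

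The plan is to combine the tightness established in the previous proposition with a Skorohod-type representation theorem adapted to non-Polish spaces. The obstruction to applying the classical theorem directly is the component $\mathcal{X}_\eta = C(0,T;H^s(\Gamma)) \cap (L^\infty(0,T;H^2(\Gamma)), w^*)$, whose weak-$*$ factor is not Polish. I would invoke the representation result described in the introduction---obtained by composing the canonical-space result of \cite{NTT21} with Jakubowski's theorem \cite{J97}---which delivers random variables $\tilde{\mathcal{U}}_N$ on the canonical probability space $\tilde\Omega = [0,1]^2$ (with its Borel $\sigma$-algebra and Lebesgue measure) having the same joint laws as $\mathcal{U}_N$, and converging $\tilde{\mathbb{P}}$-a.s.\ in $\mathcal{X}$ along a subsequence to some $\mathcal{U}$. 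Applicability hinges on $\mathcal{X}_\eta$ being quasi-Polish in Jakubowski's sense, which is straightforward: a countable family of continuous real-valued functionals separating points on weak-$*$ bounded sets of $L^\infty(0,T;H^2(\Gamma))$ can be built from evaluations against a dense subset of $L^1(0,T;H^{-2}(\Gamma))$, while the $C(0,T;H^s(\Gamma))$ factor is already Polish. The resulting pair $(\tilde W^1,\tilde W^2)$ I would identify as cylindrical $\mathcal{U}_0$-Wiener processes with respect to the filtration generated by $\mathcal{U}$ via a standard L\'evy characterization together with preservation of covariance structure under equality in law.

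Once the representation is in place, items (3) and (4) reduce to transferring pathwise measurable identities through equality in joint laws. For (4), the identity $\partial_t\eta_N = v_N$ holds distributionally in time $\mathbb{P}$-almost surely, hence by equality of joint laws on $\mathcal{X}_\eta\times\mathcal{X}_v$ the same is true of $(\tilde\eta_N,\tilde v_N)$, and then passes to the limit using $\tilde\eta_N\to\eta$ in $C(0,T;H^s(\Gamma))$ and $\tilde v_N\to v$ in $C(0,T;L^2(\Gamma))$. For (3), the stopping time $\tau^\eta_N$ is an a.s.\ continuous functional of the $\mathcal{X}_\eta$-path: the condition $\inf_\Gamma(1+\eta_N)>\alpha$ is continuous via the Sobolev embedding $H^s(\Gamma)\hookrightarrow C(\Gamma)$ (recall $s\in(3/2,2)$), while the $H^s$-norm condition is direct, so the first-exit functional is continuous on a set of full measure. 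The equality $\eta_N^*(t)=\eta_N(t)$ on $[0,\tau^\eta_N)$ is therefore a Borel relation on $\mathcal{X}_\eta\times\mathcal{X}_\eta$ satisfied $\mathbb{P}$-a.s.\ by $(\eta_N,\eta_N^*)$, and is consequently inherited by $(\tilde\eta_N,\tilde\eta_N^*)$.

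The principal obstacle is the non-Polish factor in $\mathcal{X}_\eta$ and, more subtly, arranging the representation so that the probability space and the Brownian motions are independent of $N$. This invariance---which is where composing \cite{NTT21} with \cite{J97} becomes essential---will be indispensable later for the $\delta\to 0$ passage with random test processes: as flagged in the introduction, using a common Brownian motion on a fixed canonical probability space across all approximation levels avoids non-anticipativeness issues when enlarging the natural filtration to include the limiting displacement. A secondary technical point is the progressive measurability of $\tilde\eta_N^*$ on the new stochastic basis, which follows from the continuity properties above, so that $\tau^\eta_N$ is a bona fide stopping time with respect to the filtration generated by $\tilde{\mathcal{U}}_N$, closing the argument.
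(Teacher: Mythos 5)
The overall plan---tightness plus the Jakubowski/NTT-type representation on the canonical space $[0,1]^2$, then transfer of pathwise relations through equality in joint law---matches the paper's approach, and your handling of item (4) is essentially the same argument the paper gives.

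The gap is in your argument for item (3). You assert that the first-exit functional $\eta \mapsto \tau^\eta$ is ``a.s.\ continuous'' on $\mathcal{X}_\eta$, concluding from this that $\{(\eta,\eta^*): \eta^*(t)=\eta(t),\ t<\tau^\eta\}$ is a Borel relation. But first-exit times of closed regions are in general \emph{not} continuous functionals on path space---they are at best semi-continuous, with discontinuities precisely at paths that touch the critical level $\{\inf_\Gamma(1+\eta)=\alpha\}\cup\{\|\eta\|_{H^s}=1/\alpha\}$ without immediately crossing. The phrase ``on a set of full measure'' is doing all the work here but is unsubstantiated: you would need to show, for each fixed $N$, that the law of $\eta_N$ assigns zero probability to such grazing paths, and you offer no argument for that. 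The paper avoids this issue entirely: rather than asking the stopping-time functional to be continuous, it proves directly (Proposition~\ref{equalinlaw} in Appendix~A) that the \emph{graph} of the stopping map, the set $B_R=\{(f,g)\in C(0,T;\R)^2: g=f^*\}$, is a Borel subset of the product path space, via a time-discretization construction that exhibits $B_R$ as a countable intersection of preimages of Borel sets under continuous maps $F_N$. Borel measurability of this relation---not continuity of $\tau$---is what is actually needed and used to transfer the pathwise identity through equality of joint laws, so this is the missing ingredient your proof must supply.

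A secondary point: the paper's proof of this theorem also establishes an auxiliary fact not in the statement, namely that $\tilde\eta^*_N$ and its time shift $\mathcal{T}_{\Delta t}\tilde\eta^*_N$ share the limit $\eta^*$ in $L^\infty(0,T;H^2(\Gamma))$; this is harmless to omit when proving the theorem as stated but is used in the subsequent limit passage, so it is worth flagging that your outline does not recover it.
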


\begin{proof}
The first two statements essentially follow from the Skorohod representation theorem.  Let $\tilde\sF_t'$ be the $\sigma$-field generated by the random variables ${\bu}(s), v(s), \eta(s),\tilde W^1(s),\tilde W^2(s)$, for all $s \leq t$. Then we define
\begin{align}\label{Ft}
\mathcal{N} &:=\{\mathcal{A}\in \tilde{\mathcal{F}} \ | \ \tilde \bP(\mathcal{A})=0\}\qquad
\tilde{\mathcal{F}}_t :=\bigcap_{s\ge t}\sigma(\tilde{\mathcal{F}}_s' \cup \mathcal{N}).
\end{align}
This gives a complete, right-continuous filtration $(\tilde{\mathcal{F}}_t)_{t \in [0,T]}$, dependent on {{$n,\ep,\delta$}}, on the new probability space $(\tilde \Omega,\tilde{\mathcal{F}},\tilde \bP)$, to which the limiting noise processes and solutions are adapted. The filtration $(\tilde{\mathcal{F}}^N_t)_{t \in [0,T]}$ is constructed similarly.
To prove that the new random variables $\tilde{\mathcal{U}}_N$ satisfy the same weak formulation as the old random variables, we apply the same classical arguments as in \cite{Ben} (see also Theorem 2.9.1 in \cite{BFH18}). 

The third statement follows from Proposition \ref{equalinlaw} in Appendix A.
The fact that $\partial_{t}\eta = v$, $\tilde{\mathbb{P}}$-almost surely follows from the fact that $\partial_{t}\eta_{N} = v_{N}$, $\mathbb{P}$-almost surely by definition of the splitting scheme on the original probability space. So by equivalence of laws, $\partial_{t}\tilde{\eta}_{N} = \tilde{v}_{N}$, $\tilde{\mathbb{P}}$-almost surely, and then we obtain $\partial_{t}\eta = v$, $\tilde{\mathbb{P}}$-almost surely by passing to the limit as $N \to \infty$ and using the convergences given by the Skorohod representation theorem.

We will now prove that $\tilde{\eta}^{*}_{N}$ and $\mathcal{T}_{\Delta t}\tilde{\eta}^{*}_{ N}$ have the same limit $\eta^{*}$ in $L^\infty(0,T;H^2(\Gamma))$ where, we recall, that $\mathcal{T}_{\Delta t} \tilde\eta^{*}_{N}$ denotes the time shift of $\tilde\eta^{*}_{N}$, defined by $\mathcal{T}_{\Delta t} \tilde\eta^{*}_{N} = \eta_{0}$ on $[0, \Delta t]$ and $\mathcal{T}_{\Delta t} \tilde\eta^{*}_{N}(t) = \tilde\eta^{*}_{N}(t - \Delta t)$ for $t \in [\Delta t, T]$. 
To see this, we compute for all $t \in [0, T]$:
\begin{equation*}
\|\tilde\eta^{*}_{N}(t) - \mathcal{T}_{\Delta t} \tilde\eta^{*}_{N}(t)\|_{H^2(\Gamma)} \le \int_{t - {\Delta t}}^{t} \|\partial_{t}\tilde\eta^{*}_{N}(t)\|_{H^2(\Gamma)} \le C\int_{t - {\Delta t}}^{t} \|\tilde v_{N}(t)\|_{L^{2}(\Gamma)} \le C(\Delta t)\|\tilde v_{N}\|_{C(0, T; L^{2}(\Gamma))},
\end{equation*}
where this calculation is justified by the definition of the stopped process $\eta^{*}_{N}$, the fact that $v_{N} = \partial_{t} \eta_{N}$, and equivalence of norms combined with $v_{N} \in C(0, T; X_{n}^{st})$. We note that $C$ depends only on the Galerkin parameter $n$ (which we emphasize is kept fixed in this limit passage $N \to \infty$) and hence is independent of the parameter $N$. So using Sobolev embedding and equivalence of laws, we can transfer this estimate to the new probability space:
\begin{equation*}
\|\tilde{\eta}^{*}_{N}- \mathcal{T}_{\Delta t}\tilde{\eta}^{*}_{ N}\|_{ L^\infty(0, T; H^2(\Gamma))} \le C(\Delta t)\|\tilde{v}_{N}\|_{C(0, T; L^{2}(\Gamma))}.
\end{equation*}
We complete our proof by recalling that we have, on the new probability space, the convergence $\tilde{v}_{N} \to \tilde{v}$ $\tilde{\mathbb{P}}$-almost surely in $C(0, T; L^{2}(\Gamma))$. 
\end{proof}

\medskip

Recall from the definition \eqref{phaseN} of the phase space $\mathcal{X}$ that $(\tilde{\rho}_{N}, \tilde{\bu}_{N}, \tilde{\eta}_{N}, \tilde{\eta}^{*}_{N}, \tilde{v}_{N}, \tilde W_N^1, \tilde W_N^2)$ transferred to the new probability space take values in 
\begin{equation*}
C(0, T; L^{\beta}(\mathcal{O}_{\alpha})) \times C(0, T; L^{2}(\mathcal{O}_{\alpha})) \times C(0, T; C(\Gamma))^2 \times C(0, T; L^{2}(\Gamma)) \times C(0, T; \R)^{2}.
\end{equation*}
Moreover, we argue that 
		\begin{align}\label{etasequal1}
			{\eta}^*(t)={\eta}(t) \quad \text{ for any } t<\tau^{\eta}, \quad \tilde\bP\text{-almost surely.}
		\end{align}	
		where for a given $\alpha$,
	       $$	\tau^{\eta}  := T\wedge\inf\left\{t>0:\inf_{\Gamma}(1+\eta(t))\leq \alpha \text{ or } \|{\eta}(t)\|_{H^s(\Gamma)}\geq \frac1{\alpha} \right\}.$$
		Indeed, observe that for almost any $\omega\in\tilde\Omega$ and $t<\tau^{\eta}$, and for any $\epsilon>0$, there exists an {\color{blue}{$N$}} such that
		\begin{align*}
			\|{\eta}(t)-{\eta}^*(t)\|_{H^s(\Gamma)}&<\|{\eta}(t)-\tilde{\eta}_N(t)\|_{H^s(\Gamma)}+\|{\eta}^*(t)-\tilde{\eta}^*_N(t)\|_{H^s(\Gamma)}
			+\|\tilde{\eta}^*_N(t)-\tilde{\eta}_N(t)\|_{H^s(\Gamma)}
			<\epsilon.
		\end{align*}
		This is true because, the almost sure uniform convergence of the structure displacements implies that for any $\epsilon>0$ there exists an $N_1 \in \mathbb{N}$ such that the first two terms on the right side of the above inequality
		are each bounded by $\frac{\epsilon}2$ for all $N\geq N_1$.
		Moreover, due to the uniform convergence of the structure displacement for a fixed outcome,  $t<\tau^{\eta}$ implies that $t<\tau^{\eta}_N$ for infinitely many $N$'s thence the third term is equal to 0. This concludes the proof of \eqref{etasequal1}.

Finally, we show that the new limiting random variables satisfy the desired weak formulation by passing to the limit in the weak formulation of the continuity equation \eqref{continuityN} and the semidiscrete weak formulation \eqref{Nlimitweak}. We handle the convergence of each of the terms in the weak formulation \eqref{Nlimitweak} for each fixed deterministic test function $\bd{q} \in X_n^f$ and $\psi \in X_n^{st}$ by using the convergence results in Theorem \ref{skorohod_time}. Similar techniques work for passing to the limit in the weak formulation of the approximate continuity equation \eqref{continuityN}, so we focus only on the passage to the limit in the momentum weak formulation \eqref{Nlimitweak}, and note that by the equivalence of laws given by Theorem \ref{skorohod_time}, the weak formulation also holds (with the new approximate random variables on the new probability space) $\tilde{\mathbb{P}}$-almost surely on the new probability space. We only comment on the most involved terms in this limit passage below:

\begin{itemize}
    \item $\displaystyle \int_{0}^{T} \int_{\mathcal{O}_{\alpha}} \mu^{\tilde{\eta}^*_N}_\delta  \nabla \tilde{\bu}_{N} : \nabla \bd{q} \to  \int_{0}^{T} \int_{\mathcal{O}_{\alpha}} \mu_\delta^{\eta^{*}} \nabla \bu : \nabla \bd{q}$, where we use the $\tilde{\mathbb{P}}$-almost sure convergence of $\mu^{\tilde{\eta}^*_N}_\delta$ to $\mu_\delta^{\eta^{*}}$ in $C([0, T] \times \sO_\alpha)$ which follows from $\tilde{\eta}^*_N \to \eta^*$ in $C(0, T; C(\Gamma))$ and the properties of the map $\eta \to \mu_\delta^{\eta}$ (see \eqref{chi} and \eqref{viscosityextension}), and the convergence of $\tilde{\bu}_{N} \to \bu$ in $C(0, T; X^f_{n})$. Similarly, we have $\displaystyle  \int_{0}^{T} \int_{\mathcal{O}_{\alpha}} \lambda^{\tilde{\eta}^*_{N}}_\delta \text{div}(\tilde{\bu}_{N}) \text{div}(\bd{q}) \to \int_{0}^{T} \int_{\mathcal{O}_{\alpha}}  \lambda^{\eta^{*}}_\delta\text{div}(\bu) \text{div}(\bd{q})$. 

    \item Next, we have
    \begin{equation*}
        \frac{1}{\delta} \int_{0}^{T} \int_{T^{\delta}_{\tilde{\eta}^{*}_{N}}} (\tilde{\bu}_{N} - \tilde{v}_{N} \bd{e}_{z}) \cdot \bd{q}-  \frac{1}{\delta} \int_{0}^{T} \int_{T^{\delta}_{\mathcal{T}_{\Delta t}\tilde{\eta}^{*}_{ N}}} (\mathcal{T}_{\Delta t}\tilde{\bu}_{ N} - \tilde{v}_{N} \bd{e}_{z}) \cdot \psi \bd{e}_{z} \to \frac{1}{\delta} \int_{0}^{T} \int_{T^{\delta}_{\eta^*}} (\bu - v \bd{e}_{z}) \cdot (\bd{q} - \psi \bd{e}_{z}).
    \end{equation*}
   This follows immediately from the fact that $\tilde{\bu}_{N} \to \bu$ and $\tilde{v}_{N} \bd{e}_{z} \to v \bd{e}_{z}$ in $C(0, T; L^{2}(\mathcal{O}_{\alpha}))$, $\tilde{\mathbb{P}}$-almost surely, and that  $\mathcal{T}_{\Delta t}\tilde{\eta}^{*}_{N} \to \eta^*$ in $C(0, T; C(\Gamma))$ {{(and similarly for $\tilde{\eta}^{*}_{N}$)}}, $\tilde{\mathbb{P}}$-almost surely, which implies for the indicator functions
    \begin{equation*}
        \mathbbm{1}_{T^{\delta}_{\mathcal{T}_{\Delta t}\tilde{\eta}^{*}_{ N}}} \to \mathbbm{1}_{T^{\delta}_{\eta^*}}, \quad \text{ in } C(0, T; L^{2}(\mathcal{O}_{\alpha})),\qquad\tilde\bP\text{-almost surely.}
    \end{equation*}
    \item Next, we show convergence of the structure stochastic integral. Using classical results about convergence of stochastic integrals \cite{Ben} (see also Lemma 2.1 in \cite{DGHT} and Lemma 2.6.6 in \cite{BFH18}), it suffices to show that 
    \begin{equation*}
    \int_{\Gamma} G(\tilde{\eta}_{N}, \tilde{v}_{N}) \psi \to \int_{\Gamma} G(\eta, v) \psi, \quad \text{ in probability in $L^{2}(0, T; L_{2}(\mathcal{U}_{0}; \R))$}.
    \end{equation*}
    By the growth assumption \eqref{gassumption} and Theorem \ref{skorohod_time}, we can verify this convergence, as we have:
    \begin{equation*}
        \tilde{\mathbb{E}} \int_{0}^{T} \sum_{k = 1}^{\infty} (g_{k}(\eta, v) - g_{k}(\tilde{\eta}_{N}, \tilde{v}_{N}), \psi)^{2} \le C\|\psi\|^{2}_{L^{\infty}(\Gamma)} \tilde{\mathbb{E}} \int_{0}^{T} \int_{\Gamma} |\eta - \tilde{\eta}_{N}|^{2} + |v - \tilde{v}_{N}|^{2} \to 0.
    \end{equation*}

\end{itemize}

Finally, we show convergence of the stochastic integrals for the fluid equations, which is the most involved convergence. Before showing convergence of these stochastic integrals, we first show the following convergence result: 

\begin{lemma}\label{Nconvergence}
For almost every $(\tilde{\omega}, t) \in \tilde{\Omega} \times [0, T]$:
\begin{equation}\label{sumsquaredelta2}
\sum_{k = 1}^{\infty} \|f_{n, k}(\rho, \rho \bu) - f_{n, k}(\tilde{\rho}_{N}, \tilde{\rho}_{N} \tilde{\bu}_{N})\|_{L^{2}(\mathcal{O}_{\alpha})}^{2} \to 0, \quad \text{ as $N \to \infty$}.
\end{equation}
\end{lemma}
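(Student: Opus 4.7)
The plan is to pass to the limit $N\to\infty$ term by term in the series and to justify the exchange of limit and infinite sum by a dominated convergence argument. The key decomposition writes
$$f_{n,k}(\rho,\rho\bu) - f_{n,k}(\tilde\rho_N,\tilde\rho_N\tilde\bu_N) = \bigl(\mathcal{M}^{1/2}[\rho] - \mathcal{M}^{1/2}[\tilde\rho_N]\bigr) P_n^f(h_k) + \mathcal{M}^{1/2}[\tilde\rho_N] P_n^f(h_k - h_{k,N}),$$
where $h_k := \rho^{-1/2} f_k(\rho,\rho\bu)$ and $h_{k,N} := \tilde\rho_N^{-1/2} f_k(\tilde\rho_N,\tilde\rho_N\tilde\bu_N)$. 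This splits the task into a \emph{commutator piece} controlled by the continuity of $\mathcal{M}^{1/2}[\cdot]$ and a \emph{Lipschitz piece} controlled by the continuity of $f_k/\rho^{1/2}$.

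First I would secure a pointwise positive lower bound on the densities. For $\tilde\bP$-a.e.\ $\tilde\omega$, Skorohod convergence on the finite-dimensional Galerkin space $X_n^f$ together with equivalence of norms yields that $\|\mathrm{div}\,\tilde\bu_N\|_{L^2(0,T;L^\infty(\mathcal{O}_\alpha))}$ is bounded in $N$ by some $C_{\tilde\omega}$, so the comparison principle \eqref{comparison} gives $\tilde\rho_N(t,x) \geq \underline{\rho}_{\tilde\omega} > 0$ uniformly in $N$ and $(t,x)$, and similarly for $\rho$. Consequently $\rho^{-1/2}$ and $\tilde\rho_N^{-1/2}$ are a.s.\ bounded in $L^\infty(\mathcal{O}_\alpha)$ by an $N$-independent constant. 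Combining this with the Lipschitz bound $|\nabla_{\rho,\bd q} f_k| \leq c_k$ from \eqref{fassumption} and a routine algebraic manipulation, I obtain
$$\|h_k - h_{k,N}\|_{L^2(\mathcal{O}_\alpha)} \leq C_{\tilde\omega}\, c_k\bigl(\|\rho - \tilde\rho_N\|_{L^2(\mathcal{O}_\alpha)} + \|\rho\bu - \tilde\rho_N \tilde\bu_N\|_{L^2(\mathcal{O}_\alpha)}\bigr),$$
which tends to zero pointwise in $t$ by the convergences of Theorem \ref{skorohod_time} (using finite-dimensionality of $X_n^f$ to upgrade $\tilde\bu_N \to \bu$ to $C(0,T;L^\infty(\mathcal{O}_\alpha))$).

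For the commutator piece, the operator norm $\|\mathcal{M}^{1/2}[\rho] - \mathcal{M}^{1/2}[\tilde\rho_N]\|_{\mathcal{L}(X_n^f)}$ is controlled by $\|\rho-\tilde\rho_N\|_{L^1(\mathcal{O}_\alpha)}$ (modulo the a.s.\ lower bound just established), following the argument in Section 3 of \cite{BreitHofmanova}. Since $\|P_n^f(h_k)\|_{X_n^f} \leq C_{\tilde\omega,t}\,c_k$ by \eqref{fassumption} combined with the energy bound \eqref{moment_timedis} transferred via equivalence of laws, this piece also vanishes termwise in $k$ as $N\to\infty$.

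The main obstacle is justifying the exchange of limit and sum in $k$. For this I apply dominated convergence: from the operator bound $\|\mathcal{M}^{1/2}[\tilde\rho_N]\|_{\mathcal{L}(X_n^f)} \lesssim \|\tilde\rho_N\|_{L^1(\mathcal{O}_\alpha)}^{1/2}$ together with the growth condition $|f_k(\rho,\rho\bu)|\leq c_k(\rho+\rho|\bu|)$ and the a.s.\ finiteness of $\int_{\mathcal{O}_\alpha}(\tilde\rho_N + \tilde\rho_N|\tilde\bu_N|^2)$ inherited from \eqref{moment_timedis}, each summand is dominated uniformly in $N$ by $C_{\tilde\omega,t}\,c_k^2$, whose series converges since $\sum_k c_k^2 < \infty$. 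Dominated convergence then yields \eqref{sumsquaredelta2}.
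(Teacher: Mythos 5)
Your proposal is correct and takes essentially the same route as the paper: the decomposition into the $\mathcal{M}^{1/2}$ commutator piece and the Lipschitz piece, the almost-sure pointwise density bounds from the comparison principle and finite-dimensionality of $X_n^f$, and the termwise-plus-domination argument over $k$ all match. The one cosmetic difference is that the paper further splits the Lipschitz piece $h_k - h_{k,N}$ into a numerator-difference term and a $\rho^{-1/2}-\tilde\rho_N^{-1/2}$ term (the latter handled by the mean value theorem), whereas you invoke a composite chain-rule Lipschitz estimate directly; both are equivalent given the a.s.\ $L^\infty$ bounds you established.
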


\begin{proof}
We start with the following observation that we will use throughout the proof. Since $\tilde{\bu}_{N} \to \tilde{\bu}$ in $C(0, T; X^f_{n})$, $\tilde{\mathbb{P}}$-almost surely, 
\begin{equation}\label{asun}
\sup_{N}\|\bu_{N}(\tilde{\omega})\|_{C(0, T; X^f_{n})} \le C(\tilde{\omega})
\end{equation}
for $C(\tilde{\omega})$ depending on $\tilde{\omega} \in \tilde{\Omega}$. By equivalence of norms in $X^f_{n}$ and the comparison principle \eqref{comparison},
\begin{equation}\label{asdensity}
0 < c(\tilde{\omega}) \le \rho_{N}(\tilde{\omega}) \le C(\tilde{\omega}), \quad 0 < c(\tilde{\omega}) \le \rho(\tilde{\omega}) \le C(\tilde{\omega}) \quad \text{ for all $(t, x) \in [0, T] \times \mathcal{O}_{\alpha}$,}
\end{equation}
for positive constants $c(\tilde{\omega})$ and $C(\tilde{\omega})$ (independent of $N$, where we recall that the Galerkin parameter $n$ is fixed), which depend only on the outcome $\tilde{\omega} \in \tilde{\Omega}$.

We recall the definition of $\displaystyle f_{n, k}(\rho, \bd{q}) := \mathcal{M}^{1/2}[\rho] \left(P_{n}^{f}\left(\frac{f_{k}(\rho, \bd{q})}{\rho^{1/2}}\right)\right)$ from \eqref{Fndef} and show the desired convergence by handling each element of the nonlinearity in the definition of  $f_{n, k}(\rho, \rho \bd{u})$ one at a time. We sketch the proof below:

\medskip

\noindent\textbullet{} By Lemma A.1 in \cite{BreitHofmanova}, for $\rho_{1}, \rho_{2} \in L^{2}(\mathcal{O}_{\alpha})$ with $\rho_{1}, \rho_{2} \ge \kappa > 0$ and a constant depending only on the Galerkin parameter $n$ and the positive lower bound on density $\kappa$:
\begin{equation}\label{M12Lipschitz}
\|\mathcal{M}^{1/2}[\rho_{1}] - \mathcal{M}^{1/2}[\rho_{2}]\|_{\mathcal{L}(X^f_n, X^f_n)} \le C(n, \kappa) \|\rho_{1} - \rho_{2}\|_{L^{2}(\mathcal{O}_{\alpha})}.
\end{equation}
We can apply this result due to \eqref{asdensity} to get that $\tilde{\mathbb{P}}$-almost surely for all $t \in [0, T]$:
\begin{equation*}
\sum_{k = 1}^{\infty} \left\|\Big(\mathcal{M}^{1/2}[\rho] - \mathcal{M}^{1/2}[\rho_{N}]\Big) \left(P_n^{f}\left(\frac{f_{k}(\rho, \bd{q})}{\rho^{1/2}}\right) \right)\right\|_{L^{2}(\mathcal{O}_{\alpha})}^{2} \le \sum_{k = 1}^{\infty} C(\tilde{\omega}) \|\rho - \rho_{N}\|_{L^{2}(\mathcal{O}_{\alpha})}^{2} \left\|\frac{f_{k}(\rho, \bd{q})}{\rho^{1/2}}\right\|_{L^{2}(\mathcal{O}_{\alpha})}^{2},
\end{equation*}
for a random constant $C(\tilde{\omega})$. Since $\displaystyle \sum_{k = 1}^{\infty} \left\|\frac{f_{k}(\rho, \bd{q})}{\rho^{1/2}}\right\|_{L^{2}(\mathcal{O}_{\alpha})}^{2}$ is bounded by energy estimates and the assumption \eqref{fassumption} on the noise and $\tilde{\rho}_{N} \to \rho$ in $C(0, T; L^{\beta}(\mathcal{O}_{\alpha}))$ (where we recall that $\beta \ge 4$) $\tilde{\mathbb{P}}$-almost surely, we have that for almost every $(\tilde{\omega}, t) \in \tilde{\Omega} \times [0, T]$:
\begin{equation}\label{sumsquaredeltat4}
\sum_{k = 1}^{\infty} \left\|\Big(\mathcal{M}^{1/2}[\rho] - \mathcal{M}^{1/2}[\rho_{N}]\Big) \left(P_n^{f}\left(\frac{f_{k}(\rho, \bd{q})}{\rho^{1/2}}\right)\right)\right\|_{L^{2}(\mathcal{O}_{\alpha})}^{2} \to 0.
\end{equation}
\noindent\textbullet{}  Since $\mathcal{M}[\rho]$ for $\rho > 0$ is a positive definite, symmetric operator on the finite-dimensional space $X_{n}$ with $\|\mathcal{M}[\rho]\|_{\mathcal{L}(X^f_{n}, X^f_{n})} \le \|\rho\|_{L^{\infty}(\mathcal{O}_{\alpha})}$, we have that $\mathcal{M}^{1/2}[\rho]: X^f_{n} \to X^f_{n}$ is well-defined with the bound $\|\mathcal{M}^{1/2}[\rho]\|_{\mathcal{L}(X^f_n, X^f_n)} \le \|\rho\|_{L^{\infty}(\mathcal{O}_{\alpha})}^{1/2}$. So by the estimate \eqref{asdensity} which is independent of $N$, we have that $\tilde{\mathbb{P}}$-almost surely for all $t \in [0, T]$:
\begin{multline*}
\sum_{k = 1}^{\infty} \left\|\mathcal{M}^{1/2}[\rho_N]\left(P_n^{f}\left(\frac{f_{k}(\rho, \rho \bu)}{\rho^{1/2}}\right) - P_n^{f}\left(\frac{f_{k}(\tilde{\rho}_{N}, \tilde{\rho}_{N} \tilde{\bu}_{N})}{\tilde{\rho}_{N}^{1/2}}\right)\right)\right\|_{L^{2}(\mathcal{O}_{\alpha})}^{2} \\
\le C(\tilde{\omega}) \sum_{k = 1}^{\infty} \left\|\frac{f_{k}(\rho, \rho\bu)}{\rho^{1/2}} - \frac{f_{k}(\tilde{\rho}_{N}, \tilde{\rho}_{N}\tilde{\bu}_{N})}{\tilde{\rho}_{N}^{1/2}}\right\|_{L^{2}(\mathcal{O}_{\alpha})}^{2}
\end{multline*}
for a random constant $C(\tilde{\omega})$. Then, by \eqref{fassumption}, \eqref{asun}, \eqref{asdensity}, and the $\tilde{\mathbb{P}}$-almost sure convergences of $\tilde{\rho}_{N} \to \rho$ in $C(0, T; L^{\beta}(\mathcal{O}_{\alpha}))$ and $\tilde{\bd{u}}_{N} \to \bd{u}$ in $C(0, T; X^f_{n})$, we have that $\tilde{\mathbb{P}}$-almost surely, for all $t \in [0, T]$:
\begin{equation*}
\sum_{k = 1}^{\infty} \left\|\frac{f_{k}(\rho, \rho \bu) - f_{k}(\tilde{\rho}_{N}, \tilde{\rho}_{N} \tilde{\bu}_{N})}{\tilde{\rho}_{N}^{1/2}}\right\|_{L^{2}(\mathcal{O}_{\alpha})}^{2} \le C(\tilde{\omega}) \sum_{k = 1}^{\infty} \|f_{k}(\rho, \rho\bu) - f_{k}(\tilde{\rho}_{N}, \tilde{\rho}_{N} \tilde{\bu}_{N})\|_{L^{2}(\mathcal{O}_{\alpha})}^{2} \to 0.
\end{equation*}
By using \eqref{fassumption} \eqref{asun}, \eqref{asdensity}, and the mean value theorem to estimate $\rho^{-1/2} - \tilde{\rho}_{N}^{-1/2}$, one can verify that for all $t \in [0, T]$:
\begin{equation*}
\sum_{k = 1}^{\infty} \left\|f_{k}(\rho, \rho \bu) \left(\frac{1}{\rho^{1/2}} - \frac{1}{\tilde{\rho}_{N}^{1/2}}\right)\right\|_{L^{2}(\mathcal{O}_{\alpha})}^{2} \to 0, \quad \tilde{\mathbb{P}}\text{-almost surely.}
\end{equation*}
So for almost every $(\tilde{\omega}, t) \in \tilde{\Omega} \times [0, T]$, $\displaystyle \sum_{k = 1}^{\infty} \left\|\frac{f_{k}(\rho, \rho \bu)}{\rho^{1/2}} - \frac{f_{k}(\tilde{\rho}_{N}, \tilde{\rho}_{N}\tilde{\bu}_{N})}{\rho_{N}^{1/2}}\right\|_{L^{2}(\mathcal{O}_{\alpha})}^{2} \to 0$ and also:
\begin{equation}\label{sumsquaredeltat5}
\sum_{k = 1}^{\infty} \left\|\mathcal{M}^{1/2}[\rho_N] \left(P_n^{f}\left(\frac{f_{k}(\rho, \rho \bu)}{\rho^{1/2}}\right) - P_n^{f}\left(\frac{f_{k}(\tilde{\rho}_{N}, \tilde{\rho}_{N}\tilde{\bu}_{N})}{\tilde{\rho}_{N}^{1/2}}\right)\right)\right\|^{2}_{L^{2}(\mathcal{O}_{\alpha})} \to 0.
\end{equation}
Together, \eqref{sumsquaredeltat4} and \eqref{sumsquaredeltat5} prove the desired convergence \eqref{sumsquaredelta2}, which completes the proof. 
\end{proof}

Using the convergence stated in the immediately preceding lemma, we can thus verify the following convergence result for the stochastic integrals as $N \to \infty$.

\begin{proposition}
For a given deterministic pair $(\bd{q}, \psi) \in X_{n}$ for a fixed Galerkin parameter $n$, 
\begin{equation*}
 \int_{0}^{T} \Big(\mathbbm{1}_{\sO_{\tilde\eta^*_N}}\bd{F}_{n}(\tilde{\rho}_{N}, \tilde{\rho}_{N}\tilde{\bu}_{N}) d\tilde W_N^1, \bd{q}\Big) \to \int_{0}^{T} \Big(\mathbbm{1}_{\sO_{\eta^*}}\bd{F}_{n}(\rho, \rho \bu) d\tilde W^1, \bd{q}\Big), \quad \text{ $\tilde{\mathbb{P}}$-almost surely,}
\end{equation*}
where $\bd{F}_n$ is defined in \eqref{Fndef}. 
\end{proposition}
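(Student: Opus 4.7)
The plan is to invoke a classical convergence result for stochastic integrals (cf. Lemma 2.1 in \cite{DGHT} and Lemma 2.6.6 in \cite{BFH18}), which reduces the claim to verifying two inputs: (i) the driving Wiener processes converge in $C(0,T;\mathcal{U}_0)$, which is already provided by Theorem \ref{skorohod_time} since $\tilde W^1_N\to \tilde W^1$; and (ii) the integrands, viewed as $L_2(\mathcal{U}_0;\R)$-valued stochastic processes, converge in $L^2(0,T;L_2(\mathcal{U}_0;\R))$ in probability (from which the stated $\tilde\bP$-a.s.\ statement follows up to a subsequence, by the usual Skorohod-extraction step). Concretely, for fixed $(\bd{q},\psi)\in X_n$, it suffices to show that
\begin{equation*}
\sum_{k=1}^{\infty}\int_{0}^{T}\left|\Big(\mathbbm{1}_{\sO_{\tilde\eta^*_N}}f_{n,k}(\tilde\rho_N,\tilde\rho_N\tilde\bu_N)-\mathbbm{1}_{\sO_{\eta^*}}f_{n,k}(\rho,\rho\bu),\bd{q}\Big)\right|^{2}dt\longrightarrow 0
\end{equation*}
in $\tilde\bP$-probability as $N\to\infty$.

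The first step is to decompose the difference of integrands as
\begin{equation*}
\mathbbm{1}_{\sO_{\tilde\eta^*_N}}\bigl[f_{n,k}(\tilde\rho_N,\tilde\rho_N\tilde\bu_N)-f_{n,k}(\rho,\rho\bu)\bigr]+\bigl(\mathbbm{1}_{\sO_{\tilde\eta^*_N}}-\mathbbm{1}_{\sO_{\eta^*}}\bigr)f_{n,k}(\rho,\rho\bu).
\end{equation*}
For the first piece I would apply the Cauchy--Schwarz inequality in $k$ together with Lemma \ref{Nconvergence}, which gives that for $\tilde\bP$-almost every $\tilde\omega$ and Lebesgue-a.e.\ $t\in[0,T]$,
\begin{equation*}
\sum_{k=1}^{\infty}\|f_{n,k}(\rho,\rho\bu)-f_{n,k}(\tilde\rho_N,\tilde\rho_N\tilde\bu_N)\|_{L^{2}(\sO_\alpha)}^{2}\to 0.
\end{equation*}
The indicator function is bounded by $1$, so this pointwise-in-$(\tilde\omega,t)$ decay together with the $N$-uniform bound obtained from \eqref{fassumption}, the comparison principle \eqref{comparison}, and the a.s.\ bound \eqref{asun} on $\tilde\bu_N$ in $C(0,T;X_n^f)$ yields a $t$-integrable dominating function on a set of full $\tilde\bP$-measure. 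The Vitali convergence theorem (or dominated convergence, given the uniform bounds and almost sure pointwise convergence) then produces the desired $L^{1}(0,T)$-convergence, and hence convergence in probability.

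For the second piece, the a.s.\ convergence $\tilde\eta^*_N\to\eta^*$ in $C(0,T;C(\Gamma))$ furnished by Theorem \ref{skorohod_time} implies that the Lebesgue measure of the symmetric difference $\sO_{\tilde\eta^*_N}(t)\triangle\sO_{\eta^*}(t)$ tends to zero, uniformly in $t\in[0,T]$, $\tilde\bP$-a.s. Combining this with $\sum_k\|f_{n,k}(\rho,\rho\bu)\|_{L^{2}(\sO_\alpha)}^{2}\in L^{1}(\tilde\Omega\times[0,T])$, which follows from the assumption \eqref{fassumption} and the uniform energy estimate \eqref{moment_timedis}, dominated convergence again delivers
\begin{equation*}
\sum_{k=1}^{\infty}\int_{0}^{T}\left|\Big(\bigl(\mathbbm{1}_{\sO_{\tilde\eta^*_N}}-\mathbbm{1}_{\sO_{\eta^*}}\bigr)f_{n,k}(\rho,\rho\bu),\bd{q}\Big)\right|^{2}dt\longrightarrow 0,\qquad \tilde\bP\text{-a.s.}
\end{equation*}
Putting the two pieces together verifies input (ii) above, and the classical stochastic-integral convergence lemma yields the claim. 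The main subtlety I anticipate is the simultaneous handling of the indicator-function discontinuity and the Hilbert--Schmidt topology; but because the Galerkin parameter $n$ is fixed here, the density stays strictly positive and pointwise bounded $\tilde\bP$-a.s.\ by \eqref{comparison}, and the sum over $k$ inherits the square-summability from the coefficients $\{c_k\}$ in \eqref{fassumption}, so the dominated convergence argument closes without difficulty.
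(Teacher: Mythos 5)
Your proposal is correct and follows essentially the same route as the paper: the same reduction via the classical stochastic-integral convergence lemma of Bensoussan/Debussche--Glatt-Holtz--Temam, the same decomposition into an indicator-difference term and an $f_{n,k}$-difference term, the same invocation of Lemma \ref{Nconvergence} for the latter, and an integrability/convergence theorem (you use dominated convergence where the paper phrases it as Vitali with uniform $p$-moment bounds, which amounts to the same thing here) to close the argument. Your remark that convergence is in probability and that a.s.\ convergence is obtained along a subsequence is, if anything, a slightly more careful reading of the cited lemma than the paper's own statement.
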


\begin{proof}

To do this, by classical methods of \cite{Ben} (for proofs see Lemma 2.1 in \cite{DGHT} and Lemma 2.6.6 in \cite{BFH18}), it suffices to show that
\begin{equation}\label{probstochdeltat}
(\mathbbm{1}_{\sO_{\tilde\eta^*_N}}\bd{F}_{ n}(\tilde{\rho}_{N}, \tilde{\rho}_{N}\tilde{\bu}_{N}) , \bd{q}) \to (\mathbbm{1}_{\mathcal{O}_{\eta^{*}}} \bd{F}_{n}(\rho, \rho \bu) , \bd{q}) \quad \text{ in probability in } L^{2}(0, T; L_{2}(\mathcal{U}_0; \mathbb{R})),
\end{equation}
 where $L_2(X,Y)$ is the Hilbert-Schmidt norm of operators mapping Hilbert space $X$ to $Y$.
We do this by showing the following:
\begin{align}\label{aestochdeltat}
&(\mathbbm{1}_{\sO_{\tilde\eta^*_N}}\bd{F}_{n}(\tilde{\rho}_{N}, \tilde{\rho}_{N} \tilde{\bu}_{N}) , \bd{q}) \to (\mathbbm{1}_{\mathcal{O}_{\eta^{*}}} \bd{F}_{n}(\rho, \rho \bu) , \bd{q}) \quad \text{in $L_{2}(\mathcal{U}_0; \R)$, for a.e. } (\tilde{\omega}, t) \in \tilde{\Omega} \times [0, T],\\
&\tilde{\mathbb{E}} \int_{0}^{T} \| \mathbbm{1}_{\sO_{\tilde\eta^*_N}}\bd{F}_{n}(\tilde{\rho}_{N}, \tilde{\rho}_{N} \tilde{\bu}_{N}) , \bd{q}) - (\mathbbm{1}_{\mathcal{O}_{\eta^{*}}} \bd{F}_{n}(\rho, \rho \bu) , \bd{q})\|_{L_{2}(\mathcal{U}_0; \R)}^{p} \le C_{p}, \ \text{ for $p \ge 2$},\label{momentstochdeltat}
\end{align}
where $C_{p}$ depends only on $p \ge 2$. These two facts imply by the Vitali convergence theorem that 
\begin{equation*}
\tilde\bE \int_{0}^{T} \|\mathbbm{1}_{\sO_{\tilde\eta^*_N}} \bd{F}_{n}(\tilde{\rho}_{N}, \tilde{\rho}_{N}\tilde{\bu}_{N}) , \bd{q}) - (\mathbbm{1}_{\mathcal{O}_{\eta^{*}}} \bd{F}_{n}(\rho, \rho \bu) , \bd{q})\|_{L_{2}(\mathcal{U}_0; \R)}^{2} \to 0 \quad \text{ as } N \to \infty,
\end{equation*}
from which \eqref{probstochdeltat} immediately follows.

\medskip

\noindent \textbf{Proof of \eqref{aestochdeltat}.} 
Recalling the definition of $\displaystyle f_{n, k}(\rho, \bd{q}) := \mathcal{M}^{1/2}[\rho] \left(P_{n}^{f}\left(\frac{f_{k}(\rho, \bd{q})}{\rho^{1/2}}\right)\right)$ from \eqref{Fndef} we prove the following convergences:
\begin{align}
&\sum_{k = 1}^{\infty} \Big(\mathbbm{1}_{\mathcal{O}_{\eta^{*}}}f_{n, k}(\rho, \rho \bu) - \mathbbm{1}_{\mathcal{O}_{ \tilde{\eta}_{N}^{*}}} f_{n, k}(\rho, \rho \bu), \bd{q}\Big)^{2} \to 0, \quad \text{ for a.e. $(\tilde{\omega}, t) \in \tilde{\Omega} \times [0, T]$}.\label{sumsquaredeltatc}
\\ 
&\sum_{k = 1}^{\infty} \Big(\mathbbm{1}_{\mathcal{O}_{{\tilde{\eta}_{N}^{*}}} }(f_{n, k}(\rho, \rho \bd{u}) - f_{n, k}(\tilde{\rho}_{N}, \tilde{\rho}_{N}\tilde{\bd{u}}_{N})), \bd{q}\Big)^{2} \to 0, \quad \text{ for a.e. $(\tilde{\omega}, t) \in \tilde{\Omega} \times [0, T]$},\label{sumsquaredeltatb}
\end{align}
The result \eqref{sumsquaredeltatc} follows from the fact that $\mathbbm{1}_{\sO_{\tilde\eta_N^*}} \to \mathbbm{1}_{\sO_{\eta^*}}$ in $L^\infty(0,T;L^p(\sO_\alpha))$, $\tilde\bP$-almost surely for any $1 \le p < \infty$ whereas
\eqref{sumsquaredeltatb} follows from \eqref{sumsquaredelta2}.

\medskip

\noindent \textbf{Proof of \eqref{momentstochdeltat}.} Recalling the definition in \eqref{Fndef}, we estimate that for $p \ge 2$:
\begin{multline*}
\tilde{\mathbb{E} }\int_{0}^{T} \|(\bd{F}_{n}(\tilde{\rho}_{N}, \tilde{\rho}_{N} \tilde{\bu}_{N}), \bd{q})) - (\bd{F}_{n}(\rho, \rho \bu) , \bd{q})\|_{L_{2}(\mathcal{U}_0; \R)}^{p} \\
\le C\|\bd{q}\|^p_{L^{\infty}(\mathcal{O}_{\alpha})} \left[\tilde{\mathbb{E} }\int_{0}^{T} \left(\sum_{k = 1}^{\infty} \|f_{n, k}(\tilde{\rho}_{N}, \tilde{\rho}_{N} \tilde{\bu}_{N})\|_{L^{2}(\mathcal{O}_{\alpha})}^{2}\right)^{p/2} + \tilde{\mathbb{E} }\int_{0}^{T} \left(\sum_{k = 1}^{\infty} \|f_{n, k}(\rho, \rho \bu)\|_{L_{2}(\mathcal{O}_{\alpha})}^{2}\right)^{p/2}\right].
\end{multline*}
This is uniformly bounded independently of $N$ as a result of the definition of $f_{n, k}(\rho, q)$ in \eqref{Fndef}, the estimate $\|\mathcal{M}^{1/2}[\rho]\|_{\mathcal{L}(X^f_n, X^f_n)} \le C_{n}\|\rho\|_{L^{1}(\mathcal{O}_{\alpha})}^{1/2}$, the assumption \eqref{fassumption} on the noise, and the uniform moment estimates on the energy (which are independent of $N$) in \eqref{moment_timedis}.

\end{proof}

\section{The Galerkin approximation: Passing to the limit $n \to \infty$}\label{sec:galerkin}
In this section, we will emphasize the dependence of the martingale solution constructed on the probability space $(\tilde\Omega,\tilde\sF,\tilde\bP)$ in the previous section on the parameter $n$ and we hence denote it by $(\rho_n,\bu_n,\eta_n,\eta^*_n,v_n,W_n^1,W_n^2)$. Notice that for that simplicity of notation, we temporarily suppress the dependence of this solution on the other parameters $\ep,\delta$. 
The aim of this section is to obtain bounds, uniformly in $n$, for these approximate solutions with the intent of passing $n$ to $\infty$.
 Recall that the random variables constructed in the previous section satisfy the following equation
\begin{multline}\label{galerkin}
 \int_{\mathcal{O}_{\alpha}} \rho_{n}(t) \bu_{n}(t) \cdot \bd{q}+ \int_{\Gamma} v_n(t) \psi= \int_{\mathcal{O}_{\alpha}} \bp_{0,\delta,\ep} \cdot \bd{q} + \int_{\Gamma} v_0 \psi\\
 +\int_{0}^{t} \int_{\mathcal{O}_{\alpha}} (\rho_{n} \bu_{n} \otimes \bu_{n}) : \nabla \bd{q} + \int_{0}^{t} \int_{\mathcal{O}_{\alpha}} \Big(a\rho_{n}^{\gamma} + \delta \rho_{n}^{\beta}\Big) (\nabla \cdot \bd{q}) - \int_{0}^{t} \int_{\mathcal{O}_{\alpha}} {\mu^{\eta^*_n}_\delta } \nabla \bu_{n} : \nabla \bd{q} \\
{+\ep\int_0^t\int_{\sO_\alpha}\ \bu_n\rho_n\cdot \Delta\bq }	-  \int_{0}^{t} \int_{\mathcal{O}_{\alpha}}{\lambda^{\eta^*_{n}}_\delta }\text{div}(\bu_{n}) \text{div}(\bd{q}) - \frac{1}{\delta} \int_{0}^{t} \int_{{T^\delta_{\eta^*_n}}} (\bu_{n} - v_{n} \bd{e}_{z}) \cdot (\bd{q} - \psi \bd{e}_{z})\\
	 - \int_{0}^{t} \int_{\Gamma} \nabla v_{n} \cdot \nabla \psi  - \int_{0}^{t} \int_{\Gamma} \nabla \eta_{n} \cdot \nabla \psi - \int_{0}^{t} \int_{\Gamma} \Delta \eta_{n} \Delta \psi  \\
+ \int_{0}^{t} \int_{\mathcal{O}_{\alpha}} \mathbbm{1}_{\sO_{\eta_n^*}}\bd{F}_{n}(\rho_{n}, \rho_{n}\bu_{n}) \cdot \bd{q}dW_n^1 +\int_{0}^{t} \int_{\Gamma}G_n(\eta_{n}, v_{n}) \psi dW_n^2,
\end{multline}
 $\tilde{\bP}$-almost surely for any $t\in[0,T]$ and for any test function $\bd{q} \in X_n^f$ and $\psi \in X^{st}_n$. Here,
$$\partial_t\eta_n=v_n.$$
Moreover, we have that the continuity equation is satisfied in a distributional sense, $\tilde{\bP}$-almost surely as follows:
 \begin{equation}\label{cont_ep}
\partial_t\rho_{n} + \text{div}(\rho_n \bu_n) = \varepsilon \Delta \rho_n, \,\, \text{ in } \mathcal{O}_{\alpha},\qquad
 \nabla \rho_n \cdot \bd{n}|_{\partial \mathcal{O}_{\alpha}} = 0, \qquad {{\rho_{n}(0) = \rho_{0, \delta, \ep}}},
\end{equation} 
where $\bd{n}$ is the unit normal to the boundary of the fixed maximal domain $\sO_\alpha$.
Thanks to weak lower semicontinuity of the norm, the energy estimates found in Section \ref{sec:timedis} hold true for the Galerkin approximations which gives us the following result.
\begin{lemma}\label{energy_galerkin}
The sequence of solutions $(\rho_n,\bu_n,\eta_n,\eta^*_n,v_n)$ to \eqref{galerkin} satisfies the following bounds: For any $p\geq 1$, there exists a constant $C>0$, independent of $n$, such that
    \begin{enumerate}
\item $\tilde{\bE}\|\bu_n\|^p_{L^2(0,T;H^1_0(\sO_{\alpha}))} \leq C(\delta)$,
\item $\tilde{\bE}\|\sqrt{\rho_n}\bu_n \|^p_{L^\infty(0,T;L^2(\sO_{\alpha}))}\leq C$,
\item $\tilde{\bE}\|\delta^{\frac1\beta}\rho_n\|^p_{L^\infty(0,T;L^\beta(\sO_{\alpha}))}\leq C$,
\item $\tilde{\bE}\|\sqrt{\ep\delta}\rho_n^{\frac{\beta}2}\|^p_{L^2(0,T;H^1(\sO_{\alpha}))}\leq C,\quad \tilde{\bE}\| \sqrt{\ep}\rho_n^{\frac{\gamma}2}\|^p_{L^2(0,T;H^1(\sO_{\alpha}))}\leq C$,
\item $\tilde{\bE}\|\sqrt{\ep}\rho_n\|^p_{L^2(0,T;H^1(\sO_\alpha))}\leq C$,
\item $\tilde{\bE}\|v_n \|^p_{L^2(0,T;H^1(\Gamma))} \leq C$, where $\partial_t\eta_n=v_n $, 
\item $\tilde{\bE}\|\eta_n\|^p_{{W}^{1,\infty}(0,T;L^2(\Gamma))\cap L^{2}(0,T;H^2(\Gamma))} \leq C$, \quad $\tilde{\bE}\|\eta^*_n\|^p_{ L^{2}(0,T;H^2(\Gamma))} \leq C$.
\item $\tilde{\bE}\|\bu_n-v_n{\bf e}_z\|_{L^2(0,T;L^2(T^\delta_{\eta^*_n}))}^p \le C\delta^{\frac{p}2}$.
\end{enumerate}
\end{lemma}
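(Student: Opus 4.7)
The plan is to deduce each of the eight bounds by transferring the uniform-in-$N$ energy estimate \eqref{moment_timedis} from Section \ref{sec:timedis} down to the Galerkin level via the convergences provided by Theorem \ref{skorohod_time} together with lower semicontinuity of norms. The right-hand side of \eqref{moment_timedis} depends only on the (deterministic, $N$-independent) extended initial data $\rho_{0,\delta,\ep}, \bp_{0,\delta,\ep}, v_0, \eta_0$, so all the bounds it yields are automatically independent of $n$. The only active ingredients in the limit passage are: the almost-sure convergences $\tilde\rho_N\to\rho_n$ in $C_tL^\beta_x\cap L^\beta_tW^{1,\beta}_x$, $\tilde\bu_N\to\bu_n$ in $C_tL^2_x$, $\tilde\eta_N\to\eta_n$ in $\mathcal X_\eta$ and $\tilde v_N\to v_n$ in $C_tL^2(\Gamma)$ obtained on the new probability space, together with the weak convergences of $\nabla\tilde\bu_N$ in $L^2_tL^2_x$, of $\nabla\tilde v_N$ in $L^2_tL^2(\Gamma)$ and of $\nabla\tilde\rho_N^{\gamma/2}$, $\nabla\tilde\rho_N^{\beta/2}$, $\nabla\tilde\rho_N$ in $L^2_tL^2_x$ that are forced by the uniform dissipation bounds.

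Concretely, for items (4)--(6) I would apply Fatou's lemma in $\tilde\omega$ combined with weak lower semicontinuity of the $L^2_tL^2_x$ and $L^2_tL^2(\Gamma)$ norms under the aforementioned weak limits, directly inheriting the corresponding moment bounds from \eqref{moment_timedis}. For item (1), I would use that the extended viscosity satisfies $\mu^{\eta^*}_\delta=\chi^{\eta^*}_{\delta^{\nu_0}}\mu\ge \delta^{\nu_0}\mu$ on all of $\sO_\alpha$ by \eqref{chi}--\eqref{viscosityextension}, so that the dissipation estimate already contains a $\delta$-weighted control of $\nabla\bu_n$ in $L^2_tL^2_x$; Poincar\'e's inequality applies because $X_n^f\subset H^l_0(\sO_\alpha)$ gives $\bu_n|_{\partial\sO_\alpha}=0$, and this upgrades the gradient bound to an $H^1_0$-bound with a $\delta$-dependent constant. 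Items (2), (3) and the first half of (7) involve $L^\infty_t$-norms of $\sqrt{\rho_n}\bu_n$, $\rho_n$, and $\eta_n$; I would pass to the limit by weak-$*$ lower semicontinuity of the $L^\infty_tX$-norm, identifying the weak-$*$ limit of $\sqrt{\tilde\rho_N}\tilde\bu_N$ by combining strong convergence of $\tilde\rho_N$ in $C_tL^\beta_x$ with almost-sure convergence of $\tilde\bu_N$ in $C_tL^2_x$. The bound on $\eta^*_n$ in item (7) then follows from the identity $\eta^*_n(t)=\eta_n(\tau^\eta_n\wedge t)$ and the stopping-time definition. Item (8) is extracted from the specific penalty contribution $\tfrac1{2\delta}\int_0^T\int_{T^\delta_{\eta^*_N}}|\bu_N-v_N\bd e_z|^2$ in \eqref{moment_timedis}: this term is bounded in $L^p(\tilde\Omega)$ by a constant independent of $\delta$ and $N$, and almost-sure convergence $\mathbbm 1_{T^\delta_{\tilde\eta^*_N}}\to\mathbbm 1_{T^\delta_{\eta^*_n}}$ in $L^\infty_tL^p_x$ (a consequence of $\tilde\eta^*_N\to\eta^*_n$ uniformly) lets me push the lower-semicontinuity argument through, producing $\tilde{\mathbb E}\bigl(\tfrac1\delta\|\bu_n-v_n\bd e_z\|^2_{L^2_tL^2(T^\delta_{\eta^*_n})}\bigr)^p\le C$, which rearranges to the stated $C\delta^{p/2}$ rate.

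The one genuinely delicate step is the identification of weak limits of nonlinear combinations like $\sqrt{\rho_n}\bu_n$, $\rho_n|\bu_n|^2$, and $\rho_n^\beta$ in the Itô-based energy balance prior to taking lower semicontinuity: this is where strong $C_tL^\beta_x$ convergence of $\tilde\rho_N$ (coming out of the tightness analysis with $\beta>\max\{4,\gamma\}$) is used in tandem with strong $C_tL^2_x$ convergence of $\tilde\bu_N$. A secondary subtlety is ensuring that the $\mathcal M^{-1}[\rho_n]$-based It\^o computation used to produce \eqref{moment_timedis} transfers to the limit: this is guaranteed because the strict positivity $\rho_{0,\delta,\ep}\ge\ep$ in \eqref{rho0conv} and the comparison principle \eqref{comparison} keep $\rho_n$ bounded below by a strictly positive random constant, so $\mathcal M[\rho_n]$ remains invertible uniformly in $N$. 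Everything else is a routine application of BDG combined with the noise growth conditions \eqref{fassumption}--\eqref{gassumption} and Gronwall, exactly as performed in Section \ref{sec:timedis}, and therefore need not be repeated.
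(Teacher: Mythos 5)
Your overall strategy—transfer the uniform estimate \eqref{moment_timedis} to the Galerkin level via Skorohod and weak lower semicontinuity—is essentially what the paper does and is fine for items (1)–(4) and (6)–(8). However, there is a genuine gap in your treatment of item (5). You list ``$\nabla\tilde\rho_N$'' among the quantities whose $L^2_tL^2_x$-bound is ``forced by the uniform dissipation bounds'' in \eqref{moment_timedis}, but this is not so: the dissipation in \eqref{moment_timedis} contains only $\ep\|\nabla(\rho_N^{\gamma/2})\|^2_{L^2_tL^2_x}$ and $\ep\delta\|\nabla(\rho_N^{\beta/2})\|^2_{L^2_tL^2_x}$ (which give item (4)), not $\ep\|\nabla\rho_N\|^2_{L^2_tL^2_x}$. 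Because $\rho^{\gamma-2}|\nabla\rho|^2$ and $\rho^{\beta-2}|\nabla\rho|^2$ degenerate near vacuum, they do not control $|\nabla\rho|^2$, so item (5) cannot be inherited from \eqref{moment_timedis} by lower semicontinuity.

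The argument you are missing is a direct one: test the artificial-viscosity continuity equation \eqref{cont_ep} with $\rho_n$ itself. This gives, after integrating in time and taking expectation,
\begin{align*}
2\ep\,\tilde{\bE}\int_0^T\!\!\int_{\sO_\alpha}|\nabla\rho_n|^2 + \tilde{\bE}\int_{\sO_\alpha}|\rho_n(T)|^2 = \|\rho_{0,\delta,\ep}\|^2_{L^2(\sO_\alpha)} - \tilde{\bE}\int_0^T\!\!\int_{\sO_\alpha}(\nabla\cdot\bu_n)\,|\rho_n|^2,
\end{align*}
and the last term is bounded using Cauchy--Schwarz, the $H^1_0$-bound on $\bu_n$ from item (1) and the $L^\infty_tL^4_x$-bound on $\rho_n$ (available because $\beta\ge 4$, via item (3)). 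This gives the $L^2_tH^1$-bound on $\sqrt{\ep}\,\rho_n$; note the conclusion is therefore logically dependent on items (1) and (3), which indeed come from \eqref{moment_timedis}. Without this extra step your proof of (5) does not close.
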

\begin{proof}
Statement (5) is the only one that requires further explanation as the remaining statements are a direct consequence of the energy estimate \eqref{moment_timedis}. For that purpose, we consider the continuity equation \eqref{cont_ep} and test it with $\rho_n$ to obtain that
\begin{align}
{{2}}\ep	\tilde{\bE}\int_0^T\int_{\sO_\alpha}|\nabla\rho_n|^2 &+\tilde{\bE}\int_{\sO_\alpha}|\rho_n|^2=\|\rho_{0,\delta,\ep}\|^2_{L^2(\sO_\alpha)}-\tilde{\bE}\int_0^T\int_{\sO_\alpha}\nabla\cdot \bu_n|\rho_n|^2\label{rhoH1}\\
&\leq {{\|\rho_{0, \delta, \ep}\|^{2}_{L^{2}(\mathcal{O}_{\alpha})}}} + \tilde{\bE}\|\bu_n\|^2_{L^2(0,T;H^1_0(\sO_\alpha))} + \tilde{\mathbb{E}} \int_0^T\int_{\sO_\alpha}|\rho_n|^4 \leq C,\notag
\end{align}
where we recall in the final inequality that the artificial pressure exponent satisfies $\beta \ge 4$, and where we use the uniform bound in Lemma \ref{energy_galerkin}(3).
\end{proof}

We quickly notice the following implications of the energy estimates Lemma \ref{energy_galerkin}.
First, for any $p>1$ there exists a constant $C_p>0$ independent of $n$:
$$\tilde{\bE}\|\rho_n\bu_n\|^p_{L^\infty(0,T;L^{\frac{2\beta}{\beta+1}}(\sO_\alpha))} \leq C_p.$$
Indeed, we apply the H\"older inequality to $\rho_n\bu_n$ with $\beta+1$ and $\frac{\beta+1}{\beta}$ to obtain
\begin{equation}
    \begin{split}\label{urho}
        \tilde{\bE}\sup_{0\leq t\leq T}\int_{\sO_\alpha}(\sqrt{\rho_n}|	\sqrt{\rho_n}\bu_n|)^{\frac{2\beta}{\beta+1}} &\leq \left(\tilde{\bE}\sup_{0\leq t\leq T} \int_{\sO_\alpha}|\rho_n|^{\beta}\right)^{\frac1{\beta+1}}  \left(\tilde{\bE}\sup_{0\leq t\leq T} \int_{\sO_\alpha}|\sqrt{\rho_n}\bu_n|^{2}\right)^{\frac{\beta}{\beta+1}}  \\
        &\leq C.
    \end{split}
\end{equation}
Consequently, we see that div$(\rho_n\bu_n)$ is bounded in $L^{p}(\tilde\Omega ;L^\infty(0,T;W^{-1,\frac{2\beta}{\beta+1}}(\sO_\alpha)))$ for all $p \in [1, \infty)$. We also know that $\ep\Delta\rho_n $ is bounded independently of $n$ in $ L^{{p}}(\tilde\Omega;L^\infty(0,T;W^{-2,2}(\sO_\alpha)))$. 
Hence we conclude by using the continuity equation that for any $p \geq 1$, there exists $C>0$ independent of $n$ for which
\begin{align}\label{rho_t}
    \tilde{\bE}\|\rho_n\|^p_{W^{1,\infty}(0,T;W^{-2,\frac{2\beta}{\beta+1}}(\sO_\alpha))} \leq C.
\end{align}

Now we will derive uniform estimates for the fractional time derivative of order $<\frac12$ for the structural velocity which will enable us to obtain tightness of its laws in $L^2_tL^2_x$. While the ideas used in the previous section are still applicable, we will derive estimates that are independent not only of $n$ but also of $\ep$ and $\delta$ so that these estimates can be used in the subsequent sections as well.
\begin{lemma}\label{lem:vtight}
For some $0<\bar{\kappa}<\frac12$ and $C>0$ independent of $n,\ep$ and $\delta$ we have that
   \begin{align}\label{nikolski_v}
 \tilde{\bE}\left[\sup_{0<h< T} \frac1{h^{\bar{\kappa}}}\|\mathcal{T}_hv_n-v_n\|_{L^2(h,T;L^2(\Gamma))}\right]\leq C,
\end{align}
where $\mathcal{T}_hf(t) = f(t-h)$.
\end{lemma}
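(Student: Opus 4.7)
The plan is to decompose $v_n$ into its drift and martingale components using the Galerkin weak formulation, prove fractional time regularity for each part separately, and combine via interpolation with the uniform spatial control $v_n \in L^2(0,T; H^1(\Gamma))$ from Lemma \ref{energy_galerkin}. Taking the fluid test function $\bd{q} \equiv 0$ in \eqref{galerkin}, the evolution of $v_n$ reads, for any $\psi \in X_n^{st}$ and $0 \le s \le t \le T$,
\begin{equation*}
\int_\Gamma (v_n(t) - v_n(s))\psi = \int_s^t \langle A_n(r),\psi\rangle\,dr + \int_s^t (G_n(\eta_n, v_n)\,dW_n^2, \psi)_{L^2(\Gamma)},
\end{equation*}
where the drift $A_n$ collects the elastic, biharmonic, viscoelastic, and penalty contributions. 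I split $v_n = V_n + M_n$ with $M_n(t) := \int_0^t G_n(\eta_n,v_n)\,dW_n^2$ the martingale part and $V_n$ the finite-variation part.

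For the martingale part, the Burkholder--Davis--Gundy inequality, combined with the linear growth assumption \eqref{gassumption} on $G$ and the uniform moment bounds $\tilde{\bE}(\|\eta_n\|^p_{L^\infty_t L^2_x} + \|v_n\|^p_{L^\infty_t L^2_x}) \le C_p$ from Lemma \ref{energy_galerkin}, gives, for every $p \ge 2$,
\begin{equation*}
\tilde{\bE}\|M_n(t) - M_n(s)\|_{L^2(\Gamma)}^p \le C_p\, |t-s|^{p/2},
\end{equation*}
uniformly in $n, \ep, \delta$. Choosing $p$ sufficiently large and invoking the Sobolev--Besov embedding $W^{\alpha, p}(0,T;L^2) \hookrightarrow C^{\alpha - 1/p}(0,T;L^2)$ together with a Kolmogorov-type continuity criterion then yields H\"older-in-time control of $M_n$ in $L^2(\Gamma)$, from which the desired uniform supremum Nikolskii-type bound for the stochastic component follows.

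For the drift part, I bound the increment $V_n(t) - V_n(t-h) = \int_{t-h}^t A_n(r)\,dr$ in a sufficiently weak Sobolev space $H^{-s}(\Gamma)$ (with $s \ge 2$) by testing against $\psi \in H^s(\Gamma)$ and using the bounds of Lemma \ref{energy_galerkin}. The elastic, biharmonic, and viscoelastic pieces contribute factors of order $h$ or $h^{1/2}$ when paired against $\psi$, exploiting $\eta_n \in L^\infty(0,T;H^2)$ and $v_n \in L^2(0,T;H^1)$ uniformly. The penalty contribution is handled via Lemma \ref{energy_galerkin}(8) combined with the tubular geometry identity $\|\psi\|^2_{L^2(T^\delta_{\eta_n^*})} = \delta^{1/2-1/\beta}\|\psi\|^2_{L^2(\Gamma)}$. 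Interpolating the resulting $H^{-s}$-valued time regularity of $V_n$ against the spatial bound $v_n \in L^2(0,T;H^1)$ via $\|f\|_{L^2} \le C\|f\|_{H^{-s}}^{1/(s+1)}\|f\|_{H^1}^{s/(s+1)}$, and applying H\"older in time and in expectation, yields, for some $\bar\kappa_2 \in (0,1/2)$,
\begin{equation*}
\tilde{\bE}\int_h^T \|V_n(t) - V_n(t-h)\|^2_{L^2(\Gamma)}\,dt \le C h^{2\bar\kappa_2}.
\end{equation*}

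The principal technical obstacle is handling the penalty uniformly in $\delta$: the singular prefactor $\delta^{-1}$ must be absorbed by the tubular width $\delta^{1/2-1/\beta}$ together with the energy-level seepage bound $\|\bu_n - v_n\bd{e}_z\|_{L^2_t L^2(T^\delta)} \lesssim \delta^{1/2}$ coming from Lemma \ref{energy_galerkin}(8), with careful bookkeeping of the resulting $\delta$-exponents and, if necessary, absorption of leftover terms into the viscoelastic dissipation $\|\nabla v_n\|_{L^2_t L^2_x}^2$ via Young's inequality. Once the pointwise-in-$h$ $L^p$ estimates are established for sufficiently large $p$, a dyadic covering in $h$ together with a Kolmogorov--Chentsov-type continuity argument applied to $h \mapsto h^{-\bar\kappa}\|\mathcal{T}_h v_n - v_n\|_{L^2(h,T;L^2(\Gamma))}$ upgrades them to the required $\tilde{\bE}[\sup_{h}\cdot]$ estimate for some $0 < \bar\kappa < \min(\bar\kappa_1, \bar\kappa_2) < 1/2$, uniformly in $n, \ep, \delta$.
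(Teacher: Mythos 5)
Your drift/martingale decomposition with the fluid test function set to $\bd{q}\equiv 0$ is a standard opening move, but it fails irreparably on the penalty term, which is exactly where the paper's proof diverges from yours. With $\bd{q}\equiv 0$, testing the structure equation against $\psi\in X_n^{st}$, $\|\psi\|_{L^2(\Gamma)}\le 1$, and integrating from $t-h$ to $t$ produces the drift contribution
\begin{equation*}
\frac{1}{\delta}\int_{t-h}^t\int_{T^\delta_{\eta^*_n}}(\bu_n - v_n\bd{e}_z)\cdot\psi\,\bd{e}_z\,dr .
\end{equation*}
Using the tubular identity you invoke, $\|\psi\|^2_{L^2(T^\delta_{\eta^*_n})} = \delta^{\frac12-\frac1\beta}\|\psi\|^2_{L^2(\Gamma)}$, together with the seepage bound $\tilde{\bE}\|\bu_n - v_n\bd{e}_z\|^p_{L^2_tL^2(T^\delta_{\eta^*_n})}\lesssim \delta^{p/2}$ from Lemma~\ref{energy_galerkin}(8), this is at best of order
\begin{equation*}
\frac{1}{\delta}\cdot \delta^{1/2}\cdot h^{1/2}\cdot \delta^{\frac14-\frac1{2\beta}} = h^{1/2}\,\delta^{-\frac14 - \frac1{2\beta}},
\end{equation*}
which blows up as $\delta\to 0$ rather than being uniform in $\delta$. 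Passing to a stronger negative Sobolev pairing ($s>1$ gives $\|\psi\|_{L^\infty}\lesssim 1$) or interchanging $L^1$/$L^2$ on the tube changes constants but never the sign of the $\delta$-exponent: the singular $\delta^{-1}$ is only half cancelled by the single seepage factor $\delta^{1/2}$, while the tube volume only supplies $\delta^{(1/2-1/\beta)/2}<\delta^{1/2}$. Absorbing the leftover into the viscoelastic dissipation $\|\nabla v_n\|^2_{L^2_tL^2_x}$ by Young's inequality does not help either, since that dissipation is only $O(1)$ and you have no small parameter to trade.

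The paper avoids this by running a single coupled test: $\bd{q}_n = \int_{t-h}^t \bu_n^\lambda$, with $\bu_n^\lambda$ a mollified, squeezed extension of $\bu_n$ that agrees with $v_n\bd{e}_z$ outside the physical domain. By the construction (see \eqref{usig-v} and the choice \eqref{distsq}), $|\bu_n^\lambda - v_n^\lambda\bd{e}_z|$ on $T^\delta_{\eta^*_n}$ is controlled by $|\bu_n - v_n\bd{e}_z|$ itself, so the penalty term becomes
\begin{equation*}
\frac{1}{\delta}\int_0^T\int_{T^\delta_{\eta^*_n}}(\bu_n - v_n\bd{e}_z)\cdot\int_{t-h}^t\bigl((\bu_{n,\sigma})^\lambda - v_n^\lambda\bd{e}_z\bigr)
\;\lesssim\; \frac{h}{\delta}\,\tilde{\bE}\|\bu_n - v_n\bd{e}_z\|^2_{L^2_tL^2(T^\delta_{\eta^*_n})}\;\lesssim\; h ,
\end{equation*}
with \emph{both} factors in the inner product carrying the favorable $\delta^{1/2}$ and the $\delta^{-1}$ fully cancelled --- the paper explicitly notes ``this is the reason behind choosing a non-trivial fluid test function.'' Once that is done, the paper does not need a martingale/drift split or an interpolation against $H^1(\Gamma)$ to reach $L^2(\Gamma)$: the target increment $\|v_n(t)-v_n(t-h)\|^2_{L^2(\Gamma)}$ appears directly as the term $J_0^2$ of the tested equation, and the stochastic integral is estimated directly by BDG against the bounded test $\psi_n=\int_{t-h}^t v_n^\lambda$. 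Your BDG estimate for the martingale part, the $H^{-s}$-increment plus interpolation scheme for the drift, and the dyadic/Kolmogorov upgrade to $\tilde{\bE}[\sup_h]$ are all plausible ingredients, but without the coupled fluid test function the key uniform-in-$\delta$ bound is unavailable, and once you do take that test function the more natural route is the paper's direct argument.
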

\begin{proof}
To obtain the left-hand side term in \eqref{nikolski_v} we will test the weak formulation \eqref{galerkin} with the time integral from $t-h$ to $t$ of a modification of the solutions $\bu_n$ and $v_n$. This modification is necessary since $\bu_n$ and $v_n$ do not possess the required spatial regularity of a test function. Hence, this modification will be obtained by regularizing the approximate fluid and structure velocities.

To that end, we consider the following $H^1$-extension of the fluid velocity to $\Gamma\times(0,\infty)$ {constructed by interpolating between the fluid velocity $\bu_n$ and the structure velocity $v_n$ in a transition region of width $\lambda_1\delta^{\frac12-\frac1\beta}$ near the top boundary of the tubular region $T^\delta_{\eta^*_n}$ for some $0<\lambda_1<1$}:
\begin{equation*}
{\bu}_{n, \text{ext}}(t) = {\bu}_{n}(t) + \left[\min\left(\frac{z - (1+{\eta}^{*}_{n}(t)+\delta^{\frac12-\frac1\beta})}{\lambda_1\delta^{\frac{1}{2} - \frac{1}{\beta}}}+1, 1  \right)\right]^{+} \Big({v}_{n}(t) \bd{e}_{z} - {\bu}_n(t)\Big).
\end{equation*}

Since $\bu_{n,ext} = \bu_n$ in $\sO_{\eta^*_n+(1-\lambda_1)\delta^{\frac12-\frac1\beta}}$ (see Definition \ref{Oeta}) for any $q<6$ and $p\ge 1$, we observe that
\begin{align}\label{ext-n}
 \tilde\bE\|\bu_{n,ext} -\bu_n\|^p_{L^2(0,T;L^q(\sO_{\eta^*_n}\cup T^\delta_{\eta^*_n}))} \leq \tilde\bE\|\bu_n-v_n\bd{e}_z\|^p_{L^2(0,T;L^6(T^\delta_{\eta^*_n}))}|\lambda_1 \delta^{\frac12-\frac1\beta}|^{\frac{6-q}{6q}}\leq C|\lambda_1|^{\frac{6-q}{6q}}.
\end{align}
 Now for some $1<\sigma<2$ we will squeeze this extended function vertically as follows,
\begin{align*}
    \bu_{n,\sigma}(t,x,y,z)= \bu_{n,ext}(t,x,y,\sigma z).
\end{align*}
Note that for some $C>0$ independent of $n,\ep$ and $\delta$ (and $\sigma,\lambda_1$) we have any $q\in (1,\infty)$ that
\begin{align}\label{sigma_ext_l6}
\|\bu_{n,\sigma}\|_{L^2(0,T;L^q(\sO_\alpha))} \le \frac1\sigma\|\bu_{n,ext}\|_{L^2(0,T;L^q(\sO_\alpha))} \leq (\|\bu_{n}\|_{L^2(0,T;L^q(\sO_{\eta^*_n}\cup T^\delta_{\eta^*_n}))}+\|v_{n}\|_{L^2(0,T;L^q(\Gamma))}) ,
\end{align}
and for any $r<2$ and $p\geq 1$ that,
\begin{align}
 \tilde\bE \|\bu_{n,\sigma}\|^p_{L^2(0,T;W^{1,r}(\sO_\alpha))} &\leq C\tilde\bE \|\bu_{n,ext}\|^p_{L^2(0,T;W^{1,r}(\sO_\alpha))} \notag\\
 &\leq \frac{C}{\lambda_1^p}\tilde\bE (\|\bu_n\|_{L^2(0,T;H^1(\sO_{\eta^*_n}\cup T_{\eta^*_n}^\delta))} + \|v_n\|_{L^2(0,T;H^1(\Gamma))})^p \leq \frac{C}{\lambda_1^p},\label{u_sq}
\end{align}
and for some $C$ depending only on $0<m<1, l>1$ and $\alpha$ (see (47) in \cite{TC23}) we have
\begin{align}\label{sigma-ext}
    \|\bu_{n,\sigma}-\bu_{n,ext}\|^p_{L^2(0,T;L^l(\sO_\alpha))} \leq C (1-\sigma)^{(\frac3l+m)p}\|\bu_{n,ext}\|^p_{L^2(0,T;W^{m,l}(\sO_\alpha))} .
\end{align}
Note also that,
\begin{align*}
    \bu_{n,\sigma}(x,y,z)-v_n(x,y) \bd{e}_z &= \bu_n(x,y,\sigma z)- v_n(x,y) \bd{e}_z\quad \text{if} \quad z \leq \frac{1+\eta^*_n+(1-\lambda_1)\delta^{\frac12-\frac1\beta}}{\sigma} \\
    & = \left(\frac{ (1+{\eta}^{*}_{n}+\delta^{\frac12-\frac1\beta})-z}{\lambda_1\delta^{\frac{1}{2} - \frac{1}{\beta}}}\right)(\bu_n(x,y,\sigma z)- v_n(x,y)\bd{e}_z)\\
    &\hspace{1in}\text{if}  \quad \frac{1+\eta^*_n+(1-\lambda_1)\delta^{\frac12-\frac1\beta}}{\sigma}\leq z \leq \frac{1+\eta^*_n+\delta^{\frac12-\frac1\beta}}{\sigma}\\
    &=0\qquad\text{otherwise}.
\end{align*}
Hence,
\begin{align}\label{usig-v}
\int_{\Gamma}\int_{\frac{1+\eta^*_n}{\sigma}}^{\frac1\alpha}|\bu_{n,\sigma}-v_n\bd{e}_z|^2 \leq  \int_\Gamma\int_{1+\eta^*_n}^{1+\eta^*_n+\delta^{\frac12-\frac1\beta}}|\bu_n-v_n\bd{e}_z|^2 \leq \|\bu_n-v_n\bd{e}_z\|^2_{L^2(T^\delta_{\eta^*_n})}.
\end{align}
Now we will choose $\lambda\sim \sigma-1$ such that 
\begin{align}\label{distsq}
\text{dist}(\Gamma_{1+\eta^*_n},\Gamma_{\frac{1+\eta^*_n}{\sigma}})\geq \lambda.
\end{align}
Next, we let
$$\mathcal{S}^{\lambda,\delta}_\eta=\{(x,y,z):0<z<1+\eta+\delta^{\frac12-\frac1\beta}+\lambda\},$$
and define
\begin{align*}
    \bu_n^\lambda= \left(\mathbbm{1}_{\mathcal{S}^{\lambda,\delta}_{\eta^*_n}}\right)^\lambda(\bu_{n,\sigma})^\lambda,\qquad v_n^\lambda=(v_n)^\lambda,
\end{align*}
where for any $f\in L^p(\sO_\alpha)$, $p\geq 1$ we denote its space regularization, using the standard 3D mollifiers, by $(f)^\lambda \in C^\infty(\sO_\alpha)$.
Finally, we will take 
\begin{equation}\label{testvtight}
\bq_n=\int_{t-h}^t\bu_n^\lambda, \qquad\psi_n=\int_{t-h}^tv_n^\lambda,
\end{equation}
We fix an $h>0$ and let 
$\lambda\sim \sigma-1\sim h^{\theta}$ for some appropriately chosen $\theta<\frac15$, and we pick $\lambda_1 \sim \lambda^{\frac14(1-\frac3{2\gamma})}$; the reasons behind these choices will be apparent later in our calculations.

We test the coupled momentum equation \eqref{galerkin} with $(\bq_n,\psi_n)$ by applying the variant of the Ito formula given in Lemma 5.1 of \cite{BO13}. 
This yields,
\begin{align}
&	-\int_h^{T}\int_{\sO_\alpha}\rho_n\bu_n\cdot\partial_t\bq_n -\int_h^T\int_{\Gamma} v_{n} \partial_t\psi_n	= \int_{h}^{T} \int_{\mathcal{O}_{\alpha}} (\rho_{n} \bu_{n} \otimes \bu_{n}) : \nabla \bd{q}_n 	+ \int_{h}^{T} \int_{\mathcal{O}_{\alpha}} \Big(a\rho_{n}^{\gamma} + \delta \rho_{n}^{\beta}\Big) (\nabla \cdot \bd{q}_n)\notag \\
& - \int_{{h}}^{T} \int_{\mathcal{O}_{\alpha}} \mu^{\eta^*_n}_\delta  \nabla \bu_{n} : \nabla \bd{q}_n	-  \int_{h}^{T} \int_{\mathcal{O}_{\alpha}}\lambda^{\eta^*_{ n}}_\delta \text{div}(\bu_{n}) \text{div}(\bd{q}_n)+\ep\int_{h}^{T}\int_{\sO_\alpha}\bu_n \rho_n\cdot \Delta \bq_n \notag \\
& - \frac{1}{\delta} \int_{h}^{T} \int_{{T^\delta_{\eta^*_n}}} (\bu_{n} - v_{n} \bd{e}_{z}) \cdot (\bd{q}_n - \psi_n \bd{e}_{z}) - \int_{h}^{T} \int_{\Gamma} \nabla v_{n} \cdot \nabla \psi_{n} - \int_{h}^{T} \int_{\Gamma} \nabla \eta_{n} \cdot \nabla \psi_n- \int_{h}^{T} \int_{\Gamma} \Delta \eta_{n} \Delta \psi_n \notag \\
&	 + \int_{0}^{T} \int_{\Gamma}G_n(\eta_{n}, v_{n}) \psi_n dW_n^2(t) +\int_{h}^{T} \int_{\mathcal{O}_{\alpha}} \mathbbm{1}_{\sO_{\eta_n^*}}\bd{F}_{n}(\rho_{n} , \rho_{n} \bu_{n} ) \cdot \bd{q}_n dW_n^1(t)\notag \\
&	=I_1+...+I_{10}.\label{tightv_test}
	\end{align}
We will repeatedly use the following properties of mollification: For any $s,m\in\mathbb{R}$ and $p>1$,
\begin{equation}
\begin{split}\label{mollifyprop}
\|f-(f)^\lambda\|_{W^{s,p}} &\leq C\lambda^{m - s}\|f\|_{W^{m,p}},\\
\|(f)^\lambda\|_{W^{m+k,p}} &\leq C\lambda^{-k}\|f\|_{W^{m,p}},\quad \forall k \geq 0,
\end{split}
\end{equation}
where $(f)^\lambda$ denotes the space regularization of $f \in L^p(\sO_\alpha)$.
We will first analyze the two terms on the LHS. We begin with the more critical term: 
\begin{align*}
\int_h^{T}\int_{\sO_\alpha}&\rho_n\bu_n\cdot\partial_t\bq_n =	\int_0^{T}\int_{\sO_\alpha}\rho_n\bu_n\cdot\left( \bu_n^\lambda(t)-\bu_n^\lambda(t-h)\right)\\
&= \int_h^{T}\int_{\sO_\alpha}\rho_n^\lambda\bu_n^\lambda\cdot\left( \bu_n^\lambda(t)-\bu_n^\lambda(t-h)\right)+\rho_{n}(\bu_n-\bu_n^\lambda)\left( \bu_n^\lambda(t)-\bu_n^\lambda(t-h)\right)\\
&+\int_h^{T}\int_{\sO_\alpha}(\rho_n-\rho_n^\lambda)\bu_n^\lambda \left( \bu_n^\lambda(t)-\bu_n^\lambda(t-h)\right)\\
&= \frac12\int_h^{T}\int_{\sO_\alpha} \rho_{n}^\lambda(t)|\bu_n^\lambda(t)-\bu_n^\lambda(t-h)|^2 + \frac12\int_h^T \int_{\sO_\alpha} \rho_n^\lambda(t) \Big(|\bu_n^\lambda(t)|^2-|\bu_n^\lambda(t-h)|^2\Big) \\
&+\int_h^{T}\int_{\sO_\alpha}\rho_n(\bu_n-\bu_n^\lambda)\left( \bu_n^\lambda(t)-\bu_n^\lambda(t-h)\right)+\int_0^{T}\int_{\sO_\alpha}(\rho_n-\rho_n^\lambda)\bu_n^\lambda \left( \bu_n^\lambda(t)-\bu_n^\lambda(t-h)\right)\\
&=I_0^1+...+I_0^4.
\end{align*}

Observe that, since $\rho_n \geq 0$, the first term is nonnegative:
\begin{align*}
	0\leq I^1_0=\frac12 \int_0^{T}\int_{\sO_\alpha}\rho_n^\lambda(t)	|\bu_n^\lambda(t)-\bu_n^\lambda(t-h)|^2.
\end{align*}
Now, we will consider the second term $I_0^2$. 
\begin{align*}
&\tilde{\bE}	\int_h^{T}\int_{\sO_\alpha}\rho_n^\lambda(t)\left( |\bu_n^\lambda(t)|^2 - |\bu_n^\lambda(t-h)|^2\right)\\
&= 	\tilde{\bE}\int_h^{T}\int_{\sO_\alpha}\rho_n^\lambda(t) |\bu_n^\lambda(t)|^2 - \rho_n^\lambda(t-h) |\bu_n^\lambda(t-h)|^2
	-\tilde{\bE}\int_0^{T}\int_{\sO_\alpha}(\rho_n^\lambda(t)-\rho_n^\lambda(t-h))|\bu_n^\lambda(t-h)|^2.
 \end{align*}
Notice for the first term on the right hand side that
\begin{align*}
\left|\tilde{\bE}\int_h^{T}\int_{\sO_\alpha}\rho_n^\lambda(t) |\bu_n^\lambda(t)|^2 - \rho_n^\lambda(t-h) |\bu_n^\lambda(t-h)|^2\right|&=\left|-\tilde{\bE}\int_{0}^h\int_{\sO_\alpha}\rho_n^\lambda|\bu_n^\lambda(t)|^2+\int_{T-h}^T\int_{\sO_\alpha}\rho_n^\lambda|\bu_n^\lambda(t)|^2 \right|\\
&\leq  Ch\tilde\bE\|\sqrt{\rho^\lambda_n}\bu_n^\lambda\|_{L^\infty(0,T;L^2(\sO_\alpha))}^2\leq Ch.
\end{align*}
We will next treat the second term on the right hand side. Recall the bounds \eqref{rho_t} for $\partial_t\rho_n$. Note that $|\left(\mathbbm{1}_{T_{\eta^*_n+\lambda}}\right)^\lambda| \leq 1$. Moreover since $\beta>3$ implies that $\frac{4\beta}{\beta-1} <6$ and we have \eqref{u_sq}, we obtain,
 \begin{align*}
&|\tilde{\bE}\int_0^{T}\int_{\sO_\alpha}(\rho_n^\lambda(t)-\rho_n^\lambda(t-h))|\bu_n^\lambda(t-h)|^2|	\leq Ch\tilde{\bE}\left( {\|\partial_t \rho_n^\lambda\|_{L^\infty(0,T;L^{\frac{2\beta}{\beta+1}}(\sO_\alpha))}} \|(\bu_{n,\sigma})^\lambda\|^2_{L^2(0,T;L^{\frac{4\beta}{\beta-1}}(\sO_\alpha))}\right) \\
		&\leq Ch\tilde{\bE}\left( {\lambda^{-2}\|\partial_t \rho_n\|_{L^\infty(0,T;W^{-2,{\frac{2\beta}{\beta+1}}}(\sO_\alpha))}} {{\|\bu_{n,\sigma}\|^2_{L^2(0,T;H^1(\sO_\alpha))}}}\right) \\
  &\leq Ch\tilde{\bE}\left( {\lambda^{-2}\|\partial_t \rho_n\|_{L^\infty(0,T;W^{-2,{\frac{2\beta}{\beta+1}}}(\sO_\alpha))}} \lambda_1^{-2}(\|\bu_n\|^2_{L^2(0,T;H^1(\sO_{\eta^*_n}\cup T_{\eta^*_n}^\delta))} + \|v_n\|^2_{L^2(0,T;H^1(\Gamma))})\right) \\
	&\leq Ch^{\kappa_1}.
\end{align*}

We note here that, in the subsequent sections (e.g. Section \ref{sec:delta}), when we do not have the $\beta$-integrability of the pressure, this estimate for $I_0^2$ will change slightly as we will find bounds in terms of expectation of the $W^{1,\infty}(0,T; W^{-1,{\frac{2\gamma}{\gamma+1}}}(\sO_\alpha))$ norm of the density. Thus the final estimate will possibly contain a smaller power of $\lambda$, depending on $\gamma$, arising from the inverse estimate for the fluid velocity.  

Noting that $\bu_n^\lambda$ has support in $\mathcal{S}_{\eta^*_n}^{2\lambda,\delta}$, we write the third term $I^3_0$ as,
\begin{align}
&\tilde{\bE}	\left|\int_h^{T}\int_{\sO_\alpha}\rho_n(\bu_n-\bu_n^\lambda)\left( \bu_n^\lambda(t)-\bu_n^\lambda(t-h)\right) \right| 
\notag\\
&\leq \tilde{\bE}	\int_h^{T}\int_{\sO_\alpha}\rho_n\left(|\bu_{n,\sigma} -(\bu_{n,\sigma})^\lambda|+|\bu_{n,ext} -\bu_{n,\sigma}|\right)\left(\mathbbm{1}_{\mathcal{S}^{\lambda,\delta}_{\eta^*_n}}\right)^\lambda\left|\left( \bu_n^\lambda(t)-\bu_n^\lambda(t-h)\right) \right|\notag\\
&\qquad +\tilde{\bE}	\int_h^{T}\int_{\mathcal{S}_{\eta^*_n}^{2\lambda,\delta}}\rho_n|\bu_{n,ext}|\left|\left(\mathbbm{1}_{\mathcal{S}^{\lambda,\delta}_{\eta^*_n}}\right)^\lambda - \mathbbm{1}_{\mathcal{O}_{\eta^*_n}\cup T^\delta_{\eta^*_n}}\right|\left|\left( \bu_n^\lambda(t)-\bu_n^\lambda(t-h)\right) \right|\notag\\
&\qquad +\tilde{\bE}	\int_h^{T}\int_{\mathcal{O}_{\eta^*_n}\cup T^\delta_{\eta^*_n}}\rho_n|\bu_{n,ext}-\bu_{n}|\left|\left( \bu_n^\lambda(t)-\bu_n^\lambda(t-h)\right) \right|\notag.
\end{align}
Now we choose $q<6$ and $t>0$ such that $1=\frac1q+\frac1{\gamma}+\frac1t+ \frac16$ which is possible since, by assumption $\gamma>\frac32$. 
Using \eqref{ext-n}, \eqref{sigma-ext},    and \eqref{mollifyprop} we can then bound the right hand side as follows:
\begin{align*}
 &\leq \tilde{\bE}\Big[\|\rho_n\|_{L^\infty(0,T;L^\gamma(\sO_\alpha))}\left(\|(\bu_{n,\sigma})^\lambda-\bu_{n,\sigma}\|_{L^2(0,T;L^{\frac{6\gamma}{5\gamma-6}}(\sO_\alpha))}+\|\bu_{n,ext}-\bu_{n,\sigma}\|_{L^2(0,T;L^{\frac{6\gamma}{5\gamma-6}}(\sO_\alpha))}\right)\\
 &\qquad\qquad\qquad\times\|(\bu_{n,\sigma})^\lambda\|_{L^2(0,T;L^{6}(\sO_\alpha))}\Big]\notag\\
 &\qquad+\tilde\bE\left( \|\rho_n\|_{L^\infty(0,T;L^\gamma(\sO_\alpha))}\|\bu_{n,ext}\|_{L^2(0,T;L^q(\sO_\alpha))}|\mathcal{S}_{\eta^*_n}^{2\lambda,\delta}\setminus T^\delta_{\eta^*_n}|^{\frac1t}
 \|(\bu_{n,\sigma})^\lambda\|_{L^2(0,T;L^{6}(\sO_\alpha))}
 \right) \\
  &\qquad+\tilde\bE\left( \|\rho_n\|_{L^\infty(0,T;L^\gamma(\sO_\alpha))}\|\bu_{n,ext}-\bu_n\|_{L^2(0,T;L^{\frac{6\gamma}{5\gamma-6}}(\mathcal{O}_{\eta^*_n}\cup T^\delta_{\eta^*_n}))}
 \|(\bu_{n,\sigma})^\lambda\|_{L^2(0,T;L^{6}(\sO_\alpha))}
 \right) \\
&\leq C\tilde{\bE}\Big[\|\rho_n\|_{L^\infty(0,T;L^\gamma(\sO_\alpha))}\left(\lambda^{1-\frac3{2\gamma}}\|\bu_{n,\sigma}\|_{L^2(0,T;W^{1,r_1}(\sO_\alpha))}+(1-\sigma)^{\frac{7\gamma-9}{2\gamma}}\|\bu_{n,ext}\|_{L^2(0,T;W^{1,r_1}(\sO_\alpha))}\right)\\
&\qquad\qquad\qquad\times\|\bu_{n,ext}\|_{L^2(0,T;L^{6}(\sO_\alpha))}\Big]\notag\\
 &\qquad+C\tilde\bE\left( \|\rho_n\|_{L^\infty(0,T;L^\gamma(\sO_\alpha))}\|\bu_{n,ext}\|_{L^2(0,T;W^{1,r_2}(\sO_\alpha))}|\lambda|^{\frac1{t}}
 \|\bu_{n,\sigma}\|_{L^2(0,T;L^{6}(\sO_\alpha))}
 \right) \\
  &\qquad+C\tilde\bE\left( \|\rho_n\|_{L^\infty(0,T;L^\gamma(\sO_\alpha))}|\lambda_1|^{\frac23-\frac1\gamma}\|\bu_{n}-v_n\bd{e}_z\|_{L^2(0,T;H^{1}( T^\delta_{\eta^*_n}))}
\|\bu_{n,\sigma}\|_{L^2(0,T;L^{6}(\sO_\alpha))}
 \right),
\end{align*}
where we used the Sobolev embedding $W^{1,r_1}(\mathcal{O}_{\alpha}) \subset W^{1-\frac3{2\gamma},\,\frac{6\gamma}{5\gamma-6}}(\mathcal{O}_{\alpha})$ where $r_1=\frac{6\gamma}{5\gamma-3}<2$ and $W^{1,r_2}(\sO_\alpha)\hookrightarrow L^q(\sO_\alpha)$ for $q<6$, $r_2<2$ satisfying $q=\frac{3r_2}{3-r_2}$. 
To solidify our estimates, we will pick $q=\frac{12\gamma}{4\gamma-3}$ and $\frac1t=\frac12(1-\frac{3}{2\gamma})$.
Hence using \eqref{sigma_ext_l6} and \eqref{u_sq}, we obtain for some $\kappa_2>0$ that,
\begin{align}
 \notag   \tilde{\bE}	|I^3_0|&=\tilde\bE\left|\int_h^{T}\int_{\sO_\alpha}\rho_n(\bu_n-\bu_n^\lambda)\left( \bu_n^\lambda(t)-\bu_n^\lambda(t-h)\right) \right|\\
    & \leq C(\lambda^{1-\frac3{2\gamma}} \lambda_1^{-1} + (1-\sigma)^{\frac{7\gamma-9}{2\gamma}}\lambda_1^{-1}
    + \lambda^{\frac1t}\lambda_1^{-1} + |\lambda_1|^{\frac23-\frac1\gamma})
 \leq Ch^{\kappa_2}.\label{lambdas}
\end{align}

Next we will consider the term $I_0^4$. We recall \eqref{mollifyprop}, \eqref{u_sq}, and \eqref{sigma_ext_l6} and the Sobolev embedding $ W^{1,r}\hookrightarrow H^{1-\epsilon_1}$ for $\epsilon_1>0$ and $r=\frac{6}{3+\epsilon_1}<2$. Then there exists $\kappa_3>0$ for an appropriately small $0<\epsilon_1\ll \frac14(2-\frac3\gamma)$
such that, 
\begin{align*}
&\tilde\bE|I_0^4|\leq \tilde{\bE}	\int_h^{T}\int_{\sO_\alpha}(\rho_n-\rho_n^\lambda)|(\bu_{n,\sigma})^\lambda| \left( (\bu_{n,\sigma})^\lambda(t)|+|(\bu_{n,\sigma})^\lambda(t-h)|\right)\\
&\leq\tilde{\bE}\Big[\|\rho_n-\rho_n^\lambda\|_{L^\infty(0,T;W^{-1,\gamma}(\sO_\alpha))}\Big(\||(\bu_{n,\sigma})^\lambda|^2\|_{L^1(0,T;W^{1,\frac{\gamma}{\gamma-1}}(\sO_\alpha))} + \|(\bu_{n,\sigma})^\lambda(t) (\bu_{n,\sigma})^\lambda(t - h)\|_{L^{1}(h, T; W^{1, \frac{\gamma}{\gamma - 1}}(\mathcal{O}_{\alpha})}\Big)\Big]\\
	&\leq\tilde{\bE}(\|\rho_n-\rho_n^\lambda\|_{L^\infty(0,T;W^{-1,\gamma}(\sO_\alpha))}\|(\bu_{n,\sigma})^\lambda\|_{L^2(0,T;W^{1,\frac{6\gamma}{5\gamma-6}}(\sO_\alpha))}\|(\bu_{n,\sigma})^\lambda\|_{L^2(0,T;L^{6}(\sO_\alpha))})\\		
	&\leq \tilde{\bE}(\lambda\|\rho_n\|_{L^\infty(0,T;L^{\gamma}(\sO_\alpha))}\|(\bu_{n,\sigma})^\lambda\|_{L^2(0,T;H^{\frac{3}{\gamma}}(\sO_\alpha))}\|\bu_{n,\sigma}\|_{L^2(0,T;L^{6}(\sO_\alpha))})\\
		&\leq \lambda\tilde{\bE}(\|\rho_n\|_{L^\infty(0,T;L^{\gamma}(\sO_\alpha))}\lambda^{1-{\frac{3}{\gamma}}-\epsilon_1}\|\bu_{n,\sigma}\|_{L^2(0,T;H^{1-\epsilon_1}(\sO_\alpha))}\|\bu_{n,\sigma}\|_{L^2(0,T;L^{6}(\sO_\alpha))})\\
  	&\leq \lambda\tilde{\bE}(\|\rho_n\|_{L^\infty(0,T;L^{\gamma}(\sO_\alpha))}\lambda^{1-{\frac{3}{\gamma}}-\epsilon_1}\|\bu_{n,\sigma}\|_{L^2(0,T;W^{1,\frac{6}{3+\epsilon_1}}(\sO_\alpha))}\|\bu_{n,\sigma}\|_{L^2(0,T;L^{6}(\sO_\alpha))})\\
			&\leq C\lambda^{2-{\frac{3}{\gamma}}-\epsilon_1} \lambda_1^{-1}\leq Ch^{\kappa_3}.
\end{align*}
Similarly, the second term on the left-hand side of \eqref{tightv_test} can be written as
\begin{align*}
	&\int_h^T\int_\Gamma v_n(t)\partial_t\psi_n = \int_h^T\int_{\Gamma} v_{n} (v_n^\lambda(t)-v_n^\lambda(t-h))\\
	&=\int_h^T\int_{\Gamma} v_{n}(t) (v_n(t)-v_n(t-h))+\int_0^T\int_{\Gamma} v_{n}(t) (v_n^\lambda(t)-v_n(t)-v_n^\lambda(t-h)+v_n(t-h))\\
	&=\frac12\int_h^T\int_{\Gamma} (|v_{n}(t)|^2-|v_n(t-h)|^2) + \frac12\int_h^T\int_{\Gamma} |v_n(t)-v_n(t-h)|^2 \\
	&+\int_h^T\int_{\Gamma} v_{n}(t) (v_n^\lambda(t)-v_n(t)-v_n^\lambda(t-h)+v_n(t-h))\\
	&=: J^0_1+J^0_2+J^0_3.
\end{align*}
The second term $J^0_2$ is the term of our interest, i.e. the left-hand side of the desired inequality \eqref{nikolski_v}.
For the third term we immediately obtain,
\begin{align}\label{v1}
\tilde{\bE}|J^0_3| \leq \tilde{\bE}(\|v_n\|_{L^2(0,T;L^2(\Gamma))}	\|v_{n}^{\lambda}-v_n\|_{L^2(0,T;L^2(\Gamma))}) \leq C \lambda \tilde{\bE} \| v_{n}\|^{2}_{L^2(0,T;H^1(\Gamma))} \leq C\lambda.
\end{align}
Moreover, the first term can be treated as follows,
\begin{align*}
\tilde{\bE}	|J_1^0| =\left|\frac12\tilde{\bE}	\int_h^T\int_\Gamma |v_n(t)|^2-|v_{n}(t-h)|^2\right| &= \left|-\frac12\tilde{\bE} \int_{0}^{h} \|v_n\|^{2}_{L^{2}(\Gamma)} + \frac12\tilde{\bE} \int_{T-h}^{T}\|v_n\|^2_{L^2(\Gamma)}\right| \nonumber \\
&\leq Ch\|v_{n}\|_{L^{\infty}(0, T; L^{2}(\Gamma))}^{2}.
\end{align*} 
Now we will consider the terms on the right-hand side of \eqref{tightv_test} starting with $I_1$: Since $\lambda=h^{\theta}$, for some $\theta<\frac15$, for any $s<\frac12$ and $m<1$ we apply Theorem \ref{thm:extension} and choose an $0<\epsilon_1\ll\frac{s}4$ which is appropriately small  to guarantee the existence of $\kappa_4>0$ such that
\begin{align*}
&\tilde{\bE}	\left|\int_{h}^{T} \int_{\mathcal{O}_{\alpha}} (\rho_{n} \bu_{n} \otimes \bu_{n}) : \nabla \int_{t-h}^t\bu_n^\lambda\right| \\
&= \tilde{\bE}	\left|\int_{h}^{T} \int_{\mathcal{O}_{\alpha}} (\rho_{n} \bu_{n} \otimes \bu_{n}) :  \int_{t-h}^t\left(\nabla\left(\mathbbm{1}_{\mathcal{S}^{\lambda,\delta}_{\eta^*_n}}\right)^\lambda(\bu_{n,\sigma})^\lambda+\left(\mathbbm{1}_{\mathcal{S}^{\lambda,\delta}_{\eta^*_n}}\right)^\lambda\nabla(\bu_{n,\sigma})^\lambda\right)\right|\\
 &\leq {h} \tilde{\bE}\left[\|\sqrt{\rho_n}\bu_n\|_{L^\infty(0,T;L^2(\sO_\alpha))}^2\left(\|(\bu_{n,\sigma})^\lambda\|_{L^2(0,T;H^{\frac52+\epsilon_1}(\sO_\alpha))}\|(\mathbbm{1}_{\mathcal{S}^{\lambda,\delta}_{\eta^*_n}})^\lambda\|_{L^2(0,T;H^{\frac52+\epsilon_1}(\sO_\alpha))}\right)\right]\\
&\leq {h}\tilde{\bE}(\|\sqrt{\rho_n}\bu_n\|_{L^\infty(0,T;L^2(\sO_\alpha))}^2 \lambda^{-\frac52-\epsilon_1} \|\bu_{n,\sigma}\|_{L^2(0,T;L^2(\sO_\alpha))}^2\lambda^{s-\frac52-\epsilon_1} \|\mathbbm{1}_{\mathcal{S}^{\lambda,\delta}_{\eta^*_n}}\|_{L^2(0,T;H^{s}(\sO_\alpha))})\\
&\leq Ch^{\kappa_4}.
\end{align*}

The terms $I_{2,3,4,5}$ are treated identically. 
Next, recall that $\left(\mathbbm{1}_{\mathcal{S}^{\lambda,\delta}_{\eta^*_n}}\right)^\lambda=1$ in $T^\delta_{\eta^*_n}$. Moreover, since we have enforced \eqref{distsq}, the bounds \eqref{usig-v}  imply for the penalty term that
\begin{align*}
\tilde{\bE}|I_6|&=\tilde{\bE}\left|\frac1\delta \int_{0}^{t} \int_{{T^\delta_{\eta^*_n}}} (\bu_{n} - v_{n} \bd{e}_{z}) \cdot \int_{t-h}^t((\bd{u}_{n,\sigma})^\lambda - v^\lambda_n \bd{e}_{z})\right|\\
& \leq \frac{h}{\delta}\tilde{\bE}(\|\bu_n-v_n{\bf e}_z\|_{L^2(0,T;L^2(T^\delta_{\eta^*_n}))}\|(\bu_{n,\sigma})^\lambda-v_n^\lambda{\bf e}_z\|_{L^2(0,T;L^2(T^\delta_{\eta^*_n}))})\\
& \leq \frac{h}{\delta}\tilde{\bE}\|\bu_n-v_n{\bf e}_z\|_{L^2(0,T;L^2(T^\delta_{\eta^*_n}))}^2 \leq Ch,
\end{align*}
where we emphasize that, thanks to Lemma \ref{energy_galerkin} (8), the constant $C>0$ is independent of $\delta$. This is the reason behind choosing a non-trivial fluid test function.

Next, we see for $I_8$ that
\begin{align*}
\tilde{\bE}|I_8|=	\tilde{\bE}\left|\int_{h}^{T} \int_{\Gamma} \Delta \eta_{n}  \left( \int_{t-h}^t\Delta v^\lambda_n \right)\right| &\leq Ch\tilde{\bE}(\|\eta_n\|_{L^\infty(0,T;H^2(\Gamma))}\|v^\lambda_n\|_{L^\infty(0,T;H^2(\Gamma))})\\
&\leq Ch\lambda^{-2}\tilde{\bE}(\|\eta_n\|_{L^\infty(0,T;H^2(\Gamma))}\|v_n\|_{L^\infty(0,T;L^2(\Gamma))})\\
&\leq Ch^{\kappa_5}.
\end{align*}
Finally, for the stochastic integrals we find that by \eqref{gassumption}, 
\begin{align*}
	&\tilde{\bE}\left| \int_h^T	\int_\Gamma G_n(\eta_n,v_n) \psi_n dW_n^2\right| \le Ch\tilde{\bE} \left|\sum_{k = 1}^{\infty} \int_{0}^{T}  \left(\int_{\Gamma} |P_n^{st
    }g_{k}(\eta_n, v_n)|^{2} \right) \left(\sup_{t-h<s<t}\int_{\Gamma} |v_{n}(s)|^{2} \right)dt \right|^{1/2} \\
	& 	\le  Ch\left(1 +  \tilde{\bE} \|v_n\|_{L^\infty(0,T;L^2(\Gamma))}^2 +  \tilde{\bE} \|\eta_n\|^{2}_{L^\infty(0,T;L^2(\Gamma))} \right) \\
	& \leq Ch.
\end{align*}
The other stochastic integral is treated identically. Thus combining all the estimates we obtain \eqref{nikolski_v} for $\bar{\kappa}=\min_i\{\kappa_i\}$.
\end{proof}

These uniform bounds give us the existence of a subsequence of the Galerkin solutions that converges in weak/weak-* topologies of the corresponding energy spaces. 
Our next goal is to prove that a subsequence of the solutions to the Galerkin approximations \eqref{galerkin} is tight in an appropriate phase space which is a subspace of the energy space. This will aid us in establishing the necessary almost sure convergence of the approximate solutions in the phase space.
\subsection{Tightness of laws}\label{sec:galerkin_tight}
We begin by defining the appropriate spaces:
\begin{align*}
&	\mathcal{X}_{\rho} = C_{w}(0, T; L^{\beta}(\mathcal{O}_{\alpha}))
 \cap
 L^{4}(0, T; L^{4}(\mathcal{O}_{\alpha})) \cap \left(L^{2}(0, T; H^{1}(\mathcal{O}_{\alpha})) \cap {{L^{\beta+1}((0,T)\times \sO_\alpha), w}}\right). \\
&	\mathcal{X}_{\bu} = (L^{2}(0, T; H_0^{1}(\mathcal{O}_{\alpha})), w), \qquad \mathcal{X}_{\rho \bu} = C_{w}(0, T; L^{\frac{2\beta}{\beta+1}}(\mathcal{O}_{\alpha}))\cap C([0,T];H^{-l}(\sO_\alpha)) \quad \text{for }l>\frac52 ,\\
&	\mathcal{X}_{\eta} = C_{w}(0, T; H^2(\Gamma)) \cap C([0, T]; H^{s}(\Gamma)), \text{ for any }s<2, \\
&\mathcal{X}_{v}=L^2(0,T;L^2(\Gamma))\cap (L^2(0,T;H^1(\Gamma)),w),\quad   \mathcal{X}_{W} = C(0, T; \mathcal{U}_{0})^2.
\end{align*}
 where $w$ denotes weak topology.
The aim of this section is to prove the following proposition.
\begin{proposition}\label{tight_galerkin}
Define the family of random variables
\begin{equation*}
	\mathcal{U}_{n} := (\rho_{n}, \bu_{n}, P^f_{n}(\rho_n \bu_n), \eta_{n}, \eta_n^*, v_n, 
 W^1_n, W^2_n).
\end{equation*}
Then the sequence of measures $\{\tilde{\bP}\circ \mathcal{U}^{-1}_n\}_{n\geq 1}$ is tight in the phase space
\begin{equation*}
	\mathcal{X} = \mathcal{X}_{\rho} \times \mathcal{X}_{\bu} \times \mathcal{X}_{\rho \bu} \times \mathcal{X}_{\eta}  \times \mathcal{X}_{\eta} \times \mathcal{X}_{v} 
 \times \mathcal{X}_{W}.
\end{equation*}
\end{proposition}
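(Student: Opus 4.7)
The plan is to verify tightness of each marginal separately. Tightness in $\mathcal{X}_W$ is immediate since the law of a fixed Wiener process is a single (tight) measure on $C(0,T;\mathcal{U}_0)$. The components carrying a weak topology are handled by pure boundedness: by Lemma \ref{energy_galerkin}(1) the family $\{\bu_n\}$ is bounded in $L^p(\tilde{\Omega};L^2(0,T;H^1_0(\mathcal{O}_\alpha)))$, and since bounded balls in the reflexive Banach space $L^2(0,T;H^1_0(\mathcal{O}_\alpha))$ are weakly relatively compact, a standard application of Chebyshev's inequality produces, for every $\varepsilon>0$, a weakly compact ball carrying mass $\ge 1-\varepsilon$. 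The same argument with Lemma \ref{energy_galerkin}(6) handles tightness of $v_n$ in $(L^2(0,T;H^1(\Gamma)),w)$, with Lemma \ref{energy_galerkin}(3)--(4) handles tightness of $\rho_n$ in the weak-$L^{\beta+1}$ topology, and together with Lemma \ref{energy_galerkin}(7) it yields tightness of $\eta_n,\eta_n^*$ in $C_w(0,T;H^2(\Gamma))$ once joint equicontinuity in a weaker (separable, metrizable) topology on bounded subsets of $H^2(\Gamma)$ is noted, which follows from the $W^{1,\infty}(0,T;L^2(\Gamma))$ bound.

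The strong components for the structure are handled by Aubin--Lions. Tightness in $C([0,T];H^s(\Gamma))$ for $s<2$ follows from the compact embedding $W^{1,\infty}(0,T;L^2(\Gamma))\cap L^\infty(0,T;H^2(\Gamma))\subset\subset C([0,T];H^s(\Gamma))$ applied to the uniform bounds of Lemma \ref{energy_galerkin}(7). Tightness of $v_n$ in $L^2(0,T;L^2(\Gamma))$ is the key structural point and uses Lemma \ref{lem:vtight}: the fractional-in-time Nikolski\u{\i} bound with exponent $\bar\kappa>0$ combined with $v_n\in L^2(0,T;H^1(\Gamma))$ gives a uniform bound on $v_n$ in the Besov-type space $N^{\bar\kappa,2}(0,T;L^2(\Gamma))\cap L^2(0,T;H^1(\Gamma))$, which embeds compactly into $L^2(0,T;L^2(\Gamma))$ by the standard Simon-type compactness criterion.

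For the fluid density, the bound $\sqrt{\varepsilon}\rho_n\in L^2(0,T;H^1(\mathcal{O}_\alpha))$ from Lemma \ref{energy_galerkin}(5) (with $\varepsilon>0$ fixed at this stage) together with the time-derivative bound \eqref{rho_t} gives, via Aubin--Lions, relative compactness in $L^2(0,T;L^2(\mathcal{O}_\alpha))$. Interpolating this with the $L^\infty(0,T;L^\beta(\mathcal{O}_\alpha))$ bound (recall $\beta\ge 4$) yields tightness in $L^4(0,T;L^4(\mathcal{O}_\alpha))$, and tightness in $L^2(0,T;H^1(\mathcal{O}_\alpha))$ with the weak topology follows directly from Lemma \ref{energy_galerkin}(5). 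Tightness in $C_w(0,T;L^\beta(\mathcal{O}_\alpha))$ is obtained by combining the uniform $L^\infty(0,T;L^\beta)$ bound with equicontinuity in a weaker metrizable topology: using the continuity equation \eqref{cont_ep} together with bounds on $\rho_n\bu_n$ and $\varepsilon\Delta\rho_n$, one controls $\partial_t\rho_n$ in $L^p(\tilde\Omega;L^\infty(0,T;W^{-2,2\beta/(\beta+1)}(\mathcal{O}_\alpha)))$, and a standard Arzel\`a--Ascoli argument in $W^{-2,2\beta/(\beta+1)}(\mathcal{O}_\alpha)$ combined with weak continuity in $L^\beta$ on bounded balls (see e.g.\ Lemma 2.2.6 of \cite{BFH18}) gives the required tightness.

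Finally, tightness of $P^f_n(\rho_n\bu_n)$ in $\mathcal{X}_{\rho\bu}$ is the most delicate point because of the stochastic integral. From \eqref{urho} the family is bounded in $L^p(\tilde\Omega;L^\infty(0,T;L^{2\beta/(\beta+1)}(\mathcal{O}_\alpha)))$. Testing \eqref{galerkin} against Galerkin basis elements expresses $P^f_n(\rho_n\bu_n)(t)-P^f_n(\rho_n\bu_n)(s)$ as a sum of deterministic terms whose $H^{-l}$-norms are estimated, via the uniform energy bounds of Lemma \ref{energy_galerkin} and the continuity of $P^f_n$, by $C|t-s|$, plus the stochastic integral $\int_s^t \mathbbm{1}_{\mathcal{O}_{\eta_n^*}}\bd{F}_n(\rho_n,\rho_n\bu_n)\,dW^1_n$. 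The main obstacle is to obtain H\"older continuity in $H^{-l}$ for this stochastic integral uniformly in $n$: using the BDG inequality in the Hilbert space $H^{-l}(\mathcal{O}_\alpha)$ for $l>5/2$ (so that $H^l_0\hookrightarrow L^\infty$), together with the growth assumption \eqref{fassumption} and the uniform moment bounds on $\rho_n$ and $\rho_n|\bu_n|^2$ from Lemma \ref{energy_galerkin}(2)--(3), one obtains for every $p$ large enough
\[
\tilde{\mathbb{E}}\Big\|\int_s^t \mathbbm{1}_{\mathcal{O}_{\eta_n^*}}\bd{F}_n(\rho_n,\rho_n\bu_n)\,dW^1_n\Big\|_{H^{-l}(\mathcal{O}_\alpha)}^{p}\le C|t-s|^{p/2},
\]
independently of $n$. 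The Kolmogorov continuity criterion then gives a uniform bound of $P^f_n(\rho_n\bu_n)$ in $L^p(\tilde\Omega;C^{\kappa}(0,T;H^{-l}(\mathcal{O}_\alpha)))$ for some $\kappa>0$; combined with the $L^\infty(0,T;L^{2\beta/(\beta+1)})$ bound and the compact embedding $L^{2\beta/(\beta+1)}\hookrightarrow\hookrightarrow H^{-l}$, an Arzel\`a--Ascoli argument yields tightness in $C([0,T];H^{-l}(\mathcal{O}_\alpha))$, and tightness in $C_w(0,T;L^{2\beta/(\beta+1)}(\mathcal{O}_\alpha))$ follows as for $\rho_n$. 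Tightness of the product law on $\mathcal{X}$ is then a consequence of tightness of each factor.
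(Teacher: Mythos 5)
Your proposal is essentially correct and follows the same overall component-by-component strategy as the paper, drawing on the same uniform bounds from Lemma \ref{energy_galerkin}, the Nikolski\u{\i} estimate of Lemma \ref{lem:vtight} for the structure velocity, the continuity-equation time-derivative bound \eqref{rho_t}, and the Kolmogorov/BDG argument for the H\"older continuity of the projected momentum. The one step where you take a genuinely different route is the strong $L^4((0,T)\times\mathcal{O}_\alpha)$ tightness of the density. The paper applies the Brezis--Mironescu interpolation inequality $\|\rho_n\|^2_{W^{1/2,4\beta/(\beta+2)}}\le\|\rho_n\|_{L^\beta}\|\rho_n\|_{H^1}$ to get a uniform bound in $L^4(0,T;W^{1/2,4\beta/(\beta+2)}(\mathcal{O}_\alpha))$ and then invokes Aubin--Lions once, via the compact embedding $W^{1/2,4\beta/(\beta+2)}\subset\subset L^4$ valid for $\beta>3$. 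You instead apply Aubin--Lions to the pair $L^2(0,T;H^1)\cap W^{1,\infty}(0,T;W^{-2,2\beta/(\beta+1)})$ to get strong $L^2_tL^2_x$ compactness and then interpolate pointwise-in-time with the $L^\infty(0,T;L^\beta)$ bound. This interpolation does close: with $\frac14=\frac{\alpha}{2}+\frac{1-\alpha}{\beta}$ one finds $\alpha=\frac{\beta-4}{2(\beta-2)}$, which lies in $(0,\frac12)$ precisely when $\beta>4$, so H\"older in time with exponents $1/(2\alpha)$ and its conjugate finishes the argument. Two minor cautions: your argument uses that $\beta>4$ (strict), not $\beta\ge 4$ as written, since $\beta=4$ degenerates to $\alpha=0$ and gives no gain; and the interpolation is a pointwise-in-time $L^4_x$ estimate integrated with H\"older, not a simple $L^q_tL^r_x$ interpolation with a common exponent (the latter, used na\"ively, would require $\beta=\infty$). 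With those clarifications in place, your route is simpler than the paper's (it avoids Brezis--Mironescu at the cost of a slightly less sharp use of the $H^1$ bound) and equally valid at this stage where $\varepsilon>0$ is fixed.
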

\begin{proof}
We prove this proposition by proving tightness of the laws of $\rho_n, P_n(\rho_n\bu_n), \bu_n, v_n, \eta_n, \eta^*_n$ individually in the corresponding phase spaces. Tychonoff's theorem will then imply tightness of the joint laws of these random variables.
 
\medskip
\noindent \textbullet {\bf Tightness of $\bu_n$.}
We begin by defining the following set which is relatively compact in $\mathcal{X}_\bu$:
\begin{align*}
\mathcal{K}_M:=&\{\bu\in L^2(0,T;H_0^1(\sO_\alpha)): \|\bu\|_{ L^2(0,T;H_0^1(\sO_\alpha))}\leq M\}.\end{align*}

Observe that the uniform bounds in Lemma \ref{energy_galerkin} (1) and the Chebyshev inequality give us, for any $M>0$, that
	\begin{align*}
	\tilde{\bP}[\mathcal{K}_M^c] \leq \frac1{M^2}\bE[\| \bu_n\|^2_{L^2(0,T;H^1_0(\sO_\alpha))}] \leq \frac{C}{M^2}.
	\end{align*}
This proves the desired result of tightness of the laws of the approximate fluid velocity $\bu_n$. The tightness of laws of $v_n$ in $(L^2(0,T;H^1(\Gamma)),w)$ follows identically.

\medskip

\noindent \textbullet {\bf Tightness of laws of $\eta_n$ and $\eta_n^*$.} To prove this result we recall the uniform bounds for $\tilde\eta_{n}$ and $\tilde\eta_{n}^*$ derived in Lemma \ref{energy_galerkin} (6,7). Now, the Aubin-Lions theorem states that the set
	\begin{align*}\mathcal{B}_M:=&\{\eta\in L^\infty(0,T;H^2(\Gamma))\cap W^{1,\infty}(0,T;L^2(\Gamma)):\|\eta\|^2_{L^\infty(0,T;H^2(\Gamma))} + \|\eta\|^2_{ W^{1,\infty}(0,T;L^2(\Gamma))}\leq M\},\end{align*}
 is relatively compact in $C([0,T];H^{s}(\Gamma))$,  for any $0<s<2$. Using the Chebyshev inequality, we obtain for some $C>0$ independent of $n$ that the following holds for $\eta_n$ (and similarly for $\eta^*_n$):
	\begin{equation}
	\begin{aligned}
	\tilde{\bP}\left[\eta_{n}\not\in {\mathcal{B}}_M\right]&\le \tilde{\bP}\left[
	{\|\eta_{n}\|}^2_{L^\infty(0,T;H^2(\Gamma))}\ge \frac{M}{2}\right]+
	\tilde{\bP}\left[
	{\|\eta_{n}\|}^2_{W^{1,\infty}(0,T;L^2(\Gamma))}\ge \frac{M}{2}\right]	\\
	&\le  \frac{4}{M}\tilde{\bE}\left[
	{\|\eta_{n}\|}^2_{L^\infty(0,T;H^2(\Gamma))}+\|\eta_{n}\|^2_
	{W^{1,\infty}(0,T;L^2(\Gamma))}\right]
	\leq \frac{C}{M}.
	\end{aligned}
	\end{equation}

 \medskip
\noindent \textbullet {\bf Tightness of laws of $\rho_n$.}
Since, at this stage, we are working on the fixed maximal domain $\sO_\alpha$, this part of the proof follows closely the strategy in Proposition 4.2 in \cite{BreitHofmanova} applied toward obtaining a similar tightness result and in \cite{FeireislCompressible} for establishing compactness in the deterministic setting. Hence, in this section, we will summarize the technique briefly.
Recall that $\rho_n$ satisfies the bounds \eqref{rho_t}. Thus the compact embedding
\begin{equation*}
	L^{\infty}(0, T; L^{\beta}(\mathcal{O}_{\alpha})) \cap C^{0, 1}(0, T; W^{-2, \frac{2\beta}{\beta + 1}}(\mathcal{O}_{\alpha})) \subset \subset C_{w}(0, T; L^{\beta}(\mathcal{O}_{\alpha})),
\end{equation*}
immediately gives us that the measures  $\tilde{\bP}\circ\rho^{-1}_n$ are tight in $C_w(0,T;L^\beta(\sO_\alpha))$.

Observe that, since $\beta>3$ we have that $4<\frac{12\beta}{\beta+6}$, and thus that $W^{\frac12,\frac{4\beta}{\beta+2}}(\sO_\alpha)$ is compactly embedded in $L^4(\sO_\alpha)$. Hence, the Aubin-Lions theorem gives us that,
\begin{align*}
	L^4(0,T;W^{\frac12,\frac{4\beta}{\beta+2}}(\sO_\alpha) )\cap W^{1,\infty}(0,T;W^{-2,\frac{2\beta}{\beta+1}}(\sO_\alpha) )\subset\subset L^4((0,T)\times \sO_\alpha).
\end{align*}
Hence our next aim is to obtain uniform boundedness of $\rho_n$ in $L^p(\Omega;L^4(0,T;W^{\frac12,\frac{4\beta}{\beta+2}}(\sO_\alpha) )$.
First, observe that, due to the Brezis-Mironescu interpolation inequality, we have,
\begin{align*}
	\|\rho_n\|^2_{W^{\frac12,\frac{4\beta}{\beta+2}}(\sO_\alpha)} \leq \|\rho_n\|_{L^{\beta}(\sO_\alpha)}\|\rho_n\|_{W^{1,2}(\sO_\alpha)}.
\end{align*}
This implies that,
\begin{align*}
	\int_0^T	\|\rho_n\|^4_{W^{\frac12,\frac{4\beta}{\beta+2}}(\sO_\alpha)} \leq \int_0^T \|\rho_n\|^2_{L^{\beta}(\sO_\alpha)}\|\rho_n\|^2_{W^{1,2}(\sO_\alpha)}&\leq \sup_{0\leq t\leq T}\|\rho_n\|^2_{L^{\beta}(\sO_\alpha)} \int_0^T \|\rho_n\|^2_{W^{1,2}(\sO_\alpha)},
\end{align*}
thus establishing, for some constant $C>0$, that
\begin{align}\label{rhotightep}
	\tilde{\bE}\left(\int_0^T	\|\rho_n\|^4_{W^{\frac12,\frac{4\beta}{\beta+2}}(\sO_\alpha)} \right)^p \leq \tilde{\bE} \sup_{0\leq t\leq T}\|\rho_n\|^{4p}_{L^{\beta}(\sO_\alpha)} + \tilde{\bE}\left(\int_0^T \|\rho_n\|^2_{H^1(\sO_\alpha)}\right)^{2p} \leq C(\ep,\delta).
\end{align}
As earlier, an application of the Chebyshev inequality concludes the proof of tightness of the sequence $\{\tilde{\bP}\circ\rho^{-1}_n\}$ in $L^4((0,T)\times\sO_\alpha)$.

Finally, since, $\beta\geq 3$, interpolation and Lemma \ref{energy_galerkin} (3),(4) then give us,
\begin{align*}
 \tilde{\bE}\|\rho_n\|_{L^{\beta+1}((0,T)\times\sO_\alpha)} &\leq  \tilde{\bE} \|\rho^\beta_n\|_{L^\frac43(0,T;L^2(\sO_\alpha))}\leq \tilde{\bE}\left(\|\rho^\beta_n\|_{L^\infty(0,T;L^1(\sO_\alpha))}^{\frac14}\|\rho^\beta_n\|_{L^1(0,T;L^3(\sO_\alpha))}^{\frac34}\right)\\
&\le \tilde{\bE}\left(\|\rho^\beta_n\|_{L^\infty(0,T;L^1(\sO_\alpha))}^{\frac14}\|\rho^{\frac{\beta}2}_n\|_{L^1(0,T;L^6(\sO_\alpha))}^{\frac34}\right)\leq C,
\end{align*}
concluding the tightness of the sequence $\{\tilde{\bP}\circ\rho^{-1}_n\}$ in $\mathcal{X}_\rho.$ 


\medskip

\noindent \textbullet {\bf Tightness of laws of $P^f_n(\rho_n\bu_n)$.} 
For this part of the proof we will utilize the following compact embedding for any $r>0$ and $l>\frac52$:
\begin{align}\label{compact_urho}
  L^\infty(0,T;L^{\frac{2\beta}{\beta+1}}(\mathcal{O}_{\alpha})) \cap C^{r}(0,T;H^{-l}(\mathcal{O}_{\alpha}))\subset\subset C_{w}(0, T; L^{\frac{2\beta}{\beta+1}}(\mathcal{O}_{\alpha})).
\end{align}
Recall that we already have, thanks to \eqref{urho}, that 
$$\tilde{\bE}\|P_n^f(\rho_n\bu_n)\|^p_{L^\infty(0,T;L^{\frac{2\beta}{\beta+1}}(\mathcal{O}_{\alpha})) }\leq C_p,$$ for any $p \geq 1$.
Next we will show that for any $l>\frac52$ and any $0\leq r<\frac12$, there exists some $C>0$ independent of $n$, such that
\begin{align}\label{urho_cont}
\tilde{\bE}\|P_n^f(\rho_n\bu_n)\|_{C^{r}(0,T;H^{-l}(\mathcal{O}_{\alpha}))} \leq C.
\end{align}
To obtain the aforementioned uniform bounds we consider \eqref{galerkin} with $\psi=0$ and write:
\begin{align*}
& \int_{\mathcal{O}_{\alpha}}P_n^f( \rho_{n}(t) \bu_{n}(t) )\cdot \bd{q}  = \int_{\mathcal{O}_{\alpha}}  P_n^f(\bp_{0,\delta,\ep})\cdot \bd{q} \\
&+\int_{0}^{t} \int_{\mathcal{O}_{\alpha}} P_n^f(\rho_{n} \bu_{n} \otimes \bu_{n}) : \nabla \bd{q} + \int_{0}^{t} \int_{\mathcal{O}_{\alpha}} { P_n^f}\Big(a\rho_{n}^{\gamma} + \delta \rho_{n}^{\beta}\Big) (\nabla \cdot \bd{q}) - \int_{0}^{t} \int_{\mathcal{O}_{\alpha}} { P_n^f\left(\mu^{\eta^*_n}_\delta  \nabla \bu_{n} \right)} : \nabla \bd{q} \\
& 	-  \int_{0}^{t} \int_{\mathcal{O}_{\alpha}}{ P_n^f\left(\lambda_\delta^{\eta^*_{ n}}\text{div}(\bu_{n}) \right)} \text{div}(\bd{q}) +\ep\int_0^t\int_{\sO_\alpha}P_n^f(\bu_n \rho_n)\cdot \Delta \bq- \frac{1}{\delta} \int_{0}^{t} \int_{\sO_\alpha} { P_n^f(\mathbbm{1}_{T^\delta_{\eta^*_n}}(\bu_{n} - v_{n} \bd{e}_{z}))} \cdot \bd{q} \\
& + \int_{0}^{t} \int_{\mathcal{O}_{\alpha}}{ P_n^f(\mathbbm{1}_{\sO_{\eta_n^*}}\bd{F}_{n}(\rho_{n} , \rho_{n} \bu_{n} ) )}\cdot \bd{q} dW_n^1(t) \\
&:=I_0+...+I_7,
\end{align*}
for any $\bq\in H^l_0(\sO_\alpha)$. 
We will take $\sup_{\bq\in H^l_0(\sO_\alpha)}$, $l>\frac52$ on both sides and analyze each term appearing in this weak form individually. 

We start with the first term $\displaystyle I_1 :=\int_0^t\sup_{\bq\in H^l_0(\sO_\alpha)}\int_{\sO_{\alpha}} P_n^f(\rho_n\bu_n\otimes\bu_n):\nabla\bq$. 
For any $p\geq1$, we obtain
\begin{align*}
  \tilde{\bE}  \|\partial_tI_1\|^p_{L^2(0,T)} \leq C(T)\tilde{\bE}\left(\|\rho_n\bu_n\|_{L^\infty(0,T;L^{\frac{2\beta}{\beta+1}}(\sO_\alpha))}\|\bu_n\|_{L^2(0,T;H_0^1(\sO_\alpha))}\right)^p \leq C.
\end{align*}
For the next term, $
		I_2=	\int_0^t\sup_{\bq \in H^l_0(\sO_\alpha)}|\int_{\sO_\alpha}P_n^f (a\rho^{\gamma}_n+\delta\rho^\beta_n) \nabla\cdot\bq|,
$
we choose $l>\frac52$ to obtain the following estimate
\begin{align*}
\tilde{\bE}\|\partial_tI_2\|_{L^2(0,T)}\le \tilde{\bE}\|\rho_n^\beta\|_{L^\infty(0,T;L^1(\sO_\alpha))}<C.
\end{align*}
Next, we have $
I_3=	\int_0^t\sup_{\bq \in H^l_0(\sO_\alpha)}\int_{\sO_\alpha}|P_n^f(\mu_\delta^{\eta^*_n}\nabla\bu_n ):\nabla\bq| $. Then Lemma \ref{energy_galerkin}
 readily gives us that 
\begin{align*}
\tilde{\bE}\|\partial_tI_3\|^2_{L^2(0,T)} \leq \tilde{\bE}\|\bu_n\|^2_{L^2(0,T;H^1_0(\sO_\alpha))}<C.
\end{align*}
The term $I_4$ is treated identically.
Now, for $I_5=	\ep\int_0^t\sup_{\bq \in H^l_0(\sO_\alpha)}|\int_{\sO_\alpha} P_n^f(\bu_n\rho_n)\cdot \Delta\bq| $ we write,
\begin{align*}
&	\|\partial_tI_5\|^2_{L^2(0,T)}\leq \int_0^T\|\bu_n\rho_n\|^2_{L^\frac32(\sO_{\alpha})}\leq \int_0^T\|\bu_n\|_{L^6(\sO_{\alpha})}^2\|\rho_n\|_{L^2(\sO_{\alpha})}.
\end{align*}
Hence, we obtain,
\begin{align*}
\tilde{\bE}[\|\partial_tI_5\|_{L^2(0,T)}]\leq \tilde{\bE}\|\bu_n\|^2_{L^2(0,T;H^1(\sO_{\alpha}))}+\tilde{\bE}\|\rho_n\|^2_{L^\infty(0,T;L^2(\sO_{\alpha}))} \leq C.
\end{align*}
Next, we find bounds for the term $I_6=\frac1{\delta}\int_0^t\sup_{\bq\in H^l_0(\sO_\alpha)}|\int_{\sO_\alpha}P_n^f(\mathbbm{1}_{T^\delta_{\eta^*_n}} (\bu_{n} - v_{n} \bd{e}_{z}) )\cdot \bd{q} | $. Observe that, using the trace theorem we obtain
\begin{align*}
 \tilde{\bE} [\|\partial_tI_6\|^2_{L^2(0,T)}] \leq   \tilde{\bE}\int_0^T(\|\bu_{n}\|_{L^2(\sO_\alpha)}+\|v_n\|_{L^2(\Gamma)})^2 \leq C(\delta).
\end{align*}
We summarize our bounds so far as follows,
\begin{align}\label{i6}
  \tilde{\bE}\|I_0+...+I_6\|_{C^{\frac12}([0,T];H^{-l}(\sO_\alpha))} \leq   C\tilde{\bE}\|I_0+...+I_6\|_{H^1(0,T;H^{-l}(\sO_\alpha))} \leq C.
\end{align}

Finally, we use the Burkholder-Davis-Gundy inequality to find for the stochastic term $I_7$ the following bounds for any $t_1<t_2$,
\begin{align*}
\tilde{\bE}\left\|\int_{t_1}^{t_2}P_n^f\mathbbm{1}_{\sO_{\eta_n^*}}\bd{F}_{n}(\rho_n,\rho_n\bu_n)dW_n^1\right\|_{H^{-l}(\sO_\alpha)}^p 
&\leq C\bE\left( \int_{t_1}^{t_2} \sum_{k= 1}^{\infty}\|f_{n,k}(\rho_n,\rho_n\bu_n)\|^2_{H^{-1}(\sO_\alpha)}\right)^{\frac{p}2}\\
	&\leq C\tilde{\bE}\left( \int_{t_1}^{t_2}\int_{\sO_\alpha} \rho_n+\rho_n|\bu_n|^2\right) ^{\frac{p}2}\\
	&\leq C|t_2- t_1|^{{\frac{p}2}}\left(\tilde{\bE}\|\rho_n\|^{{\frac{ p}2}}_{L^\infty(0,T;L^\gamma(\sO_{\alpha}))}
	+\tilde{\bE}\|\sqrt{\rho_n}\bu_n\|^p_{L^2(0,T;L^2(\sO_{\alpha}))} \right) .
\end{align*}
Then the Kolmogorov continuity theorem gives us that for any $r<\frac12$
\begin{align}\label{i7}
    \tilde{\bE}\left\|\int_0^t\mathbbm{1}_{\sO_{\eta_n^*}}\bd{F}_{n}(\rho_n,\rho_n\bu_n)dW_n^1\right\|_{C^r([0,T];H^{-l}(\sO_\alpha))}<C.
\end{align}
Hence, combining \eqref{i6} and \eqref{i7} we obtain \eqref{urho_cont}.
A final application of the embedding \eqref{compact_urho} and Chebyshev's inequality then gives us that $\{\tilde{\bP}\circ (\rho_n\bu_n)^{-1}\}$ is tight in $C_w(0,T;L^{\frac{2\beta}{\beta+1}}(\sO_\alpha))$.
\medskip

\noindent \textbullet {\bf Tightness for $v_n$.}
We will use the following variant of the Aubin-Lions theorem (see e.g. \cite{S87}, \cite{Tem95}) that states:
 Assume that $\mathcal{Y}_0\subset\mathcal{Y}$ are Banach spaces such that $\mathcal{Y}_0$ and $\mathcal{Y}$ are reflexive with compact embedding of $\mathcal{Y}_0$ in $\mathcal{Y}$, 
then for any $\sigma>0$, the embedding	
\begin{align}\label{AL}
    \left\{v \in L^2(0,T;\mathcal{Y}_0):\sup_{0<h< T} \frac1{h^\sigma}\|T_hv-v\|_{L^2(h,T;\mathcal{Y})}<\infty \right\} \hookrightarrow_c L^2(0,T;\mathcal{Y}),
\end{align}	
is compact. We take $\mathcal{Y}_0= H^1(\Gamma)$ and $\mathcal{Y}=L^2(\Gamma)$, use the bounds obtained in Lemma \ref{energy_galerkin} (6) and Lemma \ref{lem:vtight}, and with the aid of Chebyshev's inequality, as earlier, we infer that the sequence of measures $\tilde{\bP}\circ v_n^{-1}$ is tight in $L^2(0,T;L^2(\Gamma)).$ This completes the proof of Proposition \ref{tight_galerkin}.
\end{proof}

\subsection{Almost sure convergence and identification of the limit}
Thanks to the tightness result obtained in Section \ref{tight_galerkin} and Jakubowski's version of the Skorohod representation theorem, we obtain the following convergence result.
\begin{theorem}\label{skorohod_galerkin} 
	There exists a probability space $(\tilde\Omega,\tilde\sF,\tilde\bP)$ and random variables  \\ $\displaystyle\bar{\mathcal{U}}_{n}:=(
\bar{\rho}_n,	\bar{\bu}_{n},\overline{(\rho\bu)}_n,\bar\eta_n, \bar\eta^*_n, \bar v_n, \bar W^1_n, \bar W^2_n)$ for $n\in\mathbb{N}$ and
	${\mathcal{U}}:=(\rho,
	{\bu},\overline{\rho\bu},\eta, \eta^*, v, \bar W^1,\bar W^2)$
	defined on this new probability space, such that 
\begin{enumerate}
\item $\bar{\mathcal{U}}_{n}$ has the same law in $\mathcal{X}$ as $\mathcal{U}_{n}$,
\item $\bar{\mathcal{U}}_{n} \to \mathcal{U}_{n}$ almost surely in $\mathcal{X}$.
\item $\bar \eta^*_n=\bar \eta_n$ for every $t<\tau^\eta_n$ where, for the fixed $s\in(\frac32,2)$,
\begin{align*}
			\tau^{\eta}_n &:=T\wedge \inf\left\{t> 0:\inf_{\Gamma}(1+{\bar\eta_n}(t))\leq \alpha \text{ or } \|\bar{\eta}_n(t)\|_{H^s(\Gamma)}\geq \frac1{\alpha}\right\}.
 \end{align*}
\item $\partial_{t}\eta = v$, $\tilde{\mathbb{P}}$-almost surely.
\end{enumerate}
	
	\end{theorem}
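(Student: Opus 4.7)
The plan is to combine the tightness result of Proposition \ref{tight_galerkin} with a variant of the Skorohod representation theorem suitable for non-Polish phase spaces, in the spirit of how Theorem \ref{skorohod_time} was obtained. Since several of the components of $\mathcal{X}$ (in particular $\mathcal{X}_{\rho}$, $\mathcal{X}_{\bu}$, and $\mathcal{X}_{\rho\bu}$) carry weak or weak-$^*$ topologies and are not Polish, I would invoke Jakubowski's version of the Skorohod representation theorem from \cite{J97}, composed with the construction in \cite{NTT21} that places all new random variables canonically on the same probability space $(\tilde\Omega,\tilde{\mathcal{F}},\tilde{\mathbb{P}}) = ([0,1]^2, \mathcal{B}([0,1]^2),\mathcal{L})$ with $n$-independent Wiener processes. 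This yields random variables $\bar{\mathcal{U}}_n$ and a limit $\mathcal{U}$ satisfying statements (1) and (2) of the theorem.

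Next, to establish the kinematic identity $\partial_t\eta = v$ in statement (4), I would use that by construction of the splitting scheme on the original probability space we have $\partial_t\eta_n = v_n$, $\tilde{\mathbb{P}}$-almost surely. Equivalence of laws from part (1) transfers this identity to the new variables, giving $\partial_t\bar{\eta}_n = \bar v_n$ almost surely on the new space. Passing to the limit using the almost sure convergences $\bar\eta_n \to \eta$ in $C([0,T];H^s(\Gamma))$ and $\bar v_n \to v$ in $L^2(0,T;L^2(\Gamma))$ from part (2) then yields $\partial_t\eta = v$ in the distributional sense, and the regularity of the limit promotes this to an a.e. identity.

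For statement (3), which asserts that $\bar{\eta}^*_n = \bar{\eta}_n$ for every $t<\tau^\eta_n$, I would invoke Proposition \ref{equalinlaw} from Appendix A, precisely as was done in the proof of Theorem \ref{skorohod_time}. The underlying reason is that on the original probability space, by the very definition \eqref{etastar} of the stopped process, the identity $\eta^*_n(t) = \eta_n(t)$ holds for all $t$ strictly less than the corresponding stopping time. Since stopping times and stopped processes are measurable functionals of the paths of $(\eta_n,v_n)$ in $\mathcal{X}_\eta \times \mathcal{X}_v$, the equivalence of laws furnished by part (1) then forces the same pathwise relation between $\bar\eta^*_n$ and $\bar\eta_n$ on $(\tilde\Omega,\tilde{\mathcal F},\tilde{\mathbb P})$, up to the stopping time $\tau^\eta_n$ defined intrinsically from $\bar\eta_n$.

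The main obstacle I anticipate is not any single one of the four conclusions, but rather the verification that Jakubowski's representation theorem is applicable to the full product space $\mathcal{X}$: one must check that each factor is a quasi-Polish space, i.e. admits a countable family of continuous real-valued functions separating points. For the weakly topologized spaces $C_w(0,T;L^\beta(\sO_\alpha))$, $(L^2(0,T;H^1_0(\sO_\alpha)),w)$, $C_w(0,T;L^{2\beta/(\beta+1)}(\sO_\alpha))$, and $C_w(0,T;H^2(\Gamma))$, this property is classical and can be verified using a dense countable family of test functions and the dual pairing. Once this is in hand, together with the tightness provided by Proposition \ref{tight_galerkin}, the conclusions (1)--(4) follow by the argument sketched above.
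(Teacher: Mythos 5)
Your proposal is correct and takes essentially the same approach as the paper: the paper obtains Theorem \ref{skorohod_galerkin} by combining the tightness from Proposition \ref{tight_galerkin} with Jakubowski's version of the Skorohod representation theorem (made canonical on $[0,1]^2$ via the construction in \cite{NTT21}), uses Proposition \ref{equalinlaw} from Appendix A for the stopped-process identity in statement (3), and derives $\partial_t\eta = v$ from $\partial_t\eta_n = v_n$ via equivalence of laws and limit passage, exactly as you describe. Your additional remark about checking that each weak-topology factor is quasi-Polish is a valid precaution the paper leaves implicit, but it is a standard verification and does not constitute a different route.
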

We will construct a complete, right-continuous filtration $(\bar\sF_t)_{t \geq 0}$ on the new probability space $(\tilde\Omega,\tilde\sF,\tilde \bP)$ given in Theorem \ref{skorohod_galerkin}, to which the noise processes and the solutions are adapted. However, since the solutions at this stage are not regular enough in time to be considered stochastic processes, we rely on the definition of random distribution and its so-called history as introduced in \cite{BFH18}. 
 Let $\bu$ be a random distribution  then we define its history as
\begin{align}\label{history}
    \sigma_t(\bu):=\bigcap_{s\geq t}\sigma\left(\bigcup_{\bq\in C^\infty_0((0,s)\times\sO_\alpha)}\{(\bu,\bq)<1\}\cup \mathcal{N} \right),
\end{align}
 where $\mathcal{N}=\{\mathcal{A}\in \tilde\sF |  \tilde\bP(\mathcal{A})=0\}.$
 Let $\bar\sF_t'$ be the $\sigma-$ field generated by the random variables $\eta(s),\bar W^1(s),\bar W^2(s)$ for all $0\leq s \leq t$.
Then we define
\begin{align}\label{Ft1}
\bar\sF^0_t:=\bigcap_{s\ge t}\sigma(\bar{\sF}_s' \cup \mathcal{N}),\qquad
\bar\sF_t :=\sigma(\sigma_t(\bu,v)\cup  \bar\sF^0_t).
\end{align}
This gives a complete, right-continuous filtration $(\bar\sF_t)_{t \geq 0}$, on the probability space $(\tilde\Omega,\tilde\sF,\tilde\bP)$, to which the noise processes and the solutions are adapted. 
The filtrations $(\bar\sF^n_t)_{t \geq 0}$ are defined identically by replacing $\bu,v,\eta,\bar W^1,\bar W^2$ with $\bar\bu_n,\bar v_n,\bar\eta_n,\bar W^1_n,\bar W^2_n$ in the definition above.

Observe that, due to equivalence of laws, we have
$$\overline{(\rho\bu)_n}=\bar\rho_n\bar\bu_n \quad\text{ and thus, } \overline{(\rho\bu)}=\rho\bu.$$
Observe also that, thanks to the first equality in \eqref{rhoH1}, the strong almost sure convergence of the approximate densities in $L^4((0,T)\times\sO_\alpha)$ implies that$$\nabla \bar\rho_n\to \nabla \rho\quad\text{ in } L^2(0,T;L^2(\sO_\alpha)).$$ 
Moreover, as shown in \eqref{etasequal1}, we have that
		\begin{align}
			{\eta}^*(t)={\eta}(t) \quad \text{ for any } t<\tau^{\eta}, \quad \tilde\bP\text{-almost surely,}
		\end{align}	
		where, for any given $\alpha$, $\tau^\eta$ is an $(\bar\sF_t)_{t\in[0,T]}$-stopping time given by:
	       $$	\tau^{\eta}  := T\wedge\inf\left\{t>0:\inf_{\Gamma}(1+\eta(t))\leq \alpha \text{ or } \|{\eta}(t)\|_{H^s(\Gamma)}\geq \frac1{\alpha} \right\}.$$
        
The new random variables $\bar{\mathcal{U}}_n$ also satisfy the weak formulation \eqref{galerkin} and the uniform bounds found in Lemma \ref{energy_galerkin}.
Using these uniform bounds and the convergence result in Theorem \ref{skorohod_galerkin} we can pass $n\to\infty$ in \eqref{galerkin} and prove that $\bu,\rho,v,\eta,\eta^*$ solve the desired weak formulation \eqref{viscous}. We will only discuss the passage of $n\to\infty$ in the stochastic integral in Proposition \ref{stochint_galerkin} and in the advection term. For the convergence of the advection term in the weak formulation, we show that
\begin{align*}
	\int_0^t\int_{\sO_\alpha}{ P_n^f}(\bar\rho_n\bar\bu_n\otimes\bar\bu_n):\nabla\bq \to \int_0^t\int_{\sO_\alpha}\rho\bu\otimes\bu:\nabla \bq 
\end{align*}
$\tilde\bP$-a.s, for any $t \in[0,T]$ for any $\bq\in L^\infty(0,T;W^{1,\infty}(\sO_\alpha))$.
Recall, due to that Theorem \ref{skorohod_galerkin}, we have $\bar\rho_n\bar\bu_n \to \rho\bu$ in $C_w(0,T;L^{\frac{2\beta}{\beta+1}}(\sO_\alpha))$ almost surely. Now since the embedding $L^{\frac{2\beta}{\beta+1}}(\sO_\alpha) \hookrightarrow H^{-1}(\sO_\alpha)$ is compact, we can deduce that $\bar\rho_n\bar\bu_n \to \rho\bu$ in $L^p(0,T;H^{-1}(\sO_\alpha))$, $1\leq p < \infty$ almost surely, for example by Lemma B.2 in \cite{Smith}. This strong convergence together with the almost sure convergence of $\bar\bu_n$ weakly in $L^2(0,T;H^1(\sO_\alpha))$ to $\bu$ gives us the desired result.

Finally, we use the convergences in Theorem \ref{skorohod_galerkin} to establish $\tilde\bP$-almost sure convergence of the stochastic integrals in the weak formulation, for the fluid momentum equation. To help with this, we first prove the following lemma, which we will use to show convergence of the stochastic integrals.

\begin{lemma}\label{L2fmconv}
For every positive integer $k$ and for almost every $(\tilde\omega, t) \in \tilde\Omega \times [0, T]$,
\begin{equation}\label{convnoisecoeff}
\int_{\mathcal{O}_{\alpha}} \left|\frac{f_{k}(\bar\rho_n, \bar\rho_n\bar\bu_n)}{\bar\rho_n^{1/2}} - \frac{f_{k}(\rho, \rho \bu)}{\rho^{1/2}}\right|^{2} \to 0.
\end{equation}
\end{lemma}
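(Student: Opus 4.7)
The strategy is to apply Vitali's convergence theorem for almost every $(\tilde\omega, t)$. The argument requires two inputs: pointwise a.e.\ convergence of the integrand in $\mathcal{O}_\alpha$, and a uniform equi-integrable $L^2(\mathcal{O}_\alpha)$-majorant.

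For the pointwise convergence, I first upgrade the Skorohod-level convergences. Theorem \ref{skorohod_galerkin} gives $\bar\rho_n \to \rho$ in $L^4((0,T)\times \mathcal{O}_\alpha)$ almost surely and hence a.e.\ convergence along a subsequence. The temporal regularity bound \eqref{urho_cont} on $\bar\rho_n\bar\bu_n$ in $C^r([0,T]; H^{-l})$ combined with the uniform $L^\infty(0,T; L^{2\beta/(\beta+1)})$ estimate and the Aubin--Lions lemma gives strong convergence $\bar\rho_n\bar\bu_n \to \rho\bu$ in some $L^p((0,T)\times \mathcal{O}_\alpha)$ and thus a.e.\ convergence on $\tilde\Omega \times (0,T) \times \mathcal{O}_\alpha$ along a further subsequence. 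On the positive set $\{\rho > 0\}$ this forces $\bar\bu_n = (\bar\rho_n\bar\bu_n)/\bar\rho_n \to \bu$ a.e., and the continuity of $f_k$ together with the eventual lower bound $\bar\rho_n \ge \rho/2$ for large $n$ yields pointwise convergence of the integrand to zero. On the vacuum set $\{\rho = 0\}$, the growth estimate \eqref{fassumption} gives
\[
\Bigl|\frac{f_k(\bar\rho_n, \bar\rho_n\bar\bu_n)}{\sqrt{\bar\rho_n}}\Bigr| \le c_k\bigl(\sqrt{\bar\rho_n} + |\sqrt{\bar\rho_n}\bar\bu_n|\bigr),
\]
whose first summand vanishes since $\bar\rho_n \to 0$ there, and whose second summand is handled by the strong $L^2$ convergence $\sqrt{\bar\rho_n}\bar\bu_n \to \sqrt{\rho}\bu$ (hence pointwise a.e.\ along a subsequence, with limit vanishing on $\{\rho=0\}$).

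For equi-integrability, \eqref{fassumption} gives the pointwise bound
\[
\Bigl|\frac{f_k(\bar\rho_n, \bar\rho_n\bar\bu_n)}{\sqrt{\bar\rho_n}}\Bigr|^2 \le 2c_k^2\bigl(\bar\rho_n + \bar\rho_n|\bar\bu_n|^2\bigr),
\]
and H\"older's inequality together with the uniform $L^\beta$ bound on $\bar\rho_n$, the embedding $H^1_0(\mathcal{O}_\alpha) \hookrightarrow L^6(\mathcal{O}_\alpha)$, and the a.e.\ $t$-finiteness of $\|\bar\bu_n(t)\|_{H^1}$ produces a bound in $L^r(\mathcal{O}_\alpha)$ for some $r>1$, uniform in $n$ for a.e.\ $(\tilde\omega, t)$. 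Vitali's convergence theorem then delivers \eqref{convnoisecoeff}. The main obstacle is the strong $L^2$ convergence of $\sqrt{\bar\rho_n}\bar\bu_n$, since the density has no uniform positive lower bound as $n \to \infty$; this will be handled along the lines of \cite{FeireislCompressible, BreitHofmanova} by combining the weak $L^2$ convergence (coming from the a.e.\ convergence of $\bar\rho_n$ paired with weak convergence of $\bar\rho_n\bar\bu_n$) with convergence of the corresponding $L^2$ norms extracted from the energy balance.
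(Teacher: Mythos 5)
Your proposal takes a genuinely different route from the paper: you aim for a Vitali-type argument pointwise in $(\tilde\omega,t)$, whereas the paper integrates in expectation over $\tilde\Omega\times[0,T]$ and uses Egorov's theorem to isolate a set of small measure, then splits the Egorov set into a near-vacuum region $\{\rho\le\epsilon\}$ and its complement. The point of the Egorov route is precisely to \emph{avoid} having to prove strong convergence of $\sqrt{\bar\rho_n}\bar\bu_n$: on the Egorov set one has uniform convergence of $\bar\rho_n$ and $\bar\rho_n\bar\bu_n$, near vacuum one uses only that $\bar\rho_n$ is eventually small, and on the small-measure complement one uses the growth assumption plus H\"older. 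Your strategy front-loads exactly the convergence the paper works to sidestep.

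Two concrete issues with your write-up. First, the Aubin--Lions step producing ``strong convergence $\bar\rho_n\bar\bu_n\to\rho\bu$ in some $L^p((0,T)\times\mathcal{O}_\alpha)$'' does not work. Aubin--Lions with the bounds $L^\infty(0,T;L^{2\beta/(\beta+1)}(\mathcal{O}_\alpha))$ and $C^r(0,T;H^{-l})$ yields compactness in $L^p(0,T;X)$ only for an intermediate space $X$ \emph{compactly} containing $L^{2\beta/(\beta+1)}(\mathcal{O}_\alpha)$, and the embedding $L^q(\mathcal{O}_\alpha)\hookrightarrow L^p(\mathcal{O}_\alpha)$ for $p<q$ on a bounded domain is continuous but never compact; the compactness lands you only in negative Sobolev spaces in $x$ (e.g.\ $L^p(0,T;H^{-1}(\mathcal{O}_\alpha))$, which is what the paper itself extracts after Theorem \ref{skorohod_galerkin} via Lemma B.2 of \cite{Smith}), not in $L^p_{t,x}$. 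The pointwise a.e.\ convergence of the momentum that you need can instead be routed entirely through your claimed strong $L^2$ convergence of $\sqrt{\bar\rho_n}\bar\bu_n$, so this step is removable rather than fatal; but as written it is wrong.

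Second, and more to the point: the claim ``strong $L^2$ convergence of $\sqrt{\bar\rho_n}\bar\bu_n$'' is correct at this level of the approximation, but the mechanism you cite (``convergence of the $L^2$ norms extracted from the energy balance'') is not where it comes from. The energy balance only provides uniform bounds, not convergence of the kinetic energy to that of the limit, and passing to the limit in the energy inequality is circular because identifying the limit of the dissipation and martingale terms is precisely what is at stake. The actual mechanism is the compensated-compactness/duality pairing: from $\bar\rho_n\bar\bu_n\to\rho\bu$ strongly in $L^2(0,T;H^{-1}(\mathcal{O}_\alpha))$ (interpolating $C([0,T];H^{-l})$ and $L^\infty(0,T;L^{2\beta/(\beta+1)})$, as the paper does) and $\bar\bu_n\rightharpoonup\bu$ weakly in $L^2(0,T;H^1_0(\mathcal{O}_\alpha))$, the pairing $\int_0^T\langle\bar\rho_n\bar\bu_n,\bar\bu_n\rangle_{H^{-1}\times H^1_0}$ converges to $\int_0^T\langle\rho\bu,\bu\rangle$; combined with the weak $L^2$ convergence $\sqrt{\bar\rho_n}\bar\bu_n\rightharpoonup\sqrt{\rho}\bu$ this upgrades to strong $L^2_{t,x}$ convergence. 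You should either spell this out or replace your Vitali route with the paper's Egorov argument, which requires none of it.
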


\begin{proof}
This follows by an argument as in {\color{blue}{\cite{BFH18}}} involving Egorov's theorem and uniform estimates. To apply Egorov's theorem, the $\tilde\bP$-almost sure convergences $\bar\rho_{n} \to \rho$ (strongly) in $L^{2}(0, T; H^{1}(\mathcal{O}_{\alpha}))$ and $\bar\bu_{n} \rightharpoonup \bu$ (weakly) in $L^{2}(0, T; H^{1}_{0}(\mathcal{O}_{\alpha}))$ from Theorem \ref{skorohod_galerkin} imply that $\bar\rho_{n} \bar\bu_{n} \to \rho \bu$ (strongly) $\tilde\bP$-almost surely in $L^{1}(0, T; L^{1}(\mathcal{O}_{\alpha}))$. So by Egorov's theorem, there exists for every $\epsilon > 0$, a measurable set $A_{\epsilon} \subset \tilde\Omega \times [0, T] \times \mathcal{O}_{\alpha}$ such that $|A_{\epsilon}^{c}| \le \epsilon$, and both $\bar\rho_{n} \to \rho$ and $\bar\rho_{n}\bar \bu_{n} \to \rho \bu$ uniformly on $A_{\epsilon}$.

To avoid problems with vacuum, arising from dividing by the square root of the density, we define the set $A_{\epsilon, 1} = \{(\omega, t, x) \in A_{\epsilon} : 0 \le \rho \le \epsilon\}$ and $A_{\epsilon, 2} = A_{\epsilon} \setminus A_{\epsilon, 1}$. Then, \eqref{convnoisecoeff} will follow once we observe that:
\begin{itemize}
    \item $\displaystyle \int_{A_{\epsilon, 2}} \left|\frac{f_{k}(\bar\rho_{n}, \bar\rho_{n}\bar\bu_{n})}{\bar\rho_{n}^{1/2}} - \frac{f_{k}(\rho, \rho \bu)}{\rho^{1/2}}\right|^{2} \to 0 \text{ on } A_{\epsilon}^{c} \text{ as $n \to \infty$}$, since we are away from vacuum by the definition of $A_{\epsilon, 2}$ and by the uniform convergence of $\bar\rho_n \to \rho$ and $\bar\rho_n \bar\bu_n \to \rho\bu$ on $A_{\epsilon}$. 
    \item Next, by \eqref{fassumption}:
\begin{equation*}
\int_{A_{\epsilon, 1}} \left|\frac{f_{k}(\bar\rho_{n}, \bar\rho_{n}\bar\bu_{n})}{\bar\rho_{n}^{1/2}} - \frac{f_{k}(\rho, \rho \bu)}{\rho^{1/2}}\right|^{2} \le C \int_{A_{\epsilon, 1}} (\bar\rho_{n} + \bar\rho_{n}|\bar\bu_{n}|^{2} + \rho + \rho|\bu|^{2}),
\end{equation*}
for a constant $C$ depending only on $k$, and for sufficiently large $n$, this can be estimated by using the definition of $A_{\epsilon, 1}$ as:
\begin{equation*}
\limsup_{n \to \infty} \int_{A_{\epsilon, 1}} \left|\frac{f_{k}(\bar\rho_{n}, \bar\rho_{n}\bar\bu_{n})}{\bar\rho_{n}^{1/2}} - \frac{f_{k}(\rho, \rho \bu)}{\rho^{1/2}}\right|^{2} \le 2C\left(\limsup_{n \to \infty} \tilde\bE \int_{0}^{T} \int_{\mathcal{O}_{\alpha}} (2 + |\bar\bu_{n}|^{2} + |\bu|^{2})\right)\epsilon \le C\epsilon,
\end{equation*}
for $C$ depending only on $k$, where this estimate holds for sufficiently large $n$ depending on $(\omega, t, x)$ since eventually, $0 \le \rho_{n} \le 2\epsilon$ by the uniform convergence of $\bar\rho_{n} \to \rho$ on $A_{\epsilon}$ and the fact that $0 \le \rho \le \epsilon$ on $A_{\epsilon, 1}$ by definition. We also used the uniform estimate of $\bar\bu_{n} \in L^{2}(\tilde\Omega \times [0, T] \times \mathcal{O}_{\alpha})$ independently of $n$, which follows from Poincar\'{e}'s inequality. 
\item By uniform bounds of $\bar\rho_{n} \in L^{p}(\tilde\Omega; L^{\infty}(0, T; L^{\gamma}(\mathcal{O}_{\alpha})))$, $\bar\rho_{n} |\bar\bu_{n}|^{2} \in L^{p}(\tilde\Omega; L^{2}(0, T; L^{\frac{6\gamma}{4\gamma + 3}}(\mathcal{O}_{\alpha})))$ for all $1 \le p < \infty$, the assumption \eqref{fassumption}, and $|A_{\epsilon}^{c}| \le \epsilon$, for a constant $C$ depending only on $k$:
\begin{equation*}
\limsup_{n \to \infty} \int_{A_{\epsilon}^{c}} \left|\frac{f_{k}(\bar\rho_{n}, \bar\rho_{n}\bar\bu_{n})}{\bar\rho_{n}^{1/2}} - \frac{f_{k}(\rho, \rho \bu)}{\rho^{1/2}} \right|^{2} \le C \limsup_{n \to \infty} \int_{A_{\epsilon}^{c}} (\bar\rho_{n} + \bar\rho_{n}|\bar\bu_{n}|^{2} + \rho + \rho|\bu|^{2}) \le C \epsilon^{\frac{2\gamma - 3}{6\gamma}}.
\end{equation*}
\end{itemize}
So for a constant $C$ independent of $\epsilon$ and $n$ (depending only on $k$) and for arbitrary $\epsilon > 0$,
\begin{equation*}
\limsup_{n \to \infty} \tilde\bE \int_{0}^{T} \int_{\mathcal{O}_{\alpha}} \left|\frac{f_{k}(\bar\rho_{n}, \bar\rho_{n}\bar\bu_{n})}{\bar\rho_{n}^{1/2}} - \frac{f_{k}(\rho, \rho \bu)}{\rho^{1/2}}\right|^{2} \le C \epsilon,
\end{equation*}
which completes the proof.
\end{proof}
We now use Lemma \ref{L2fmconv} to show convergence of the stochastic integral as the Galerkin parameter $n \to \infty$. In contrast to the other stochastic integral convergences (see Lemma \ref{Nconvergence} and Lemma \ref{stochint}), we will make use of Egorov's theorem to establish this convergence, in order to control the term $\rho^{1/2}$ in the denominator of the definition \eqref{Fndef} of $f_{n, k}$, which needs to be appropriately estimated near vacuum. Note that unlike for the convergence argument for the stochastic integral in the time discretization passage $N \to \infty$ in Lemma \ref{Nconvergence}, see in particular the estimate \eqref{asdensity}, we do not have bounds of the densities $\rho_{n}$ away from vacuum $\tilde\bP$-almost surely that are independent of the Galerkin parameter $n$. This complicates the argument and requires the use of Egorov's theorem to rigorously pass to the limit in the stochastic integral term as $n \to \infty$.
\begin{lemma}\label{stochint_galerkin}
For $\bd{q} \in \bigcup_{n} X_{n}$,
\begin{equation*}
\int_{0}^{t} \int_{\mathcal{O}_{\alpha}} \mathbbm{1}_{\sO_{\bar\eta_n^*}}\bd{F}_{n}(\bar\rho_{n}, \bar\rho_{n}\bar{\bu}_{n}) \cdot \bd{q} d\bar W_n^1(t) \to \int_{0}^{t} \int_{\mathcal{O}_{\alpha}} \mathbbm{1}_{\sO_{\eta^*}}\bd{F}(\rho, \rho \bu) \cdot \bd{q} d\bar W^1(t), \quad \tilde\bP\text{-almost surely.}
\end{equation*}
\end{lemma}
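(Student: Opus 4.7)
The plan is to adapt the strategy used in Lemma \ref{Nconvergence} for the time-discretization limit, but with additional care required because, unlike in that setting, we do not possess a uniform positive lower bound on the densities $\bar\rho_n$ independent of the Galerkin parameter $n$. By the classical convergence criterion for stochastic integrals (\cite{Ben}, Lemma 2.1 of \cite{DGHT}, or Lemma 2.6.6 of \cite{BFH18}), it suffices to prove that
$$(\mathbbm{1}_{\sO_{\bar\eta_n^*}}\bd{F}_{n}(\bar\rho_{n}, \bar\rho_{n}\bar{\bu}_{n}), \bd{q}) \longrightarrow (\mathbbm{1}_{\sO_{\eta^*}}\bd{F}(\rho, \rho \bu), \bd{q})$$
in probability in $L^{2}(0, T; L_{2}(\mathcal{U}_0; \R))$, together with a uniform $L^{p}$ moment bound for some $p > 2$ in order to apply Vitali. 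The moment bound is immediate from \eqref{fassumption}, the energy estimate of Lemma \ref{energy_galerkin}, the definition \eqref{Fndef}, and the operator bound $\|\mathcal{M}^{1/2}[\bar\rho_n]\|_{\mathcal{L}(X_n^f,X_n^f)} \le \|\bar\rho_n\|_{L^\infty}^{1/2}$ combined with the comparison principle \eqref{comparison}.

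First I would establish the pointwise (in $(\tilde\omega,t)$) convergence
$$\sum_{k=1}^{\infty} \bigl| (\mathbbm{1}_{\sO_{\bar\eta_n^*}} f_{n,k}(\bar\rho_n, \bar\rho_n\bar\bu_n) - \mathbbm{1}_{\sO_{\eta^*}} f_k(\rho, \rho\bu),\, \bd{q}) \bigr|^{2} \to 0$$
by splitting into an indicator piece and a noise-coefficient piece. The indicator piece vanishes by dominated convergence using $\bar\eta^*_n \to \eta^*$ in $C(0,T;C(\Gamma))$ from Theorem \ref{skorohod_galerkin} (so that $\mathbbm{1}_{\sO_{\bar\eta^*_n}} \to \mathbbm{1}_{\sO_{\eta^*}}$ a.e. on $\mathcal{O}_\alpha$) and the bound $\sum_k |f_k(\rho,\rho\bu)|^2 \le C(\rho^2 + \rho^2|\bu|^2)$ from \eqref{fassumption}. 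For the noise-coefficient piece, I would exploit the self-adjointness identity
$$(f_{n,k}(\bar\rho_n, \bar\rho_n\bar\bu_n), \bd{q})_{L^{2}} = \bigl(f_k(\bar\rho_n, \bar\rho_n\bar\bu_n)/\bar\rho_n^{1/2},\; \mathcal{M}^{1/2}[\bar\rho_n]\bd{q}\bigr)_{L^{2}},$$
valid for $\bd{q} \in X_{n_0}^f$ and $n \ge n_0$ because $P_n^f$ and $\mathcal{M}^{1/2}[\bar\rho_n]$ are both self-adjoint and preserve $X_n^f$. By Lemma \ref{L2fmconv}, the first factor converges in $L^2(\mathcal{O}_\alpha)$ to $f_k(\rho,\rho\bu)/\rho^{1/2}$, so Cauchy--Schwarz reduces the matter to showing that $\mathcal{M}^{1/2}[\bar\rho_n]\bd{q}$ converges strongly in $L^2(\mathcal{O}_\alpha)$ to $\sqrt{\rho}\,\bd{q}$.

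The main obstacle is precisely this last convergence, since the Lipschitz estimate \eqref{M12Lipschitz} requires a positive lower bound on densities that is not available independently of $n$. To circumvent this, I would employ Egorov's theorem in the spirit of the proof of Lemma \ref{L2fmconv}: given $\epsilon > 0$, choose $A_\epsilon \subset \tilde\Omega \times (0,T) \times \mathcal{O}_\alpha$ with $|A_\epsilon^c| < \epsilon$ on which $\bar\rho_n \to \rho$ uniformly, and split $A_\epsilon$ further into $A_{\epsilon,1} = A_\epsilon \cap \{\rho \le \epsilon\}$ and $A_{\epsilon,2} = A_\epsilon \setminus A_{\epsilon,1}$. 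On $A_{\epsilon,2}$ we are away from vacuum and \eqref{M12Lipschitz} delivers the claimed $L^2$ convergence to $\sqrt{\rho}\,\bd{q}$, while on $A_{\epsilon,1}$ and on $A_\epsilon^c$ the identity $\|\mathcal{M}^{1/2}[\bar\rho_n]\bd{q}\|_{L^2}^2 = \int_{\mathcal{O}_\alpha} \bar\rho_n|\bd{q}|^2$, the uniform $L^\gamma$ bound on $\bar\rho_n$ from Lemma \ref{energy_galerkin}, and $\bd{q} \in L^\infty$ give an estimate of order $\epsilon^{(\gamma-1)/\gamma}$ that vanishes as $\epsilon \to 0$. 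Combining these pieces with the norm convergence $\int \bar\rho_n|\bd{q}|^2 \to \int \rho|\bd{q}|^2$ (a consequence of the strong $L^4$ convergence $\bar\rho_n \to \rho$ in Theorem \ref{skorohod_galerkin}) yields strong $L^2$ convergence of $\mathcal{M}^{1/2}[\bar\rho_n]\bd{q}$ to $\sqrt{\rho}\,\bd{q}$. Once the pointwise convergence of integrands is in hand, the uniform moment bound allows Vitali to upgrade it to convergence in probability in $L^2(0,T;L_2(\mathcal{U}_0;\R))$, from which the claimed $\tilde{\mathbb{P}}$-almost sure convergence of the stochastic integrals follows, passing to a subsequence if necessary, since the limit is identified independently.
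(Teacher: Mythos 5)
Your high-level strategy closely parallels the paper's: reduce to $L^2_t L_2(\mathcal{U}_0;\R)$-convergence in probability, split into an indicator piece and a noise-coefficient piece, exploit the self-adjointness of $\mathcal{M}^{1/2}[\bar\rho_n]$ and $P_n^f$ to rewrite the pairing, invoke Lemma~\ref{L2fmconv}, and close with Vitali. You also correctly flag the central new difficulty relative to Lemma~\ref{Nconvergence}, namely that the densities are no longer bounded away from vacuum uniformly in $n$.

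The gap is in your treatment of the convergence $\mathcal{M}^{1/2}[\bar\rho_n]\bd{q} \to \sqrt{\rho}\,\bd{q}$ strongly in $L^2(\mathcal{O}_\alpha)$ via Egorov and \eqref{M12Lipschitz}. There are two problems. First, the Lipschitz constant in \eqref{M12Lipschitz} is $C(n,\kappa)$ and depends explicitly on the Galerkin parameter $n$ (it arises from inverting a finite-dimensional Gram matrix whose condition number grows with $n$), so the estimate cannot be used uniformly as $n\to\infty$ even after restricting to a good set where densities are bounded below. Second, and more fundamentally, $\mathcal{M}^{1/2}[\cdot]$ is a \emph{nonlocal} operator on $X_n^f$: the value of $(\mathcal{M}^{1/2}[\bar\rho_n]\bd{q})(x)$ depends on $\bar\rho_n$ over all of $\mathcal{O}_\alpha$, not just near $x$. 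Egorov partitions $\tilde\Omega\times(0,T)\times\mathcal{O}_\alpha$ in all three variables, so a slice $A_{\epsilon,2}(\tilde\omega,t)$ is a subset of $\mathcal{O}_\alpha$ on which $\rho>\epsilon$, but this does not give a positive lower bound for $\bar\rho_n$ on the whole of $\mathcal{O}_\alpha$, which is what \eqref{M12Lipschitz} requires. Likewise, the identity $\|\mathcal{M}^{1/2}[\bar\rho_n]\bd{q}\|_{L^2(\mathcal{O}_\alpha)}^2 = \int_{\mathcal{O}_\alpha}\bar\rho_n|\bd{q}|^2$ is global; it does not localize to $A_{\epsilon,1}$ or $A_\epsilon^c$ because the left-hand side cannot be computed by integrating only over a spatial subregion. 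The Egorov argument you imported from the proof of Lemma~\ref{L2fmconv} works there precisely because $f_k(\rho,\rho\bu)/\rho^{1/2}$ is a pointwise (local) function of the density and momentum; it does not carry over to $\mathcal{M}^{1/2}$.

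The paper sidesteps this by working in the $H^{-l}(\mathcal{O}_\alpha)$ dual norm (rather than $L^2$ directly) and by invoking the result (4.29) of \cite{BreitHofmanova} as a black box: namely that $\|\mathcal{M}^{1/2}_n[\bar\rho_n]\bd{p} - \sqrt{\rho}\,\bd{p}\|_{L^2(\mathcal{O}_\alpha)} \to 0$ for $\bd{p}\in H^l_0(\mathcal{O}_\alpha)$ given suitable convergence of $\bar\rho_n\to\rho$. That external result is proved there by other means (it essentially combines weak convergence of the operator with the norm identity you already observed, rather than a Lipschitz bound). If you want a self-contained argument, that is the route: establish $\mathcal{M}^{1/2}[\bar\rho_n]\bd{q}\rightharpoonup\sqrt{\rho}\,\bd{q}$ weakly in $L^2$, combine with $\|\mathcal{M}^{1/2}[\bar\rho_n]\bd{q}\|_{L^2}^2 = \int\bar\rho_n|\bd{q}|^2\to\int\rho|\bd{q}|^2 = \|\sqrt{\rho}\,\bd{q}\|_{L^2}^2$, and conclude strong $L^2$ convergence from weak convergence plus convergence of norms in a Hilbert space. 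Your Egorov/Lipschitz route, as written, does not close.
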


\begin{proof}
By using classical ideas \cite{Ben} (see Lemma 2.1 of \cite{DGHT}, Lemma 2.6.6 in \cite{BFH18}) it suffices to prove
\begin{equation*}
\int_{\mathcal{O}_{\alpha}} \mathbbm{1}_{\sO_{\eta_n^*}}\bd{F}_{n}(\bar\rho_{n}, \bar\rho_{n}\bar{\bu}_{n}) \cdot \bd{q} \to \int_{\mathcal{O}_{\alpha}} \mathbbm{1}_{\sO_{\eta^*}} \bd{F}(\rho, \rho \bu) \cdot \bd{q} \text{ in probability in $L^{2}(0, T; L_{2}(\mathcal{U}_0; \R))$},
\end{equation*}
which follows if we show:
\begin{align}\label{Galerkinstoch0}
\tilde{\mathbb{E}} \int_{0}^{T} \left(\int_{\mathcal{O}_{\alpha}} \Big(\mathbbm{1}_{\mathcal{O}_{\eta^{*}}} - \mathbbm{1}_{\mathcal{O}_{\eta_{n}^{*}}}\Big) f_{k}(\rho, \rho \bu) \cdot \bd{q}\right)^{2} \to 0 \quad \text{ as $n \to \infty$, for each $k$},
\\
\label{Galerkinstoch1}
\tilde{\mathbb{E}} \int_{0}^{T} \left(\int_{\mathcal{O}_{\alpha}} \mathbbm{1}_{\mathcal{O}_{\eta_{n}^{*}}} \Big(f_{k}(\rho, \rho \bu) - f_{n, k}(\rho_{n}, \bar\rho_{n} \bar{\bu}_{n})\Big) \cdot \bd{q}\right)^{2} \to 0 \quad \text{ as $n \to \infty$, for each $k$,}
\\
\label{Galerkinstoch2}
\lim_{m \to \infty} \left(\sup_{n} \tilde\bE \int_{0}^{T} \sum_{k = m}^{\infty} \left(\int_{\mathcal{O}_{\alpha}} |f_{n, k} (\bar\rho_{n}, \bar\rho_{n}\bar{\bu}_{n}) \cdot \bd{q}| + |f_{k}(\rho, \rho \bu) \cdot \bd{q}|\right)^{2}\right) = 0.
\end{align}
We observe that \eqref{Galerkinstoch0} follows from the $\tilde\bP$-almost sure strong convergence of $\mathbbm{1}_{\mathcal{O}_{\eta^{*}_{n}}} \to \mathbbm{1}_{\mathcal{O}_{\eta^{*}}}$ in $L^{\infty}(0, T; L^{p}(\mathcal{O}_{\alpha}))$ for any $1 \le p < \infty$. So it suffices to verify \eqref{Galerkinstoch1} and \eqref{Galerkinstoch2}.

\medskip

\noindent \textbf{Proof of \eqref{Galerkinstoch1}:} Using Sobolev embedding of $L^{1}(\mathcal{O}_{\alpha}) \subset H^{-l}(\mathcal{O}_{\alpha})$ for $l > 3/2$:
\begin{equation*}
    \tilde\bE \int_{0}^{T} \left(\int_{\mathcal{O}_{\alpha}} \Big(f_{k}(\rho, \rho \bu) - f_{n, k}(\bar\rho_{n}, \bar\rho_{n} \bar{\bu}_{n})\Big) \cdot \bd{q}\right)^{2} \le \tilde\bE \int_{0}^{T} \|f_{k}(\rho, \rho\bu) - f_{n, k}(\bar\rho_{n}, \bar\rho_{n}\bar{\bu}_{n})\|_{H^{-l}(\mathcal{O}_{\alpha})}^{2},
\end{equation*}
where we recall from \eqref{Fndef} that $\displaystyle f_{n, k}(\bar\rho_{n}, \bar\rho_{n} \bar\bu_{n}) := \mathcal{M}^{1/2}[\bar\rho_{n}] P_{n}^{f} \left(\frac{f_{k}(\bar\rho_{n}, \bar\rho_{n}\bar\bu_{n})}{\bar\rho_{n}^{1/2}}\right)$. This estimate motivates us to show first that
\begin{equation}\label{negativeSobolev}
\|f_{k}(\rho, \rho \bu) - f_{n, k}(\bar\rho_{n}, \bar\rho_{n}\bar\bu_{n})\|_{H^{-l}(\mathcal{O}_{\alpha})} \to 0, \quad \text{ for {a.e. $(\tilde{\omega}, t) \in \tilde\Omega\times [0, T]$}, }
\end{equation}
which is established once we obtain these convergences for arbitrary $\psi \in H^l_0(\mathcal{O}_{\alpha})$ with $l > 3/2$:
\begin{enumerate}
\item $\displaystyle \left|\int_{\mathcal{O}_{\alpha}} \mathcal{M}^{1/2}[\bar\rho_{n}]P_{n}^{f}\left(\frac{f_{k}(\rho, \rho \bu)}{\rho^{1/2}} - \frac{f_{k}(\bar\rho_{n}, \bar\rho_{n}\bar\bu_{n})}{\bar{\rho}_{n}^{1/2}}\right) \psi \right| \to 0$ for a.e. $(\tilde{\omega}, t) \in \tilde\Omega \times [0, T]$.
\item $\displaystyle 
\left|\int_{\mathcal{O}_{\alpha}} \mathcal{M}^{1/2}[\bar\rho_{n}] \left(\frac{f_{k}(\rho, \rho\bu)}{\rho^{1/2}} - P^{f}_{n}\left(\frac{f_{k}(\rho, \rho \bu)}{\rho^{1/2}}\right)\right) \psi \right| \to 0$, for a.e. $(\tilde\omega, t) \in \tilde\Omega \times [0, T]$
\item $\displaystyle \left|\int_{\mathcal{O}_{\alpha}} \left(f_{k}(\rho, \rho \bu) - \mathcal{M}^{1/2}[\bar\rho_{n}] \left(\frac{f_{k}(\rho, \rho \bu)}{\rho^{1/2}}\right)\right) \psi \right| \to 0$ for a.e. $(\tilde\omega, t) \in \tilde\Omega \times [0, T]$.
\end{enumerate}

\medskip

\noindent \textbf{Proof of statement 1.} By the symmetry of $\mathcal{M}^{1/2}[\bar\rho_{n}]$ with respect to the $L^{2}(\mathcal{O}_{\alpha})$ inner product and by continuity of the projection operator on $L^{2}(\mathcal{O}_{\alpha})$, for $\psi \in H^l_0(\mathcal{O}_{\alpha})$ with $l > 3/2$:
\begin{footnotesize}
\begin{equation*}
\left|\int_{\mathcal{O}_{\alpha}} \mathcal{M}^{1/2}[\bar\rho_n] P_{n}^{f} \left(\frac{f_{k}(\rho, \rho \bu)}{\rho^{1/2}} - \frac{f_{k}(\bar\rho_n, \bar\rho_n\bu_{n})}{\bar\rho_n^{1/2}}\right)\psi\right| \le \left\|\frac{f_{k}(\rho, \rho \bu)}{\rho^{1/2}} - \frac{f_{k}(\bar\rho_n, \bar\rho_n\bu_{n})}{\bar\rho_n^{1/2}}\right\|_{L^{2}(\mathcal{O}_{\alpha})} \|\mathcal{M}^{1/2}[\bar\rho_n]\psi\|_{L^{2}(\mathcal{O}_{\alpha})},
\end{equation*}
\end{footnotesize}
which goes to zero as $n \to \infty$, by estimate (3.15) from \cite{BreitHofmanova} that 
\begin{equation}\label{BH314}
\|\mathcal{M}^{1/2}[\rho]\psi\|_{L^{2}(\mathcal{O}_{\alpha})} \le C\|\psi\|^{1/2}_{H^l_0(\mathcal{O}_{\alpha})} \|\rho\|_{L^{2}(\mathcal{O}_{\alpha})}^{1/2}
\end{equation}
and Lemma \ref{L2fmconv}\footnote{As a technical detail, we emphasize that the constant $C$ in estimate (3.14) from \cite{BreitHofmanova} depends only on the Sobolev embedding from $H^l_0(\mathcal{O}_{\alpha})$ to $L^{2}(\mathcal{O}_{\alpha})$, and is hence independent of the Galerkin parameter $n$. In particular, we note that the inequality that they use $\|P_{n}\psi\|_{H^l_0(\mathcal{O}_{\alpha})} \le \|\psi\|_{H^l_0(\mathcal{O}_{\alpha})}$ to establish this inequality does not change the constant $C$ since the Galerkin basis was also chosen to be orthonormal in $H^l_0(\mathcal{O}_{\alpha})$.}. So the result follows from the fact that $\limsup_{n \to \infty} \|\bar\rho_n\|_{L^{2}(\mathcal{O}_{\alpha})}$ is bounded for almost every $(t, \bar\omega)$ by the convergence $\tilde\bP$-almost surely of $\bar\rho_n \to \rho$ in $L^{p}(0, T; W^{1, p}(\mathcal{O}_{\alpha}))$ for $p > 2$. 

\medskip

\noindent \textbf{Proof of statement 2.} This follows analogously to Statement 1, since $\displaystyle P_{n}^{f}\left(\frac{f_{k}(\rho, \rho \bu)}{\rho^{1/2}}\right) \to \frac{f_{k}(\rho, \rho \bu)}{\rho^{1/2}}$ in $L^{2}(\mathcal{O}_{\alpha})$ as $n \to \infty$ for almost every $(t, \bar\omega)$ by the fact that $\displaystyle \frac{f_{k}(\rho, \rho \bu)}{\rho^{1/2}} \in L^{2}(\mathcal{O}_{\alpha})$ and by properties of the projection operator.

\medskip

\noindent \textbf{Proof of statement 3.} Finally, for the last convergence, we use symmetry of $\mathcal{M}^{1/2}[\bar\rho_n]$ to write
\begin{equation*}
\left|\int_{\mathcal{O}_{\alpha}} \left(f_{k}(\rho, \rho \bu) - \mathcal{M}^{1/2}[\bar\rho_n]\left(\frac{f_{k}(\rho, \rho \bu)}{\rho^{1/2}}\right)\right) \psi\right| = \left|\int_{\mathcal{O}_{\alpha}} \frac{f_{k}(\rho, \rho \bu)}{\rho^{1/2}} \left(\rho^{1/2} \psi - \mathcal{M}^{1/2}[\bar\rho_n]\psi\right)\right|,
\end{equation*}
which converges to zero as $n \to \infty$ for almost every $(t, \tilde\omega)$ for arbitrary $\psi \in H^l_0(\mathcal{O}_{\alpha})$ ($l > 3/2$), by (4.29) from \cite{BreitHofmanova}, which states that given $\bar\rho_n \to \rho$ in $H^l_0(\mathcal{O}_{\alpha})$: 
\begin{equation*}
\|\mathcal{M}^{1/2}_{n}[\bar\rho_n]\bd{p} - \sqrt{\rho} \bd{p}\|_{L^{2}(\mathcal{O}_{\alpha})} \to 0 \quad \text{ for all $\bd{p} \in H^l_0(\mathcal{O}_{\alpha})$},
\end{equation*}
and the fact that $\displaystyle \frac{f_{k}(\rho, \rho \bu)}{\rho^{1/2}} \in L^{2}(\mathcal{O}_{\alpha})$ for almost every $(t, \tilde\omega)$, by \eqref{fassumption} and uniform estimates.

\medskip

\noindent \textbf{Conclusion of proof.} Combining the convergences in statements 1-3 implies \eqref{negativeSobolev} for almost every $(t, \tilde\omega)$, so  the desired convergence \eqref{Galerkinstoch1} follows from \eqref{negativeSobolev} by the Vitali convergence theorem once we show that  $\displaystyle \tilde\bE \int_{0}^{T} \left(\|f_{k}(\rho, \rho \bu) - f_{n, k}(\bar\rho_n, \bar\rho_n\bar\bu_{n})\|^{2}_{H^{-l}(\mathcal{O}_{\alpha})}\right)^{r}$ is bounded independently of $n$ for some uniform constant $C$ for some $r > 1$. To derive this uniform bound, we use \eqref{fassumption}, \eqref{BH314}, and the embedding of $H^{-l}(\mathcal{O}_{\alpha})$ into $L^{1}(\mathcal{O}_{\alpha})$ to estimate:
\begin{align}\label{negativeSobolevcalc}
\|f_{k}(\rho&, \rho \bu)\|_{H^{-l}(\mathcal{O}_{\alpha})}^{2} + \|f_{n, k}(\bar\rho_n, \bar\rho_n\bar\bu_{n})\|_{H^{-l}(\mathcal{O}_{\alpha})}^{2} \nonumber \\
&\le \left[\left(\int_{\mathcal{O}_{\alpha}} |f_{k}(\rho, \rho \bu)|\right)^{2} + \sup_{\|\psi\|_{H^l_0(\mathcal{O}_{\alpha})} \le 1} \left|\int_{\mathcal{O}_{\alpha}} P^{n}_{f}\left(\frac{f_{k}(\bar\rho_n, \bar\rho_{n}\bar\bu_{n})}{\bar\rho_n^{1/2}}\right) \mathcal{M}^{1/2}[\bar\rho_n]\psi\right|^{2} \right] \nonumber \\
&\le c_{k}^{2} \left(\int_{\mathcal{O}_{\alpha}} \rho + \rho|\bu|\right)^{2} + \left\|\frac{f_{k}(\bar\rho_n, \bar\rho_n\bar\bu_{n})}{\bar\rho_n^{1/2}}\right\|_{L^{2}(\mathcal{O}_{\alpha})}^{2} \sup_{\|\psi\|_{H^l_0(\mathcal{O}_{\alpha})} \le 1} \|\mathcal{M}^{1/2}[\bar\rho_n]\psi\|^{2}_{L^{2}(\mathcal{O}_{\alpha})} \nonumber \\
&\le c_{k}^{2} \left(\int_{\mathcal{O}_{\alpha}} \rho + \rho|\bu|\right)^{2} + c_{k}^{2} \|\bar\rho_n\|_{L^{2}(\mathcal{O}_{\alpha})} \left(\int_{\mathcal{O}_{\alpha}} \bar\rho_n + \bar\rho_n|\bar\bu_{n}|^{2}\right) \nonumber \\
&\le 4c_{k}^{2}\left(\|\rho\|_{L^{1}(\mathcal{O}_{\alpha})}^{2} + \|\sqrt{\rho} \bu\|_{L^{2}(\mathcal{O}_{\alpha})}^{4} + \|\bar\rho_n\|_{L^{2}(\mathcal{O}_{\alpha})}^{2} + \|\bar\rho_n\|_{L^{1}(\mathcal{O}_{\alpha})}^{2} + \|\sqrt{\bar\rho_n}\bar\bu_{n}\|_{L^{2}(\mathcal{O}_{\alpha})}^{4}\right).
\end{align}
By using uniform bounds (independent of $n$) on the approximate solutions, this implies that $\displaystyle \tilde\bE \int_{0}^{T} \Big(\|f_{k}(\rho, \rho \bd{u}) - f_{n, k}(\bar\rho_n, \bar\rho_n \bar\bu_n)\|_{H^{-l}(\mathcal{O}_{\alpha})}^{2}\Big)^{r}$ for $r > 1$ is bounded independently of $n$ for each $k$, which together with \eqref{negativeSobolev} establishes \eqref{Galerkinstoch1}. The preceding calculation also establishes \eqref{Galerkinstoch2}, since $\displaystyle \sum_{k = 1}^{\infty} c_{k}^{2} < \infty$, see \eqref{fassumption}, and since the uniform bounds that we are using to bound the final expression in \eqref{negativeSobolevcalc} are independent of $n$.

\end{proof}


\section{Passing viscosity parameter $\ep \to 0$}\label{sec:viscousreg}
In this section, to emphasize its dependence on the parameter $\ep$, we denote the solutions constructed on $(\tilde\Omega,\tilde\sF,(\bar\sF^\ep_t)_{t\geq 0}, \tilde\bP)$ obtained in the previous section by $(\rho_\ep,\bu_\ep,\eta_\ep,\eta^*_\ep,v_\ep,W_\ep^1,W_\ep^2)$ respectively. Let us then recall that the weak formulation at this stage reads:
\begin{multline}\label{viscous}
\int_{\mathcal{O}_{\alpha}} \rho_{\ep}(t) \bu_\ep(t) \cdot \bd{q} + \int_{\Gamma} v_\ep(t) \psi = \int_{\mathcal{O}_{\alpha}} \bp_{0,\delta,\ep} \cdot \bd{q} + \int_{\Gamma} v_0 \psi
	+	\int_{0}^{t} \int_{\mathcal{O}_{\alpha}} (\rho_\ep \bu_\ep \otimes \bu_\ep) : \nabla \bd{q}\\
 + \int_{0}^{t} \int_{\mathcal{O}_{\alpha}} \Big(a\rho_\ep^{\gamma} + \delta \rho_\ep^{\beta}\Big) (\nabla \cdot \bd{q}) - \int_{0}^{t} \int_{\mathcal{O}_{\alpha}} \mu_\delta^{\eta^*_\ep}  \nabla \bu_\ep : \nabla \bd{q} +\ep\int_0^T\int_{\sO_\alpha}\rho_\ep\bu_\ep\cdot\Delta \bq \\
	-  \int_{0}^{t} \int_{\mathcal{O}_{\alpha}}\lambda_\delta^{\eta_{\ep}^*} \text{div}(\bu_\ep) \text{div}(\bd{q}) - \frac1\delta \int_{0}^{t} \int_{{T^\delta_{\eta^*_\ep}}} (\bu_{\ep} - v_{\ep} \bd{e}_{z}) \cdot (\bd{q} - \psi \bd{e}_{z}) - \int_{0}^{t} \int_{\Gamma} \nabla \eta_\ep \cdot \nabla \psi \\
	- \int_{0}^{t} \int_{\Gamma} \Delta \eta_\ep \Delta \psi - \int_{0}^{t} \int_{\Gamma} \nabla v_\ep \cdot \nabla \psi + \int_{0}^{t} \int_{\mathcal{O}_{\alpha}} \mathbbm{1}_{\sO_{\eta^*_\ep}} \bd{F}(\rho_\ep, \rho_\ep \bu_\ep) \cdot \bd{q} dW^1_\ep(t) + \int_{0}^{t} \int_{\Gamma} G(\eta_\ep, v_\ep) \psi dW_\ep^2(t),
\end{multline}
$\tilde\bP$-almost surely for any test function $\bd{q} \in C^{\infty}_c(\sO_{\alpha})$ and $\psi \in C^{\infty}(\Gamma)$ and for every $t\in[0,T]$.
Moreover, we have that the continuity equation reads $\tilde\bP$-almost surely as follows,
 \begin{equation}\label{cont_ep2}
\partial_t\rho_{\ep} + \text{div}(\rho_\ep \bu_\ep) = \varepsilon \Delta \rho_\ep, \quad \text{ in } \mathcal{O}_{\alpha}, \qquad{ \nabla \rho_\ep \cdot \bd{n}|_{\partial \mathcal{O}_{\alpha}} = 0,} \qquad {{\rho_{\ep}(0) = \rho_{0, \delta, \ep}.}}
\end{equation}

Thanks to the weak lower semicontinuity of norm, all the uniform bounds obtained Lemma \ref{energy_galerkin}, that are also independent of $\ep$, hold at this stage as well for the solution $(\rho_\ep,\bu_\ep,\eta_\ep,\eta^*_\ep, v_\ep)$. Hence, most of the calculations in Section \ref{sec:galerkin_tight}
are valid.
What is different is that we do not have bounds for the spatial derivatives of the density uniform in $\ep$. Hence, the tightness result for the density in $L^{\beta+1}$ in space requires a different approach which we explain in the next lemma. This result is required since the uniform energy estimates give us boundedness of $\rho_\ep^\beta$ merely in the non-reflexive space $L^1(\sO_\alpha)$ which is not sufficient to pass $\ep\to 0$ in the pressure term.

\begin{lemma}\label{viscous_pressure}
We have that
	\begin{align*}
		\tilde\bE\int_0^T\int_{\sO_\alpha} (\rho_\ep^{\gamma+1} + \delta\rho^{\beta+1}_\ep )\leq C.
	\end{align*}
\end{lemma}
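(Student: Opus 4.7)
The plan is to implement the Bogovski\u{\i}--based pressure estimate of Feireisl--Lions in the stochastic framework of \cite{BreitHofmanova}, tailored to our penalized problem on the fixed maximal domain $\mathcal{O}_\alpha$. Let $M=|\mathcal{O}_\alpha|$ and let $\mathcal{B}\colon \{f\in L^p(\mathcal{O}_\alpha):\int_{\mathcal{O}_\alpha} f=0\}\to W^{1,p}_0(\mathcal{O}_\alpha)$ be the Bogovski\u{\i} operator on $\mathcal{O}_\alpha$, so that $\nabla\cdot\mathcal{B}[f]=f$ and $\|\mathcal{B}[f]\|_{W^{1,p}_0}\le C\|f\|_{L^p}$ for every $1<p<\infty$, together with the well-known estimate $\|\mathcal{B}[\mathrm{div}\,\bd{g}]\|_{L^p}\le C\|\bd{g}\|_{L^p}$ when $\bd{g}\cdot\bd{n}=0$ on $\partial\mathcal{O}_\alpha$. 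The test process I would use is
\begin{equation*}
\bd{q}_\ep(t,\cdot):=\mathcal{B}\!\left[\rho_\ep(t,\cdot)-M^{-1}\textstyle\int_{\mathcal{O}_\alpha}\rho_\ep(t,\cdot)\right],
\end{equation*}
which is $(\bar{\mathcal{F}}_t^\ep)$-adapted because $\mathcal{B}$ is deterministic and $\rho_\ep$ is adapted. Note that the mean of $\rho_\ep$ is conserved in $t$ (integrate \eqref{cont_ep2} using the Neumann condition), so $\overline{\rho_\ep}=\overline{\rho_{0,\delta,\ep}}$ is deterministic and constant in time.

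The second step is to obtain an It\^o expansion for $\bd{q}_\ep$. Using \eqref{cont_ep2} and the commutation of $\mathcal{B}$ with the time derivative, one has the pathwise identity
\begin{equation*}
\partial_t \bd{q}_\ep=-\mathcal{B}\bigl[\mathrm{div}(\rho_\ep\bu_\ep)\bigr]+\ep\,\mathcal{B}[\Delta\rho_\ep],
\end{equation*}
which, combined with the $L^p$ mapping estimate $\|\mathcal{B}[\mathrm{div}\,\bd{g}]\|_{L^p}\le C\|\bd{g}\|_{L^p}$ and Lemma \ref{energy_galerkin}, supplies uniform bounds for $\bd{q}_\ep$ in $L^\infty_tW^{1,\beta}_x$ and for $\partial_t\bd{q}_\ep$ in $L^2_tL^{6\beta/(\beta+6)}_x$ (say) independently of $\ep$. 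I would then apply the generalized It\^o formula to $t\mapsto\int_{\mathcal{O}_\alpha}\rho_\ep\bu_\ep\cdot\bd{q}_\ep$ exactly as in Section 5.3 of \cite{BreitHofmanova}; this is justified because $\rho_\ep\bu_\ep$ is a continuous $L^{2\beta/(\beta+1)}$-semimartingale by the limiting weak formulation and $\bd{q}_\ep$ is sufficiently regular in time and space to pair with the drift terms.

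Taking expectation of the resulting identity and substituting $\nabla\cdot\bd{q}_\ep=\rho_\ep-\overline{\rho_\ep}$ into the pressure term gives
\begin{equation*}
\tilde{\bE}\int_0^T\!\!\int_{\mathcal{O}_\alpha}\bigl(a\rho_\ep^{\gamma+1}+\delta\rho_\ep^{\beta+1}\bigr)=\tilde{\bE}\int_0^T\!\!\int_{\mathcal{O}_\alpha}\bigl(a\rho_\ep^{\gamma}+\delta\rho_\ep^{\beta}\bigr)\overline{\rho_\ep}\;+\;\sum_j R_j^\ep,
\end{equation*}
where the remainders $R_j^\ep$ collect the boundary values $[\int\rho_\ep\bu_\ep\cdot\bd{q}_\ep]_0^T$, the advection term $\int\!\int\rho_\ep\bu_\ep\otimes\bu_\ep:\nabla\bd{q}_\ep$, the viscous terms $\int\!\int\mu_\delta^{\eta^*_\ep}\nabla\bu_\ep:\nabla\bd{q}_\ep$ and $\int\!\int\lambda_\delta^{\eta^*_\ep}\mathrm{div}(\bu_\ep)\mathrm{div}(\bd{q}_\ep)$, the artificial-viscosity term $\ep\int\!\int\rho_\ep\bu_\ep\cdot\Delta\bd{q}_\ep$, the penalty term on $T^\delta_{\eta^*_\ep}$, the drift coming from $\partial_t\bd{q}_\ep$, the stochastic integral $\int\int\mathbbm{1}_{\mathcal{O}_{\eta^*_\ep}}\bd{F}(\rho_\ep,\rho_\ep\bu_\ep)\cdot\bd{q}_\ep\,dW^1_\ep$, and its It\^o correction. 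Each of these I would bound using H\"older's inequality, the Bogovski\u{\i} mapping properties, the uniform $L^p_\omega L^\infty_t L^\beta_x$ control of $\rho_\ep$, the $L^2_\omega L^2_t H^1_x$ control of $\bu_\ep$, and the penalty dissipation bound from Lemma \ref{energy_galerkin}(8); the stochastic integral is handled by BDG together with the growth hypothesis \eqref{fassumption}. The $\overline{\rho_\ep}$-term on the right-hand side is a deterministic constant controlled by the initial energy.

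The most delicate estimate will be the convective term, for which the natural bound
\begin{equation*}
\Bigl|\int_0^T\!\!\int_{\mathcal{O}_\alpha}\rho_\ep\bu_\ep\otimes\bu_\ep:\nabla\bd{q}_\ep\Bigr|\lesssim \|\rho_\ep\bu_\ep\otimes\bu_\ep\|_{L^1_tL^{p'}_x}\|\nabla\bd{q}_\ep\|_{L^\infty_tL^p_x}
\end{equation*}
requires choosing the Bogovski\u{\i} exponent $p$ so that both $\rho_\ep|\bu_\ep|^2\in L^{p'}$ and $\nabla\bd{q}_\ep\in L^p$; a standard interpolation between $\sqrt{\rho_\ep}\bu_\ep\in L^\infty_tL^2_x$ and $\bu_\ep\in L^2_tL^6_x$ combined with $\rho_\ep\in L^\infty_tL^\beta_x$ makes this work when $\beta\ge 4$, which is precisely the hypothesis imposed on the artificial pressure parameter. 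Absorbing any term proportional to $\tilde{\bE}\int_0^T\!\int\delta\rho_\ep^{\beta+1}$ that arises from the advection or $\partial_t\bd{q}_\ep$ contributions into the left-hand side (which is the secondary obstacle and is the reason we had to include the artificial pressure term) closes the estimate and yields the claimed bound uniformly in $\ep$.
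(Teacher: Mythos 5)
Your proposal is correct, and it takes a slightly different but essentially interchangeable route. The paper tests the momentum equation against the Riesz-potential operator $\bd{q}_\ep=\nabla\Delta^{-1}\rho_\ep$ (with $\Delta^{-1}$ the Dirichlet Poisson solver on $\mathcal{O}_\alpha$), whereas you use the Bogovski\u{\i} operator $\mathcal{B}[\rho_\ep-\overline{\rho_\ep}]$. Both produce $\mathrm{div}\,\bd{q}_\ep\sim\rho_\ep$ up to a constant, both map $L^\beta(\mathcal{O}_\alpha)\to W^{1,\beta}(\mathcal{O}_\alpha)\hookrightarrow L^\infty(\mathcal{O}_\alpha)$ for $\beta>3$ (the paper invokes this embedding explicitly; your interpolation book-keeping plays the same role), and both are expanded in time through the continuity equation, which is legitimate because the no-slip and Neumann boundary conditions make $-\mathrm{div}(\rho_\ep\bu_\ep)+\ep\Delta\rho_\ep$ average-free. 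The Bogovski\u{\i} choice has the formal advantage of landing in $W^{1,p}_0(\mathcal{O}_\alpha)$, hence in the closure of the admissible test space, while $\nabla\Delta^{-1}\rho_\ep$ does not vanish on $\partial\mathcal{O}_\alpha$; conversely, the paper's choice avoids mean-subtraction and so avoids the (harmless, deterministic) extra term $\overline{\rho_\ep}\int_0^T\!\!\int_{\mathcal{O}_\alpha}(a\rho_\ep^\gamma+\delta\rho_\ep^\beta)$.

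Two small corrections to your accounting. First, the convective term actually closes for $\beta\ge 3$, not $\beta\ge 4$: with $\rho_\ep\in L^\infty_tL^\beta_x$ and $|\bu_\ep|^2\in L^1_tL^3_x$ one has $\rho_\ep|\bu_\ep|^2\in L^1_tL^{\beta'}_x$ precisely when $\beta\ge 3$; the stronger hypothesis $\beta>\max\{4,\gamma\}$ serves other estimates in the scheme. Second, in your It\^o expansion of $\bd{q}_\ep$ you should write the artificial-viscosity piece as $\ep\mathcal{B}[\mathrm{div}\,\nabla\rho_\ep]$ rather than $\ep\mathcal{B}[\Delta\rho_\ep]$, and invoke the Neumann condition $\nabla\rho_\ep\cdot\bd{n}=0$ so that the estimate $\|\mathcal{B}[\mathrm{div}\,\bd{g}]\|_{L^p}\lesssim\|\bd{g}\|_{L^p}$ applies; that contribution is then controlled by the $\sqrt\ep\,\rho_\ep\in L^2(\tilde\Omega;L^2_tH^1_x)$ bound from Lemma \ref{energy_galerkin}(5) together with the kinetic-energy bound, exactly as the paper does with its term $\ep\int\!\!\int\nabla(\bu_\ep\rho_\ep):\nabla^2\Delta^{-1}\rho_\ep$. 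Finally, the absorption of $\delta\rho_\ep^{\beta+1}$ into the left-hand side that you anticipate is not actually needed: the constant in Lemma \ref{viscous_pressure} is allowed to depend on $\delta$ (only uniformity in $\ep$ is required for the tightness argument it feeds), and for fixed $\delta$ no such absorption arises.
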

\begin{proof}
The proof of this lemma is standard.
Let $\Delta^{-1}\rho$ denote the unique solution in $W^{2,\alpha}(\sO_\alpha)\cap W^{1,\gamma^*}(\sO_\alpha)$, where $\gamma^*$ is the Holder conjugate of $\gamma$, to the equation
$$-\Delta w =\rho,\quad w|_{\partial\sO_\alpha}=0.$$
We set $\psi=0$ in \eqref{viscous} and apply Ito's formula to $f_\chi( \rho,\bu)=\int_{\sO_\alpha}\bu\cdot \nabla\Delta^{-1}\rho$ in the spirit of Lemma 5.1 in \cite{BO13} to obtain
\begin{align*}
\int_0^t\int_{\sO_\alpha}  (a\rho^{\gamma+1}_\ep&+\delta\rho^{\beta+1}_\ep)=\int_0^t\int_{\sO_\alpha} \mu^{\eta^*_\ep}_\delta  \nabla \bu_\ep:\nabla^2\Delta^{-1}\rho_\ep + \int_0^t\int_{\sO_\alpha}  \mu^{\eta^*_\ep}_\delta\nabla\bu_\ep:\nabla \otimes\nabla\Delta^{-1}\rho_\ep \\
&-\int_0^t\int_{\sO_\alpha}(a\rho_\ep^\gamma+\delta\rho_\ep^\beta)\nabla \cdot\nabla\Delta^{-1}\rho_\ep		+\int_0^t\int_{\sO_\alpha}  \nabla\cdot\bu_\ep( \rho_\ep+\nabla \cdot\nabla\Delta^{-1}\rho_\ep)\\
&+\int_0^t\int_{\sO_\alpha}   \rho_\ep\bu_\ep\otimes \bu_\ep:\nabla^2\Delta^{-1}\rho_\ep		+\int_0^t\int_{\sO_\alpha} \bu_\ep\otimes \bu_\ep:\nabla \otimes\nabla\Delta^{-1}\rho_\ep\\
&+\ep\int_0^t\int_{\sO_\alpha} \nabla(\bu_\ep\rho_\ep) :  \nabla^2\Delta^{-1}\rho_\ep + \ep\int_0^t\int_{\sO_\alpha} \nabla(\bu_\ep\rho_\ep) :\nabla \otimes \nabla\Delta^{-1}\rho_\ep\\
&
+\sum_k\int_0^t\int_{\sO_\alpha}\mathbbm{1}_{\sO_{\eta^*_\ep}}f_k(\rho_\ep,\rho_\ep\bu_\ep)\cdot\nabla\Delta^{-1}\rho_\ep dW^1_\ep+ \frac1\delta \int_{0}^{t} \int_{{T^\delta_{\eta^*_\ep}}} (\bu_{\ep} - v_{\ep} \bd{e}_{z}) \cdot {(\nabla\Delta^{-1}\rho_\ep)}\\
&+\int_0^t\int_{\sO_\alpha} \rho_\ep\bu_\ep\nabla\Delta^{-1}\text{div}(\rho_\ep\bu_\ep+\ep\nabla\rho_\ep)+\int_0^t\int_{\sO_{\alpha}}\partial_t \rho_\ep\bu_\ep\cdot\nabla\Delta^{-1}\rho_\ep.
\end{align*}
The estimates then depend on the fact that, in $\mathbb{R}^3$, since $\beta>3$, we have the Sobolev embedding $W^{1,\beta} (\sO_\alpha)\hookrightarrow L^\infty(\sO_\alpha)$ which implies that
\begin{align*}
\|\Delta^{-1}\nabla\rho_\ep\|_{L^\infty(\sO_\alpha)} \leq \|\rho_\ep\|_{L^\beta(\sO_\alpha)}.
\end{align*}
Full details of these estimates will be provided for the more difficult proof of obtaining higher integrability of the fluid density uniformly in $\delta$ for the $\delta \to 0$ limit passage in the next section, see for example the interior pressure estimates of Proposition \ref{interior_pressure}.
\end{proof}

\subsection{Tightness of laws}\label{sec:viscous_tight}
We begin by defining the appropriate spaces
\begin{align*}
	&{\mathcal{X}_{\rho}} = C_{w}(0, T; L^{\beta}(\mathcal{O}_{\alpha})) 
\cap \left(L^2(0,T;H^1(\sO_\alpha))\cap L^{\beta+1}((0,T)\times\sO_\alpha),w\right)\\
	&{\mathcal{X}_{\bu}} = (L^{2}(0, T; H^{1}(\mathcal{O}_{\alpha})), w), \qquad \mathcal{X}_{\rho \bu} = C_{w}(0, T; L^{\frac{2\beta}{\beta+1}}(\mathcal{O}_{\alpha})) \cap C(0,T;H^{-l}(\sO_\alpha)) \quad \text{for }l>\frac52,\\
	&\mathcal{X}_{\eta} = C_{w}(0, T; H^2(\Gamma)) \cap C(0, T; H^{s}(\Gamma)), \text{ for any }s<2, \\
&\mathcal{X}_{v}=L^2(0,T;L^2(\Gamma))\cap (L^2(0,T;H^1(\Gamma)),w),\quad   \mathcal{X}_{W} = C(0, T; \mathcal{U}_{0})^2.
\end{align*}
We can then show the following tightness result that follows the proof of Proposition \ref{tight_galerkin} and by substituting Lemma \ref{viscous_pressure} to obtain tightness of the laws of density. 
\begin{proposition}\label{prop:tight_viscous}
Define the family of random variables
\begin{equation*}
	\mathcal{U}_{\ep} := (\rho_{\ep}, \bu_{\ep},\rho_\ep \bu_\ep, \eta_{\ep}, \eta_\ep^*, v_\ep, 
 W^1_\ep, W^2_\ep).
\end{equation*}
Then the sequence of the laws of $\mathcal{U}_{\ep}$ i.e. the measures $\{\tilde\bP\circ \mathcal{U}^{-1}_\ep\}_{\ep\geq 0}$ is tight in the phase space
\begin{equation*}
	{\mathcal{X}} = {\mathcal{X}_{\rho}} \times {\mathcal{X}_{\bu}} \times {\mathcal{X}_{\rho \bu}} \times {\mathcal{X}_{\eta}} \times {\mathcal{X}_{\eta}}\times {\mathcal{X}_{v} }
 \times {\mathcal{X}_{W}}.
\end{equation*}
\end{proposition}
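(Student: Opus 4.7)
The strategy is parallel to that of Proposition~\ref{tight_galerkin}: by Tychonoff's theorem, tightness of the joint law reduces to tightness of each marginal in its respective factor of $\mathcal{X}$. The crucial observation is that, by weak lower semicontinuity of norm, every uniform bound recorded in Lemma~\ref{energy_galerkin} survives the passage $n\to\infty$ with constants independent of $\ep$ (the parameter $\delta$ being fixed throughout this section). Consequently, the tightness arguments of Proposition~\ref{tight_galerkin} for the marginals $\bu_\ep\in\mathcal{X}_\bu$, $v_\ep\in\mathcal{X}_v$, $\eta_\ep,\eta_\ep^*\in\mathcal{X}_\eta$, and $\rho_\ep\bu_\ep\in\mathcal{X}_{\rho\bu}$ transfer verbatim. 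In particular, tightness in $\mathcal{X}_v$ uses the $\ep$-independent fractional time regularity of Lemma~\ref{lem:vtight} together with the Aubin--Lions embedding \eqref{AL}, while tightness in $\mathcal{X}_{\rho\bu}$ relies on the H\"older-in-time estimate \eqref{urho_cont}, each step of whose proof is controlled by $\ep$-independent quantities.

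The only marginal requiring modification is that of the density $\rho_\ep$. At the Galerkin level, strong tightness in $L^4((0,T)\times\mathcal{O}_\alpha)$ was available through the Brezis--Mironescu interpolation and the full $H^1$ bound from \eqref{rhoH1}; here that bound degenerates and we retain only $\sqrt{\ep}\,\nabla\rho_\ep\in L^2(0,T;L^2(\mathcal{O}_\alpha))$. My plan is therefore to establish two $\ep$-uniform forms of tightness. First, for the $C_w(0,T;L^\beta(\mathcal{O}_\alpha))$ component, I will reuse the compact embedding
\begin{equation*}
L^\infty(0,T;L^\beta(\mathcal{O}_\alpha)) \cap C^{0,1}(0,T;W^{-2,\frac{2\beta}{\beta+1}}(\mathcal{O}_\alpha)) \subset\subset C_w(0,T;L^\beta(\mathcal{O}_\alpha)),
\end{equation*}
reading the time-Lipschitz estimate off the continuity equation \eqref{cont_ep2} exactly as in \eqref{rho_t}; the regularising term $\ep\Delta\rho_\ep$ contributes to $L^\infty(0,T;W^{-2,2}(\mathcal{O}_\alpha))$ uniformly in $\ep\le 1$ because $\ep\,\|\rho_\ep\|_{L^\infty_tL^2_x}\le C$ via Lemma~\ref{energy_galerkin}(3) and $\beta\ge 2$. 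Second, for the weak-topology component on $L^{\beta+1}((0,T)\times\mathcal{O}_\alpha)$, I invoke the new integrability estimate of Lemma~\ref{viscous_pressure}: closed balls of the reflexive space $L^{\beta+1}((0,T)\times\mathcal{O}_\alpha)$ are weakly compact by Banach--Alaoglu, and Chebyshev applied to the uniform moment bound $\tilde{\bE}\int_0^T\!\int_{\mathcal{O}_\alpha}\rho_\ep^{\beta+1}\le C$ produces the required tightness.

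The principal obstacle is precisely this loss of the $\ep$-uniform $\nabla\rho_\ep$-control, which forces one to replace strong compactness in $L^4$ by weak compactness in $L^{\beta+1}$; once Lemma~\ref{viscous_pressure} is available, this replacement is routine, and Tychonoff's theorem assembles the marginal tightnesses into the joint statement.
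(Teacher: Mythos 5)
Your proposal is correct and takes essentially the same approach as the paper, which itself compresses the argument into a single sentence ("follows the proof of Proposition~\ref{tight_galerkin} and by substituting Lemma~\ref{viscous_pressure} to obtain tightness of the laws of density"); you fill in exactly the right details — Tychonoff reduction, $\ep$-independence of the bounds in Lemma~\ref{energy_galerkin} for the non-density marginals, the $C_w(0,T;L^\beta)$ argument via \eqref{rho_t} with uniform control of $\ep\Delta\rho_\ep$, and the substitution of Lemma~\ref{viscous_pressure} plus Chebyshev for the $L^{\beta+1}$ weak component.

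One small caveat worth recording: the paper's stated phase space $\mathcal{X}_\rho$ for this proposition retains the factor $(L^2(0,T;H^1(\sO_\alpha))\cap L^{\beta+1}((0,T)\times\sO_\alpha),w)$, and as you observe the $H^1$ control degenerates to $\sqrt{\ep}\,\nabla\rho_\ep\in L^2$, so tightness of the laws in the weak topology of $L^2(0,T;H^1(\sO_\alpha))$ genuinely cannot be obtained uniformly in $\ep$. Your proposal silently drops this factor, which is consistent with the fact that the $H^1$ component is neither available nor used in the subsequent $\ep\to 0$ limit passage (and indeed disappears from $\mathcal{X}_\rho$ at the $\delta\to 0$ stage); this is best read as a carried-over inaccuracy in the paper's phase-space definition rather than a gap in your argument.
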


\subsection{Skorohod convergence theorem}
 Hence,  by applying the Prohorov theorem and Theorem 1.10.4 in \cite{VW96}, which is a variant of the Skorohod representation theorem, we obtain the following convergence result.
\begin{theorem}\label{skorohod_viscous} 
There exists a filtered probability space $(\tilde\Omega,\tilde\sF,\tilde\bP)$ and random variables  \\$\tilde{\mathcal{U}}_{\ep}:=(\tilde\rho_{\ep}, \tilde\bu_\ep,\tilde{\rho_\ep \bu_\ep}, \tilde\eta_{\ep}, \tilde\eta_\ep^*, \tilde v_\ep, 
\tilde W_\ep^1, \tilde W_\ep^2)$,  and
	${\mathcal{U}}:=(\rho, \bu, \tilde{\rho\bu}, \eta, \eta^*, v,  \tilde W^1, \tilde W^2 )$
	defined on this new probability space, such that  
\begin{enumerate}
\item $\tilde{\mathcal{U}}_{\ep}$ has the same law in $\mathcal{X}$ as $\mathcal{U}_{\ep}$ 
\item $
\tilde{\mathcal{U}}_{\ep} \to \mathcal{U} \text{ in the topology of $\mathcal{X}$, $\tilde{\mathbb{P}}$-almost surely as $\ep \to \infty$},$
\item $\tilde{ \eta}^*_\ep =\tilde{ \eta}_\ep$ for every $t<\tau^\eta_\ep$ where, for the fixed $s\in(\frac32,2)$,
		\begin{align*}
			\tau^{\eta}_\ep &:=T\wedge \inf\left\{t> 0:\inf_{\Gamma}(1+{\tilde\eta_\ep}(t))\leq \alpha \text{ or } \|\tilde{\eta}_\ep(t)\|_{H^s(\Gamma)}\geq \frac1{\alpha}\right\}.
		\end{align*}
\item $\partial_{t}\eta = v$, $\tilde{\mathbb{P}}$-almost surely.
\end{enumerate}
 \end{theorem}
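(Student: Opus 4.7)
The plan is to follow the template established in Theorem \ref{skorohod_time} and Theorem \ref{skorohod_galerkin}, adapted to the phase space $\mathcal{X}$ introduced in Section~\ref{sec:viscous_tight}. The starting point is the tightness statement provided by Proposition \ref{prop:tight_viscous}. First, I would apply Prokhorov's theorem to extract from $\{\tilde\bP\circ\mathcal{U}_\ep^{-1}\}$ a weakly convergent (along a subsequence) sequence of probability measures on $\mathcal{X}$. Since $\mathcal{X}$ contains components carrying weak or weak-$*$ topologies (such as $C_w(0,T;L^\beta(\sO_\alpha))$, $(L^2(0,T;H^1(\sO_\alpha)),w)$, $(L^{\beta+1}((0,T)\times\sO_\alpha),w)$, $C_w(0,T;L^{\frac{2\beta}{\beta+1}}(\sO_\alpha))$, and $C_w(0,T;H^2(\Gamma))$), the space is not Polish, and the classical Skorokhod representation theorem does not apply. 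Instead I would invoke the Jakubowski--Skorokhod variant (as in \cite{J97}), together with the composition argument used in \cite{NTT21}, to obtain new random variables $\tilde{\mathcal{U}}_\ep$ and $\mathcal{U}$ on a fixed canonical probability space $(\tilde\Omega,\tilde\sF,\tilde\bP)=([0,1)^2,\mathcal{B}([0,1)^2),\mathrm{Leb})$ satisfying (1) equivalence of laws and (2) $\tilde{\mathbb{P}}$-almost sure convergence in the topology of $\mathcal{X}$. The fact that the canonical representation may be arranged so that the Wiener components $\tilde W^1_\ep$ and $\tilde W^2_\ep$ coincide across $\ep$ (the ``parameter-independent Brownian motions'' feature emphasized in the introduction) will be essential later when random test functions must be adapted, but it is obtained from the same abstract machinery.

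Next, I would verify property (3), the coincidence $\tilde\eta^*_\ep=\tilde\eta_\ep$ on $[0,\tau^\eta_\ep)$, by transferring the analogous property on the original probability space. On $(\Omega,\mathcal{F},\bP)$ we have, by construction of the artificial displacement in \eqref{etastar}, that $\eta^*_\ep=\eta_\ep$ on $[0,\tau^\eta_\ep)$, with $\tau^\eta_\ep$ the hitting time of the set characterized by violation of either the lower bound on $1+\eta$ or the $H^s$-bound on $\eta$. Since the joint law of $(\eta_\ep,\eta^*_\ep)$ is preserved by passage to the new probability space, Proposition~\ref{equalinlaw} from Appendix A (the same result that was used to establish the corresponding statement in Theorem~\ref{skorohod_time}) applies and yields (3). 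Property (4), namely $\partial_t\eta=v$ $\tilde\bP$-a.s., is even more direct: on the original probability space $v_\ep=\partial_t\eta_\ep$ by definition in the splitting scheme, hence on the new probability space $\tilde v_\ep=\partial_t\tilde\eta_\ep$ $\tilde\bP$-almost surely by equivalence of laws. Then one passes to the limit: $\tilde v_\ep\to v$ in $L^2(0,T;L^2(\Gamma))$ and $\tilde\eta_\ep\to \eta$ at least in $C(0,T;H^s(\Gamma))$, so the identity $\partial_t\eta=v$ holds in the sense of distributions and hence almost surely.

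The main obstacle I expect is the verification of the almost sure convergence of $\tilde{\mathcal{U}}_\ep$ in the component $\mathcal{X}_{\rho\bu}=C_w(0,T;L^{\frac{2\beta}{\beta+1}}(\sO_\alpha))\cap C(0,T;H^{-l}(\sO_\alpha))$, and more generally checking that the full product of the weak/strong topologies used in Proposition~\ref{prop:tight_viscous} is compatible with Jakubowski's setup (i.e.\ that each factor admits a countable family of continuous functions separating points, which we verify exactly as in \cite{BFH18}). A subtle point is that the limit momentum is \emph{a priori} a new random variable $\widetilde{\rho\bu}$; one must then identify $\widetilde{\rho\bu}=\rho\bu$ almost surely, which follows from $\tilde\rho_\ep\widetilde{\bu}_\ep=\widetilde{\rho_\ep\bu_\ep}$ on the new probability space (by equivalence of joint laws) together with the strong convergence $\tilde\rho_\ep\to\rho$ in, e.g., $L^p(0,T;W^{1,p}(\sO_\alpha))$ supplied by $\mathcal{X}_\rho$ and the weak convergence $\tilde\bu_\ep\rightharpoonup\bu$ in $L^2(0,T;H^1(\sO_\alpha))$. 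Finally, as in the previous limit passages, one constructs the complete right-continuous filtration $(\bar\sF^\ep_t)_{t\ge 0}$ on $(\tilde\Omega,\tilde\sF,\tilde\bP)$ following \eqref{Ft1}, with respect to which $\tilde W^1_\ep,\tilde W^2_\ep$ are cylindrical Wiener processes and $\tilde{\mathcal{U}}_\ep$ is adapted; I would not carry out this bookkeeping in detail, since it is identical to the construction used in Theorem~\ref{skorohod_galerkin}.
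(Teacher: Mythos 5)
Your proposal is correct and mirrors the paper's approach: tightness (Proposition \ref{prop:tight_viscous}), Prokhorov's theorem, and a Skorokhod representation variant for non-Polish phase spaces, with properties (3) and (4) transferred via equivalence of laws and Proposition \ref{equalinlaw} exactly as in Theorems \ref{skorohod_time} and \ref{skorohod_galerkin}. The only surface-level difference is that at this particular step the paper cites Theorem 1.10.4 of \cite{VW96} rather than the Jakubowski/\cite{NTT21} combination you invoke, but these are interchangeable versions of the same machinery; otherwise your treatment, including the identification $\widetilde{\rho\bu}=\rho\bu$ via equivalence of joint laws and the filtration bookkeeping deferred to \eqref{Ft1}, matches the paper's terse proof-by-reference.
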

 The filtrations $(\tilde\sF_t)_{t\geq 0}$ and $(\tilde\sF^\ep_t)_{t\geq 0}$ are constructed as in the previous section ensuring adaptibility of the solutions and that $\tilde W^1, \tilde W^2$ are $(\tilde\sF_t)_{t\geq 0}$-Wiener processes and $\tilde W_\ep^1, \tilde W_\ep^2$ are $(\tilde\sF^\ep_t)_{t\geq 0}$-Wiener processes. The new random variables $\tilde{ \mathcal{U}}_\ep$ satisfy the continuity equation \eqref{cont_ep2} and the structure-fluid momentum equation \eqref{viscous} for every $\ep$.
 
 Using the same procedure as in the previous section, we can pass $\ep\to 0$ and prove that the random variables $(\bu,\rho,\eta,\eta^*)$ satisfy the desired weak formulation \eqref{delta} for a fixed $\delta>0$. We refer the reader to the proof of Lemma \ref{stochint} for details regarding passage of $\ep\to 0$ in the stochastic integral. We recall here that, due to its construction, $\rho_{0,\delta,\ep} \to \rho_{0,\delta}$ in $L^\gamma(\sO_\alpha)$ as $\ep\to 0$ (see \eqref{rho0conv}) where $\rho_{0,\delta}|_{\sO_\alpha\setminus\sO_{\eta_0}}=0$.

\section{Passage to the limit in $\delta \to 0$}\label{sec:delta}


As done previously, to emphasize its dependence on the parameter $\delta$, we denote the probability space and the solutions obtained in the previous section by $(\tilde\Omega,\tilde\sF,(\tilde\sF^\delta_t)_{t\in[0,T]},\tilde\bP)$
and $(\rho_\delta,\bu_\delta,\eta_\delta,\eta^*_\delta,v_\delta,W^1_\delta,W^2_\delta)$ respectively. Let us then recall that the weak formulation that we obtain after the limit in the artificial viscosity parameter $\varepsilon \to 0$ states that:
\begin{enumerate}
\item The following momentum equation, 
\begin{equation}
\begin{split}\label{delta}
&\int_{\mathcal{O}_{\alpha}} \rho_{\delta}(t) \bu_{\delta}(t) \cdot \bd{q} + \int_{\Gamma} v_{\delta}(t) \psi = \int_{\mathcal{O}_{\alpha}}  \bp_{0,\delta} \cdot \bd{q} + \int_{\Gamma} v_{0} \psi + \int_{0}^{t} \int_{\mathcal{O}_{\alpha}} (\rho_{\delta} \bu_{\delta} \otimes \bu_{\delta}) : \nabla \bd{q} \\
&+ \int_{0}^{t} \int_{\mathcal{O}_{\alpha}} \Big(a\rho_{\delta}^{\gamma} + \delta \rho_{\delta}^{\beta}\Big) (\nabla \cdot \bd{q}) - \int_{0}^{t} \int_{\mathcal{O}_{\alpha}} \mu^{\eta_{\delta}^*}_{\delta} \nabla \bu_{\delta} : \nabla \bd{q} - \int_{0}^{t} \int_{\mathcal{O}_{\alpha}} \lambda^{\eta_{\delta}^*}_{\delta} \text{div}(\bu_{\delta}) \text{div}(\bd{q}) \\
&- \frac1\delta \int_{0}^{t} \int_{{T^\delta_{\eta^*_\delta}}} (\bu_{\delta} - v_{\delta} \bd{e}_{z}) \cdot (\bd{q} - \psi \bd{e}_{z})  -\int_{0}^{t} \int_{\Gamma} \nabla v_{\delta} \cdot \nabla \psi- \int_{0}^{t} \int_{\Gamma} \nabla \eta_{\delta} \cdot \nabla \psi \\
&- \int_{0}^{t} \int_{\Gamma} \Delta \eta_{\delta} \Delta \psi + \int_{0}^{t} \int_{\mathcal{O}_{\alpha}} \mathbbm{1}_{\sO_{\eta^*_\delta}}\bd{F}(\rho_{\delta}, \rho_{\delta} \bu_{\delta}) \cdot \bd{q} dW^1_\delta + \int_{0}^{t} \int_{\Gamma} G(\eta_{\delta}, v_{\delta}) \psi d W^2_\delta,
\end{split}
\end{equation}
holds $\tilde\bP$-almost surely, for almost every $t\in[0,T]$ and for every test function $\bd{q} \in C_c^\infty(\mathcal{O}_{\alpha})$ and $\psi \in C^{\infty}(\Gamma)$. We recall that $\bp_{0,\delta}$, defined in \eqref{p0}, approximates the initial data $\bp_0$.
\item The continuity equation,
\begin{align*}
    \int_{\sO_\alpha}\rho_\delta(t)\phi = \int_{\sO_\alpha}\rho_{0,\delta}\phi + \int_0^t\int_{\sO_\alpha}\rho_\delta\bu_\delta\cdot\nabla\phi
\end{align*}
holds $\tilde\bP$-almost surely for every $\phi\in C^\infty(\bar\sO_\alpha)$ and $t\in[0,T]$.
\end{enumerate}

\begin{remark}[A discussion about numerology] 
Many of the chosen constructions in the existence proof, such as extension of viscosity coefficients and the use of the exterior tubular neighborhood $T^{\delta}_{\eta^{*}_{\delta}}$ {{defined in \eqref{tube}}} in the penalty term that enforces the kinematic coupling condition in the limit as $\delta \to 0$, rely on $\delta$ explicitly in important ways, where the specific numerology is carefully chosen. Before continuing with the existence proof, we remind the reader of the relevant $\delta$-dependent quantities. 

First, $\nu_0$ is a fixed parameter that will later be relevant to the vanishing of density outside the physical domain in the limit as $\delta \to 0$, see Proposition \ref{vacuum2}. Then, the viscosity coefficients are extended from the physical domain to the maximal domain via the equations \eqref{viscosityextension}:
\begin{equation*}
\mu^{\eta^{*}_{\delta}}_\delta = \chi^{\eta^{*}_{\delta}}_{\delta^{\nu_{0}}} \mu, \quad \lambda^{\eta^{*}_{\delta}}_\delta = \chi^{\eta^{*}_{\delta}}_{\delta^{\nu_{0}}}\lambda,
\end{equation*}
where $\chi^{\eta^{*}_{\delta}}_{\delta^{\nu_{0}}}$ is defined in \eqref{chi}, for $\kappa = \delta^{\nu_{0}}$. Note that by the properties of $\chi^{\eta^{*}_{\delta}}_{\delta^{\nu_{0}}}$ and of the bounding function $a^{\eta^{*}_{\delta}}_{\delta^{\nu_0}}$, see estimate \eqref{abound}, we have that
\begin{align}\label{muconstantremark}
&\mu^{\eta^{*}_{\delta}}_{\delta} = \mu, \quad \lambda^{\eta^{*}_{\delta}}_{\delta} = \lambda, \quad \text{ for $(x, y, z) \in \mathcal{O}_{\alpha} \quad$ such that $z \le \eta^{*}_{\delta}(x,y) + \left(C_{\alpha} + \frac{1}{4}\right)\delta^{\nu_{0}/2}$},
\\
&
\mu^{\eta^{*}_{\delta}}_{\delta} = \delta^{\nu_0}, \quad \text{ for $(x, y, z) \in \mathcal{O}_{\alpha} \quad$ such that $z \ge \eta^{*}_{\delta}(x,y) + \left(3C_{\alpha} + \frac{3}{4}\right)\delta^{\nu_{0}/2}$}.\label{muexteriorremark}
\end{align}
for a constant $C_{\alpha}$ that depends only on $\alpha$. More generally, note that
\begin{equation}\label{mulowerbound}
\mu^{\eta^{*}_{\delta}}_{\delta} \ge \delta^{\nu_0} \text{ and } \lambda^{\eta^{*}_{\delta}}_{\delta} \ge \delta^{\nu_0}, \quad \text{ for all } (x, y, z) \in \mathcal{O}_{\alpha}.
\end{equation}

Recall also that the penalty term will give a tubular neighborhood estimate $T^{\delta}_{\eta^{*}_{\delta}}$, see the estimate \eqref{penaltybound}, where the tubular neighborhood is defined in \eqref{tube} to be an exterior tubular neighborhood of width $\delta^{\left(\frac{1}{2} - \frac{1}{\beta}\right)}$ containing the points $(x, y, z) \in \mathcal{O}_{\alpha} \setminus \mathcal{O}_{\eta^{*}_{\delta}}$ such that
\begin{equation*}
0 < (z - 1 - \eta^{*}_{\delta}) < \delta^{\left(\frac{1}{2} - \frac{1}{\beta}\right)}.
\end{equation*}
We now make an important observation relating the width of the tubular neighborhood to the width of extension for the viscosity coefficients. Since $\nu_{0}$ will be chosen to be $\sim \left(\frac{1}{2} - \frac{1}{\beta}\right)^{2}$
(cf. \eqref{nu0} and Proposition \ref{vacuum2})
and since we are considering $\delta$ sufficiently small in the limit as $\delta \to 0$, we have that the width of the tubular neighborhood is less than the width of the region outside of the moving domain where the viscosity coefficients are still equal to their values; see \eqref{muconstantremark}, namely:
\begin{equation*}
\delta^{\left(\frac{1}{2} - \frac{1}{\beta}\right)} \le \left(C_{\alpha} + \frac{1}{4}\right)\delta^{\nu_{0}/2}. 
\end{equation*}
Since we have uniform bounds on the dissipation in terms of the extended viscosity coefficient $\mu^{\eta^{*}_{\delta}}_{\delta}$, as in Lemma \ref{energy_delta} (2), we can use this fact to conclude not just boundedness of $\tilde{\mathbb{E}} \|\nabla \bu_{\delta}\|^{p}_{L^2(0,T;L^{2}(\mathcal{O}_{\eta^{*}_{\delta}}))}$ but more generally of $\tilde{\mathbb{E}} \|\nabla \bu_{\delta}\|^{p}_{L^2(0,T;L^{2}(\mathcal{O}_{\eta^{*}_{\delta}} \cup T^{\delta}_{\eta^{*}_{\delta}}))}$.
\end{remark}

\medskip

\noindent \textbf{Uniform boundedness.} Thanks to the fact that the estimates obtained in Lemma \ref{energy_galerkin} are independent of $\delta$, we obtain the following uniform boundedness results. 

\begin{lemma}\label{energy_delta}
For any $p\geq 1$, there exists a constant $C$ that is independent of $\delta$ such that:
\begin{enumerate}
    \item $\tilde\bE\|\rho_{\delta}\|^p_{L^{\infty}(0, T; L^{\gamma}(\mathcal{O}_{\alpha}))} \leq C$.
  \item {{$\tilde\bE\|\sqrt{\mu^{\eta^*_\delta}_\delta}\nabla\bu_{\delta}\|^p_{ L^2(0,T;L^{2}(\mathcal{O}_{\alpha}))}\leq C$,\quad and $\tilde\bE\|\sqrt{\lambda^{\eta^*_\delta}_\delta}\nabla\cdot\bu_{\delta}\|^p_{ L^2(0,T;L^{2}(\mathcal{O}_{\alpha}))}\leq C$.}}
    \item $\tilde\bE\|\eta_{\delta}\|^p_{ L^{\infty}(0, T; H^2(\Gamma))}\leq C$ and  $\tilde\bE\|\eta^*_{\delta}\|^p_{ L^{\infty}(0, T; H^2(\Gamma))}\leq C$.
    \item $\tilde\bE\|v_{\delta}\|^p_{L^{\infty}(0, T; L^{2}(\Gamma))}\leq C$.
    \item $\tilde\bE\|\sqrt{\rho_{\delta}} \bu_{\delta}\|^p_{ L^{\infty}(0, T; L^{2}(\mathcal{O}_{\alpha}))}\leq C$.
    \item $\tilde\bE\|\rho_{\delta} \bu_{\delta}\|^p_{ L^{\infty}(0, T; L^{\frac{2\gamma}{\gamma + 1}}(\mathcal{O}_{\alpha}))}\leq C$.
    \item $\tilde\bE\|\bu_\delta\|^p_{L^2(0,T;H^{1}(\sO_{\eta^*_\delta}\cup{T}^\delta_{\eta^*_\delta}))} \leq C$. 
    \item $\tilde\bE\|\bu_\delta\|^p_{L^2(0,T;L^q(\sO_{\eta^*_\delta}\cup T^\delta_{\eta^*_\delta}))}<C(\alpha). \qquad \forall 0 \leq q<6$
      \item For some $0<\bar{\kappa}<\frac12$, $\tilde{\bE}\left[\sup_{0<h<T}\frac1{h^{\bar\kappa}}\|\mathcal{T}_hv_\delta-v_\delta\|_{L^{2}(h,T;L^2(\Gamma))}^2\right]\leq C$, where $\mathcal{T}_hf(t) = f(t-h)$.
\end{enumerate}
Moreover, we have that
\begin{align}\label{penaltybound}
 \tilde\bE\|\bu_{\delta} - v_{\delta} \bd{e}_{z}\|^p_{L^2(0,T;L^2(T^{\delta}_{\eta^*_\delta}))}\leq C\delta^{\frac{p}2},
\end{align}
\end{lemma}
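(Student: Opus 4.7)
The entire lemma is obtained by propagating the uniform estimates of Lemma \ref{energy_galerkin} through the two successive limit passages $n\to\infty$ and $\varepsilon\to 0$ of Sections \ref{sec:galerkin} and \ref{sec:viscousreg}. The key observation is that the right-hand side of the Gronwall estimate \eqref{moment_timedis} depends only on the $p$-th moment of the modified initial energy
\[
\tfrac12\int_{\sO_\alpha}\tfrac{|\bp_{0,\delta,\ep}|^2}{\rho_{0,\delta,\ep}}+\int_{\sO_\alpha}\bigl(a\rho_{0,\delta,\ep}^\gamma+\delta\rho_{0,\delta,\ep}^\beta\bigr)+\tfrac12\int_\Gamma |v_0|^2,
\]
which by the construction \eqref{p0}--\eqref{rho0conv} is uniformly bounded in both $\varepsilon$ and $\delta$. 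Applying weak/weak-$*$ lower semicontinuity of the relevant norms along the $\tilde{\bP}$-almost sure convergences given by Theorems \ref{skorohod_galerkin} and \ref{skorohod_viscous} yields statements (1)--(5) directly. The penalty bound \eqref{penaltybound} is an immediate consequence of the fact that the penalty term enters the energy identity with prefactor $1/(2\delta)$: the uniform $p$-th moment bound on $\tilde{\bE}\bigl(\tfrac{1}{2\delta}\|\bu_\delta-v_\delta\bd{e}_z\|^2_{L^2(0,T;L^2(T^\delta_{\eta^*_\delta}))}\bigr)^p\le C$ rearranges to the stated $\delta^{p/2}$ decay.

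Statement (6) is a direct Hölder interpolation $\rho_\delta\bu_\delta=\sqrt{\rho_\delta}\cdot\sqrt{\rho_\delta}\bu_\delta$ with exponents $\gamma+1$ and $(\gamma+1)/\gamma$, combining (1) and (5) as in \eqref{urho}. The more delicate item is (7), whose uniform character relies on the interaction between the tubular width $\delta^{1/2-1/\beta}$ appearing in \eqref{tube} and the viscosity-extension width $\sim\delta^{\nu_0/2}$ coming from \eqref{chi}--\eqref{viscosityextension}. Since $\nu_0$ is chosen so that $\nu_0<2(\tfrac12-\tfrac1\beta)$ (cf.\ the intended scaling $\nu_0\sim(\tfrac12-\tfrac1\beta)^2$), for $\delta$ small one has $\delta^{1/2-1/\beta}\le(C_\alpha+\tfrac14)\delta^{\nu_0/2}$, so by \eqref{muconstantremark} the coefficients $\mu^{\eta^*_\delta}_\delta$ and $\lambda^{\eta^*_\delta}_\delta$ equal the full constants $\mu$ and $\lambda$ on the whole of $\sO_{\eta^*_\delta}\cup T^\delta_{\eta^*_\delta}$. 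Combining this with (2) yields a uniform bound on $\tilde{\bE}\|\nabla\bu_\delta\|^p_{L^2(0,T;L^2(\sO_{\eta^*_\delta}\cup T^\delta_{\eta^*_\delta}))}$; the $L^2$ part of the $H^1$-bound is then obtained by the one-dimensional Poincaré estimate in the vertical direction, using the no-slip condition $\bu_\delta|_{z=0}=0$ inherited from $\bu_\delta\in H^1_0(\sO_\alpha)$ (preserved under both weak limits), exactly as in \eqref{poincare}.

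For (8), Sobolev embedding $H^1\hookrightarrow L^q$ for every $q<6$ in three dimensions gives the estimate on a smoothly bounded superset of $\sO_{\eta^*_\delta}\cup T^\delta_{\eta^*_\delta}$ inside $\sO_\alpha$; one concrete choice is the smooth hypograph $\{0\le z\le 1+a^{\eta^*_\delta}_{\delta^{\nu_0}}\}$ from \eqref{abbounding}, to which $\bu_\delta$ is extended trivially (recall $\bu_\delta\in H^1_0(\sO_\alpha)$, so the extension remains in $H^1$ of the smooth superset), with embedding constant depending only on $\alpha$ through the uniform Hölder bound \eqref{Calphadef} on $\eta^*_\delta$. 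Finally, (9) is nothing but Lemma \ref{lem:vtight}, whose proof was explicitly crafted to produce a bound independent of $n,\varepsilon$, and $\delta$, and so it transfers verbatim. The main obstacle throughout is the bookkeeping at each limit passage, ensuring that integrations and indicator functions over the random moving regions $\sO_{\eta^*_\delta}$ and $T^\delta_{\eta^*_\delta}$ behave correctly under the almost-sure convergences of the Skorohod theorems; this is handled as in the identification of the penalty term in Section \ref{sec:galerkin}, using the $\tilde{\bP}$-a.s.\ strong convergence $\mathbbm{1}_{\sO_{\eta^*_\delta}}\to\mathbbm{1}_{\sO_{\eta^*}}$ in $L^\infty(0,T;L^p(\sO_\alpha))$ for all $1\le p<\infty$.
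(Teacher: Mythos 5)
Your proposal follows the paper's route for items (1)--(7), (9), and the penalty bound: the uniform energy estimate \eqref{moment_timedis}, weak lower semicontinuity through the Skorohod limits, Hölder interpolation for (6), the vertical Poincaré and the observation $\mu^{\eta^*_\delta}_\delta \equiv \mu$ on $\sO_{\eta^*_\delta}\cup T^\delta_{\eta^*_\delta}$ for (7), and Lemma~\ref{lem:vtight} (whose bounds were set up to be $\delta$-independent via \eqref{bounduH1}) for (9). All of this matches the paper.

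The one place where your argument has a genuine gap is item (8). You pass to the smooth hypograph $\{0\le z\le a^{\eta^*_\delta}_{\delta^{\nu_0}}\}$ and invoke the Sobolev embedding $H^1\hookrightarrow L^q$, claiming the embedding constant depends only on $\alpha$ via the Hölder bound \eqref{Calphadef}. But the function $a^{\eta^*_\delta}_{\delta^{\nu_0}}$ is obtained by mollifying at scale $\delta^{\nu_0}$, so while it is $C^\infty$, its Lipschitz norm scales like $\delta^{-\nu_0/2}$ and is \emph{not} uniform in $\delta$. The standard $H^1\hookrightarrow L^q$ constant on a Lipschitz hypograph depends on the Lipschitz constant (through the cone/extension property), so the mollified domain does not by itself deliver a $\delta$-independent constant. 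What you assert without proof --- that the constant in fact depends only on the $1/2$-Hölder modulus of continuity of the boundary --- is precisely the nontrivial content that the paper imports by citing Corollary~2.9 of \cite{LengererRuzicka}, applied \emph{directly} to the non-Lipschitz Hölder domain $\sO_{\eta^*_\delta}\cup T^\delta_{\eta^*_\delta}$. If you have that result in hand, the mollification step is superfluous; if you do not, the smooth superset does not close the gap, since smoothness is a qualitative property and does not control the embedding constant. To make your alternative rigorous without the cited reference you would need a different mechanism (e.g., an ALE flattening to a fixed domain paying a $W^{1,r}$, $r<2$, price from the $\nabla\eta^*_\delta$ factors, as the paper does in the trace estimate \eqref{tracecontinuity}), not the mollified hypograph.
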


We will next discuss how Statements (7) and (8) are derived. 
Observe that, since $\bu_\delta = 0$ on the bottom boundary of $\mathcal{O}_{\eta_\delta^*}$,  we have
\begin{multline*}
\int_{0}^{1 + \eta^*_\delta(t, x, y)} |\bu_\delta(t, x, y, z)|^{2} dz = \int_{0}^{1 + \eta^*_\delta(t, x, y, z)} \left(\int_{0}^{z} \partial_{z}\bu_\delta(t, x, y, w) dw\right)^{2} dz \\
\le (1 + \eta^*_\delta(t, x, y)) \int_{0}^{1 + \eta^*_\delta(t, x, y)} \int_{0}^{z} |\partial_{z}\bu_\delta(t, x, y, w)|^{2} dw dz 
\le (1 + \eta^*_\delta(t, x, y))^{2} \int_{0}^{1 + \eta^*_\delta(t, x, y)} |{{\partial_{z}\bu_\delta(t, x, y, z)}}|^{2} dz.
\end{multline*}
Hence we obtain the following Poincare inequality by using the fact that $\|\eta^*_\delta\|_{L^\infty((0,T)\times\Gamma)} \leq \frac1\alpha$ almost surely:
\begin{equation}\label{poincare}
\int_{0}^{1 + \eta^*_\delta(t, x, y)} |\bu_\delta(t, x, y, z)|^{2} dz \le C(\alpha^{-2} )\int_{0}^{1 + \eta^*_\delta(t, x, y)} |\nabla \bu_\delta(t, x, y, z)|^{2} dz.
\end{equation}
Similar calculations, obtained by considering the bounding function $a^{\eta^*_\delta}_\kappa$ defined in \eqref{abbounding} and by noticing that the extension coefficient $\mu_{\delta}^{\eta^*_\delta}$ is equal to $\mu$ in $\sO_{\eta^*_\delta}\cup{T}^\delta_{\eta^*_\delta}$, give us, for some $C>0$ independent of $\delta$, that
\begin{align}\label{bounduH1}
\tilde\bE\|\bu_\delta\|^p_{L^2(0,T:H^1(\sO_{\eta^*_\delta}\cup{T}^\delta_{\eta^*_\delta}))} \leq C.
\end{align}
Statement (8) follows from Statement (7) after an application of Corollary 2.9 in \cite{LengererRuzicka}. Finally,
using bounds \eqref{bounduH1} in the proof of Lemma \ref{lem:vtight} gives us Statement (9).

\subsection{Tightness of laws}\label{sec:delta_tight}
As done in previous sections, we will now show that the sequence of the laws of the approximate solutions is tight in an appropriate phase space. For that purpose, we define the following spaces
\begin{align}\label{Xnu}
	\mathcal{X}_{\rho} = C_{w}(0, T; L^{\gamma}(\mathcal{O}_{\alpha})),\qquad \mathcal{X}_{\rho \bu} ={   (L^\infty(0,T;L^{\frac{2\gamma}{\gamma+1}}(\mathcal{O}_{\alpha})),w^*)} \nonumber \\
{	 \mathcal{X}_{\bu} = (L^{2}(0, T; L^{q}(\mathcal{O}_{\alpha})), w)},\qquad \mathcal{X}_{\nabla\bu} = (L^2(0, T; L^2(\mathcal{O}_{\alpha}), w),\quad \mathcal{X}_{div\bu} = (L^{2}(0, T; L^{2}(\mathcal{O}_{\alpha})), w) \nonumber \\
	\mathcal{X}_{\eta} = C_{w}(0, T; H^2(\Gamma)) \cap C([0, T]; H^{s}(\Gamma)), \quad
{\mathcal{X}_{v}=L^2(0,T;L^2(\Gamma)) \cap (L^2(0,T;H^1(\Gamma),w)},\nonumber\\
  \mathcal{X}_{W} = C(0, T; \mathcal{U}_{0})^2, \quad
\mathcal{X}_{\nu} = (L^{\infty}([0, T] \times \mathcal{O}_{\alpha}; \mathcal{P}(\R^{13})), w^{*}),
\end{align}
for fixed $\frac32<s<2$ and $q<6$. 

\medskip

\noindent \textbf{A short exposition on Young measures.} In the definition of $\mathcal{X}_{\nu}$ in \eqref{Xnu}, $\mathcal{P}(\R^{13})$ denotes the space of probability measures on the space $\R^{13}$ so that $\mathcal{X}_{\nu}$ denotes the space for the \textit{Young measures} for the fluid density, the fluid velocity, and the gradient of the fluid velocity. The intuition for Young measures is as follows. If we have a deterministic fluid velocity and fluid velocity giving rise to the pointwise-defined function $(\rho, \bu, \nabla \bu): [0, T] \times \mathcal{O}_{\alpha} \to \R^{13}$, then for each point in spacetime, $(\rho, \bu, \nabla \bu)$ has a single value in $\R^{13}$. However, a Young measure is a rigorous way of dealing with ``multi-valued" functions from $[0, T] \times \mathcal{O}_{\alpha}$ to $\R^{13}$, where the value at each point $(t, x) \in [0, T] \times \mathcal{O}_{\alpha}$ is a weighted average of possible values in the range $\R^{13}$. 

To represent this weighted average, a \textbf{Young measure} in our current context is a measurable map $\nu: [0, T] \times \mathcal{O}_{\alpha} \to \mathcal{P}(\R^{13})$, that is a probability measure on the range $\R^{13}$ for each $(t, x) \in [0, T] \times \mathcal{O}_{\alpha}$, where this map from $(t, x) \in [0, T] \times \R^{13}$ to a probability measure associated to $(t, x)$, denoted by $\nu_{t, x} \in \mathcal{P}(\R^{13})$, is measurable in an appropriate sense, see Definition 2.8.4 in \cite{BFH18}. In this sense of Young measures representing weighted averages of potential function values, given any function $(\rho, \bd{u}, \nabla \bu): [0, T] \times \mathcal{O}_{\alpha} \to \R^{13}$ that is real-valued (rather than probability measure-valued as for general Young measures), we can associate a natural Young measure $\nu$ defined to have a probability-measured value at $(t, x) \in [0, T] \times \mathcal{O}_{\alpha}$ of
\begin{equation}\label{deltafunction}
\nu_{t, x} = \delta_{\small(\rho(t, x), \bu(t, x), \nabla \bu(t, x)\small)},
\end{equation}
which is the Dirac delta function on $\R^{13}$ supported at the specific point $(\rho(t, x), \bu(t, x), \nabla \bu(t, x)) \in \R^{13}$. The Dirac delta measure at each $(t, x)$ represents the fact that given a genuinely $\R^{13}$-valued function $(\rho, \bu, \nabla \bu)$, the value of this function at $(t, x)$ is a single determined value in $\R^{13}$ rather than a spread of potential values in $\R^{13}$. The space of general Young measures considered as measurable maps from $[0, T] \times \mathcal{O}_{\alpha}$ to probability measures on $\R^{13}$ that we consider for $(\rho, \bu, \nabla \bu)$ is the space $\mathcal{X}_{\nu}$ defined in \eqref{Xnu}, which has an appropriate weak-star topology of convergence, which can be precisely defined as in pg.~156 on \cite{BFH18}. Since the precise definition will not be important to the current exposition, we refer the interested reader to pg.~156 of \cite{BFH18} for the explicit definition of this space, and more generally to Section 2.8 of \cite{BFH18} for more information about Young measures. 


\medskip

We can now state the main result of this section is stated in the following proposition. 
\begin{proposition}\label{tight_delta}
Consider the sequence of random variables,

\begin{small}
\begin{multline*}
	\mathcal{U}_{\delta} := \left(\rho_{\delta},\mathbbm{1}_{\left(\sO_{\eta^*_\delta}\cup T^\delta_{\eta^*_\delta}\right)}\bu_\delta,
    \sqrt{\mu^{\eta^*_\delta}_\delta}\nabla \bu_\delta, \sqrt{\lambda^{\eta^{*}_{\delta}}_\delta}\nabla\cdot\bu_{\delta}, \rho_\delta \bu_\delta, \eta_{\delta}, \eta_\delta^*, v_\delta, W^1, W^2, \delta_{(\rho, \mathbbm{1}_{\mathcal{O}_{\eta^{*}_{\delta}}} \bu_\delta, \mathbbm{1}_{\mathcal{O}_{\eta^{*}_{\delta}} \nabla \bu_\delta}})\right)
\end{multline*}
\end{small}
where the last term $\delta_{(\rho, \mathbbm{1}_{\mathcal{O}_{\eta^{*}_{\delta}}} \bu_\delta, \mathbbm{1}_{\mathcal{O}_{\eta^{*}_{\delta}}} \nabla \bu_\delta)}$ is to be interpreted in the sense of a Dirac delta probability measure-valued function as discussed in \eqref{deltafunction}, since the random functions $(\rho, \mathbbm{1}_{\mathcal{O}_{\eta^{*}_{\delta}}\bu, \mathbbm{1}_{\mathcal{O}_{\eta^{*}_{\delta}}} \nabla \bu})$ for each $\omega \in \tilde\Omega$ are (single-valued) real-valued functions from $[0, T] \times \mathcal{O}_{\alpha} \to \R^{13}$. Then the sequence of measures $\{\tilde\bP\circ \mathcal{U}_\delta^{-1}\}_{\delta \geq 0}$
 is tight in the phase space
\begin{equation*}
	\mathcal{X} = \mathcal{X}_{\rho} \times\mathcal{X}_{\bu} \times \mathcal{X}_{\nabla\bu} \times \mathcal{X}_{div\bu}  \times \mathcal{X}_{\rho \bu} \times \mathcal{X}_{\eta}\times \mathcal{X}_{\eta} \times \mathcal{X}_{v} 
 \times \mathcal{X}_{W} \times \mathcal{X}_{\nu}.
\end{equation*}
\end{proposition}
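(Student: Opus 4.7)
The strategy is the standard one: I will establish tightness of the marginal law of each component in its corresponding phase space, and then invoke Tychonoff's theorem to conclude tightness of the joint law on the product space $\mathcal{X}$. Throughout, the uniform (in $\delta$) bounds from Lemma \ref{energy_delta} together with Chebyshev's inequality will be the basic mechanism for producing compact sets in $\mathcal{X}$ whose complements carry uniformly small probability.

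First, for the components equipped with a weak or weak$^*$ topology---namely $\mathcal{X}_{\bu}$, $\mathcal{X}_{\nabla \bu}$, $\mathcal{X}_{\text{div}\bu}$, $\mathcal{X}_{\rho\bu}$, and the weak component of $\mathcal{X}_v$---tightness follows immediately from the Banach--Alaoglu theorem applied to the uniform bounds in Lemma \ref{energy_delta}(2),(6),(7),(8), since closed balls in the relevant reflexive (or separable dual) spaces are weakly (resp.\ weak-$^*$) metrizable and compact. The law of the Wiener processes in $\mathcal{X}_W$ is independent of $\delta$, so that marginal tightness is trivial.

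For the density in $\mathcal{X}_{\rho}=C_w(0,T;L^{\gamma}(\mathcal{O}_{\alpha}))$, I will combine the uniform bound $\rho_\delta \in L^p(\tilde\Omega; L^{\infty}(0,T;L^{\gamma}(\mathcal{O}_{\alpha})))$ with a uniform H\"older-in-time estimate in a negative Sobolev norm coming from the continuity equation $\partial_t \rho_\delta = -\mathrm{div}(\rho_\delta\bu_\delta)$. Since Lemma \ref{energy_delta}(6) gives $\rho_\delta \bu_\delta$ uniformly bounded in $L^{\infty}(0,T;L^{\frac{2\gamma}{\gamma+1}}(\mathcal{O}_\alpha))$, we get $\partial_t \rho_\delta$ uniformly bounded in $L^\infty(0,T;W^{-1,\frac{2\gamma}{\gamma+1}}(\mathcal{O}_\alpha))$, and hence $\rho_\delta$ lies in a bounded set of $C^{0,1}(0,T;W^{-1,\frac{2\gamma}{\gamma+1}}(\mathcal{O}_\alpha))$. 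The compact embedding
\begin{equation*}
L^{\infty}(0,T;L^{\gamma}(\mathcal{O}_{\alpha}))\,\cap\,C^{0,1}(0,T;W^{-1,\frac{2\gamma}{\gamma+1}}(\mathcal{O}_{\alpha}))
\subset\subset C_w(0,T;L^{\gamma}(\mathcal{O}_{\alpha}))
\end{equation*}
together with Chebyshev then yields tightness. The analogous argument with $\mathcal{X}_{\rho\bu}$ as a target (using the weak-$^*$ topology of $L^\infty L^{2\gamma/(\gamma+1)}$) is direct from Lemma \ref{energy_delta}(6).

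For $\eta_\delta$ and $\eta^*_\delta$ in $\mathcal{X}_\eta$, tightness in the weakly-continuous space $C_w(0,T;H^2(\Gamma))$ and in $C([0,T];H^s(\Gamma))$ for $s\in(3/2,2)$ follows from the uniform bounds in Lemma \ref{energy_delta}(3),(4) via Aubin--Lions (the classical Strauss-type lemma giving $C_w$-continuity from $L^\infty$ plus an equicontinuity estimate), exactly as in the analogous tightness argument in Section~\ref{sec:galerkin}. For $v_\delta$ in $L^2(0,T;L^2(\Gamma))$, I will apply the fractional Aubin--Lions lemma \eqref{AL} with $\mathcal{Y}_0 = H^1(\Gamma)$ and $\mathcal{Y}=L^2(\Gamma)$, using the spatial bound in Lemma \ref{energy_delta}(6) together with the fractional time regularity $\tilde{\mathbb{E}}\sup_{0<h<T}h^{-\bar\kappa}\|\mathcal{T}_h v_\delta - v_\delta\|_{L^2(h,T;L^2(\Gamma))}<C$ of Lemma \ref{energy_delta}(9).

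Finally, for the Young measure component in $\mathcal{X}_\nu = (L^\infty([0,T]\times\mathcal{O}_\alpha; \mathcal{P}(\mathbb{R}^{13})), w^*)$, I will invoke the fundamental theorem on Young measures (see Section~2.8 of \cite{BFH18}, in particular Lemma 2.8.7): any sequence of Young measures whose associated ``generating'' functions $(\rho_\delta, \mathbbm{1}_{\mathcal{O}_{\eta^*_\delta}}\bu_\delta, \mathbbm{1}_{\mathcal{O}_{\eta^*_\delta}}\nabla\bu_\delta)$ is uniformly integrable on $[0,T]\times\mathcal{O}_\alpha$ generates a tight family of laws in $\mathcal{X}_\nu$. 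The required uniform integrability is immediate from the uniform $L^p(\tilde\Omega;L^\infty(0,T;L^\gamma))$ bound on $\rho_\delta$ and the uniform $L^p(\tilde\Omega; L^2(0,T;L^2))$ bounds on the restricted velocity and its gradient given by Lemma \ref{energy_delta}(2),(8), together with a Chebyshev argument to transfer these deterministic compactness statements into bounds on the probabilities of the complements of compact sets in $\mathcal{X}_\nu$.

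The main conceptual obstacle is the Young-measure component, because unlike in earlier sections the phase space is non-metric and the notion of ``compact set'' is somewhat delicate; the right framework is the one developed in Chapter~2.8 of \cite{BFH18}, which is tailored precisely to the compressible stochastic setting we are in. The other nontrivial point is that we are losing the strong $L^2 H^1$ control of $\bu_\delta$ on the maximal domain (we keep it only on $\mathcal{O}_{\eta^*_\delta}\cup T^\delta_{\eta^*_\delta}$), so the momentum and Young-measure tightness arguments must be set up with restricted/indicator-multiplied quantities, which is reflected in the choice of $\mathcal{U}_\delta$. Once all marginal tightness results are in place, Tychonoff's theorem yields joint tightness, completing the proof.
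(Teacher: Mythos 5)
Your proposal is correct and follows essentially the same route as the paper: component-wise tightness via the uniform bounds of Lemma~\ref{energy_delta} and Chebyshev (appealing to Banach--Alaoglu for the weak/weak-$^*$ components, the continuity equation for $\rho_\delta$, fractional Aubin--Lions for $v_\delta$, and the Young-measure tightness criterion of Section 2.8 of \cite{BFH18}), followed by Tychonoff. The only small discrepancies are citation-level: the paper invokes Corollary 2.8.6 rather than Lemma 2.8.7 of \cite{BFH18} for the Young-measure component, and the $L^2(0,T;H^1(\Gamma))$ control on $v_\delta$ that you need for the fractional Aubin--Lions step is contained in the energy estimate \eqref{moment_timedis} but is not item (6) of Lemma~\ref{energy_delta}.
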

The proof of this proposition is identical to that of the corresponding statements in Proposition \ref{tight_galerkin}. The only novelty is to show the tightness of the Young measures $\delta_{(\rho, \mathbbm{1}_{\eta^{*}_{\delta}} \bu_\delta, \mathbbm{1}_{\mathcal{O}_{\eta_{\delta}^{*}} \nabla \bu_\delta})}$, which is shown using the compactness criterion in Corollary 2.8.6 in \cite{BFH18}, Chebychev's inequality, and the fact that
\begin{equation*}
\tilde\bE\left(\|\rho_{\delta}\|_{L^{\infty}(0, T; L^{ \gamma}(\mathcal{O}_{\alpha}))} + \|\mathbbm{1}_{\mathcal{O}_{\eta^{*}_{\delta}}} \bu_{\delta}\|_{L^{2}(0, T; L^{2}(\mathcal{O}_{\alpha}))} + \|\mathbbm{1}_{\mathcal{O}_{\eta^{*}_{\delta}}\cup T^\delta_{\eta^*_\delta}} \nabla \bu_{\delta}\|_{L^{2}(0, T; L^{2}(\mathcal{O}_{\alpha}))}\right) < \infty,
\end{equation*}
as in the proof of Proposition 4.4.7 in \cite{BFH18}.

\subsection{Skorohod convergence theorem}
At this stage we apply the Skorohod representation theorem along with the result of \cite{J97}, to obtain the following convergence result.
\begin{theorem}\label{skorohod} 
	There exists a probability space $(\tilde\Omega,\tilde\sF,\tilde\bP)$ and random variables  ${ \hat\bu_\delta}$ and $\hat{\mathcal{U}}_{\delta}:=(\hat\rho_{\delta}, \mathbbm{1}_{\left(\sO_{\hat\eta^*_\delta}\cup T^\delta_{\hat\eta^*_\delta}\right)}\hat\bu_\delta, 
    \sqrt{\mu^{\hat\eta^*_\delta}_\delta}\nabla\hat\bu_{\delta}^*,\sqrt{\lambda^{\hat\eta^{*}_{\delta}}_\delta}\nabla\cdot\hat\bu_{\delta},{\hat\rho_\delta \hat\bu_\delta}, \hat\eta_{\delta}, \hat\eta_\delta^*, \hat v_\delta, 
\hat W^1_\delta, \hat W^2_\delta, \hat{\nu}_{\delta})$, and \\
	${\mathcal{U}}:=(\rho, \bu, \bu_{\nabla}^*, \bu_{div}^*, \bp, \eta, \eta^*, v,  \hat W^1, \hat W^2, \nu)$
	defined on this new probability space, such that 
	\begin{enumerate}
\item $\hat{\mathcal{U}}_{\delta}$ has the same law in $\mathcal{X}$ as $\mathcal{U}_{\delta}$,
\item $
\hat{\mathcal{U}}_{\delta} \to \mathcal{U} \text{ in the topology of $\mathcal{X}$, $\tilde{\mathbb{P}}$-almost surely as $\delta \to 0$},$
\item $\hat \eta^*_\delta=\hat \eta_\delta$ for every $t<\tau^\eta_\delta$ where, for the fixed $s\in(\frac32,2)$,
		\begin{align}\label{taudelta}
			\tau^{\eta}_\delta &:=T\wedge \inf\left\{t> 0:\inf_{\Gamma}(1+{\hat\eta_\delta}(t))\leq \alpha \text{ or } \|\hat{\eta}_\delta(t)\|_{H^s(\Gamma)}\geq \frac1{\alpha}\right\}.
		\end{align}
\item  $\partial_{t}\eta = v$, $\tilde{\mathbb{P}}$-almost surely.
\end{enumerate}
 In addition, for any Carath\'{e}odory function $H: [0, \infty) \times \R^{3} \times \R^{9} \to \R$ satisfying the growth condition:
 \begin{equation*}
|H(\rho, \bu, \bd{Q})| \le C(1 + \rho^{q_1} + |\bu|^{q_2} + |\bd{Q}|^{q_3})
\end{equation*}
for some constant $C$ and for some $q_1, q_2, q_3 \ge 1$, we have that $\tilde{\mathbb{P}}$-almost surely: for any $1 < p \le \min\left(\frac{\gamma}{q_1}, \frac{6}{q_2}, \frac{2}{q_3}\right)$,
\begin{equation}\label{caratheodoryconv}
H(\rho_{\delta}, \mathbbm{1}_{\mathcal{O}_{\hat{\eta}^{*}_{\delta}}} \hat{\bu}_{\delta}, \mathbbm{1}_{\mathcal{O}_{\hat{\eta}^{*}_{\delta}}} \nabla \hat{\bu}_{\delta}) \rightharpoonup \overline{H(\rho, \bu, \bd{Q})}, \ \ \text{weakly in $L^{p}((0, T) \times \mathcal{O}_{\alpha})$},
\end{equation}
where the weak limit is defined using the limiting Young measure:
{\begin{equation*}
\overline{H(\rho, \bu, \bd{Q})} = \int_{\R^{13}} H(\rho', \bu', \bd{Q}') d\nu(\rho',\bu',\bd{Q}'). 
\end{equation*}}
	\end{theorem}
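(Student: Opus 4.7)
The plan is to deduce this theorem by combining Jakubowski's version of the Skorohod representation theorem with the Young measure framework of \cite{BFH18}. First, by Proposition \ref{tight_delta}, the laws of $\mathcal{U}_\delta$ on the phase space $\mathcal{X}$ form a tight family. Since the spaces appearing in $\mathcal{X}$ (in particular the weak-type spaces like $\mathcal{X}_{\bu}$, $\mathcal{X}_{\nabla\bu}$, $\mathcal{X}_\nu$, and $C_w(0,T;H^2(\Gamma))$) are not Polish but are each a countable union of Polish subsets (i.e.\ so-called sub-Polish spaces in the sense of \cite{BFH18}, Section 2.1), the appropriate tool is the Jakubowski--Skorohod theorem \cite{J97} together with the composition argument from \cite{NTT21}, which guarantees existence of a canonical probability space $(\tilde\Omega,\tilde\sF,\tilde\bP)=([0,1]^2,\mathcal{B},\mathrm{Leb})$ and a subsequence (not relabeled) along with random variables $\hat{\mathcal{U}}_\delta$ and $\mathcal{U}$ satisfying statements (1) and (2). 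The equivalence of laws in (1) is transferred to the new space in the standard way and the almost-sure convergence in (2) is immediate from the construction.

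Next, the identification $\hat\eta^*_\delta=\hat\eta_\delta$ on $[0,\tau^\eta_\delta)$ in (3) follows from Proposition \ref{equalinlaw} (in Appendix A), exactly as was done in Theorem \ref{skorohod_time} and Theorem \ref{skorohod_galerkin}: the analogous identity holds on the original probability space by construction of the stopping-time/cut-off used to define $\eta^*_\delta$, and this identity, being expressible as equality in $H^s(\Gamma)$ at all times before a certain functional of the path, is a law-preserved property and thus transfers. Statement (4), $\partial_t\eta=v$, is argued in the same manner as in Theorem \ref{skorohod_time}: on the original space we have $\partial_t\eta_\delta=v_\delta$ by construction; this relation is preserved by equivalence of laws, yielding $\partial_t\hat\eta_\delta=\hat v_\delta$ on $\tilde\Omega$, and then we pass to the limit as $\delta\to 0$ using the $\tilde\bP$-a.s.\ convergences $\hat\eta_\delta\to\eta$ in $C(0,T;H^s(\Gamma))$ and $\hat v_\delta\to v$ in $L^2(0,T;L^2(\Gamma))$.

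The main technical step is establishing the Carath\'eodory convergence \eqref{caratheodoryconv}. The point of adding the Young-measure-valued component $\delta_{(\rho_\delta,\mathbbm{1}_{\mathcal{O}_{\eta^*_\delta}}\bu_\delta,\mathbbm{1}_{\mathcal{O}_{\eta^*_\delta}}\nabla\bu_\delta)}$ to the random variable $\mathcal{U}_\delta$ is precisely to record the joint oscillation/concentration behavior of $(\rho_\delta,\bu_\delta,\nabla\bu_\delta)$, so that by tightness of $\{\hat\nu_\delta\}$ in $\mathcal{X}_\nu$ and the Jakubowski theorem we extract a limiting Young measure $\nu_{t,x}$ on $\R^{13}$. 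I would then invoke the fundamental theorem on Young measures (Lemma 2.8.7/Proposition 2.8.15 in \cite{BFH18}): given the uniform integrability provided by the uniform bounds
\begin{equation*}
\hat\rho_\delta\in L^p_{\omega}L^\infty_tL^\gamma_x,\qquad \mathbbm{1}_{\mathcal{O}_{\hat\eta^*_\delta}}\hat\bu_\delta\in L^p_\omega L^2_tL^6_x,\qquad \mathbbm{1}_{\mathcal{O}_{\hat\eta^*_\delta}}\nabla\hat\bu_\delta\in L^p_\omega L^2_tL^2_x,
\end{equation*}
(from Lemma \ref{energy_delta}), for any Carath\'eodory $H$ with the growth
\begin{equation*}
|H(\rho,\bu,\bd Q)|\le C(1+\rho^{q_1}+|\bu|^{q_2}+|\bd Q|^{q_3}),
\end{equation*}
the family $\{H(\hat\rho_\delta,\mathbbm{1}_{\mathcal{O}_{\hat\eta^*_\delta}}\hat\bu_\delta,\mathbbm{1}_{\mathcal{O}_{\hat\eta^*_\delta}}\nabla\hat\bu_\delta)\}$ is uniformly integrable in $L^p((0,T)\times\mathcal{O}_\alpha)$ for any $1<p\le\min(\gamma/q_1,6/q_2,2/q_3)$. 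Uniform integrability plus $\tilde\bP$-a.s.\ convergence of the Young measures yields the weak convergence
\begin{equation*}
H(\hat\rho_\delta,\mathbbm{1}_{\mathcal{O}_{\hat\eta^*_\delta}}\hat\bu_\delta,\mathbbm{1}_{\mathcal{O}_{\hat\eta^*_\delta}}\nabla\hat\bu_\delta)\rightharpoonup \int_{\R^{13}} H(\rho',\bu',\bd Q')\,d\nu(\rho',\bu',\bd Q')
\end{equation*}
in $L^p$, which is exactly \eqref{caratheodoryconv}.

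The hard part will be the Young-measure identification step, because it requires that the sub-Polish phase space $\mathcal{X}_\nu$ was set up correctly so that tightness of the laws of $\hat\nu_\delta$ actually upgrades (after the Skorohod step) to a.s.\ convergence in the weak-$*$ topology on $L^\infty_{\mathrm{weak-}*}([0,T]\times\mathcal{O}_\alpha;\mathcal{P}(\R^{13}))$, together with the verification of the uniform integrability hypothesis under the sharpened growth exponent $p$. Everything else is a careful bookkeeping exercise invoking already-proven tightness (Proposition \ref{tight_delta}), the Jakubowski--Skorohod machinery, and Appendix A.
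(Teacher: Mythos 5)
Your proposal takes essentially the same route as the paper: tightness from Proposition \ref{tight_delta}, the Jakubowski--Skorohod machinery for sub-Polish spaces, the transfer-of-laws arguments from Appendix A and the earlier Skorohod theorems for statements (3) and (4), and the Young measure / fundamental-theorem framework from \cite{BFH18} for the Carath\'{e}odory convergence. The paper's proof is considerably terser --- it simply cites Theorem 2.8.1 and Corollary 2.8.3 in \cite{BFH18}, which already bundle the Skorohod step together with the weak convergence of Carath\'{e}odory compositions, rather than splitting them into a separate Jakubowski step followed by the fundamental theorem on Young measures as you do; but the underlying content is the same, and the numerology $p\le\min(\gamma/q_1,\,6/q_2,\,2/q_3)$ is derived from the same uniform bounds in Lemma \ref{energy_delta} (where, as the paper itself notes, the fluid velocity bound is in $L^q$ for all $q<6$ rather than $q=6$, so the exponent constraints are to be read accordingly).
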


 \begin{proof}
This is an application of the usual Skorohod representation theorem, in addition to a generalization of the Skorohod representation theorem to include the weak convergence of Carath\'{e}odory functions of random variables with bounded probability moments (see Theorems 2.8.1 and Corollary 2.8.3 in \cite{BFH18}). The numerology in \eqref{caratheodoryconv} is due to the result of Theorem 2.8.1 and Corollary 2.8.3 in \cite{BFH18} and the uniform bounds of $\tilde{\mathbb{E}}\left(\|\hat\rho_\delta\|_{L^{\infty}(0, T; L^{\gamma}(\mathcal{O}_{\alpha}))}\right)^{p}$, $\tilde{\mathbb{E}}\left(\|\mathbbm{1}_{\mathcal{O}_{\hat{\eta}^{*}_{\delta}}} \hat{\bu}_{\delta}\|_{L^{2}(0, T; L^{q}(\mathcal{O}_{\alpha}))}\right)^{p}$, $q<6$, and $\tilde{\mathbb{E}}\left(\|\mathbbm{1}_{\mathcal{O}_{\hat{\eta}^{*}_{\delta}}} \nabla \hat\bu_\delta\|_{L^{2}(0, T; L^{2}(\mathcal{O}_{\alpha}))}\right)^{p}$ for all $1 \le p < \infty$, independently of $\delta$.
 \end{proof}


Furthermore, we have that the following equality holds almost surely,
\begin{align}
\hat v_\delta = \partial_t\hat\eta_\delta \quad \text{ in the sense of distributions}.
\end{align}
Note in particular that,
\begin{equation}\label{etastarconv}
\hat\eta_{\delta} \to \eta \text{ and } \hat\eta^{*}_{\delta} \to \eta^{*} \quad \text{ in } C_{w}(0, T; H^2(\Gamma)) \cap C(0, T; H^{s}(\Gamma)), \text{ for } s \in (3/2, 2),
\end{equation}
implies that,
\begin{align}\label{etauniform}
    \hat\eta_{\delta} \to \eta \text{ and } \hat\eta^{*}_{\delta} \to \eta^{*} \quad \text{ in } C(0, T; C(\Gamma)),
\end{align}
which further implies that
\begin{align}\label{etasequal2}
			{\eta}^*(t)={\eta}(t) \quad \text{ for any } t<\tau^{\eta}, \quad \tilde\bP\text{-almost surely.}
		\end{align}	
   where for a given $\alpha$,
\begin{align}\label{tau}
    \tau^{\eta}  := T\wedge\inf\left\{t>0:\inf_{\Gamma}(1+\eta(t))\leq \alpha \text{ or } \|{\eta}(t)\|_{H^s(\Gamma)}\geq \frac1{\alpha} \right\}.
\end{align}
The proof of \eqref{etasequal2} is identical to that of \eqref{etasequal1}.

Thanks to Lemma 5.7 in \cite{TC23} we conclude that $\tau^\eta$ is almost surely strictly positive. In particular, given that the deterministic initial data $\eta_0$ satisfies \eqref{etainitial}, we have
\begin{align}\label{positivetau}
    \tilde\bP[\tau^\eta = 0] = 0.
\end{align}
This gives us the stopping time which is part of our martingale solution; see Definition \ref{def:martingale}. We will show that the candidate solution $(\rho,\bu,\eta)$ constructed in Theorem \ref{skorohod} satisfies the continuity equation \eqref{renorm_cont} in the renormalized sense and the momentum equation \eqref{weaksol} in the weak sense until this stopping time $\tau^\eta$.

Recalling the definition of the history of random variables $\sigma_t$  \eqref{history}, we will next construct a filtration to which these new random variables are adapted. As done in previous sections,
letting $\hat\sF_t'$ be the $\sigma-$ field generated by the random variables $\eta(s),\hat W^1(s),\hat W^2(s)$ for all $0\leq s \leq t$, and $\mathcal{N}=\{\mathcal{A}\in \tilde{\mathcal{F}} |  \tilde\bP(\mathcal{A})=0\}$, we define
\begin{align}\label{Ft1_delta}
\hat{\mathcal{F}}^0_t:=\bigcap_{s\ge t}\sigma(\hat{\sF}_s' \cup \mathcal{N}),\qquad
\hat{\mathcal{F}}_t :=\sigma(\sigma_t(\bu) \cup \sigma_t(v)\cup \hat{\mathcal{F}}^0_t).
\end{align} 
This is a complete, right-continuous filtration $(\hat\sF_t)_{t \geq 0}$, on the probability space $(\tilde\Omega,\tilde\sF,\tilde\bP)$, to which the limiting noise processes and the solutions are adapted. The filtration $(\hat\sF^\delta_t)_{t \geq 0}$ for the approximation weak formulation is constructed similarly.


Next, we know that the weak formulation \eqref{delta} is also satisfied by the new random variables $\hat{\mathcal{U}}_\delta$, i.e., for any pair of {{deterministic}} test functions $(\bq,\psi) \in C_c^\infty(\sO_\alpha)\times C^\infty(\Gamma)$ the following equation,
 \begin{multline}\label{newdelta}
\int_{\mathcal{O}_{\alpha}} \hat\rho_{\delta}(t) \hat{\bu}_{\delta}(t) \cdot \bd{q} + \int_{\Gamma} \hat v_{\delta}(t) \psi = \int_{\mathcal{O}_{\alpha}}  \bp_{0,\delta} \cdot \bd{q} + \int_{\Gamma} v_{0} \psi 
+ \int_{0}^{t} \int_{\mathcal{O}_{\alpha}} (\hat\rho_{\delta} \hat{\bu}_{\delta} \otimes \hat{\bu}_{\delta}) : \nabla \bd{q} \\
+ \int_{0}^{t} \int_{\mathcal{O}_{\alpha}} \Big(a\hat\rho_{\delta}^{\gamma} + \delta \hat\rho_{\delta}^{\beta}\Big) (\nabla \cdot \bd{q}) - \int_{0}^{t} \int_{\mathcal{O}_{\alpha}} \mu^{\hat\eta^{*}_{\delta}}_\delta  \nabla\hat{ \bu}_{\delta} : \nabla \bd{q} 
- \int_{0}^{t} \int_{\mathcal{O}_{\alpha}} \lambda^{\hat\eta^{*}_{\delta}}_\delta \text{div}(\hat{\bu}_{\delta}) \text{div}(\bd{q})\\
-\frac1\delta \int_{0}^{t} \int_{{T^\delta_{\hat\eta^*_\delta}}} (\hat\bu_{\delta} - \hat v_{\delta} \bd{e}_{z}) \cdot (\bd{q} - \psi \bd{e}_{z})
- \int_{0}^{t} \int_{\Gamma} \nabla \hat v_{\delta} \cdot \nabla \psi - \int_{0}^{t} \int_{\Gamma} \nabla \hat\eta_{\delta} \cdot \nabla \psi - \int_{0}^{t} \int_{\Gamma} \Delta \hat\eta_{\delta} \Delta \psi \\
+ \int_{0}^{t} \int_{\mathcal{O}_{\alpha}} \mathbbm{1}_{\sO_{\eta^*_\delta}}\bd{F}(\hat\rho_{\delta}, \hat\rho_{\delta} \hat{\bu}_{\delta}) \cdot \bd{q} d\hat W_\delta^1(t) + \int_{0}^{t} \int_{\Gamma} G(\hat\eta_{\delta}, \hat v_{\delta}) \psi d\hat W_\delta^2(t),
\end{multline}
holds $\tilde\bP$-almost surely and for almost every $t\in[0,T]$.
Furthermore, the continuity equation reads,
\begin{align}\label{contdelta}
    \int_{\sO_\alpha}\hat\rho_\delta(t)\phi(t) = \int_{\sO_\alpha}\rho_{0,\delta}\phi(0) + \int_0^t\int_{\sO_\alpha}\hat\rho_\delta(\partial_t\phi+\hat\bu_\delta\cdot\nabla\phi),
\end{align}
$\tilde\bP$-almost surely for any {$\phi\in C^\infty((0,T);C^\infty_c(\sO_\alpha))$} and $t\in[0,T]$.

\subsection{Vanishing bounds for density outside of the moving domain}
We show an estimate that shows that the integral of the density converges to zero in the exterior region $\left(\mathcal{O}_{\hat{\eta}_{\delta}} \cup T^{\delta}_{\hat{\eta}_{\delta}}\right)^{c}$, with an explicit bound on the rate of convergence of this integral, where we recall the definition of the exterior tubular neighborhood from \eqref{tube}. Due to the nature of the proof of this estimate, the moving domain involves $\hat{\eta}_{\delta}$ rather than $\hat{\eta}_{\delta}^{*}$, so that the resulting estimate holds only up to the stopping time $\tau^{\eta}_{\delta}$ defined in \eqref{taudelta}.
This vanishing of density argument takes advantage of the continuity equation \eqref{contdelta} and is proved in the spirit of Lemma 3.1 in \cite{SarkaHeatFSI} and Lemma 4.1 in \cite{FKNNS13}.
\begin{proposition}\label{vacuum2}
For any $0<\nu_*<(\frac12-\frac1\beta)^2$ there exists a constant $c$ independent of $\delta$, such that
\begin{equation*}
\tilde\bE\sup_{t\in[0,\tau^{\eta}_\delta]}\int_{\sO^c_{\hat\eta_\delta}\cap (T^\delta_{\hat\eta_\delta})^c(t)}\hat\rho_\delta(t) \leq c\delta^{\nu_*},
\end{equation*}
where the stopping time $\tau^\eta_\delta$ is defined in \eqref{taudelta}.
 We also have the pathwise bound
\begin{equation}\label{vacuumpathwise}
\sup_{t \in [0, \tau^{\eta}_\delta]} \int_{\mathcal{O}_{\hat{\eta}_{\delta}}^{c} \cap (T^{\delta}_{\hat{\eta}_{\delta}})^{c}(t)} \hat{\rho}_{\delta}(t) \le c(\tilde{\omega}) \delta^{\nu_*},
\end{equation}
for a constant $c(\tilde\omega)$ (independent of $\delta > 0$) depending only on the outcome $\tilde\omega \in \tilde\Omega$. 
\end{proposition}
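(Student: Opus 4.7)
The strategy is a pathwise test-function argument applied to the continuity equation \eqref{contdelta}. For each outcome $\tilde\omega$ and each $t\in[0,\tau^\eta_\delta]$, I would construct a cutoff $\phi_\delta(t,x,y,z)$ vanishing on $\sO_{\hat\eta_\delta}$ and identically one on $\sO^c_{\hat\eta_\delta}\cap(T^\delta_{\hat\eta_\delta})^c$, with the transition confined to $T^\delta_{\hat\eta_\delta}$. Because $\hat\eta_\delta$ is only H\"older continuous in space, I build $\phi_\delta$ from the smoothed bounding function $a^{\hat\eta_\delta}_\kappa$ of \eqref{abbounding} with $\kappa\sim\delta^{1-2/\beta}$ (the constant chosen small enough that, by \eqref{abound}, $a^{\hat\eta_\delta}_\kappa\in(1+\hat\eta_\delta,\,1+\hat\eta_\delta+\tfrac12\delta^{1/2-1/\beta})$): for a fixed smooth $\chi$ with $\chi\equiv 0$ on $(-\infty,0]$ and $\chi\equiv 1$ on $[1,\infty)$, set
\begin{equation*}
\phi_\delta(t,x,y,z) \;=\; \chi\!\left(\frac{z-a^{\hat\eta_\delta}_\kappa(t,x,y)}{\tfrac12\delta^{1/2-1/\beta}}\right).
\end{equation*}
Then $|\nabla\phi_\delta|,|\partial_z\phi_\delta|\lesssim \delta^{-(1/2-1/\beta)}$, and since $\rho_{0,\delta}$ is supported in $\sO_{\eta_0}=\sO_{\hat\eta_\delta(0)}$ where $\phi_\delta(0)\equiv 0$, the initial contribution in \eqref{contdelta} vanishes, yielding
\begin{equation*}
\int_{\sO^c_{\hat\eta_\delta}\cap(T^\delta_{\hat\eta_\delta})^c(t)}\hat\rho_\delta(t)\;\le\;\int_{\sO_\alpha}\hat\rho_\delta(t)\phi_\delta(t)\;=\;\int_0^t\int_{T^\delta_{\hat\eta_\delta}}\hat\rho_\delta\bigl(\partial_t\phi_\delta+\hat\bu_\delta\cdot\nabla\phi_\delta\bigr).
\end{equation*}

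The crucial observation is the material-derivative cancellation
\begin{equation*}
\partial_t\phi_\delta + \hat v_\delta\,\partial_z\phi_\delta \;=\; \frac{2\,\chi'(\cdot)}{\delta^{1/2-1/\beta}}\bigl(\hat v_\delta - \hat v_\delta*\zeta_\kappa\bigr),
\end{equation*}
which follows from $\partial_t a^{\hat\eta_\delta}_\kappa = \hat v_\delta*\zeta_\kappa$. Writing $\hat\bu_\delta = (\hat\bu_\delta - \hat v_\delta\bd{e}_z) + \hat v_\delta\bd{e}_z$ then splits the right-hand side into a \emph{penalty piece}, where $\hat\bu_\delta - \hat v_\delta\bd{e}_z$ is small in $L^2(T^\delta_{\hat\eta_\delta})$ by \eqref{penaltybound}, and a \emph{mollifier-error piece}, where $\hat v_\delta - \hat v_\delta*\zeta_\kappa$ is small by the $H^1_x$-regularity of $\hat v_\delta$.

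To close the estimates, I would apply H\"older's inequality using the artificial-pressure bound $\|\hat\rho_\delta\|_{L^\infty_tL^\beta_x}\lesssim\delta^{-1/\beta}$, the volume bound $|T^\delta_{\hat\eta_\delta}|\lesssim \delta^{1/2-1/\beta}$, and the uniform $L^2_tH^1_x(\Gamma)$ control on $\hat v_\delta$ from Lemma \ref{energy_delta}. Balancing exponents, the penalty piece gives
\begin{equation*}
\lesssim\;\delta^{-(1/2-1/\beta)}\cdot\delta^{-1/\beta}\cdot\delta^{1/2}\cdot|T^\delta_{\hat\eta_\delta}|^{1/2-1/\beta}\;=\;\delta^{(1/2-1/\beta)^2},
\end{equation*}
while combining $\|\hat v_\delta-\hat v_\delta*\zeta_\kappa\|_{L^2(\Gamma)}\lesssim\kappa\|\hat v_\delta\|_{H^1(\Gamma)}$ with the vertical Cauchy-Schwarz estimate $\|\int_{I(x,y)}\hat\rho_\delta\|_{L^2(\Gamma)}\lesssim|T^\delta_{\hat\eta_\delta}|^{1/2}\|\hat\rho_\delta\|_{L^2(T^\delta_{\hat\eta_\delta})}$ makes the mollifier-error piece of the same order whenever $\kappa\lesssim\delta^{1/4+1/(2\beta)}$, a constraint satisfied by $\kappa\sim\delta^{1-2/\beta}$ under $\beta>\max\{4,\gamma\}$. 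Any $\nu_*<(1/2-1/\beta)^2$ then absorbs the implicit $T$-dependent constants, yielding the expectation bound after taking $\tilde\bE$; the pathwise bound \eqref{vacuumpathwise} follows by the same argument applied outcome-by-outcome, with $c(\tilde\omega)$ collecting the $\tilde\omega$-dependent norms $\|\hat\bu_\delta-\hat v_\delta\bd{e}_z\|_{L^2_tL^2(T^\delta_{\hat\eta_\delta})}^2/\delta$ and $\|\hat v_\delta\|_{L^2_tH^1_x(\Gamma)}$, which are $\tilde\bP$-a.s.\ finite (along a subsequence if needed, by the moment bounds of Lemma \ref{energy_delta} combined with Borel-Cantelli).

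The main obstacle is the absence of any pointwise or $L^\infty$ control on $\hat v_\delta$: naively, $\partial_t\phi_\delta$ alone carries the factor $\delta^{-(1/2-1/\beta)}\|\partial_t a^{\hat\eta_\delta}_\kappa\|_{L^\infty}$, which is not absorbable in $\delta$. It is precisely the material-derivative cancellation in $\partial_t\phi_\delta+\hat v_\delta\partial_z\phi_\delta$, intrinsic to the transport structure of the continuity equation, that trades this potentially divergent term for a mollifier smoothing rate requiring only the $H^1$-regularity of $\hat v_\delta$ already available. The resulting exponent $(1/2-1/\beta)^2$ is exactly the scaling $\nu_0\sim(1/2-1/\beta)^2$ prescribed for the viscosity extension \eqref{viscosityextension}, which is what makes the vanishing-density rate and the extension of the viscosity coefficients compatible in the forthcoming $\delta\to 0$ limit.
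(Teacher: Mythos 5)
Your proposal is correct and reaches the same exponent as the paper, but through a genuinely different construction of the test function. The paper tests the continuity equation with the non-smoothed profile
\begin{equation*}
\varphi_\delta = \left[\min\left\{\frac{g_\delta}{\delta^{1/2-1/\beta}},\,1\right\}\right]^{+}, \qquad g_\delta(t,x,y,z) = z - (1+\hat\eta_\delta(t,x,y)),
\end{equation*}
and exploits that $g_\delta$ is transported exactly by the interface velocity: $\partial_t g_\delta + \hat v_\delta\,\partial_z g_\delta = 0$. The entire bulk integral then collapses to the single term $\frac{1}{\delta^{1/2-1/\beta}}\int_0^t\int_{T^\delta_{\hat\eta_\delta}}\hat\rho_\delta(\hat\bu_\delta-\hat v_\delta\bd{e}_z)\cdot\nabla g_\delta$, which is closed with the penalty bound, the $L^\beta$ bound on $\delta^{1/\beta}\hat\rho_\delta$, and the fact that $\nabla g_\delta = (-\nabla_\Gamma\hat\eta_\delta,1)$ lies in $L^\infty_t L^q$ for every finite $q$ (since $\hat\eta_\delta$ is a.s.\ bounded in $L^\infty_t H^2(\Gamma)$, not merely H\"older). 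You instead convolve the interface with the kernel $\zeta_\kappa$ (via $a^{\hat\eta_\delta}_\kappa$), which sacrifices the exact cancellation and produces the additional mollifier error $\propto \delta^{-(1/2-1/\beta)}(\hat v_\delta - \hat v_\delta*\zeta_\kappa)$, which you then absorb using the $H^1_x$ regularity of $\hat v_\delta$ and the choice $\kappa\sim\delta^{1-2/\beta}$. Both routes balance to $\delta^{(1/2-1/\beta)^2}$, so your result is just as sharp. Your stated motivation for the mollification --- that $\hat\eta_\delta$ is ``only H\"older continuous in space'' --- is not actually an obstruction: the low space regularity of $\hat\eta_\delta$ never enters the paper's H\"older exponents because only $L^q$, $q<\infty$, integrability of $\nabla\hat\eta_\delta$ is needed, and that follows from the $H^2$ energy bound. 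What the mollification does buy is a test function that is genuinely smooth in the horizontal variables, so the approximation by $C^\infty_c$ test functions needed to justify plugging into \eqref{contdelta} is marginally more direct; in exchange you pay with an extra term and the constraint on $\kappa$. Finally, on the pathwise bound \eqref{vacuumpathwise}: the paper obtains the $\tilde\omega$-dependent constant from the a.s.\ uniform-in-$\delta$ boundedness of the relevant norms supplied by the Skorohod convergences of Theorem \ref{skorohod}, not from a Borel--Cantelli extraction along a subsequence as you hedge; since the proposition is used for \emph{every} small $\delta$ (not just a subsequence), the subsequence caveat should be removed and the a.s.\ bounds should be justified directly from the a.s.\ convergences in the Skorohod step.
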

\begin{proof}
    We take $g_\delta(t,x,y,z)=z-(1+\hat\eta_\delta(t,x,y)).$ Then $g_\delta$ satisfies,
    $$\partial_t g_\delta +(\hat v_\delta\bd{e}_z)\cdot \nabla g_\delta  = 0.$$
    Now 
    let
    $$ \varphi_\delta = \left[\min\left\{\frac{g_\delta}{\delta^{(\frac12-\frac1\beta)}},1\right\}\right]^+,$$
    and we use it as a test function in \eqref{contdelta}. This yields
    \begin{align*}
\int_{\sO_\alpha}\hat\rho_\delta(t)\varphi_\delta(t) -{\rho_{0,\delta}\varphi_\delta(0)}=   \frac1{\delta^{(\frac12-\frac1\beta)}}\int_0^t\int_{T^\delta_{\hat\eta_\delta}}\hat\rho_\delta(\partial_tg_\delta+\hat\bu_\delta\cdot\nabla g_\delta),
    \end{align*}
    {{for all $t \in [0, T]$, by the definition of the tubular neighborhood $T^{\delta}_{\hat{\eta}_{\delta}}$ in \eqref{tube}.}}
Hence,
  \begin{align*}
\int_{\sO_\alpha}\hat\rho_\delta(t)\varphi_\delta(t) =  \frac1{\delta^{(\frac12-\frac1\beta)}}\int_0^t\int_{T^\delta_{\hat\eta_\delta}}\hat\rho_\delta(\hat\bu_\delta-\hat v_\delta\bd{e}_z)\cdot\nabla g_\delta,
    \end{align*}
since the integral $\displaystyle \int_{\mathcal{O}_{\alpha}} \rho_{0, \delta} \varphi_{\delta}(0)$ is equal to 0, because $\varphi_\delta(0)$ is supported outside of $\sO_{\eta_0}$ whereas $\rho_{0,\delta}$, by definition in \eqref{p0}, is supported inside of $\sO_{\eta_0}$.
    Now recall the definition \eqref{taudelta} which implies that $\tilde\bE\left\|\frac1{\sqrt\delta}(\hat\bu_\delta-\hat v_\delta\bd{e}_z) \right\|^p_{  L^2(0,\tau^{\eta}_\delta;L^2(T^\delta_{\hat{\eta}_\delta}))}\leq c$. Moreover, we have $\tilde\bE\|\delta^{\frac1{\beta}}\hat\rho_\delta\|^p_{ L^\infty(0,T;L^\beta(\sO_\alpha))} \leq c$ and $\tilde\bE\|\nabla g_\delta\|^p_{ L^\infty(0,T;L^q(\sO_\alpha))}\leq c$ for any $p,q<\infty$. Hence, we obtain for some $1<k<2$ that
{{\begin{equation}\label{expectationexterior} \tilde\bE\left\|\frac1{\delta^{(\frac12-\frac1\beta)}}\hat\rho_\delta(\hat\bu_\delta-\hat v_\delta\bd{e}_z)\cdot\nabla g_\delta \right\|_{L^2(0,\tau^{\eta}_\delta;L^k(T^\delta_{\hat\eta_\delta}))} \leq c.
\end{equation}}}
Since { $\varphi_\delta(t)=1$ on $\sO^c_{\hat\eta_\delta}\cap (T^\delta_{\hat\eta_\delta})^c(t)$ and 0 on $\sO_{\hat\eta_\delta}$}, we use {{H\"{o}lder's inequality and \eqref{expectationexterior} to}} thus obtain for some $\nu_*>0$ that
\begin{align*}
\tilde\bE\sup_{t\in[0,\tau^{\eta}_\delta]}\int_{\sO^c_{\hat\eta_\delta}\cap (T^\delta_{\hat\eta_\delta})^c(t)}\hat\rho_\delta(t)&\le  \tilde\bE\sup_{t\in[0,\tau^{\eta}_\delta]}\frac1{\delta^{(\frac12-\frac1\beta)}}\int_{T^\delta_{\hat{\eta}_\delta}(t)}\hat\rho_\delta(t)g_{\delta}+ \tilde\bE\sup_{t\in[0,\tau^{\eta}_\delta]}\int_{\sO^c_{\hat\eta_\delta}\cap (T^\delta_{\hat\eta_\delta})^c(t)}\hat\rho_\delta(t) \\
&=\tilde\bE\sup_{t\in[0,\tau^{\eta}_\delta]}\int_{\sO_\alpha}\hat\rho_\delta(t)\varphi_\delta(t) \leq c\sup_{t\in[0,T]}|T^\delta_{\hat\eta_\delta(t)}|^{1-\frac1k}\leq c\delta^{(\frac12-\frac1\beta)(1-\frac1{k})} \leq \delta^{\nu_*}. 
\end{align*}

The pathwise form of the estimate follows using exactly the same estimates, where we instead observe that by the weak convergences provided by Theorem \ref{skorohod}, we have that the norms $\|\delta^{\frac{1}{\beta}} \hat{\rho}_{\delta}\|_{L^{\infty}(0, T; L^{\beta}(\mathcal{O}_{\alpha}))}$, $\left\|\frac{1}{\sqrt{\delta}}(\hat{\bu}_\delta - \hat{v}_\delta \bd{e}_{z})\right\|_{L^{2}(0, \tau^\eta_\delta; L^{2}(T^{\delta}_{\hat{\eta}_{\delta}}))}$, and $\|\nabla g_{\delta}\|_{L^{\infty}(0, T; L^{q}(\mathcal{O}_{\alpha}))}$ for any $1 \le q < \infty$ are $\tilde{\bP}$-almost surely bounded by a constant depending on the outcome $\tilde{\omega} \in \tilde{\Omega}$. 
\end{proof}

\subsection{Weak convergence of the pressure.} Our aim is to now pass $\delta\to 0$ in this weak formulation and prove that the new random variable $\mathcal{U}$ is a candidate solution. Note that the energy estimates found in Lemma \ref{energy_delta} hold true for the new random variables as well. 
The challenge, as is the case of most compressible flow problems is to pass to the limit in the pressure term $a\hat\rho_{\delta}^{\gamma} + \delta\hat\rho_{\delta }^{\beta}$. From the uniform bounds, we only have that this quantity is bounded in $L^{p}(\tilde\Omega; L^{\infty}(0, T; L^{1}(\sO_{\alpha})))$ but the function space $L^{1}$ is not amenable to weak compactness arguments, from just uniform boundedness. The ultimate goal of this subsection is to show weak convergence of the pressures to some limiting function $\bar{p}$, see Proposition \ref{L1weakpressure}:
\begin{equation*}
a\hat{\rho}^{\gamma}_{\delta} + \delta\hat{\rho}^{\beta}_{\delta} \rightharpoonup \bar{p} \text{ weakly in } L^{1}(\tilde{\Omega}; L^{1}(0, T; L^{1}(\mathcal{O}_{\alpha}))).
\end{equation*}
This will be established if we can show equi-integrability of the pressure uniformly in the parameter $\delta$, as a result of the following classical weak $L^{1}$ convergence criterion.

\begin{lemma}[Dunford-Pettis Theorem] \label{L1compact}
	Consider a bounded family of functions $\sF$ in $L^1(\sO)$. Then it contains a subsequence that converges weakly in $L^1(\sO)$ if and only if for every $\epsilon>0$ there exists $\theta>0$ such that for every measurable set $E$, with $\mu(E)<\theta$ we have
	$$\int_E|f|<\epsilon,\qquad\forall f \in \sF.$$
\end{lemma}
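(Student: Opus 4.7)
The plan is to prove both implications of the equivalence, treating it as the classical Dunford--Pettis characterization of relative weak compactness in $L^{1}$. The necessity direction is straightforward and will follow from Vitali--Hahn--Saks; the sufficiency direction requires a truncation plus diagonal extraction argument and is where the main subtlety lies.

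For the necessity direction, suppose $\{f_{n}\} \subset \mathcal{F}$ and $f_{n} \rightharpoonup f$ weakly in $L^{1}(\mathcal{O})$ along a subsequence. Define the signed measures $\nu_{n}(E) := \int_{E} f_{n}\, d\mu$, each absolutely continuous with respect to $\mu$ by construction. Testing weak convergence against the bounded measurable test function $\mathbbm{1}_{E} \in L^{\infty}(\mathcal{O})$ gives the pointwise limit $\nu_{n}(E) \to \int_{E} f$ for every measurable $E$. The Vitali--Hahn--Saks theorem then implies that the family $\{\nu_{n}\}$ is \emph{uniformly} absolutely continuous with respect to $\mu$, which is precisely the stated uniform integrability property (and it transfers to the whole family $\mathcal{F}$ by contradiction: if it failed, one could extract a subsequence violating the conclusion just obtained).

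For the sufficiency direction, fix a truncation parameter $M > 0$ and set $f^{(M)} := f \, \mathbbm{1}_{\{|f| \le M\}}$ for $f \in \mathcal{F}$. Using the uniform $L^{1}$ bound and Chebyshev's inequality, $\mu(\{|f|>M\}) \le \|f\|_{L^{1}}/M$ uniformly in $f \in \mathcal{F}$, so the uniform integrability hypothesis yields
\begin{equation*}
\sup_{f \in \mathcal{F}} \, \|f - f^{(M)}\|_{L^{1}(\mathcal{O})} \;\longrightarrow\; 0 \qquad \text{as } M \to \infty.
\end{equation*}
For each fixed $M$ the truncated family is bounded in $L^{2}(\mathcal{O})$ (assuming $\mu(\mathcal{O}) < \infty$; if not, a further exhaustion by sets of finite measure combined with the uniform integrability at infinity is used), hence relatively weakly compact by Banach--Alaoglu in the reflexive space $L^{2}$. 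Given a sequence $\{f_{n}\} \subset \mathcal{F}$ and a sequence $M_{k} \to \infty$, a Cantor diagonal extraction produces a subsequence $\{f_{n_{j}}\}$ such that $f_{n_{j}}^{(M_{k})}$ converges weakly in $L^{2}$, and hence in $L^{1}$, for every $k$. A standard $3\varepsilon$ argument, combining weak convergence of the truncations at level $M_{k}$ with the uniform smallness of $f_{n_{j}} - f_{n_{j}}^{(M_{k})}$ in $L^{1}$, upgrades this to weak convergence of $\{f_{n_{j}}\}$ itself in $L^{1}(\mathcal{O})$ to some limit $f \in L^{1}(\mathcal{O})$.

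The main obstacle will be the sufficiency direction, specifically ensuring that the limit functional obtained from the diagonal argument is represented by an honest $L^{1}$ function rather than merely a finitely additive measure. This is where uniform integrability plays its essential role: by the Yosida--Hewitt decomposition, any weak-$\ast$ limit in $(L^{\infty})^{\ast}$ splits into a countably additive part (represented by an $L^{1}$ density) and a purely finitely additive part, and the uniform integrability hypothesis forces the purely finitely additive part to vanish. In practice, since this lemma is a standard functional-analytic tool used only as a criterion in the subsequent equi-integrability argument for the pressure, we will simply invoke the result directly from classical references such as Dunford--Schwartz, rather than reproduce the argument above in the manuscript.
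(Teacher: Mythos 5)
The paper gives no proof of this lemma: it is stated as a classical fact (the Dunford--Pettis criterion) and then invoked directly in Proposition \ref{L1weakpressure}. Your proposal correctly identifies the standard argument (Vitali--Hahn--Saks for necessity; truncation, diagonal extraction, and a $3\varepsilon$ argument for sufficiency), and your closing paragraph already makes the right call — in this manuscript one would simply cite the result rather than reproduce the proof, which is exactly what the authors do.

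One small point worth flagging if you ever do write this out in full: as the lemma is literally phrased ("it contains a subsequence that converges weakly"), the necessity direction would not actually force uniform integrability of all of $\mathcal{F}$ — a single weakly convergent sequence inside $\mathcal{F}$ tells you nothing about the rest of the family. Your necessity argument implicitly (and correctly) upgrades the hypothesis to relative weak sequential compactness of $\mathcal{F}$, which is the standard formulation of Dunford--Pettis and the one actually used later in the paper via Lemma \ref{L1weakpressure}. Your "transfers by contradiction" step is exactly the place where this stronger hypothesis is needed, so your instinct there was sound; just be aware that it is repairing a slight imprecision in the statement rather than an optional remark.
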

Thus, the goal of this subsection will be to show that the pressure functions $a\hat{\rho}^{\gamma}_{\delta} + \delta \hat{\rho}_{\delta}^{\beta}$ satisfies the equi-integrability criterion uniformly in $\delta$. To do this, we will have three estimates on the pressure that together will imply equi-integrability. The first two estimates will be on the time-dependent random moving fluid domain $\mathcal{O}_{\hat{\eta}_{\delta}^{*}}$, and the last estimate will be on the exterior tubular neighborhood of the random moving fluid domain. We hence show the following estimates\footnote{We remark that the remaining exterior portions of the fluid domain in $\mathcal{O}_{\alpha} \setminus (\mathcal{O}_{\hat{\eta}^{*}_{\delta}} \cup T^{\delta}_{\hat{\eta}^{*}_{\delta}})$ can be handled using the same ideas in Estimates 1 and 2.}:

\medskip

\noindent \textbf{Estimate 1.} Given by Proposition \ref{interior_pressure}, this is a higher integrability estimate on the density that will imply equi-integrability of the pressure in the region within $\mathcal{O}_{\hat{\eta}^{*}_{\delta}}$, defined for $l > 0$:
    \begin{equation}\label{Alset}
    A^{l}_{\hat{\eta}^{*}_\delta}(t) = \{(x, y, z) \in 
    \mathbb{R}^3: (x,y)\in\Gamma,\ l < z < 1 + \hat{\eta}^{*}_{\delta}(t, x, y) - l\}.
    \end{equation}
\medskip

\noindent \textbf{Estimate 2.} Given by Proposition \ref{bdry_pressure}, we show that the pressure has arbitrarily small integral within $\mathcal{O}_{\hat{\eta}^{*}_{\delta}}$ sufficiently near enough to the fluid-structure boundary $\partial \mathcal{O}_{\hat{\eta}^{*}_{\delta}}$. This will be an estimate on the integral of the pressure on the interior boundary set $\sO_{\hat\eta^*_\delta}\cap (A^l_{\hat\eta^*_\delta})^c$.
\medskip

\noindent \textbf{Estimate 3.} Finally, in Proposition \ref{tube_pressure}, we obtain an estimate on the integral of the pressure in the exterior tubular neighborhood $T^{\delta}_{\hat{\eta}^{*}_{\delta}}$, and show an explicit bound on the rate of convergence to zero of the integral of the pressure on this exterior tubular neighborhood.

\medskip

The crucial estimate and fundamental step is to \textbf{improve the integrability of the pressure} in the interior of the moving domain, given in Estimate 1 in Proposition \ref{interior_pressure}. This will allow us to interpret the term $a\hat\rho_\delta^\gamma+\delta\hat\rho_\delta^\beta$ in an $L^{p}$ space with $p > 1$, which will allow us to attain interior equi-integrability of the pressure. This is traditionally done by using the Bogovski operator to construct an appropriate test function for the momentum equation which requires that the fluid domain is at least Lipschitz continuous. However, since the fluid domain in our case is not regular enough, we will follow the ideas that were first introduced, in the fixed irregular domain case, in \cite{K09} and later extended to the time-varying geometries in \cite{BreitSchwarzacherNSF}. This procedure of obtaining higher integrability of the fluid density in the interior region $A^{l}_{\hat{\eta}^{*}_{\delta}}$ is the content of the following proposition on interior pressure estimates.

\begin{proposition}[Interior pressure estimates]\label{interior_pressure}
Recall the definition of $A^{l}_{\hat{\eta}^{*}_{\delta}}$ from \eqref{Alset}. For a sufficiently small $\Theta > 0$ depending only on $\gamma$, and every $l>0$, there exists a constant $C_l$ depending only on $l$ and $\alpha$ (and independent of $\delta$) such that:
\begin{equation}\label{intpressure}
\tilde\bE\int_{0}^{T} \int_{A^{l}_{\hat\eta^{*}_{\delta}}(t)} (\hat\rho_{\delta}^{\gamma + \Theta} + \delta \hat\rho_{\delta}^{\beta + \Theta}) \le C_{l}.
\end{equation}
\end{proposition}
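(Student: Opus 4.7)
The strategy follows the higher integrability technique of \cite{FeireislCompressible} adapted to irregular domains in the spirit of \cite{K09} and then to time-dependent moving domains in \cite{BreitSchwarzacherNSF}, with additional care for the stochastic and random-geometry setting here. The obstacle to applying the Bogovski operator directly is that $\partial\mathcal{O}_{\hat\eta^*_\delta}(t)$ is only H\"older continuous and random; localizing away from this boundary via a cutoff supported in $A^l_{\hat\eta^*_\delta}(t)$ restores a Lipschitz (in fact smooth) geometry, at the price of obtaining only a weighted, interior bound.

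First, I would construct a smooth, random, $(\hat{\mathcal{F}}^\delta_t)$-adapted cutoff $\phi^\delta_l(t,x,y,z)$ such that $\phi^\delta_l \equiv 1$ on $A^{2l}_{\hat\eta^*_\delta}$ and $\phi^\delta_l \equiv 0$ outside $A^{l/2}_{\hat\eta^*_\delta}$, with uniform bounds (depending only on $l$ and $\alpha$) on $\|\phi^\delta_l\|_{C^k}$ and $\|\partial_t\phi^\delta_l\|_{L^\infty}$ obtained by writing $\phi^\delta_l$ as a composition of a fixed smooth function with the mollified bounding function $b^{\hat\eta^*_\delta}_\kappa$ from \eqref{abbounding} (for $\kappa$ fixed, depending only on $l,\alpha$). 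The time derivative of $\phi^\delta_l$ inherits the regularity of $\hat v_\delta$ via the chain rule, which is controlled uniformly in $\delta$ by Lemma \ref{energy_delta}.

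Next, for a fixed $0<\Theta<1$ to be chosen later, I would use the test pair
\[
\bq^\delta = \phi^\delta_l\,\nabla\Delta^{-1}\!\left[\phi^\delta_l \hat\rho^{\,\Theta}_\delta \chi\right],\qquad \psi^\delta=0,
\]
where $\Delta^{-1}$ is the inverse Laplacian on $\mathcal{O}_\alpha$ with periodic conditions in $x,y$ and homogeneous Dirichlet in $z$ (extended by zero near the endpoints), and $\chi$ is a fixed smooth $z$-cutoff equal to $1$ on the support of $\phi^\delta_l$. Calder\'on--Zygmund theory then yields $\|\nabla \bq^\delta\|_{L^q(\mathcal{O}_\alpha)} \lesssim \|\hat\rho^{\,\Theta}_\delta\|_{L^q(A^{l/2}_{\hat\eta^*_\delta})}$ for $1<q<\infty$, and by construction $\bq^\delta$ is compactly supported in $A^{l/2}_{\hat\eta^*_\delta}(t)\subset\mathcal{O}_{\hat\eta^*_\delta}(t)$, so in particular $\bq^\delta\equiv 0$ on $T^\delta_{\hat\eta^*_\delta}$ (thereby killing the penalty term), and where the viscosity coefficients satisfy $\mu^{\hat\eta^*_\delta}_\delta\equiv\mu$, $\lambda^{\hat\eta^*_\delta}_\delta\equiv\lambda$.

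I would then apply the It\^o-type formula from Lemma 5.1 of \cite{BO13} to the functional $(\rho,\bu)\mapsto \int_{\mathcal{O}_\alpha}\rho\bu\cdot\bq^\delta$, rearrange to isolate the pressure-divergence term $\int(a\hat\rho^\gamma_\delta+\delta\hat\rho^\beta_\delta)\nabla\!\cdot\!\bq^\delta$ on the left-hand side, and use the identity
\[
\nabla\!\cdot\!\bq^\delta \;=\; (\phi^\delta_l)^2 \hat\rho^{\,\Theta}_\delta \;+\; \nabla\phi^\delta_l\cdot\nabla\Delta^{-1}[\phi^\delta_l \hat\rho^{\,\Theta}_\delta\chi]
\]
to extract the desired coercive quantity $\int(\phi^\delta_l)^2(a\hat\rho^{\gamma+\Theta}_\delta+\delta\hat\rho^{\beta+\Theta}_\delta)$, which controls $\int_{A^l_{\hat\eta^*_\delta}}(\hat\rho^{\gamma+\Theta}_\delta+\delta\hat\rho^{\beta+\Theta}_\delta)$ since $\phi^\delta_l\equiv 1$ there. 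The remaining terms in the energy identity are:
\begin{enumerate}
\item convective term $\int \hat\rho_\delta\hat\bu_\delta\otimes\hat\bu_\delta:\nabla\bq^\delta$, estimated via H\"older with exponents adapted to $\gamma$, the Sobolev embedding $H^1\hookrightarrow L^6$, and Calder\'on--Zygmund applied to $\bq^\delta$;
\item viscous terms, absorbed using Lemma \ref{energy_delta}(2) and the same Calder\'on--Zygmund estimate;
\item the time derivative $\int \hat\rho_\delta\hat\bu_\delta\cdot\partial_t\bq^\delta$, rewritten via the renormalized continuity equation \eqref{contdelta} with renormalization $b(\rho)=\rho^\Theta$ (legitimate since $\hat\rho_\delta$ has the integrability provided by Lemma \ref{energy_delta}(1) and the fluid velocity is in $L^2_t H^1_x$ on the support of the cutoff), which produces terms of the form $\hat\rho^{\,\Theta}_\delta\hat\bu_\delta$ and $\hat\rho^{\,\Theta}_\delta\nabla\!\cdot\!\hat\bu_\delta$ paired against $\nabla\Delta^{-1}$-type quantities;
\item the stochastic integral, bounded in $p$-th moment by BDG, the growth \eqref{fassumption}, and the $L^\infty_t L^{\frac{2\gamma}{\gamma+1}}_x$ bound on $\nabla\Delta^{-1}[\phi^\delta_l\hat\rho^{\,\Theta}_\delta\chi]$;
\item initial and terminal boundary terms, controlled directly by the uniform bounds of Lemma \ref{energy_delta}.
\end{enumerate}

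Finally I would choose $\Theta>0$ sufficiently small, depending only on $\gamma$ (and not on $l,\delta$), so that every right-hand side term either involves $\hat\rho^{\,s}_\delta$ with $s\le\gamma$ (controlled by Lemma \ref{energy_delta}(1)) or $\hat\rho^{\,\Theta}_\delta|\hat\bu_\delta|^2$-type quantities (controlled by Lemma \ref{energy_delta}(2,5,8) via H\"older and Sobolev), with the coupling constant making all contributions absorbable into the left-hand side after taking expectation. The resulting bound is uniform in $\delta$ and yields the claimed estimate \eqref{intpressure}. The principal difficulty is steering the renormalization and the random time derivative of $\phi^\delta_l$ through the stochastic It\^o computation without losing $\delta$-uniformity; the choice of $\phi^\delta_l$ built from $b^{\hat\eta^*_\delta}_\kappa$ with $\kappa$ depending only on $l,\alpha$ (and the $H^1(\Gamma)$-bound on $\hat v_\delta$ from Lemma \ref{energy_delta}) is what makes this work.
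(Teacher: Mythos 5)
Your proposal is correct and follows essentially the same approach as the paper: a Feireisl-type test function $\chi\,\nabla\Delta^{-1}[\hat\rho_\delta^\Theta]$ with a random, $(\hat{\mathcal F}_t^\delta)$-adapted cutoff supported away from the moving interface (so the penalty term vanishes and the viscosity coefficients equal $\mu,\lambda$), the It\^o formula of \cite{BO13}, the renormalized continuity equation with $b(\rho)=\rho^\Theta$ for the time-derivative term, and Calder\'on--Zygmund plus the energy bounds of Lemma \ref{energy_delta} to absorb the remainder, with $\Theta$ small depending only on $\gamma>3/2$. Your minor deviations — placing the cutoff inside $\Delta^{-1}$ as well, building the cutoff from the mollified bounding function $b^{\hat\eta^*_\delta}_\kappa$ rather than the paper's squeeze-then-mollify construction, and invoking BDG where the paper simply uses that the stochastic integral has zero expectation — are all immaterial variants of the same argument.
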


\begin{proof}
The idea here is to ``test" the momentum equation with 
\begin{equation}\label{qdeltaTheta}
\bq_{\delta,\Theta} = \chi_\delta\bd{p}_{\delta,\Theta},\quad\text{ such that } \nabla\cdot\bd{p}_{\delta,\Theta} = \hat\rho_\delta^\Theta,
\end{equation}
and where $\chi_\delta$ is an appropriate compactly supported function that localizes away from the structure boundary so that the penalized boundary term does not come into the picture. We note here that $\Theta$ cannot be arbitrarily large (i.e. we do not obtain arbitrarily high integrability of the density) due to low integrability of the advection term.

To that end, we will take 
$$\bd{p}_{\delta,\Theta}:= \nabla\Delta^{-1}\hat\rho_\delta^\Theta,$$
where $\Delta^{-1}\hat\rho^\Theta_\delta$ is the unique solution in $W^{2,\gamma/\Theta}(\sO_\alpha)\cap W^{1,(\gamma/\Theta)^*}_0(\sO_\alpha)$ to
$$-\Delta w =\hat\rho_\delta^\Theta,$$ where $(\gamma/\Theta)^{*}$ denotes the dual exponent to $\gamma/\Theta$ satisfying $\displaystyle \frac{1}{(\gamma/\Theta)^{*}} + \frac{\Theta}{\gamma} = 1$.

Next, we will construct a smooth random process $\chi_\delta$ on $\sO_\alpha$ such that $\chi_\delta(t)=0$ on $\sO_\alpha\setminus A^{\frac{l}4}_{\hat\eta^*_\delta}(t)$ and such that $\chi_\delta(t) = 1$ in $A^{l}_{\hat\eta^*_\delta}(t)$ for almost every $\omega\in\tilde\Omega$ and $t\in[0,T]$, where the sets $A_{\eta}^l$ are defined in \eqref{Alset}. Due to the random motion of the fluid domain, it will be helpful to explicitly construct this process and obtain appropriate bounds for time and space derivatives of $\chi_\delta$ in terms of $\hat\eta_\delta^*$.
For that purpose, we will first construct an appropriate smooth {\it deterministic} function $g$ on the fixed maximal domain $\sO_\delta$ and then squeeze in such a way that the abovementioned conditions for $\chi_\delta$ are satisfied. The squeezing operator, however, inherits the regularity of the structure displacement via the ALE map. Hence, to obtain a function $\chi_\delta$ which is smooth enough, we will also spatially regularize the squeezed function.

Hence, let $g\in C^\infty_0(\sO_\alpha)$ be  such that $g(x,y,z)=1$ when $\frac{l}{2\alpha}\leq z\leq 1-\frac{l}{2\alpha}$ 
and transitions smoothly from 0 to 1 when $\frac{l}{4\alpha}<z<\frac{l}{2\alpha}$ and $1-\frac{l}{2\alpha}<z<1-\frac{l}{4\alpha}$. 
 Then we define
 \begin{equation}
     \begin{split}
         \zeta_\delta(t, x, y, z) &= g\left(x,y,\frac{z}{(1+\hat\eta^*_\delta(t,x,y))}\right),\quad \text{ in } \sO_{\hat\eta^*_\delta}(t)\\
         &=0 ,\text{ in } \mathbb{R}^3\setminus\sO_{\hat\eta^*_\delta}(t).
     \end{split}
 \end{equation}
 Observe that, by construction, we have $\zeta_\delta(t,x,y,z) =1$ when $\frac{l}2\leq z\leq 1+\hat\eta^*_\delta(t,x,y) -\frac{1+\hat\eta^*_\delta(t,x,y)}{\alpha}\frac{l}2 < (1+\hat\eta^*_\delta(t,x,y)) -\frac{l}2$ i.e. in $A^{\frac{l}2}_{\hat\eta^*_\delta}(t)$ which contains the set $ A^{{l}}_{\hat\eta^*_\delta}(t)$. 
 
It is easy to see that $\|\zeta_\delta\|_{L^\infty(\tilde\Omega\times(0,T)\times\sO_\alpha)}\leq 1$. Moreover, since $H^1(\Gamma)\hookrightarrow L^p(\Gamma)$ for every $1\leq p<\infty$, we have
\begin{align}\label{zeta_lp}
\|\nabla\zeta_\delta\|_{L^\infty(0,T;L^p(\sO_\alpha))}\leq C(l,\alpha) \|\nabla g\|_{L^\infty(\sO_\alpha)}\|\nabla_\Gamma \hat\eta^*_\delta\|_{L^\infty(0,T; L^p(\Gamma))} \leq C\|\hat\eta^*_\delta\|_{L^\infty(0,T;H^2(\Gamma))},
\end{align}
and similarly
\begin{align}\label{zeta_t}
\|\partial_t\zeta_\delta\|_{L^2(0,T;L^p(\sO_\alpha))} \le C(\alpha)\|\partial_{z} g\|_{L^\infty(\sO_{\alpha})} \|\partial_t\hat\eta^*_\delta\|_{L^2(0,T;L^p(\Gamma))} \leq C\|\partial_t\hat\eta^*_\delta\|_{L^2(0,T;H^1(\Gamma))},
\end{align}
where the constant $C$ depends only on $\Gamma$, $\alpha$ and $l$.

Observe, that the regularity \eqref{zeta_lp} is not enough. Hence we mollify it with a standard 3D mollifier and denote this space mollification by $\chi_{\delta}$. We choose the radius of mollification {{$\sigma \ll \frac{l}8$}}, appropriately small (depending only on $\alpha$ and $l$ and not on $\delta$), so that 
$$\chi_\delta(t)=1 \text{ in } A^l_{\hat\eta^*_\delta}(t) \text{ and } \chi_\delta(t)=0\text{ in } \sO_\alpha\setminus A^{\frac{l}8}_{\hat\eta^*_\delta}(t).$$ 
Observe, due to the properties of mollification  $\|f^\lambda\|_{W^{m+k, p}} \le C\lambda^{-k}\|f\|_{W^{m,p}}$ for $k \ge 0$, and \eqref{zeta_lp} that for any $p>3$ we have
\begin{equation}\label{chi_lp}
\begin{split}
    \|\chi_\delta\|_{L^\infty(0,T;W^{1,\infty}(\sO_\alpha))}&\leq C\|\chi_\delta\|_{L^\infty(0,T;W^{2,p}(\sO_\alpha))} \leq \frac{C(\alpha)}{l}\|\zeta_\delta\|_{L^\infty(0,T;W^{1,p}(\sO_\alpha))}\\
&\leq C(l,\alpha) \|\nabla g\|_{L^\infty(\sO_\alpha)}\|\nabla_\Gamma \hat\eta^*_\delta\|_{L^\infty(0,T;L^p(\Gamma))}  \\
&\leq C(l,\alpha)\|\hat\eta^*_\delta\|_{L^\infty(0,T;H^2(\Gamma))},
\end{split}
\end{equation}
where the constant $C$ depends on (negative powers of) $l$ and $\alpha$ but is independent of $\delta$. Similarly, thanks to \eqref{zeta_t}, we can see that 
\begin{align}\label{chi_t}
\|\partial_t\chi_\delta \|_{L^2(0,T;L^p(\sO_\alpha))}\leq C \|\partial_t\zeta_\delta\|_{L^\infty(0,T;L^p(\sO_\alpha))}  \leq C\|\partial_t\hat\eta^*_\delta\|_{L^2(0,T;H^1(\Gamma))},
\end{align}
where the constant $C$ depends only on $\Gamma$ and $g$.
 Moreover, this explicit construction of the process $\chi_\delta$ ensures that it is $\{\hat\sF^\delta_t\}_{t\geq 0}$-adapted. 

We test \eqref{delta} with $\bq_{\delta,\Theta}$ i.e. we apply It\^{o}'s formula to $f_\chi( \rho,\bu)=\int_0^T\int_{\sO_\alpha}\bu\cdot  \chi_\delta\nabla\Delta^{-1}\rho^\Theta$ in the spirit of Lemma 5.1 in \cite{BO13} and take the structure test function $\bd{\psi}=0$, to obtain
\begin{align*}
		\int_0^t\int_{\sO_\alpha} \chi_\delta(a\hat\rho_\delta^{\gamma+\Theta}&+\delta\hat\rho^{\beta+\Theta}_\delta)=\int_0^t\int_{\sO_\alpha}  \chi_\delta \mu^{\hat\eta^*_\delta}_\delta \nabla \hat\bu_\delta:\nabla^2\Delta^{-1}\hat\rho_\delta^\Theta + \int_0^t\int_{\sO_\alpha} \mu^{\hat\eta^{*}_{\delta}}_\delta \nabla\hat\bu_\delta:\nabla \chi_\delta\otimes\nabla\Delta^{-1}\hat\rho^\Theta_\delta \\
		&-\int_0^t\int_{\sO_\alpha}(a\hat\rho_\delta^\gamma+\delta\hat\rho_\delta^\beta)\nabla \chi_\delta\cdot\nabla\Delta^{-1}\hat\rho^\Theta_\delta
		+\int_0^t\int_{\sO_\alpha}  \lambda^{\hat\eta^{*}_{\delta}}_\delta \text{div}\hat\bu_\delta( \chi_\delta\hat\rho^\Theta_\delta+\nabla \chi_\delta\cdot\nabla\Delta^{-1}\hat\rho_\delta^\Theta)\\
		&+\int_0^t\int_{\sO_\alpha}  \chi_\delta \hat\rho_\delta\hat\bu_\delta\otimes \hat\bu_\delta:\nabla^2\Delta^{-1}\hat\rho^\Theta_\delta
		+\int_0^t\int_{\sO_\alpha} \hat\bu_\delta\otimes \hat\bu_\delta:\nabla \chi_\delta\otimes\nabla\Delta^{-1}\hat\rho^\Theta_\delta\\
		&+\sum_k\int_0^t\int_{\sO_\alpha} \chi_\delta f_k(\hat\rho_\delta,\hat\rho_\delta\hat\bu_\delta)\cdot\nabla\Delta^{-1}\hat\rho^\Theta_\delta d\hat W_\delta^1\\
		&+{{\int_0^t\int_{\sO_\alpha} \chi_\delta\hat\rho_\delta\hat\bu_\delta\nabla\Delta^{-1}[\text{div}(\hat\rho_\delta^\Theta\hat\bu_\delta)+(\Theta-1)\hat\rho_\delta^\Theta\text{div}\hat\bu_\delta]}}+\int_0^t\int_{\sO_{\alpha}}{\partial_t\chi_\delta\hat\rho_\delta}\hat\bu_\delta\cdot\nabla\Delta^{-1}\hat\rho^\Theta_\delta.
\end{align*}
{ Here we additionally used the fact that $\partial_t\bd{p}_{\delta,\Theta} = \nabla\Delta^{-1}\partial_t\hat\rho_\delta^\Theta=\nabla\Delta^{-1}[\text{div}(\hat\rho_\delta^\Theta\hat\bu_\delta)+(\Theta-1)\hat\rho_\delta^\Theta\text{div}\hat\bu_\delta]$.}
Now, due to the positivity of the density, the left-hand side term of the equation above is an upper bound for the desired term that we want to bound i.e. the left-hand side term of \eqref{intpressure}. Hence, we will now find bounds for each term that appears on the right-hand side of the equation above by noting that the energy estimates Lemma \ref{energy_delta} hold for the new random variables $\hat{\mathcal{U}}_\delta$ found in Theorem \ref{skorohod} as well.

\noindent\textbullet{} We start with the most critical term. Depending on $\gamma>\frac32$, we choose $q<6$ and $0<\Theta <\frac{(q-2)\gamma-q}{q}$ and utilize the bounds in Lemma \ref{energy_delta}(8). The definition of $\chi_\delta$ and Lemma \ref{energy_delta} give us
	\begin{align*}
\Bigg|\tilde\bE	\int_0^T\int_{\sO_\alpha}  \chi_\delta & \hat\rho_\delta\hat\bu_\delta\otimes \hat\bu_\delta:\nabla^2\Delta^{-1}\hat\rho^\Theta_\delta \Bigg|\leq C\tilde\bE	\int_0^T\int_{\sO_{\hat\eta^*_\delta}}  \left|\hat\rho_\delta\hat\bu_\delta\otimes \hat\bu_\delta:\nabla^2\Delta^{-1}\hat\rho^\Theta_\delta\right|\\
&\leq C\tilde\bE\left|\Big(\|\hat\rho_\delta\|_{L^\infty(0,T;L^\gamma(\sO_\alpha))}\|\hat\bu_\delta\|_{L^2(0,T;L^{q}(\sO_{\hat\eta^*_\delta}))}^2\|\hat\rho^\Theta_\delta\|_{L^\infty(0,T;L^{\frac{q\gamma}{(q-2)\gamma-q}}(\sO_\alpha))}\Big)\right| \leq C,
	\end{align*}
 The term $\displaystyle \int_0^T\int_{\sO_\alpha} \chi_\delta\hat\rho_\delta\hat\bu_\delta\nabla\Delta^{-1}\text{div}(\hat\rho^\Theta_\delta\hat\bu_\delta)$ is treated identically. We remark that the assumption on the adiabatic constant $\gamma > 3/2$ is essential at this step to ensure that, $\Theta > 0$ and $q<6$ satisfying $\Theta <\frac{(q-2)\gamma-q}{q}$ can be appropriately chosen in order to improve the integrability of the fluid density.
 
\noindent\textbullet{} Next, by using \eqref{chi_lp}, 
 and taking $\Theta<\frac{\gamma}{3}$, which gives us the embedding $W^{1,\frac{\gamma}{\Theta}}(\sO_\alpha)\hookrightarrow L^\infty(\sO_\alpha)$, we obtain
 \begin{align*}
  \Bigg|   \tilde\bE\int_0^T\int_{\sO_\alpha} \mu^{\hat\eta^{*}_{\delta}}_\delta \nabla\hat\bu_\delta:\nabla &\chi_\delta\otimes\nabla\Delta^{-1}\hat\rho^\Theta_\delta \Bigg| \\
  &\leq C\tilde\bE\left( \| \mu^{\hat\eta^{*}_{\delta}}_\delta  \nabla \hat\bu_\delta\|_{L^2(0,T;L^2(\sO_\alpha))}\|\nabla\chi_\delta\|_{L^\infty(0,T;L^{2}(\sO_\alpha))}\|\nabla\Delta^{-1}\hat\rho_\delta^\Theta\|_{L^2(0,T;L^\infty(\sO_\alpha))}\right)\\
   &\leq C \tilde\bE\left( \| \mu^{\hat\eta^{*}_{\delta}}_\delta  \nabla \hat\bu_\delta\|_{L^2(0,T;L^2(\sO_\alpha))}\|\hat\eta^*_\delta\|_{L^\infty(0,T;H^{2}(\sO_\alpha))}\|\hat\rho_\delta\|_{L^\infty(0,T;L^\gamma(\sO_\alpha))}^{{\Theta}}\right) \leq C.
 \end{align*}
 
\noindent\textbullet{} Similarly, by choosing $\Theta<\frac{\gamma}2$, we have
\begin{align*}
 \Bigg| \tilde\bE\int_0^T\int_{\sO_\alpha}  \chi_\delta\mu^{\hat\eta^{*}_{\delta}}_\delta  \nabla \hat\bu_\delta:\nabla^2\Delta^{-1}\hat\rho_\delta^\Theta\Bigg| &\leq C\tilde\bE\left( \| \mu^{\hat\eta^{*}_{\delta}}_\delta  \nabla \hat\bu_\delta\|_{L^2(0,T;L^2(\sO_\alpha))}\|\nabla^2\Delta^{-1}\hat\rho_\delta^\Theta\|_{L^2(0,T;L^2(\sO_\alpha))}\right)\\
 &\leq C\tilde\bE\left( \| \mu^{\hat\eta^{*}_{\delta}}_\delta  \nabla \hat\bu_\delta\|_{L^2(0,T;L^2(\sO_\alpha))}\|\hat\rho_\delta\|_{L^\infty(0,T;L^\gamma(\sO_\alpha))}^{{\Theta}}\right) \leq C.
\end{align*}
The term $\int_0^t\int_{\sO_\alpha}  \lambda^{\hat\eta^{*}_{\delta}}_\delta \text{div}\hat\bu_\delta( \chi_\delta\hat\rho^\Theta_\delta+\nabla \chi_\delta\cdot\nabla\Delta^{-1}\hat\rho_\delta^\Theta)$ is treated identically. 

\noindent\textbullet{} Next, using \eqref{chi_lp}, we have
\begin{align*}
    &\Bigg|\tilde\bE\int_0^T\int_{\sO_\alpha}(a\hat\rho_\delta^\gamma+\delta\hat\rho_\delta^\beta)\nabla \chi_\delta\cdot\nabla\Delta^{-1}\hat\rho^\Theta_\delta\Bigg| \\
    &\leq C\tilde\bE\left[\left(\|\hat\rho_\delta\|^\gamma_{L^\infty(0,T;L^\gamma(\sO_\alpha))}+\delta\|\hat\rho_\delta\|^\beta_{L^\infty(0,T;L^\beta(\sO_\alpha))}\right)\|\chi_\delta\|_{L^\infty(0,T;W^{1,\infty}(\sO_\alpha))}\|\nabla\Delta^{-1}\hat\rho_\delta^\Theta\|_{L^\infty(0,T;L^\infty(\sO_\alpha))}\right]\\
     &\leq C\tilde\bE\left[\left(\|\hat\rho_\delta\|^\gamma_{L^\infty(0,T;L^\gamma(\sO_\alpha))}+\delta\|\hat\rho_\delta\|^\beta_{L^\infty(0,T;L^\beta(\sO_\alpha))}\right)\|\hat\eta^*_\delta\|_{L^\infty(0,T;H^2(\Gamma))}\|\hat\rho_\delta\|_{L^\infty(0,T;L^\gamma(\sO_\alpha))}^{{\Theta}}\right] \leq C(l,\alpha),
\end{align*}
for any $\Theta<\frac{\gamma}{3}$, by the continuous embedding of $W^{1, \gamma/\Theta}(\mathcal{O}_{\alpha}) \subset L^{\infty}(\mathcal{O}_{\alpha})$.

\noindent\textbullet{} The last term is treated using inequality \eqref{chi_t} for any $\Theta < \frac{\gamma}{3}$, as follows:
\begin{align*}
   & \Bigg|\tilde\bE\int_0^t\int_{\sO_{\alpha}}{\partial_t\chi_\delta\hat\rho_\delta}\hat\bu_\delta\cdot\nabla\Delta^{-1}\hat\rho^\Theta_\delta\Bigg| \\
    &\leq C\tilde\bE\left({{\|\partial_t\chi_\delta\|_{L^2(0,T;L^\infty(\sO_\alpha))}}}\|\hat\rho_\delta\hat\bu_\delta\|_{L^\infty(0,T;L^{\frac{2\gamma}{\gamma+1}}(\sO_\alpha))}\|\nabla\Delta^{-1}\hat\rho_\delta^\Theta\|_{L^\infty(0,T;L^\infty(\sO_\alpha))}\right)\\
     &\leq C\tilde\bE\left(\|\partial_t\hat\eta^*_\delta\|_{L^2(0,T;H^1(\Gamma))}\|\hat\rho_\delta\hat\bu_\delta\|_{L^\infty(0,T;L^{\frac{2\gamma}{\gamma+1}}(\sO_\alpha))}\|\hat\rho_\delta\|_{L^\infty(0,T;L^\gamma(\sO_\alpha))}^{\Theta}\right) \leq C.
\end{align*}
\item Finally, we note that, since the stochastic integral is an $(\hat\sF^\delta_t)_{t\in [0,T]}$-martingale, its expectation is zero. This concludes the proof of Lemma \ref{interior_pressure}.
\end{proof}

Next, we will need an estimate on the boundary, since we need to avoid the boundary for the interior estimates due to the badly behaved $1/\delta$ penalty term and low spatial regularity of the structure displacements.

\begin{proposition}[Boundary estimate for the pressure]\label{bdry_pressure}
 For any $\epsilon > 0$, {there exists an $l>0$} and a $\delta_{0}$ sufficiently small so that for all $\delta \in (0, \delta_{0}]$, 
\begin{equation*}
\tilde\bE\int_{0}^{T} \int_{\sO_{\hat\eta^*_\delta}\cap (A^{l}_{\hat\eta^{*}_{\delta}})^c} (a\hat\rho^{\gamma}_{\delta} + \delta \hat\rho^{\beta}_{\delta}) \le \epsilon.
\end{equation*}

\end{proposition}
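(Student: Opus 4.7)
\medskip

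\noindent\textbf{Proof proposal.} The strategy adapts the boundary pressure estimation technique of \cite{K09} (for fixed rough domains), extended to the deterministic moving boundary setting in \cite{BreitSchwarzacherNSF}, to the stochastic framework here. The overall idea is to set up an analog of the interior pressure estimate (Proposition \ref{interior_pressure}), but with a cutoff localizing in a thin layer $\mathcal{O}_{\hat\eta^*_\delta} \cap (A^l_{\hat\eta^*_\delta})^c$ near the interface, of controlled width $l$, and then exploit the small measure of this layer together with H\"{o}lder's inequality to convert higher integrability into the desired smallness of the pressure $L^1$-norm.

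First, for each fixed $l > 0$, I would construct an $\{\hat{\mathcal{F}}_t^\delta\}_{t\ge 0}$-adapted random cutoff $\xi_{l,\delta}(t,x,y,z) \in C^\infty(\bar{\mathcal{O}}_\alpha)$ with $\xi_{l,\delta}(t,\cdot) \equiv 1$ on $\mathcal{O}_{\hat\eta^*_\delta}(t) \cap (A^l_{\hat\eta^*_\delta}(t))^c$ and supported strictly inside $\mathcal{O}_{\hat\eta^*_\delta}(t)$, in a layer of width $\sim l$ around the interface $\Gamma_{\hat\eta^*_\delta}(t)$ (and around the rigid bottom $\Gamma_b$). This would mirror the squeezing-and-mollifying procedure used to define $\chi_\delta$ in the proof of Proposition \ref{interior_pressure}: take a one-dimensional smooth bump of width comparable to $l/\alpha$ in the rescaled variable $z/(1+\hat\eta^*_\delta)$, then mollify in 3D at a scale $\ll l$. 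The resulting process inherits bounds of the form $\|\xi_{l,\delta}\|_{L^\infty(0,T;W^{1,\infty}(\sO_\alpha))} \le C(l,\alpha)\|\hat\eta^*_\delta\|_{L^\infty(0,T;H^2(\Gamma))}$ and $\|\partial_t \xi_{l,\delta}\|_{L^2(0,T;L^p(\sO_\alpha))} \le C(l,\alpha)\|\partial_t\hat\eta^*_\delta\|_{L^2(0,T;H^1(\Gamma))}$, exactly as in \eqref{chi_lp}--\eqref{chi_t}.

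Next, I would apply It\^{o}'s formula (via Lemma 5.1 of \cite{BO13}) to the functional $(\rho,\bu) \mapsto \int_{\sO_\alpha} \bu \cdot \xi_{l,\delta} \nabla \Delta^{-1}(\hat\rho_\delta^\Theta)$ with $\bd{p}_{\delta,\Theta} = \nabla \Delta^{-1}(\hat\rho_\delta^\Theta)$ as in \eqref{qdeltaTheta}, for a sufficiently small $\Theta > 0$ (chosen as in Proposition \ref{interior_pressure}). Crucially, since $\xi_{l,\delta}$ is supported strictly inside $\mathcal{O}_{\hat\eta^*_\delta}$, in particular outside the exterior tubular neighborhood $T^\delta_{\hat\eta^*_\delta}$, the penalty term in \eqref{newdelta} contributes nothing. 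Repeating the term-by-term analysis from Proposition \ref{interior_pressure} verbatim (advection, viscous, artificial pressure, stochastic integral, and time-derivative of the cutoff), I would obtain
\[
\tilde{\bE}\int_0^T \!\int_{\sO_\alpha} \xi_{l,\delta}\bigl(a\hat\rho_\delta^{\gamma+\Theta} + \delta\hat\rho_\delta^{\beta+\Theta}\bigr) \,\le\, C_l,
\]
where $C_l$ is independent of $\delta$ and the dependence on $l$ enters only through the norms of $\xi_{l,\delta}$.

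Finally, I would apply H\"{o}lder's inequality with conjugate exponents $\frac{\gamma+\Theta}{\gamma}$ and $\frac{\gamma+\Theta}{\Theta}$ (and analogously with $\beta$ in place of $\gamma$), using that $|\mathcal{O}_{\hat\eta^*_\delta}(t) \cap (A^l_{\hat\eta^*_\delta}(t))^c| \le C l$ uniformly, to arrive at
\[
\tilde{\bE}\int_0^T \!\int_{\sO_{\hat\eta^*_\delta} \cap (A^l_{\hat\eta^*_\delta})^c} \bigl(a\hat\rho_\delta^\gamma + \delta \hat\rho_\delta^\beta\bigr) \,\le\, C\,C_l^{\gamma/(\gamma+\Theta)}\, l^{\Theta/(\gamma+\Theta)}.
\]
The main obstacle I anticipate is precisely the scaling of $C_l$ in $l$: a naive bound gives $C_l \sim l^{-k}$ for some $k > 0$ stemming from $\|\nabla \xi_{l,\delta}\|_{L^\infty} \sim l^{-1}$, which must satisfy $k \cdot \tfrac{\gamma}{\gamma+\Theta} < \tfrac{\Theta}{\gamma+\Theta}$, i.e., $k < \Theta/\gamma$, in order for the product to vanish as $l \to 0$. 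To achieve this, one observes that the terms involving $\nabla \xi_{l,\delta}$ and $\partial_t \xi_{l,\delta}$ in the It\^{o} expansion are integrated only against functions whose support intersects the support of these derivatives, which has measure $O(l)$; combining H\"{o}lder's inequality with the uniform energy bounds from Lemma \ref{energy_delta} then produces extra positive powers of $l$ that offset the inverse powers coming from the $W^{1,\infty}$-norm of $\xi_{l,\delta}$. Once this refined scaling is verified, one first chooses $l$ small to ensure the right-hand side is $\le \epsilon/2$, and then $\delta_0$ small so that $\delta \hat\rho_\delta^\beta$ contributes at most $\epsilon/2$; the latter is the easier half, owing to the factor $\delta$ multiplying the high-order pressure.
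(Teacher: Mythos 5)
Your proposal takes a genuinely different route from the paper, and I do not think it closes. The paper does \emph{not} localize the interior higher-integrability argument near the boundary. Instead, it picks a completely different test function: $\phi^K_\delta = -\bigl[\min\bigl(K(1+\hat\eta^*_\delta - z),1\bigr)\bigr]^+ \bd{e}_z$, which is bounded in $L^\infty$ uniformly in $K$, vanishes on $\partial\mathcal{O}_\alpha$ and outside $\mathcal{O}_{\hat\eta^*_\delta}$, and has $\nabla\cdot\phi^K_\delta = K$ precisely on the interior layer $B^{1/K}_{\hat\eta^*_\delta}$. Testing the momentum equation with $(\phi^K_\delta,0)$ then gives $K\,\tilde\bE\int_0^T\int_{B^{1/K}_{\hat\eta^*_\delta}}(a\hat\rho_\delta^\gamma+\delta\hat\rho_\delta^\beta) \le C\bigl(K^{1-\frac{\gamma-1}{4\gamma}} + K^{1-\frac{q(\gamma-1)-2\gamma}{2q\gamma}} + K^{3/4}\bigr)$, because the test function is $L^\infty$-bounded while its \emph{gradient} scales like $K$ only in a region of measure $O(1/K)$, so each remaining term grows strictly sublinearly in $K$; dividing by $K$ and taking $K$ large gives the bound $\epsilon$. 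This is the actual technique from \cite{K09} adapted to the moving-domain and stochastic setting: a large-divergence, bounded-amplitude trapezoidal test function, not a higher-integrability estimate.

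Your route --- reproduce the interior higher-integrability argument with a cutoff $\xi_{l,\delta}$ concentrated in the boundary layer, then deploy H\"{o}lder's inequality against the small measure $|\mathcal{O}_{\hat\eta^*_\delta}\cap(A^l_{\hat\eta^*_\delta})^c| \lesssim l$ --- contains a real gap that you yourself flag but do not resolve. For the final step to work you need $C_l\,l^{\Theta/(\gamma+\Theta)} \to 0$, i.e., $C_l = o(l^{-\Theta/(\gamma+\Theta)})$, while the naive scaling from $\|\nabla\xi_{l,\delta}\|_{L^\infty}\sim l^{-1}$ gives $C_l\gtrsim l^{-1}$, and $\Theta/(\gamma+\Theta)<1$ always. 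Your suggested repair --- gain extra positive powers of $l$ because $\nabla\xi_{l,\delta}$ and $\partial_t\xi_{l,\delta}$ are supported in a set of measure $O(l)$ --- is not established and is doubtful: several of the terms in the It\^{o} expansion pair $\nabla\xi_{l,\delta}$ against $\nabla\Delta^{-1}(\hat\rho_\delta^\Theta)$ or $\nabla^2\Delta^{-1}(\hat\rho_\delta^\Theta)$, which are nonlocal and controlled only by $L^p(\mathcal{O}_\alpha)$ norms of $\hat\rho_\delta^\Theta$ with no extra smallness over the thin layer. There is also the quantitative-$l$ estimate on $\partial_t\xi_{l,\delta}$, where a factor of $l^{-1}$ again appears. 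You would at best need $C_l\lesssim l^{-\Theta/(\gamma+\Theta)+\epsilon'}$, and the numbers do not obviously line up for any admissible $\Theta$. The direct test-function approach of the paper sidesteps this entirely because the test function, not its divergence, is uniformly bounded, so there is no inverse power of the layer width to beat.
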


\begin{proof}
For any parameter $K > 0$, we define:
{$$\phi^K_\delta=-[\min\left(K(1 + \hat\eta^{*}_{\delta}(t, x, y)-z), 1\right)]^{+} \bd{e}_{z},$$}
and we will use $(\bd{q}, \psi) = (\phi^{K}_\delta, 0)$ as test functions in the weak formulation \eqref{newdelta} which is justified thanks to Lemma 5.1 in \cite{BO13}. Note that since 
$${ \nabla \cdot \phi^{K}_{\delta} =  K }\ \text{ on }\ B^{\frac1K}_{\hat\eta^*_\delta}(\omega):= \{(x,y,z)\in\mathbb{R}^3: (x,y) \in \Gamma,\ 0<(1+\hat\eta^*_\delta(t,x,y))-z<l\},$$
we have that 
\begin{multline*}
K\int_{0}^{T} \int_{B^{\frac1K}_{\hat\eta^*_\delta}(\omega)} (a\hat\rho^{\gamma}_{\delta} + \delta \hat\rho^{\beta}_{\delta}) 
= | \int_{0}^{T} \int_{\sO_\alpha} (a\hat\rho_{\delta}^{\gamma} + \delta \hat\rho^{\beta}_{\delta})(\nabla \cdot \phi^{K}_{\delta}) | \\
 =|\int_{0}^{T} \int_{\mathcal{O}_{\alpha}} \hat\rho_{\delta} \hat\bu_{\delta} \partial_{t} \phi^{K}_{\delta} + \int_{\mathcal{O}_{\alpha}} \hat\rho_{\delta}(t) \hat\bu_{\delta}(t) \cdot \phi^{K}_{\delta}(t) - \int_{\mathcal{O}_{\alpha}}  \bp_{0,\delta} \cdot \phi^{K}_{\delta}(0) \\
- \int_{0}^{T} \int_{\mathcal{O}_{\alpha}} (\hat\rho_{\delta} \hat\bu_{\delta} \otimes \hat\bu_{\delta}) : \nabla \phi^{K}_{\delta} + \int_{0}^{T} \int_{\mathcal{O}_{\alpha}} \mu_{\delta}^{\hat\eta_{\delta}^*} \nabla \hat\bu_{\delta} : \nabla \phi^{K}_{\delta} + \int_{0}^{T} \int_{\mathcal{O}_{\alpha}} \lambda^{\hat\eta_{\delta}^*}_{\delta} \text{div}(\hat\bu_{\delta}) \text{div}(\phi^{K}_{\delta}) \\
+ { \frac1\delta \int_{0}^{T} \int_{{T^\delta_{\hat\eta^*_\delta}}} (\hat\bu_{\delta} - \hat v_{\delta} \bd{e}_{z}) \cdot (\phi_\delta^K )}+ \int_{0}^{T} \int_{\mathcal{O}_{\alpha}} \mathbbm{1}_{\sO_{\hat\eta^*_\delta}} \bd{F}(\hat\rho_{\delta} , \hat\rho_{\delta} \hat\bu_{\delta}) \cdot \phi^{K}_{\delta} d\hat W^1_\delta(t)|.
\end{multline*}
Observe that by construction,  $\phi^{K}_{\delta}|_{\sO_\alpha\setminus\sO_{{\hat\eta^*_\delta}}} = 0$ so the penalty term vanishes.
We estimate the terms on the right-hand side, based on the fact that the bounds in Lemma \ref{energy_delta} hold for the new random variables $\hat{\mathcal{U}}_\delta$ found in Theorem \ref{skorohod} as well, as follows:

\noindent\textbullet{} Outside of $B^{\frac1K}_{\hat\eta^*_\delta}$, we have that $|\nabla \cdot \phi^{K}_{\delta}| \le C$, and hence, 
we obtain the estimate:
\begin{equation*}
\left|\tilde\bE\int_{0}^{T} \int_{\mathcal{O}_{\alpha} \cap A^{\frac1K}_{\hat{\eta}^{*}_{\delta}}(\tilde\omega)^{c}} (a\hat\rho^{\gamma}_{\delta} + \delta \hat\rho^{\beta}_{\delta}) (\nabla \cdot \phi^{K}_{\delta})\right| \le C.
\end{equation*}

\noindent\textbullet{} Note that $|\partial_{t}\phi^{K}_{\delta}| \le K |\partial_{t}\hat\eta^{*}_{\delta}|$. So since \begin{equation*}
\partial_{t}\hat\eta^{*}_{\delta} \in L^{2}(\tilde\Omega; L^{2}(0, T; H_{0}^{1}(\Gamma))) \subset L^{2}(\tilde\Omega; L^{2}(0, T; L^{p}(\Gamma))), \quad \text{ for all $1 < p < \infty$},
\end{equation*}
we have that $\partial_{t} \phi^{K}_{\delta} \in L^{2}(\tilde\Omega; L^{2}(0, T; L^{p}(\mathcal{O}_{\alpha})))$, for all $1 < p < \infty$, with
\begin{equation*}
\|\partial_{t}\phi^{K}_{\delta}\|_{L^{2}(\tilde\Omega; L^{2}(0, T; L^{p}(\mathcal{O}_{\alpha})))} \le CK.
\end{equation*}
Since $\hat\rho_{\delta} \hat\bu_{\delta}$ is uniformly bounded in $L^{2}(\tilde\Omega; L^{2}(0, T; L^{\frac{2\gamma}{\gamma + 1}}(\mathcal{O}_{\alpha})))$ and because $\partial_{t}\phi^{K}_{\delta}$ is zero outside of $B^{\frac1K}_{\hat\eta^*_\delta}$, we conclude that
\begin{multline*}
\left|\tilde\bE\int_{0}^{T} \int_{\mathcal{O}_{\alpha}} \hat\rho_{\delta} \hat\bu_{\delta} \partial_{t} \phi^{K}_{\delta}\right| = \left|\tilde\bE\int_{0}^{T} \int_{B^{\frac1K}_{\hat\eta^*_\delta}(\omega)} \hat\rho_{\delta} \hat\bu_{\delta} \partial_{t} \phi^{K}_{\delta}\right| \\
\le \|\hat\rho_{\delta} \hat\bu_{\delta}\|_{L^{2}(\tilde\Omega; L^{2}(0, T; L^{\frac{2\gamma}{\gamma + 1}}(\mathcal{O}_{\alpha})))} \|1_{B^{\frac1K}_{\hat\eta^*_\delta}(\omega)}\|_{L^{\infty}(\tilde\Omega; L^{\infty}(0, T; L^{\frac{4\gamma}{\gamma - 1}}(\mathcal{O}_{\alpha})))} \|\partial_{t}\phi^{K}_{\delta}\|_{L^{2}(\tilde\Omega; L^{2}(0, T; L^{\frac{4\gamma}{\gamma - 1}}(\mathcal{O}_{\alpha})))} \\
\le CK^{1 - \frac{\gamma - 1}{4\gamma}}.
\end{multline*}

\noindent\textbullet{} To estimate the terms {{$\displaystyle \tilde{\mathbb{E}}\left|\int_{\mathcal{O}_{\alpha}} \hat\rho_{\delta}(t) \hat\bu_{\delta}(t) \cdot \phi^{K}_\delta(t)\right|$ and $\displaystyle\tilde{\mathbb{E}}\left| \int_{\mathcal{O}_{\alpha}} \bp_{0,\delta} \cdot \phi^{K}_\delta(0)\right|$}}, we use the fact that $\hat\rho_{\delta} \hat\bu_{\delta} \in C_{w}(0, T; L^{\frac{2\gamma}{\gamma + 1}}(\mathcal{O}_{\alpha}))${ $\tilde\bP$-almost surely so that $\tilde\bE\|\hat\rho_\delta\hat\bu_\delta\|^p_{L^\infty(0,T;L^{\frac{2\gamma}{2\gamma+1}}(\sO_\alpha))} \leq C$,}
and the fact that $|\phi_{K}(t, \cdot)| \le 1$ for all $t \in [0, T]$. Hence, we have the immediate estimate that these terms are bounded by a uniform constant $C$. 

\noindent\textbullet{} By definition we have
    \begin{equation}\label{nablaphiK}
        |\nabla \phi^{K}_{\delta}| \le CK(1 + |\nabla \hat\eta_{\delta}^{*}|), \qquad \text{ on $B^{\frac1K}_{\hat\eta^*_\delta}$.}
    \end{equation}
 and then we use the fact that $\nabla \hat\eta^{*}_{\delta} \in L^{2}(\tilde\Omega; L^{\infty}(0, T; H_{0}^{1}(\Gamma))) \subset L^{2}(\tilde\Omega; L^{\infty}(0, T; L^{p}(\Gamma)))$ for all {{$1 \le p < \infty$}} along with Lemma \ref{energy_delta} (8) for any $q<6$ to conclude that:
\begin{align}
\notag\tilde\bE\left|\int_{0}^{T} \int_{B^{\frac1K}_{\hat\eta^*_\delta}} (\hat\rho_{\delta} \hat\bu_{\delta} \otimes \hat\bu_{\delta}) \cdot \nabla \phi^{K}_{\delta}\right|&\le \|\hat\rho_{\delta} \hat\bu_{\delta} \otimes \hat\bu_{\delta}\|_{L^{2}(\tilde\Omega; L^1(0,T;L^{\frac{q\gamma}{q+2\gamma}}(B^{\frac1K}_{\hat\eta^*_\delta}))} \\
\cdot &\|1_{B^{\frac1K}_{\hat\eta^*_\delta}}\|_{L^{\infty}(\tilde\Omega; L^{\infty}(0, T; L^{\frac{2q\gamma}{q(\gamma-1)-2\gamma}}(\mathcal{O}_{\alpha})))} \cdot \|\nabla \phi^{K}_{\delta}\|_{L^{2}(\tilde\Omega; L^{\infty}(0, T; L^{\frac{2q\gamma}{q(\gamma-1)-2\gamma}}(\mathcal{O}_{\alpha})))} \notag\\
&\le {{CK^{1 - {\frac{q(\gamma-1)-2\gamma}{2q\gamma}}}}}.\label{schoice}
\end{align}

\noindent\textbullet{} For the next term, we do a similar estimate {{using \eqref{nablaphiK}}}, noting that 
$\mu_{\delta}^{\hat\eta_{\delta}^*}\nabla \hat\bu_{\delta}$ is uniformly bounded in $L^{2}(\tilde\Omega; L^{2}(0, T; L^{2}(\mathcal{O}_{\alpha})))$. Hence,
\begin{multline*}
\tilde\bE\left|\int_{0}^{T} \int_{B^{\frac1K}_{\hat\eta^*_\delta}(\omega)} \mu_\delta^{\hat\eta_{\delta}^*} \nabla \hat\bu_{\delta} : \nabla \phi^{K}_{\delta}\right| \le C\|{\mu_\delta^{\hat\eta_{\delta}^*}}\nabla \hat\bu_{\delta}\|_{L^{2}(\tilde\Omega; L^{2}(0, T; L^{2}(\mathcal{O}_{\alpha})))} \\
\cdot \|1_{B^{\frac1K}_{\hat\eta^*_\delta}}\|_{L^{\infty}(\tilde\Omega; L^{\infty}(0, T; L^{4}(\mathcal{O}_{\alpha})))} \cdot \|\nabla \phi^{K}_{\delta}\|_{L^{2}(\tilde\Omega; L^{2}(0, T; L^{4}(\mathcal{O}_{\alpha})))} \le {{CK^{3/4}}}.
\end{multline*}
We then have a similar estimate for the other viscosity term:
\begin{equation*}
\tilde\bE\left|\int_{0}^{T} \int_{\mathcal{O}_{\alpha}} \lambda^{\hat\eta_{\delta}^*}_{\delta} \text{div}(\hat\bu_{\delta}) \text{div}(\phi^{K}_{\delta})\right| \le {{CK^{3/4}}}.
\end{equation*} 

 \noindent\textbullet{} Finally, the expectation of the stochastic integral term is zero.
Putting together all of these estimates, we have for any $q<6$, that
\begin{equation*}
K\tilde\bE\int_{0}^{T} \int_{B^{\frac1K}_{\hat\eta^*_\delta}} (a\hat\rho_{\delta}^{\gamma} + \delta\hat\rho_{\delta}^{\beta}) \le C(K^{1 - \frac{\gamma - 1}{4\gamma}} + K^{1 - {\frac{q(\gamma-1)-2\gamma}{2q\gamma}}} + K^{3/4}).
\end{equation*}
So given $\epsilon > 0$, there exists $K$ sufficiently large such that
\begin{equation}\label{bound:Adelta}
\tilde\bE\int_{0}^{T} \int_{B^{\frac1K}_{\hat\eta^*_\delta}} (a\hat\rho^{\gamma}_{\delta} + \delta \hat\rho^{\beta}_{\delta}) \le \epsilon.
\end{equation}
{The same estimate near the bottom boundary is obtained using similar arguments on the set $\Gamma \times (0,l)$ which, combined with \eqref{bound:Adelta}, gives us the desired result.}
\if 1 = 0
{\bf ??} However, in the statement of the proposition, the spatial set over which the integral is taken is independent of $\delta$ and $\kappa$, whereas $A^{K}_{\delta}(\omega)$ depends explicitly on $\delta$ and $\kappa$. So the strategy is to integrate over $A^{K}(\omega)$ instead and use the almost sure convergence of $\hat\eta^{*}_{\delta}$ to $\eta^{*}$ in $C(0, T; C(\Gamma))$ to show that the $A^{K}_{\delta}(\omega)$ and $A^{K}(\omega)$ are ``almost the same" with arbitrarily high probability. In particular, we estimate that
\begin{equation}\label{symdiff}
\tilde\bE\int_{0}^{T} \int_{A^{K}} (a\rho_{\delta}^{\gamma} + \delta \rho_{\delta}^{\beta})
\le \tilde\bE\int_{0}^{T} \int_{A^{K}_{\delta}} (a\hat\rho^{\gamma}_{\delta} + \delta \hat\rho^{\beta}_{\delta}) + \tilde\bE\int_{0}^{T} \int_{A^{K} \Delta A^{K}_{\delta}} (a\hat\rho^{\gamma}_{\delta} + \delta \hat\rho^{\beta}_{\delta}),
\end{equation}
where $A^{K} \Delta A^{K}_{\delta} = (A^{K} \cap (A^{K}_{\delta})^{c}) \cup ((A^{K})^{c} \cap A^{K}_{\delta})$ denotes the symmetric difference. As previously established earlier in this proof, we can make the first term on the right-hand side arbitrarily small by choosing $K$ sufficiently large, and we will fix a $K$ so that this first term on the right-hand side is less than $\theta/2$ for some given $\theta > 0$.  

It remains to bound the second term on the right-hand side of \eqref{symdiff}. To estimate the term involving $A^{K} \Delta A^{K}_{\delta}$ we want to combine the following two facts: \begin{itemize}
\item For sufficiently small $\delta, \kappa$, $A^{K} \Delta A^{K}_{\delta}$ has small measure and does not include $\Gamma_{\delta}^{*}$ with high probability by the almost sure convergence of $\eta_{\delta}^*$ to $\eta^*$ in $C(0, T; C(\Gamma))$. 
\item The interior estimates where for all $Q \subset (\mathcal{O}_{\alpha} \cap \Gamma_{\eta_{\delta}^*}^{c}) \times [0, T]$, we have that for some $\Theta > 0$:
\begin{equation*}
    \tilde\bE\int_{Q} (a\rho^{\gamma +     \Theta}_{\delta} + \delta \rho^{\beta + \Theta}_{\delta}) \le C.
\end{equation*}
\end{itemize}
To make this argument rigorous, consider the set $\mathcal{O}_{\alpha} \cap A^{2K}_{\delta}(t, \omega)^{c}$ and note that uniformly in $\delta$, we have that
\begin{equation*}
\tilde\bE\int_{0}^{T} \int_{\mathcal{O}_{\alpha} \cap A^{2K}_{\delta}(t, \omega)^{c}} (a\rho^{\gamma + \frac{\gamma}{\beta} \Theta}_{\delta} + \delta \rho^{\beta + \Theta}_{\delta}) \le C_{0}.
\end{equation*}
Then, we estimate from \eqref{symdiff} that
\begin{equation*}
\tilde\bE\int_{0}^{T} \int_{A^{K}} (a\hat\rho^{\gamma}_{\delta} + \delta \hat\rho^{\beta}_{\delta}) \le \frac{\theta}{2} + \left(\tilde\bE\left|\int_{0}^{T} \int_{A^{K} \Delta A^{K}_{\delta}} 1 \right|\right)^{\frac{\Theta}{\beta + \Theta}} \left(\tilde\bE\int_{0}^{T} \int_{A^{K} \Delta A^{K}_{\delta}} (a\rho^{\gamma + \frac{\gamma}{\beta}\Theta}_{\delta} + \delta \rho^{\beta + \Theta}_{\delta})\right)^{\frac{\beta}{\beta + \Theta}}.
\end{equation*}
Since $\eta_{\delta}^* \to \eta^*$ in $C(0, T; C(\Gamma))$ almost surely, given $\beta > 0$, we can choose $\nu$ sufficiently small so that:
\begin{equation*}
\mathbb{P}\left(\|\eta_{\delta}^* - \eta^*\|_{C(0, T; C(\Gamma))} < \alpha_{0} \text{ for all } \delta, \kappa < \nu \right) > 1 - \beta,
\end{equation*}
where $\alpha_{0}$ is chosen so that $\displaystyle \alpha_{0} < \frac{1}{2K}$ and $\displaystyle (2\alpha_{0})^{\frac{\Theta}{\beta + \Theta}} C_{0}^{\frac{\beta}{\beta + \Theta}} \le \frac{\theta}{2}$. Then, defining the event
\begin{equation*}
E_{\theta, \beta} := \{\|\eta_{\delta}^* - \eta^*\|_{C(0, T; C(\Gamma))} < \alpha_{0} \text{ for all } \delta, \kappa < \nu\},
\end{equation*}
we have that on $E_{\theta, \beta}$,
\begin{equation*}
\int_{0}^{T} \int_{A^{K} \Delta A^{K}_{\delta}} \le 2\alpha_{0}
\end{equation*}
and $A^{K} \Delta A^{K}_{\delta} \subset \mathcal{O}_{\alpha} \cap A^{2K}_{\delta}(t, \omega)^{c}$ since $\alpha_{0}$ was chosen so that $\alpha_{0} < 1/(2K)$. Hence, we conclude that
\begin{equation*}
\tilde\bE\left(1_{E_{\theta, \beta}} \int_{0}^{T} \int_{A^{K}} (a\hat\rho^{\gamma}_{\delta} + \delta \hat\rho^{\beta}_{\delta})\right) \le \frac{\theta}{2} + (2\alpha_{0})^{\frac{\Theta}{\beta + \Theta}} C_{0}^{\frac{\beta}{\beta + \Theta}} = \theta,
\end{equation*}
which establishes the desired result.
\fi 
\end{proof}
The two propositions above give us equi-integrability of the pressure in the moving domain.
Next we will bound the pressure in the small tubular neighborhood $T^\delta_{\hat\eta^*_\delta}$ of $\hat\eta^*_\delta$.
{ \begin{proposition}\label{tube_pressure}
    For some $\nu_1>0$ there exists a constant $C$ independent of $\delta$, such that
\begin{equation}
\tilde\bE\int_{0}^{T} \int_{T^{\delta}_{\hat\eta^*_\delta}} (a\hat\rho^{\gamma}_{\delta} + \delta \hat\rho^{\beta}_{\delta}) \le C\delta^{\nu_1}.
\end{equation}
Thus,
\begin{equation}\label{l1conv_tube}
\mathbbm{1}_{T^{\delta}_{\hat\eta^*_\delta}} (a\hat\rho^{\gamma}_{\delta} + \delta \hat\rho^{\beta}_{\delta}) \to 0\qquad\text{ in } L^1(\tilde\Omega\times(0,T)\times\sO_\alpha)).
\end{equation}
\end{proposition}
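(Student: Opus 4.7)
The plan is to adapt the test-function strategy of Proposition \ref{bdry_pressure} to a compactly supported profile localized in a thickened tubular neighborhood of $\partial\sO_{\hat\eta^*_\delta}$. Fix a smooth non-decreasing profile $f\colon \mathbb{R}\to [0,1]$ with $f\equiv 0$ on $(-\infty,0]$, $f\equiv 1$ on $[1,\infty)$, and set $\epsilon_\delta := \delta^{(1/2-1/\beta)}$ and
$$\phi_\delta(t,x,y,z) := f\!\left(\epsilon_\delta^{-1}\bigl(z-(1+\hat\eta^*_\delta(t,x,y))\bigr)\right)\chi(z)\,\bd{e}_z,$$
where $\chi$ is a deterministic smooth cutoff with $\chi\equiv 1$ on $[0,1+\alpha^{-1}+2\epsilon_\delta]$ and $\chi\equiv 0$ near $z=2+\alpha^{-1}$ (admissible since $\hat\eta^*_\delta\le\alpha^{-1}$ pathwise). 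Then $\phi_\delta$ is $(\hat{\mathcal F}^\delta_t)_{t\ge 0}$-adapted, $|\phi_\delta|\le 1$, and
$$\nabla\!\cdot\phi_\delta \;=\; \epsilon_\delta^{-1}f'\!\left(\epsilon_\delta^{-1}(z-(1+\hat\eta^*_\delta))\right)\mathbbm{1}_{T^\delta_{\hat\eta^*_\delta}} \;+\; f(\cdots)\,\chi'(z).$$
Testing \eqref{newdelta} with $(\bd{q},\psi)=(\phi_\delta,0)$, justified by Lemma~5.1 of \cite{BO13}, converts the pressure-divergence term into $\epsilon_\delta^{-1}\int_0^T\!\int_{T^\delta_{\hat\eta^*_\delta}}(a\hat\rho_\delta^\gamma+\delta\hat\rho_\delta^\beta)f'(\cdots)$ plus a remainder supported on $\{z\ge 1+\hat\eta^*_\delta+\epsilon_\delta\}$ which is directly controlled by Proposition \ref{vacuum2} together with H\"{o}lder's inequality (using $\hat\rho_\delta\in L^\infty L^\gamma$ and the $\delta^{\nu_*}$ mass bound there). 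Choosing $f'$ bounded below by a constant on a subinterval of length $1/2$, one recovers a lower bound $c\,\epsilon_\delta^{-1}\int_0^T\!\int_{T^\delta_{\hat\eta^*_\delta}}(a\hat\rho_\delta^\gamma+\delta\hat\rho_\delta^\beta)$, modulo a lower-order contribution handled by a second, shifted test function of the same form.

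Next I would estimate each term on the right-hand side in expectation using Lemma \ref{energy_delta}, exploiting that $|\mathrm{supp}(\phi_\delta)|\lesssim \epsilon_\delta$ and that $\mu^{\hat\eta^*_\delta}_\delta=\mu$, $\lambda^{\hat\eta^*_\delta}_\delta=\lambda$ throughout $\mathrm{supp}(\phi_\delta)$ once $\delta$ is small enough that $2\epsilon_\delta < (C_\alpha+\tfrac14)\delta^{\nu_0/2}$, which is guaranteed by the choice $\nu_0\sim(\tfrac12-\tfrac1\beta)^2$ (cf.~\eqref{muconstantremark}). The critical terms are: the penalty term, which by Cauchy--Schwarz and \eqref{penaltybound} contributes at most $\tfrac{1}{\delta}\,\delta^{1/2}\,\epsilon_\delta^{1/2}=\delta^{-1/2}\epsilon_\delta^{1/2}$ in expectation; the viscous terms, estimated via $|\nabla\phi_\delta|\le C\epsilon_\delta^{-1}(1+|\nabla\hat\eta^*_\delta|)$ to $\lesssim \epsilon_\delta^{-1/2}$ after using Lemma \ref{energy_delta}(2) and the embedding $H^2(\Gamma)\hookrightarrow W^{1,p}(\Gamma)$ for any $p<\infty$; the convection term, for which Lemma \ref{energy_delta}(8) provides $\hat\bu_\delta\in L^2 L^q(\sO_{\hat\eta^*_\delta}\cup T^\delta_{\hat\eta^*_\delta})$ for any $q<6$, yielding $\hat\rho_\delta\hat\bu_\delta\otimes\hat\bu_\delta\in L^1 L^{q\gamma/(q+2\gamma)}$ and a bound of order $\epsilon_\delta^{-1}|\mathrm{supp}(\phi_\delta)|^{1-(q+2\gamma)/(q\gamma)}$; the time-derivative term, via $\partial_t\phi_\delta=-\epsilon_\delta^{-1}f'(\cdots)\partial_t\hat\eta^*_\delta\,\chi$ combined with $\partial_t\hat\eta^*_\delta\in L^2 H^1(\Gamma)$ and $\hat\rho_\delta\hat\bu_\delta\in L^\infty L^{2\gamma/(\gamma+1)}$; and the endpoint mass pairings $\int_{\sO_\alpha}\hat\rho_\delta\hat\bu_\delta\cdot\phi_\delta(t)$ and $\int\bp_{0,\delta}\cdot\phi_\delta(0)$, bounded by $|\mathrm{supp}(\phi_\delta)|^{1-(\gamma+1)/(2\gamma)}$ using the same moment. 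The stochastic integral has zero expectation because $\phi_\delta$ is $(\hat{\mathcal F}^\delta_t)$-adapted.

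Summing these estimates and dividing through by $\epsilon_\delta^{-1}=\delta^{-(1/2-1/\beta)}$ produces $\tilde\bE\int_0^T\!\int_{T^\delta_{\hat\eta^*_\delta}}(a\hat\rho_\delta^\gamma+\delta\hat\rho_\delta^\beta)\le C\delta^{\nu_1}$ for some $\nu_1>0$ depending only on $\gamma$ and $\beta$, after which the $L^1$ convergence \eqref{l1conv_tube} follows immediately. The main obstacle is the bookkeeping: the prefactor $\epsilon_\delta^{-1}$ coming from both $\nabla\!\cdot\phi_\delta$ and $\partial_t\phi_\delta$, the $\delta^{-1}$ from the penalty, the $\delta^{1/2}$ gained from \eqref{penaltybound}, and the small support volume $\epsilon_\delta$ must conspire to leave a strictly positive net power of $\delta$ in every term. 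The tightest balance appears in the convection term, where the interplay between $q\in[1,6)$ and $\gamma>3/2$ dictates how much $L^\infty L^\gamma$ moment of the density is affordable; when necessary, the estimate can be tightened by interpolating against the vanishing-density bound of Proposition \ref{vacuum2} on the outer portion of $\mathrm{supp}(\phi_\delta)$, which is precisely where the assumption $\beta>\max\{4,\gamma\}$ enters.
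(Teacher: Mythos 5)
Your strategy --- test the approximate momentum balance with an $\bd{e}_z$-valued profile whose divergence concentrates on the tubular layer, then balance powers of $\delta$ --- is the same one the paper employs here, and your sketch is essentially sound. But two choices you make are not cosmetic: they close gaps in the paper's version of the argument. First, you ramp over the full width $\epsilon_\delta := \delta^{1/2-1/\beta}$ of $T^\delta_{\hat\eta^*_\delta}$; the paper's profile $[\min(\delta^{-1}(z-1-\hat\eta^*_\delta),1)]^+\bd{e}_z$ ramps over width $\delta$, which for small $\delta$ is strictly smaller than $\epsilon_\delta$, so its divergence lives on a slab $S_\delta$ of width $\delta$ that is properly contained in $T^\delta_{\hat\eta^*_\delta}$ (not the other way around, as the paper asserts), and the paper's opening step $\delta^{-1}\int_{T^\delta}\le\delta^{-1}\int_{S_\delta}$ then goes the wrong way. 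Second, your Cauchy--Schwarz for the penalty term measures $\|\phi_\delta\|_{L^2_tL^2_x}$ over $T^\delta_{\hat\eta^*_\delta}$ only, gaining a factor $|T^\delta_{\hat\eta^*_\delta}|^{1/2}\lesssim\epsilon_\delta^{1/2}$, whereas the paper uses $\|\phi_\delta\|_{L^2_tL^2_x(\sO_\alpha)}\lesssim 1$. That extra $\epsilon_\delta^{1/2}$ is exactly what makes the balance close: after dividing by the divergence prefactor $\epsilon_\delta^{-1}$, the penalty contributes $\delta^{-1/2}\epsilon_\delta^{3/2}=\delta^{1/4-3/(2\beta)}$, a positive power once $\beta>6$ (admissible, since $\beta$ is a free approximation parameter), whereas the paper's cruder $C/\sqrt\delta$ yields $\delta^{-1/\beta}$ after the same division, which diverges. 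In this sense your version repairs the paper's argument.

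Two slips in your writeup. First, the claim $|\mathrm{supp}(\phi_\delta)|\lesssim\epsilon_\delta$ does not hold with the deterministic cutoff $\chi$ you describe ($\chi\equiv 1$ up to $z=1+\alpha^{-1}+2\epsilon_\delta$, while the interface may lie as low as $z=\alpha$), so $\mathrm{supp}(\phi_\delta)$ has height of order one. This is ultimately harmless --- the divergence-heavy estimates localize to $T^\delta_{\hat\eta^*_\delta}$ where $f'\neq 0$, the region where $\chi'\neq 0$ is tamed by the extended viscosity $\mu^{\hat\eta^*_\delta}_\delta=\delta^{\nu_0}$, and the endpoint pairings are $O(1)$ and survive the division by $\epsilon_\delta^{-1}$ --- but you should not lean on the false support bound. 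Second, after the bookkeeping it is the penalty term, not the convection term, that is the binding constraint (it is what forces $\beta>6$); you should make the final $\nu_1$ and the resulting requirement on $\beta$ explicit.
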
} 
\begin{proof}
We define $$\phi_\delta=\left[\min\left(\frac1\delta(z-(1 + \hat\eta^{*}_{\delta}(t, x, y))), 1\right) \right]^+ \bd{e}_{z}.$$
     We will now use $(\bd{q}, \psi) = (\phi_{\delta}, 0)$ as test functions in the weak formulation \eqref{newdelta} which is justified thanks to Lemma 5.1 in \cite{BO13}. Note that $\nabla \cdot \phi_{\delta} = \frac1\delta$ on the random set $S_\delta:=\{(x,y,z):0\leq z-(1+\hat\eta^*_\delta(x,y)) \leq \delta\}$ (which contains $T^\delta_{\hat\eta^*_\delta}$) and 0 otherwise, almost surely. We hence have that 
\begin{multline*}
\frac1\delta\int_{0}^{T} \int_{T^\delta_{\hat\eta^*_\delta}}(a\hat\rho^{\gamma}_{\delta} + \delta \hat\rho^{\beta}_{\delta}) \leq \frac1\delta\int_{0}^{T} \int_{S_\delta}(a\hat\rho^{\gamma}_{\delta} + \delta \hat\rho^{\beta}_{\delta}) 
= \int_{0}^{T} \int_{\sO_\alpha} (a\hat\rho_{\delta}^{\gamma} + \delta \hat\rho^{\beta}_{\delta})(\nabla \cdot \phi_{\delta})= - \int_{\mathcal{O}_{\alpha}}  \bp_{0,\delta} \cdot \phi_{\delta}(0) \\
 + \int_{\mathcal{O}_{\alpha}} \hat\rho_{\delta}(t) \hat\bu_{\delta}(t) \cdot \phi_{\delta}(t)  +\int_{0}^{T} \int_{\mathcal{O}_{\alpha}} \hat\rho_{\delta} \hat\bu_{\delta} \partial_{t} \phi_{\delta} 
- \int_{0}^{T} \int_{\mathcal{O}_{\alpha}} (\hat\rho_{\delta} \hat\bu_{\delta} \otimes \hat\bu_{\delta}) : \nabla \phi_{\delta} + \int_{0}^{T} \int_{\mathcal{O}_{\alpha}} \hat\mu_{\delta}^{\hat\eta_{\delta}^*} \nabla \hat\bu_{\delta} : \nabla \phi_{\delta}  \\
+ \int_{0}^{T} \int_{\mathcal{O}_{\alpha}} \hat\lambda^{\hat\eta_{\delta}^*}_{\delta} \text{div}(\hat\bu_{\delta}) \text{div}(\phi_{\delta})+ { \frac1\delta \int_{0}^{T} \int_{{T^\delta_{\hat\eta^*_\delta}}} (\hat\bu_{\delta} - \hat v_{\delta} \bd{e}_{z}) \cdot \phi_\delta }+ \int_{0}^{T} \int_{\mathcal{O}_{\alpha}} \mathbbm{1}_{\sO_{\hat\eta^*_\delta}}\bd{F}(\hat\rho_{\delta} , \hat\rho_{\delta} \hat\bu_{\delta}) \cdot \phi_{\delta} d\hat W^1_\delta(t).
\end{multline*}
All the terms here are treated as in Proposition \ref{bdry_pressure}. The only difference is the appearance of the penalty term for which we observe that,
$$ \frac1\delta \tilde\bE\int_{0}^{T} \int_{{T^\delta_{\hat\eta^*_\delta}}} (\hat\bu_{\delta} - \hat v_{\delta} \bd{e}_{z}) \cdot \phi_\delta  \leq \frac1{\delta}\tilde\bE\|\hat\bu_{\delta} - \hat v_{\delta} \bd{e}_{z}\|_{L^2(0,T;L^2(T^\delta_{\hat\eta^*_\delta}))}\|\phi_\delta\|_{L^{2}(0, T; L^{2}(\mathcal{O}_{\alpha}))} \leq  \frac{C}{\sqrt\delta}.$$
\end{proof}

Now, we can combine the results of Propositions \ref{interior_pressure}, \ref{bdry_pressure}, and \ref{tube_pressure} to obtain the main result of this section.

\begin{proposition}\label{L1weakpressure}
We conclude that for a limiting pressure function $\overline{p}$:
\begin{equation*}
a\hat{\rho}^{\delta}_{\delta} + \delta \hat{\rho}^{\beta}_{\delta} \rightharpoonup \overline{p} \text{ weakly in } L^{1}(\tilde\Omega; L^{1}(0, T; L^{1}(\mathcal{O}_{\alpha}))).
\end{equation*}
\end{proposition}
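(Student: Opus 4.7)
The plan is to invoke the Dunford--Pettis theorem (Lemma \ref{L1compact}) on the product space $\tilde\Omega\times(0,T)\times\mathcal{O}_\alpha$, so the task reduces to verifying equi-integrability of the family $\{a\hat\rho_\delta^\gamma+\delta\hat\rho_\delta^\beta\}_{\delta>0}$, together with a uniform $L^1$ bound. The uniform $L^1$ bound follows immediately from Lemma \ref{energy_delta}(1) and (3), since $\tilde\bE\|a\hat\rho_\delta^\gamma+\delta\hat\rho_\delta^\beta\|_{L^1(0,T;L^1(\mathcal{O}_\alpha))}$ is controlled by the kinetic energy estimates uniformly in $\delta$. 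So all the work lies in establishing equi-integrability.

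Given $\epsilon>0$, I will split the maximal cylinder into four pieces and show that on each piece the $L^1$ norm of the pressure restricted to any set of small Lebesgue$\otimes\tilde\bP$ measure can be made less than $\epsilon/4$ uniformly in $\delta$. Fix a measurable set $E\subset \tilde\Omega\times[0,T]\times\mathcal{O}_\alpha$ and decompose
\[
\int_E (a\hat\rho_\delta^\gamma+\delta\hat\rho_\delta^\beta)
= I_{\rm int}(E)+I_{\rm bdy}(E)+I_{\rm tube}(E)+I_{\rm out}(E),
\]
corresponding respectively to intersecting $E$ with (i) the interior set $A^l_{\hat\eta^*_\delta}$ for a parameter $l>0$ to be chosen, (ii) the near-interface layer $\mathcal{O}_{\hat\eta^*_\delta}\cap(A^l_{\hat\eta^*_\delta})^c$, (iii) the exterior tubular neighborhood $T^\delta_{\hat\eta^*_\delta}$, and (iv) the far exterior $\mathcal{O}_\alpha\setminus(\mathcal{O}_{\hat\eta^*_\delta}\cup T^\delta_{\hat\eta^*_\delta})$.

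For $I_{\rm bdy}(E)$, Proposition \ref{bdry_pressure} lets me fix $l=l(\epsilon)>0$ and $\delta_1>0$ so that the entire integral over the layer is at most $\epsilon/4$ for $\delta\le\delta_1$, hence the same holds on $E$. For $I_{\rm tube}(E)$, Proposition \ref{tube_pressure} (together with \eqref{l1conv_tube}) shows the full integral over $T^\delta_{\hat\eta^*_\delta}$ vanishes as $\delta\to 0$, so I can pick $\delta_2>0$ with the tube integral bounded by $\epsilon/4$ whenever $\delta\le\delta_2$. For $I_{\rm int}(E)$, with $l$ now frozen, Proposition \ref{interior_pressure} gives $\tilde\bE\int_0^T\int_{A^l_{\hat\eta^*_\delta}}(\hat\rho_\delta^{\gamma+\Theta}+\delta\hat\rho_\delta^{\beta+\Theta})\le C_l$, and a Hölder application with exponents $(\gamma+\Theta)/\gamma$ and $(\gamma+\Theta)/\Theta$ on the first term (and analogously for the second) yields
\[
I_{\rm int}(E)\le C_l^{\gamma/(\gamma+\Theta)}|E|^{\Theta/(\gamma+\Theta)}+C_l^{\beta/(\beta+\Theta)}|E|^{\Theta/(\beta+\Theta)},
\]
which is $\le \epsilon/4$ once $|E|$ is small enough, uniformly in $\delta$. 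The far-exterior term $I_{\rm out}(E)$ is handled by the remark following Proposition \ref{tube_pressure} and the footnote in the text: repeating the cutoff construction of Propositions \ref{interior_pressure}--\ref{bdry_pressure} on the region below the bottom wall (and analogously above the tubular band) furnishes an analogous higher-integrability estimate on an interior portion and a small-layer estimate near its boundary, and the density vanishing bound \eqref{vacuumpathwise} from Proposition \ref{vacuum2} plus \eqref{expectationexterior} forces the $L^1$ mass there to tend to $0$ with $\delta$, reducing $I_{\rm out}(E)$ to the same argument as in $I_{\rm int}$ and $I_{\rm bdy}$.

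Choosing $\theta>0$ so that the interior (and analogous far-exterior) Hölder bounds are $\le \epsilon/4$ for $|E|<\theta$, and taking $\delta\le \min(\delta_1,\delta_2)$ for the small-layer and tube pieces, gives equi-integrability uniformly in $\delta$, and Lemma \ref{L1compact} extracts a subsequence with the claimed weak-$L^1$ limit $\overline p$. I anticipate that the main subtlety is coordinating the parameters: $l$ must be chosen \emph{after} $\epsilon$ via Proposition \ref{bdry_pressure} but \emph{before} applying Proposition \ref{interior_pressure} (whose constant $C_l$ blows up as $l\downarrow 0$), and the $\delta$-dependent tubular bound must be combined with the $\delta$-independent Hölder interior bound without letting the dependence on $\|\hat\eta^*_\delta\|_{H^s}$ degrade; this is precisely why the stopping time $\tau^\eta_\delta$ and the deterministic bound $\|\hat\eta^*_\delta\|_{H^s}\le \alpha^{-1}$ are crucial, since they make $C_l$ and the constants in Propositions \ref{bdry_pressure}--\ref{tube_pressure} independent of the outcome $\tilde\omega$.
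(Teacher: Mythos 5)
Your argument is correct and follows the same strategy as the paper: four-way decomposition into interior $A^l_{\hat\eta^*_\delta}$, boundary layer, tube $T^\delta_{\hat\eta^*_\delta}$, and far exterior, then Dunford--Pettis after verifying equi-integrability via Propositions \ref{interior_pressure}, \ref{bdry_pressure}, \ref{tube_pressure} with H\"older on the interior piece and $L^1$-convergence to zero on the tube. You also correctly flag the parameter ordering ($\epsilon\to l\to C_l\to\theta$) that makes the H\"older step work. One small imprecision: for the far exterior you cite the density-vanishing estimate \eqref{vacuumpathwise} and \eqref{expectationexterior} as ``forcing the $L^1$ mass to tend to $0$,'' but those bounds control $\hat\rho_\delta$, not $\hat\rho_\delta^\gamma$; the paper instead handles that region purely by repeating the Prop.~\ref{interior_pressure}/\ref{bdry_pressure} machinery there (without the penalty term), which is also the argument you land on in your final clause, so this is a cosmetic detour rather than a gap.
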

\begin{proof}
Observe that Propositions \ref{interior_pressure} and \ref{bdry_pressure} gives us the desired equi-integrability of $\mathbbm{1}_{\sO_{\hat\eta^*_\delta}}(\hat\rho^\gamma_\delta+\delta\hat\rho_\delta^\beta)$ in $L^1(\tilde\Omega\times(0,T)\times\sO_\alpha)$. Indeed, for any $\epsilon>0$ there exists {{a $\Theta > 0$ and an $l>0$}} such that the pressure estimates, near the moving boundary, found in Proposition \ref{bdry_pressure} hold. Then we take any measurable set $E\subset \tilde\Omega\times[0,T]\times\sO_\alpha$ such that $|E|<(\frac1{C_l}\epsilon)^{r_\Theta}$ where $C_l$ is the deterministic constant appearing in Proposition \ref{interior_pressure} and $1<r_\Theta<\infty$ is an appropriately chosen power that appears in the following calculations due to the H\"older inequality. We then obtain,
 \begin{align}
\notag \int_{E}\mathbbm{1}_{\sO_{\hat\eta^*_\delta}}|a\hat\rho^{\gamma}_\delta + \delta\hat\rho^\beta_\delta | & =  \tilde\bE\int_0^T\int_{ \sO_{\hat\eta^*_\delta}(t)\cap (A^l_{\hat{\eta}^*_\delta}(t))^c}\mathbbm{1}_E|a\hat\rho^{\gamma}_\delta+\delta\hat\rho^\beta_\delta | +	\tilde\bE\int_0^T\int_{ A^l_{\hat\eta^*_\delta}(t)}\mathbbm{1}_E|a\hat\rho^{\gamma}_\delta+\delta\hat\rho^\beta_\delta | \\
\notag &\leq \epsilon+ \left( \tilde\bE\int_0^T\int_{ A^l_{\hat\eta^*_\delta}(t)}\mathbbm{1}_E|a\hat\rho^{\gamma+\Theta}_\delta+\delta\hat\rho^{\beta+\Theta}_\delta |\right) ^{1-\frac1{r_\Theta}}|E|^{\frac1{r_\Theta}}\\
 & \leq \epsilon+ C_l |E|^{\frac1{r_\Theta}} \leq 2\epsilon. \label{equiint}
 \end{align}
 
 By excluding the tubular neighborhood $T^\delta_{\hat\eta^*_\delta}$ (thus the penalty term) and using the same interior and exterior pressure arguments as in Propositions \ref{interior_pressure} and \ref{bdry_pressure} we can extend this result to ${\sO_\alpha\setminus(\sO_{\hat\eta^*_\delta}\cup T^\delta_{\hat\eta^*_\delta})}$.
  Moreover, due to Proposition \ref{tube_pressure} we have that $\mathbbm{1}_{T^\delta_{\hat\eta^*_\delta}}(a\rho_\delta^\gamma+\delta\rho_\delta^\beta) \to 0$ in $L^1(\tilde\Omega\times(0,T)\times\sO_\alpha)$ which implies equi-integrability of $\mathbbm{1}_{T^\delta_{\hat\eta^*_\delta}}(a\rho_\delta^\gamma+\delta\rho_\delta^\beta)$.
  Hence, we conclude that,
\begin{align}\label{conv_weakpressure}
a\hat\rho^\gamma_\delta + \delta\hat\rho^\beta_\delta \rightharpoonup \bar p \text{ weakly in } L^1(\tilde\Omega;L^1(0,T;L^1(\sO_{\alpha}))).
\end{align}
\end{proof}

\subsection{Passage to the limit as $\delta \to 0$ in the weak formulation.} Our next aim is to pass $\delta \to 0$ in \eqref{delta}. Before explicitly passing to the limit in the terms in the weak formulation \eqref{delta}, we first show that the kinematic coupling condition holds for the limiting fluid/structure functions $(\bd{u}, v)$ in the limit $\delta\to0$. {{This is nontrivial, because the kinematic coupling condition does not necessarily hold for the approximate solutions $(\hat{\bd{u}}_{\delta}, \hat{v}_{\delta})$ due to the presence of the penalty term in the weak formulation. However, we will recover the kinematic coupling condition for the limiting functions as a consequence of the uniform bound given in \eqref{penaltybound}, which controls the difference between $\hat{\bd{u}_{\delta}}$ and $\hat v_{\delta} \bd{e}_{z}$ in the exterior neighborhood $T^{\delta}_{\hat\eta^{*}_{\delta}}$ which formally ``shrinks" to $\Gamma_{\eta^{*}}$ in the limit as $\delta \to 0$. This is the content of the following proposition:}}
\begin{proposition}
The kinematic coupling condition $\bd{u}|_{\Gamma_{\eta^*}} = v\bd{e}_{z}$ is satisfied $\tilde\bP$-almost surely.
\end{proposition}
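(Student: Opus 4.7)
The plan is to exploit the penalty estimate \eqref{penaltybound}, which gives $\tilde\bE\|\hat\bu_\delta - \hat v_\delta\bd{e}_z\|_{L^2(0,T;L^2(T^\delta_{\hat\eta^*_\delta}))}^2 \le C\delta$, in combination with a vertical fundamental-theorem-of-calculus argument that converts an integral over the thin tubular region $T^\delta_{\hat\eta^*_\delta}$ of width $\delta^{1/2-1/\beta}$ into a trace integral over the moving interface $\Gamma_{\hat\eta^*_\delta}$. Fix a smooth vector field $\bd{\phi}\in C^1([0,T]\times\bar\sO_\alpha;\R^3)$ and consider
\[
I_\delta \;:=\; \frac{1}{\delta^{1/2-1/\beta}}\int_0^T\int_{T^\delta_{\hat\eta^*_\delta}} (\hat\bu_\delta - \hat v_\delta\bd{e}_z)\cdot\bd{\phi}.
\]
By the Cauchy--Schwarz inequality, \eqref{penaltybound}, and $|T^\delta_{\hat\eta^*_\delta}|\le C\delta^{1/2-1/\beta}$, one gets $\tilde\bE|I_\delta| \le C\delta^{1/2}\cdot\delta^{(1/2-1/\beta)/2}\cdot\delta^{-(1/2-1/\beta)}\,\|\bd{\phi}\|_{L^\infty} = C\delta^{1/4+1/(2\beta)}\to 0$.

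Next, I would parametrize the tubular region by $(x,y,s)\in\Gamma\times(0,\delta^{1/2-1/\beta})$ via the map $(x,y,s)\mapsto (x,y,1+\hat\eta^*_\delta(t,x,y)+s)$, and write, using the fundamental theorem of calculus in the vertical direction,
\[
\hat\bu_\delta(x,y,1+\hat\eta^*_\delta+s,t) \;=\; \hat\bu_\delta|_{\Gamma_{\hat\eta^*_\delta}}(x,y,t) + \int_0^s \partial_z \hat\bu_\delta(x,y,1+\hat\eta^*_\delta+r,t)\,dr,
\]
and analogously for $\bd{\phi}$. Substituting into $I_\delta$ and averaging over $s\in (0,\delta^{1/2-1/\beta})$, I would get
\[
I_\delta \;=\; \int_0^T\!\!\int_\Gamma \bigl(\hat\bu_\delta|_{\Gamma_{\hat\eta^*_\delta}} - \hat v_\delta\bd{e}_z\bigr)\cdot \bd{\phi}|_{\Gamma_{\hat\eta^*_\delta}} \;+\; R_\delta,
\]
where the remainder $R_\delta$ contains integrals of $\partial_z\hat\bu_\delta$ and $\partial_z\bd{\phi}$ against factors of order $s/\delta^{1/2-1/\beta}\le 1$. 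Using the uniform bound of Lemma~\ref{energy_delta}(7) on $\hat\bu_\delta$ in $L^2(0,T;H^1(\mathcal O_{\hat\eta^*_\delta}\cup T^\delta_{\hat\eta^*_\delta}))$ and the smallness of $|T^\delta_{\hat\eta^*_\delta}|$, Cauchy--Schwarz shows $\tilde\bE|R_\delta|\to 0$.

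Combining these two estimates gives $\tilde\bE\bigl|\int_0^T\int_\Gamma (\hat\bu_\delta|_{\Gamma_{\hat\eta^*_\delta}} - \hat v_\delta\bd{e}_z)\cdot\bd{\phi}|_{\Gamma_{\hat\eta^*_\delta}}\bigr|\to 0$. The final step, which I expect to be the main obstacle, is to pass to the limit $\delta\to 0$ inside this trace integral in order to identify it with $\int_0^T\int_\Gamma(\bu|_{\Gamma_{\eta^*}}-v\bd{e}_z)\cdot\bd{\phi}|_{\Gamma_{\eta^*}}$. The difficulty is that both the interface and the function converge simultaneously, while $\hat\bu_\delta \to \bu$ only weakly in $L^2(0,T;H^1)$ on the moving domain, so the traces $\hat\bu_\delta|_{\Gamma_{\hat\eta^*_\delta}}$ converge only weakly in $L^2(0,T;H^{1/2})$ on a moving surface. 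To make this rigorous, one would flatten via an ALE-type change of variables determined by $\hat\eta^*_\delta$ (or compose with the extension result of Theorem~\ref{thm:extension}) to compare traces on a common reference surface, use the uniform convergence $\hat\eta^*_\delta\to\eta^*$ in $C(0,T;C(\Gamma))$ to pass the test function $\bd{\phi}|_{\Gamma_{\hat\eta^*_\delta}}\to\bd{\phi}|_{\Gamma_{\eta^*}}$ strongly, and the strong convergence $\hat v_\delta\to v$ in $L^2(0,T;L^2(\Gamma))$ to handle the structure term. Since $\bd{\phi}$ is arbitrary and the set where $\bd{\phi}|_{\Gamma_{\eta^*}}$ ranges is dense in $L^2(0,T;L^2(\Gamma))$, one concludes $\bu|_{\Gamma_{\eta^*}} = v\bd{e}_z$ $\tilde\bP$-almost surely.
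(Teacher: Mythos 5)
Your proposal is correct and takes essentially the same route as the paper: both exploit the penalty bound \eqref{penaltybound} together with a one-dimensional fundamental-theorem-of-calculus (Poincar\'e-type) argument in the $z$-direction over the thin exterior tube $T^\delta_{\hat\eta^*_\delta}$, and both then rely on the flattening $\bd{w}_\delta(t,x,y,z)=\hat\bu_\delta(t,x,y,1+\hat\eta^*_\delta(t,x,y)-z)$ (with the ensuing $W^{1,q}$, $q<2$, trace bound on the fixed reference strip $\Gamma\times[0,\alpha]$) to extract a.s.\ weak convergence of the traces $\hat\bu_\delta|_{\Gamma_{\hat\eta^*_\delta}}\rightharpoonup\bu|_{\Gamma_{\eta^*}}$ in $L^q([0,T]\times\Gamma)$. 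The only genuine difference is the intermediate quantity: you test against a fixed smooth $\bd\phi$ and show the dual pairing $\int_0^T\int_\Gamma(\hat\bu_\delta|_\Gamma-\hat v_\delta\bd{e}_z)\cdot\bd\phi|_\Gamma\to 0$ (requiring a density-of-traces step at the end), whereas the paper estimates $\tilde\bE\int_0^T\int_\Gamma|\hat\bu_\delta|_{\Gamma_{\hat\eta^*_\delta}}-\hat v_\delta\bd{e}_z|^2$ directly (via the estimate $\tilde\bE\int_0^T\int_{T^\delta_{\hat\eta^*_\delta}}|\hat\bu_\delta-\hat\bu_\delta|_{\Gamma_{\hat\eta^*_\delta}}|^2\lesssim\delta^{3(1/2-1/\beta)}$ and division by the tube thickness) and concludes by weak lower semicontinuity of the $L^q$-norm, which is marginally cleaner since it avoids the density argument. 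You correctly flag the trace-convergence step as the key obstacle; in the paper this is indeed where the nontrivial ingredient (the loss of integrability $W^{1,q}$ with $q<2$ caused by $\nabla\eta^*\notin L^\infty(\Gamma)$) enters, so your sketch is on target.
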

\begin{proof}
First, we compute the difference between the fluid velocity and its trace along the moving interface over the exterior tubular neighborhood defined in \eqref{tube}:
\begin{align*}
\tilde\bE\int_{0}^{T} \int_{T^{\delta}_{\hat\eta^{*}_{\delta}}} \left|\hat{\bu}_{\delta} - \hat{\bu}_{\delta}|_{\Gamma_{\hat\eta^{*}_{\delta}}}\right|^{2} &= \tilde\bE\int_{0}^{T} \int_{\Gamma} \int_{\hat\eta^{*}_{\delta}}^{\hat\eta^{*}_{\delta} + \delta^{\left(\frac{1}{2} - \frac{1}{\beta}\right)}} \Big|\hat{\bu}_{\delta}(t, x, y, z) - \hat{\bu}_{\delta}(t, x, y, \hat\eta^{*}_{\delta}(t, x, y))\Big|^{2} dz dy dx dt \\
&=\tilde\bE \int_{0}^{T} \int_{\Gamma} \int_{\hat\eta^{*}_{\delta}}^{\hat\eta^{*}_{\delta} + \delta^{\left(\frac{1}{2} - \frac{1}{\beta}\right)}} \left|\int_{\hat\eta^{*}_{\delta}}^{z} \partial_{z} \hat{\bu}_{\delta}(t, x, y, w) dw \right|^{2} dz dy dx dt \\
&\le \tilde\bE\left(\delta^{2 \left(\frac{1}{2} - \frac{1}{\beta}\right)} \int_{0}^{T} \int_{\Gamma} \int_{\hat\eta^{*}_{\delta}}^{\hat\eta^{*}_{\delta} + \delta^{\left(\frac{1}{2} - \frac{1}{\beta}\right)}} \int_{\hat\eta^{*}_{\delta}}^{\hat\eta^{*}_{\delta} + \delta^{\left(\frac{1}{2} - \frac{1}{\beta}\right)}} |\partial_{z}\hat{\bu}_{\delta}(t, x, y, w)|^{2} dw dz dy dx dt\right) \\
&\le \delta^{3\left(\frac{1}{2} - \frac{1}{\beta}\right)} \tilde\bE\|\partial_{z}\hat{\bu}_{\delta}\|^{2}_{L^{2}(0, T; L^{2}(T^{\delta}_{\hat\eta^{*}_{\delta}}))}\leq C \delta^{3\left(\frac{1}{2} - \frac{1}{\beta}\right)}.
\end{align*}
We then combine this estimate with the uniform bound \eqref{penaltybound} on the difference between the fluid and structure velocity in the exterior tubular neighborhood to deduce:
\begin{multline*}
\tilde\bE\int_{0}^{T} \int_{\Gamma} \left|\hat{\bu}_{\delta}|_{\Gamma_{\hat\eta^{*}_{\delta}}} - \hat{v}_{\delta} \bd{e}_{z}\right|^{2} \le \frac{C_{r}}{\delta^{\left(\frac{1}{2} - \frac{1}{\beta}\right)}} \left(\tilde\bE\int_{0}^{T} \int_{T^{\delta}_{\hat\eta^{*}_{\delta}}} \left|\hat{\bu}_{\delta} - \hat{\bu}_{\delta}|_{\Gamma_{\hat\eta^{*}_{\delta}}}\right|^{2} + \tilde\bE\int_{0}^{T} \int_{T^{\delta}_{\hat\eta^{*}_{\delta}}} |\hat{\bu}_{\delta} - \hat{v}_{\delta} \bd{e}_{z}|^{2}\right) \\
\le C\left[\delta^{2\left(\frac{1}{2} - \frac{1}{\beta}\right)} + \delta^{-\left(\frac{1}{2} - \frac{1}{\beta}\right)} \left(\tilde\bE\int_{0}^{T} \int_{T^{\delta}_{\hat\eta^{*}_{\delta}}} |\hat{\bu}_{\delta} - \hat{v}_{\delta} \bd{e}_{z}|^{2}\right)\right] \le C\left(\delta^{2\left(\frac{1}{2} - \frac{1}{\beta}\right)} + \delta^{\left(\frac{1}{2} + \frac{1}{\beta}\right)}\right),
\end{multline*}
so we conclude that
\begin{equation}\label{qnoslip}
\tilde\bE\int_{0}^{T} \int_{\Gamma} \left|\hat{\bu}_{\delta}|_{\Gamma_{\hat\eta^{*}_{\delta}}} - \hat{v}_{\delta} \bd{e}_{z}\right|^{2} \to 0, \quad \text{ as } \delta \to 0 .
\end{equation}

The proof would be complete if we can show an appropriate convergence of the left-hand side of \eqref{qnoslip} to a quantity involving the limiting $\bd{u}|_{\Gamma_{\eta^{*}}}$ and $v \bd{e}_{z}$ as $\delta \to 0$. For the structure velocities, we recall that $\hat{v}_{\delta} \to \hat{v}$ strongly $\tilde\bP$-almost surely in $L^{2}(0, T; L^{2}(\Gamma))$. It remains to consider the convergence of the traces of the fluid velocities.

We claim that we have a similar weak convergence of the traces along the moving interface $\hat{\bu}_{\delta}|_{\Gamma_{\hat{\eta}^{*}_{\delta}}}$ to $\hat{\bu}|_{\Gamma_{\eta}}$ for the fluid velocities. To see this, we note that the following function $w$ is well-defined for each $\delta$ on $[0, T] \times \Gamma \times [0, \alpha]$, by the properties of the stopping of the process depending on the parameter $\alpha$:
\begin{equation*}
\bd{w}_{\delta}(t, x, y, z) = \hat{\bd{u}}_\delta(t, x, y, 1 + \hat\eta^{*}_{\delta}(t, x, y) - z).
\end{equation*}
Note that because $\nabla \bd{u}$ is uniformly bounded in $L^{p}(\Omega; L^{2}([0, T] \times \mathcal{O}_{\hat\eta^{*}_{\delta}}))$ for any $1 \le p < \infty$,  the traces $\hat{\bu}_{\delta}|_{\Gamma_{\hat\eta^{*}_{\delta}}} := \bd{w}_{\delta}|_{\Gamma \times \{0\}}$ are well-defined  and, for any $q<2$, 
\begin{align}\label{tracecontinuity}
        \|\bd{w}_\delta(t)|_{z=0}\|_{W^{1-\frac1q,q}(\Gamma)} \leq C_{tr}\|\bd{w}_\delta(t)\|_{W^{1,q}(\Gamma\times[0,\alpha])} \leq C(\alpha)\|\hat\bu_\delta(t)\|_{H^1(\sO_{\hat\eta^*_\delta})},
    \end{align}
    where $C_{tr}$ is the constant appearing in the usual trace theorem that depends only on $\alpha, \Gamma$.

Moreover, note, due to the strong uniform convergence of $\hat\eta^*_\delta$ and weak convergence of $\mathbbm{1}_{\sO_{\hat\eta^*_\delta}}\hat\bu_\delta$ in $L^{2}(0, T; L^{6}(\mathcal{O}_{\alpha}))$ due to Theorem \ref{skorohod}, that
\begin{align*}
    \int_0^T\int_{\Gamma\times[0,\alpha]}\bd{w}_\delta\bq = \int_0^T\int_{\sO_\alpha}\mathbbm{1}_{S^\alpha_{\hat\eta^*_\delta}}\hat\bu_{\delta}\bq \to \int_0^T\int_{\sO_\alpha}\mathbbm{1}_{S^\alpha_{\eta^*}}\bu\bq = \int_0^T\int_{\Gamma\times[0,\alpha]}\bd{w}\bq.
\end{align*}
where $S^\alpha_{\eta}:=\sO_{\eta}\setminus\sO_{\eta-\alpha}$ and $\bd{w}(t,x,y,z)=\bu(t,x,y,1+\eta^*(t,x,y)-z)$. Additionally, the $\delta$-independent almost sure bound \eqref{tracecontinuity}, implies that
$\bd{w}_\delta \rightharpoonup \bd{w}$ weakly in $L^2(0,T;W^{1,q}(\Gamma\times [0,\alpha]))$ almost surely. Hence $\hat{\bu}_\delta|_{\Gamma_{\hat\eta^{*}_{\delta}}}$ converges weakly $\tilde\bP$-almost surely in $L^{q}([0, T] \times \Gamma)$ for $1 \le q < 2$, to $\bd{w}|_{z=0}=\bu|_{\Gamma_{\eta^*}}$.

So combining this weak convergence and the $\tilde\bP$-almost sure strong convergence of $\hat{v}_{\delta} \to v$ in $L^{2}(0, T; L^{2}(\Gamma))$ with \eqref{qnoslip}, we obtain using weak lower semicontinuity that for $1 \le q < 2$:
\begin{equation*}
\tilde\bE\int_{0}^{T} \int_{\Gamma} \Big|\bu|_{\Gamma_{\eta^*}} - v \bd{e}_{z}\Big|^{q} = 0,
\end{equation*}
so that $\bu|_{\Gamma_{\eta^*}} = v\bd{e}_{z}$, $\tilde\bP$-almost surely.
\end{proof}

Finally, we comment on how to pass to the limit in the terms in the weak formulation \eqref{newdelta} for the new random variables on the probability space. The only involved terms in the weak formulation \eqref{newdelta} for the limit passage $\delta \to 0$ are the advection term $\displaystyle \int_{0}^{t} \int_{\mathcal{O}_{\alpha}} (\hat{\rho}_{\delta} \hat{\bu}_{\delta} \otimes \hat\bu_{\delta}) : \nabla \bd{q}$ and the stochastic integrals. We begin with showing convergence of the advection term. A primary difficulty that is involved in both convergences of the advection term and the stochastic integral is that we must estimate the contribution of $\hat{\rho}_{\delta}$ outside of the approximate domain $\mathcal{O}_{\hat{\eta}^{*}_{\delta}}$, since the spatial integrals are over the entire maximal domain $\mathcal{O}_{\alpha}$. 

\medskip

\noindent \textbf{Limit passage for the advection term.} For the limit of the advection term, we will need to use the $(\hat\sF_t)_{t \geq 0}$-stopping time $\tau^{\eta}$ and the $(\hat\sF^\delta_t)_{t \geq 0}$-stopping time $\tau^{\eta}_{\delta}$ because the estimate Proposition \ref{vacuum2} only holds up until the stopping time $\tau^{\eta}_{\delta}$ defined in \eqref{taudelta}. We want to pass to the limit in the weak formulation almost surely, for almost every $t \in [0, \tau^{\eta}]$, where $\tau^{\eta}$ is the stopping time corresponding to the \textit{limiting structure displacement}. So consider any $t \in [0, \tau^{\eta}]$. By properties of stopping times, we have that
\begin{equation*}
\tau^{\eta} \le \liminf_{\delta \to 0} \tau^{\eta}_{\delta}, \quad \tilde{\bP}\text{-almost surely.}
\end{equation*}

So given any (deterministic) time $t \in [0, \tau^{\eta}(\tilde\omega)]$ and a specific outcome $\tilde\omega \in \tilde\Omega$, we can find a subsequence $\delta_{m} \to 0$ (which depends on the outcome $\tilde\omega \in \tilde\Omega$) such that
\begin{equation}\label{liminf}
t \le \tau^{\eta}_{\delta_{m}}(\tilde{\omega}), \quad \text{ for all } m.
\end{equation}
We then want to show that given a sequence of $(\hat{\sF}^\delta_{t})_{t\in[0,T]}$-adapted, essentially bounded, smooth processes $\bd{q}_{\delta}$ that converge $\tilde\bP$-almost surely and strongly to $\bd{q}$ in $C(0, T; C^{k}(\mathcal{O}_{\alpha}))$ for any positive integer $k$, we have that:
\begin{equation}\label{advectionlimit}
\int_{0}^{t} \int_{\mathcal{O}_{\alpha}} (\hat{\rho}_{\delta_m} \hat{\bu}_{\delta_m} \otimes \hat{\bu}_{\delta_m}) : \nabla \bd{q}_{\delta_m} \to \int_{0}^{t} \int_{\mathcal{O}_{\eta}} \rho \bu \otimes \bu : \nabla \bd{q},
\end{equation}
for the specific (deterministic) choice of { $t \in [0, \tau^{\eta}(\tilde\omega)]$}, the specific outcome $\tilde\omega$, and the corresponding (random) subsequence $\delta_m \to 0$. 

To do this, we separate the integral into two contributions, an integral over the physical domain with the exterior tubular neighborhood included and an integral over the remainder of the maximal domain:
\begin{multline}\label{advectiondecomposition}
\int_{0}^{t} \int_{\mathcal{O}_{\alpha}} (\hat{\rho}_{\delta_m} \hat{\bu}_{\delta_m} \otimes \hat{\bu}_{\delta_m}) : \nabla \bd{q}_{\delta_m} \\
= \int_{0}^{t} \int_{\mathcal{O}_{\hat{\eta}_{\delta_m}^{*}} \cup T^{\delta_m}_{\hat{\eta}_{\delta_m}^{*}}} \hat{\rho}_{\delta_m} \hat{\bu}_{\delta} \otimes \hat{\bu}_{\delta_m} : \nabla \bd{q}_{\delta_m} + \int_{0}^{t} \int_{(\mathcal{O}_{\hat{\eta}_{\delta_m}^{*}} \cup T^{\delta_m}_{\hat{\eta}_{\delta_m}^{*}})^{c}} \hat{\rho}_{\delta_m} \hat{\bu}_{\delta_m} \otimes \hat{\bu}_{\delta_m} : \nabla \bd{q}_{\delta_m}.
\end{multline}

\noindent \underline{\textit{The exterior integral.}} We first consider the contribution on the exterior $(\mathcal{O}_{\hat{\eta}^{*}_{\delta}} \cup T^{\delta}_{\hat{\eta}^{*}_{\delta}})^{c}$ and omit the $m$ index on the subsequence to simplify notation, where we emphasize that the (random) subsequence of $\delta_{m}$ although not explicit in the notation, is still an important component of the proof due to \eqref{liminf}. The goal is to show that for this fixed outcome $\tilde\omega \in \tilde\Omega$ and fixed $t \in [0, \tau^{\eta}(\tilde\omega)]$, we have that
\begin{equation}\label{exteriorlimitzero}
\int_{0}^{t} \int_{(\mathcal{O}_{\hat{\eta}^{*}_{\delta}} \cup T^{\delta}_{\hat{\eta}^{*}_{\delta}})^{c}} \hat{\rho}_{\delta} \hat{\bu}_{\delta} \otimes \hat{\bu}_{\delta} : \nabla \bd{q}_{\delta} \to 0.
\end{equation}
Because the random test functions we construct (in the next Section \ref{sec:testfunction}) will satisfy the convergence $\bd{q}_{\delta} \to \bd{q}$ in $C(0, T; C^{k}(\mathcal{O}_{\alpha}))$ for any positive integer $k$, strongly and $\tilde{\bP}$-almost surely, we have that $\|\nabla \bd{q}_{\delta}(\tilde{\omega})\|_{L^{\infty}(\mathcal{O}_{\alpha})}$ is uniformly bounded in $\delta$. For this fixed but arbitrary $\tilde\omega \in \tilde\Omega$ and $t \in [0, \tau^{\eta}(\tilde\omega)]$, we estimate:
\begin{multline*}
\left|\int_{0}^{t} \int_{(\mathcal{O}_{\hat{\eta}^{*}_{\delta}} \cup T^{\delta}_{\hat{\eta}^{*}_{\delta}})^{c}} \hat{\rho}_{\delta} \hat{\bu}_{\delta} \otimes \hat{\bu}_{\delta} : \nabla \bd{q}_{\delta}\right| \\
\le C(\tilde\omega) \|\sqrt{\hat{\rho}_{\delta}}\|_{L^{\infty}(0, T; L^{r}((\mathcal{O}_{\hat{\eta}^{*}_{\delta}} \cup T^{\delta}_{\hat{\eta}^{*}_{\delta}})^{c})} \|\sqrt{\hat{\rho}_{\delta}} \hat{\bu}_{\delta}\|_{L^{\infty}(0, T; L^{2}(\mathcal{O}_{\alpha}))} \|\hat{\bu}_{\delta}\|_{L^{2}(0, T; L^{q}(\sO_\alpha))},
\end{multline*}
for some $q<6$ and $3<r<2\gamma$ so that $\frac1r+\frac1q=\frac12$.

Hence, \eqref{exteriorlimitzero} will follow once we make the following observations:
\begin{itemize}
    \item First, $\|\sqrt{\hat{\rho}_{\delta}} \hat{\bu}_{\delta}(\tilde\omega)\|_{L^{\infty}(0, T; L^{2}(\mathcal{O}_{\alpha}))}$ is uniformly bounded in $\delta$. 
     \item We use the ideas employed in Proposition \ref{vacuum2} and interpolation to conclude that for some $0 < \theta < 1$, depending only on $r$ and $\gamma$, and  any $t\in[0,\tau^\eta_\delta(\tilde\omega)]$:
    \begin{equation}
    \begin{split}\label{nu0}
    & \|\sqrt{\hat{\rho}}_{\delta}\|_{L^{r}((\mathcal{O}_{\hat{\eta}^{*}_{\delta}} \cup T^{\delta}_{\hat{\eta}^{*}_{\delta}})^{c})} =  \|\hat{\rho}_{\delta}\|^\frac12_{L^{\frac{r}2}((\mathcal{O}_{\hat{\eta}^{*}_{\delta}} \cup T^{\delta}_{\hat{\eta}^{*}_{\delta}})^{c})} \\&\le \|\hat{\rho}_{\delta}\|^{\frac{\theta}2}_{L^{1}((\mathcal{O}_{\hat{\eta}^{*}_{\delta}} \cup T^{\delta}_{\hat{\eta}^{*}_{\delta}})^{c}))} \|\hat{\rho}_{\delta}\|_{L^{\gamma}((\mathcal{O}_{\hat{\eta}^{*}_{\delta}} \cup T^{\delta}_{\hat{\eta}^{*}_{\delta}})^{c})}^{\frac{1 - \theta}2}\le C(\tilde\omega)\delta^{\frac{\theta\nu_*}2}=: C(\tilde\omega)\delta^{\nu_0}.
    \end{split}
    \end{equation}
    Here we define $\nu_0=\frac{\theta\nu_*}2$ where $
    \nu_*$ is any constant that satisfies the condition in Proposition \ref{vacuum2} and $\theta$ is determined by the interpolation inequality \eqref{nu0}.
    Here, because of the condition \eqref{liminf} on the subsequence $\delta_m$ we have chosen, we can apply Proposition \ref{vacuum2}, since this lemma only holds for times up to the stopping time $\tau^{\eta}_{\delta}$. Note that here, it is essential that we are applying the result about the $L^{1}$ integrability of the pressure in the exterior domain in Proposition \ref{vacuum2} pathwise, as in \eqref{vacuumpathwise}. 
    \item 
    Thanks to \eqref{mulowerbound} and Theorem \ref{skorohod} we have
    \begin{align}
    \delta^{\nu_0/2} \|\hat{\bu}_{\delta}\|_{L^2(0,T;L^{q}(\mathcal{O}_\alpha))}  &\leq   \delta^{\nu_0/2} \|\nabla\hat{\bu}_{\delta}\|_{L^2(0,T;L^{2}(\mathcal{O}_\alpha))}\leq \|\sqrt{\mu^{\hat\eta^*_\delta}_\delta}\nabla\hat{\bu}_{\delta}\|_{L^2(0,T;L^{2}(\sO_\alpha))}\le C(\tilde\omega).
    \end{align}
\end{itemize}
Then, we get the desired convergence \eqref{exteriorlimitzero}.

\medskip

\noindent \underline{\textit{The interior integral.}} Next, we show that for the (random) subsequence $\delta_{m}$ satisfying \eqref{liminf} for our fixed but arbitrary $t \in [0, \tau^{\eta}(\tilde\omega)]$ and $\tilde\omega \in \tilde\Omega$, and for a sequence of (random) test functions for which $\bd{q}_{\delta}(\tilde\omega) \to \bd{q}(\tilde\omega)$ in $C(0, T; C^{k}(\mathcal{O}_{\alpha}))$ for all nonnegative integers $k$, we have that:
\begin{equation}\label{interioradvection}
\int_{0}^{t} \int_{{{\mathcal{O}_{\hat{\eta}^{*}_{\delta}} \cup T^{\delta}_{\hat{\eta}_{\delta}^{*}}}}} \hat{\rho}_{\delta} \hat{\bu}_{\delta} \otimes \hat{\bu}_{\delta} : \nabla \bd{q}_{\delta} \to \int_{0}^{t} \int_{\mathcal{O}_{\eta}} \rho \bu \otimes \bu : \nabla \bd{q},
\end{equation}
where we have omitted the explicit labeling of the (random) subsequence $\delta_{m} \to 0$ satisfying \eqref{liminf} with the subscript $m$, for simplicity of notation. By the condition \eqref{liminf}, we have that $\displaystyle \int_{0}^{t \wedge \tau^{\eta}} \int_{\mathcal{O}_{\eta}} \rho \bu \otimes \bu : \nabla \bd{q} = \int_{0}^{t\wedge \tau^{\eta}} \int_{\mathcal{O}_{\eta^{*}}} \rho \bu \otimes \bu : \nabla \bd{q}$, and hence, this convergence will follow from Lemma \ref{rhousquared} (see below) and \eqref{convindicator}.

\medskip

\noindent \underline{\textit{Conclusion of advection term limit.}} Thus, combining both the exterior estimate \eqref{exteriorlimitzero} and the interior estimate \eqref{interioradvection} gives the desired convergence \eqref{advectionlimit} for an arbitrary time $t \in [0, \tau^{\eta}(\tilde\omega)]$ and for an arbitrary outcome $\tilde \omega$ in some measurable set of probability one in $\tilde\Omega$. 

\medskip

\noindent \textbf{Convergence of the stochastic integrals.} Next we will show how to treat the stochastic integral. We first begin by showing two essential convergences that will appear several times in the proof of the convergence of the stochastic integral for the fluid, which involves terms of the type $\rho_{\delta}|\hat{\bu}_{\delta}|^{2}$.

\begin{lemma}\label{rhousquared}
For some $r > 1$, we have the following $\tilde\bP$-almost sure convergences: 
\begin{align*}
\mathbbm{1}_{\mathcal{O}_{\hat{\eta}^{*}_{\delta}}} \hat{\rho}_{\delta} |\hat{\bu}_{\delta}|^{2} & \rightharpoonup \mathbbm{1}_{\mathcal{O}_{\eta^{*}}} \rho|\bu|^{2}, \quad \text{ weakly in } L^{r}((0,T) \times \mathcal{O}_{\alpha}),\\
\mathbbm{1}_{\mathcal{O}_{\hat{\eta}^{*}_{\delta}}} \hat{\rho}_{\delta} \hat{\bu}_{\delta} \otimes \hat{\bu}_{\delta} &\rightharpoonup \mathbbm{1}_{\mathcal{O}_{\eta^{*}}} \rho \bu \otimes \bu, \quad \text{ weakly in } L^{r}((0,T) \times \mathcal{O}_{\alpha}).
\end{align*}
\end{lemma}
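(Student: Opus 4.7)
The plan is to work pathwise on the new probability space by fixing an outcome $\tilde\omega$ in a set of full $\tilde\bP$-measure on which all the convergences of Theorem \ref{skorohod} hold, and then adapt the deterministic moving-domain compactness argument of Lemma 2.8 in \cite{BreitFSI} (which is itself a weak-strong type product lemma for functions defined on time-varying domains). The key structural observation is that, although neither $\hat\rho_\delta$ nor $\hat\bu_\delta$ converges strongly, the \emph{momentum} $\hat\rho_\delta\hat\bu_\delta$ converges strongly in a negative-regularity-in-space topology, and this strong convergence can be paired against the weak convergence of $\hat\bu_\delta$ in a positive-regularity-in-space topology.

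First I would establish uniform integrability of the products. By H\"older and Sobolev on $\mathcal{O}_{\hat\eta^*_\delta}\cup T^\delta_{\hat\eta^*_\delta}$,
\begin{equation*}
\bigl\|\mathbbm{1}_{\mathcal{O}_{\hat\eta^*_\delta}}\hat\rho_\delta|\hat\bu_\delta|^2\bigr\|_{L^1(0,T;L^{3\gamma/(\gamma+3)}(\mathcal{O}_\alpha))}\le C\|\hat\rho_\delta\|_{L^\infty(0,T;L^\gamma)}\|\hat\bu_\delta\|^2_{L^2(0,T;L^6(\mathcal{O}_{\hat\eta^*_\delta}\cup T^\delta_{\hat\eta^*_\delta}))},
\end{equation*}
which by Lemma \ref{energy_delta}(1),(7),(8) and the condition $\gamma>3/2$ is $\tilde\bP$-a.s.\ bounded uniformly in $\delta$. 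Interpolating with the $L^\infty_tL^1_x$ bound produced by Lemma \ref{energy_delta}(5), one obtains an $L^r((0,T)\times\mathcal{O}_\alpha)$ bound for some $r>1$; the tensor product is handled identically. Hence, along a subsequence, both sequences converge weakly in $L^r$ to limits $\overline{\rho|\bu|^2}$ and $\overline{\rho\bu\otimes\bu}$.

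To identify these limits, I would fix a scalar test function $\varphi\in C^\infty_c((0,T)\times\mathcal{O}_\alpha)$ and write
\begin{equation*}
\int_0^T\!\!\int_{\mathcal{O}_\alpha}\mathbbm{1}_{\mathcal{O}_{\hat\eta^*_\delta}}\hat\rho_\delta\hat\bu_\delta\cdot\hat\bu_\delta\,\varphi=\int_0^T\bigl\langle\hat\rho_\delta\hat\bu_\delta,\,\mathbbm{1}_{\mathcal{O}_{\hat\eta^*_\delta}}\hat\bu_\delta\varphi\bigr\rangle_{W^{-1,p},\,W^{1,p'}_0}\,dt,
\end{equation*}
choosing the conjugate exponents so that the duality closes. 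From Theorem \ref{skorohod}, $\hat\rho_\delta\hat\bu_\delta\to\rho\bu$ strongly in $C([0,T];H^{-l}(\mathcal{O}_\alpha))$ for $l>5/2$, and the uniform momentum bound together with the momentum equation \eqref{newdelta} allows one to interpolate down (with time derivative in a negative Sobolev space) to strong convergence in $L^p(0,T;W^{-1,p}(\mathcal{O}_\alpha))$ for an appropriate $p$. On the other hand, the strong convergence $\mathbbm{1}_{\mathcal{O}_{\hat\eta^*_\delta}}\to\mathbbm{1}_{\mathcal{O}_{\eta^*}}$ in every $L^q((0,T)\times\mathcal{O}_\alpha)$, $q<\infty$ (a consequence of \eqref{etauniform}), combined with the weak convergence of $\hat\bu_\delta$ in $L^2(0,T;L^q)$ for $q<6$ from Lemma \ref{energy_delta}(8), yields $\mathbbm{1}_{\mathcal{O}_{\hat\eta^*_\delta}}\hat\bu_\delta\rightharpoonup\mathbbm{1}_{\mathcal{O}_{\eta^*}}\bu$ weakly in $L^2(0,T;L^q)$, and multiplication by $\varphi$ keeps this convergence weak in $L^{p'}(0,T;W^{1,p'}_0)$. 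A strong-weak pairing then gives identification of the limit as $\rho\bu\cdot\bu\,\mathbbm{1}_{\mathcal{O}_{\eta^*}}$. The tensor product case is handled by taking the test function to be a matrix-valued $\varphi$ and paired componentwise.

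The main obstacle is finding a matched pair of exponents $(p,p')$ for which the momentum converges strongly in $L^p(W^{-1,p})$ while $\hat\bu_\delta\varphi$ is uniformly bounded in $L^{p'}(W^{1,p'}_0)$; here one exploits the fact that $\hat\rho_\delta\hat\bu_\delta$ has regularity better than merely $C(H^{-l})$ when tested against functions vanishing near the (moving) boundary, together with the interior Sobolev regularity of $\hat\bu_\delta$. Uniqueness of the weak limit, uniform $L^r$-bounds from Step~1, and passage from subsequences to the full sequence conclude the proof.
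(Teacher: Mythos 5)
Your identification step hinges on the claim that ``From Theorem \ref{skorohod}, $\hat\rho_\delta\hat\bu_\delta\to\rho\bu$ strongly in $C([0,T];H^{-l}(\mathcal{O}_\alpha))$ for $l>5/2$''. That convergence is \emph{not} available in the $\delta\to 0$ limit passage. Compare the phase spaces: in Theorems \ref{skorohod_galerkin} and \ref{skorohod_viscous} one indeed has $\mathcal{X}_{\rho\bu}=C_w(0,T;L^{\frac{2\beta}{\beta+1}})\cap C(0,T;H^{-l})$, but in Section \ref{sec:delta_tight} the phase space is deliberately weakened to $\mathcal{X}_{\rho\bu}=(L^\infty(0,T;L^{\frac{2\gamma}{\gamma+1}}(\mathcal{O}_\alpha)),w^*)$, i.e.\ weak-star only, no $C(H^{-l})$ component. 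The reason is structural: the Kolmogorov/Arzel\`a--Ascoli route to temporal equicontinuity of the momentum requires testing \eqref{newdelta} with $\bd{q}$ and $\psi=0$ and estimating increments, and then the penalty term contributes $\frac{1}{\delta}\int\!\!\int_{T^\delta_{\hat\eta^*_\delta}}(\hat\bu_\delta-\hat v_\delta\bd{e}_z)\cdot\bd{q}$, which by \eqref{penaltybound} is only $O(\delta^{-1/2})$ --- it blows up as $\delta\to 0$. So one cannot ``interpolate down'' to strong convergence in $L^p(W^{-1,p})$ uniformly in $\delta$; the momentum equation does not deliver that here, unlike in the $n\to\infty$ and $\varepsilon\to 0$ passages. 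Your strong-weak duality pairing therefore has nothing to pair against.

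The paper circumvents this obstruction by a structurally different two-step route. First, it applies the Carath\'eodory-function/Young-measure machinery built into Theorem \ref{skorohod} (property \eqref{caratheodoryconv}) with $H(\rho,\bu)=\rho|\bu|^2$, using the pointwise bound $\rho|\bu|^2\le\rho^q+|\bu|^{2q/(q-1)}$ for $q\in(3/2,\gamma)$ to verify the growth hypothesis, obtaining $\tilde\bP$-a.s.\ weak $L^r$ convergence of $\mathbbm{1}_{\mathcal{O}_{\hat\eta^*_\delta}}\hat\rho_\delta|\hat\bu_\delta|^2$ to \emph{some} limit $\overline{\rho|\bu|^2}$ along the full sequence, with no temporal compactness input whatsoever. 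Second, it identifies this limit pathwise by invoking the deterministic moving-domain compactness Lemma 2.8 of \cite{BreitFSI} outcome-by-outcome (yielding subsequential identification, subsequence depending on $\tilde\omega$), and then appeals to uniqueness of the already-established weak limit to upgrade to the full sequence. The Young-measure step thus plays the role that strong convergence of $\rho\bu$ plays in your draft; without it, your argument has a genuine gap that cannot be closed by the interpolation you sketch. If you want to retain a pairing-style argument, you would need an independent source of temporal compactness for $\hat\rho_\delta\hat\bu_\delta$ uniform in $\delta$, which the penalty structure of the approximate problem forbids.
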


\begin{proof}
We consider just the first convergence of $\hat{\rho}_{\delta}|\hat{\bu}_{\delta}|^{2}$ since the second convergence is analogous. Note that for some $q \in (3/2, \gamma)$, which is possible since $\gamma > 3/2$, we have that:
\begin{equation*}
\rho|\bu|^{2} \le |\rho|^{q} + |\bu|^{\frac{2q}{q - 1}},
\end{equation*}
where $\frac{2q}{q - 1} < 6$. So by considering the Carath\'{e}odory function $H(\rho, \bu) = \rho|\bu|^{2}$, we obtain from \eqref{caratheodoryconv} the following $\tilde\bP$-almost sure convergence by considering the weak limit of $H(\hat{\rho}_{\delta}, \mathbbm{1}_{\mathcal{O}_{\hat{\eta}^{*}_{\delta}}}\hat{\bu}_{\delta})$:
\begin{equation}\label{cararhou2}
\mathbbm{1}_{\mathcal{O}_{\hat{\eta}^{*}_{\delta}}} \hat{\rho}_{\delta} |\hat{\bu}_{\delta}|^{2} \rightharpoonup \overline{\rho|\bu|^{2}}, \text{ weakly in $L^{r}((0,T) \times \mathcal{O}_{\alpha})$ for some $r > 1$.}
\end{equation}
However, we have not yet identified this weak limit $\overline{\rho|\bu|^{2}}$ from \eqref{caratheodoryconv} explicitly as $\mathbbm{1}_{\mathcal{O}_{\eta^{*}}} \rho|\bu|^{2}$, and if we are able to do this, the proof would be complete.

So we will now identify the weak limit $\overline{\rho|\bu|^{2}} = \mathbbm{1}_{\mathcal{O}_{\eta^{*}}} \rho|\bu|^{2}$. By the application of the Skorohod representation theorem in Theorem \ref{skorohod}, we have the following $\tilde\bP$-almost sure convergences:
\begin{align*}
&\hat{\eta}^{*}_{\delta} \rightharpoonup \eta^{*}, \quad \text{ weakly-star in } L^{\infty}(0, T; H^{2}(\Gamma)),\\
&\partial_{t} \hat{\eta}^{*}_{\delta} \rightharpoonup \partial_{t} {\eta}^{*}, \quad \text{ weakly-star in } L^{\infty}(0, T; L^{2}(\Gamma)).
\end{align*}
In addition, we have from Theorem \ref{skorohod} and the strong convergence of 
\begin{align}\label{convindicator}
{{\mathbbm{1}_{\mathcal{O}_{\hat{\eta}^{*}_{\delta}}}\text{ and }}} \mathbbm{1}_{\left(\mathcal{O}_{\hat{\eta}^{*}_{\delta}} \cup T^{\delta}_{\hat{\eta}^{*}_{\delta}}\right)} \to \mathbbm{1}_{\mathcal{O}_{\eta^{*}}}, \quad \tilde\bP\text{-almost surely in}\quad L^{\infty}(0, T; L^{q}(\Gamma))\quad \forall 1 \le q < \infty,
\end{align}
that
\begin{align*}
&\mathbbm{1}_{\left(\mathcal{O}_{\hat{\eta}^{*}_{\delta}} \cup T^{\delta}_{\hat{\eta}^{*}_{\delta}}\right)} \hat{\bu}_{\delta} \rightharpoonup \mathbbm{1}_{\mathcal{O}_{\eta^{*}}} \bu, \quad \text{ weakly in } L^{2}(0, T; L^{r}(\mathcal{O}_{\alpha})),\\
&\mathbbm{1}_{\left(\mathcal{O}_{\hat{\eta}^{*}_{\delta}} \cup T^{\delta}_{\hat{\eta}^{*}_{\delta}}\right)} \nabla \hat{\bu}_{\delta} \rightharpoonup \mathbbm{1}_{\mathcal{O}_{\eta^{*}}} \nabla \bu, \quad \text{ weakly in } L^{2}(0, T; L^{r}(\mathcal{O}_{\alpha})),
\end{align*}
for any $1 \le r < 2$. In addition,
\begin{equation*}
\hat{\rho}_{\delta} \hat{\bu}_{\delta} \rightharpoonup \rho \bu, \quad \text{ weakly-star in } L^{\infty}(0, T; L^{\frac{2\gamma}{\gamma + 1}}(\mathcal{O}_{\alpha})),
\end{equation*}
where $\frac{2\gamma}{\gamma + 1}>6/5$ since $\gamma > 3/2$. Applying a deterministic compactness result for establishing weak convergence of products of functions defined on moving domains, given by Lemma 2.8 in \cite{BreitFSI}, to the current situation outcome by outcome for each $\tilde\omega \in \tilde\Omega$, we obtain that:
\begin{equation}\label{aubinlionsomega}
\mathbbm{1}_{\mathcal{O}_{\hat{\eta}^{*}_{\delta_m}}} \hat{\rho}_{\delta_m} |\hat{\bu}_{\delta_m}|^{2} (\tilde\omega) \rightharpoonup \mathbbm{1}_{\mathcal{O}_{\eta^{*}}} \rho |\bu|^{2}(\tilde\omega), \quad \text{ weakly in $L^{r}((0, T) \times \mathcal{O}_{\alpha})$},
\end{equation}
for some $r > 1$, along a subsequence $\delta_m \to 0$ that \textit{depends on the outcome $\tilde\omega \in \tilde\Omega$}. Even though this weak convergence in \eqref{aubinlionsomega} is along a subsequence that depends on the outcome $\tilde\omega \in \tilde\Omega$, we get the desired convergence as $\delta \to 0$ (without considering a subsequence) of $\mathbbm{1}_{\sO_{\hat{\eta}^{*}_{\delta}} }\hat{\rho}_{\delta} |\hat{\bu}_{\delta}|^{2} \rightharpoonup \mathbbm{1}_{\mathcal{O}_{\eta^{*}}} \rho |\bu|^{2}$ weakly $\tilde\bP$-almost surely in $L^{r}((0, T) \times \mathcal{O}_{\alpha})$ for some $r > 1$, by using \eqref{cararhou2} and by using uniqueness of the weak limit for each $\tilde\omega \in \tilde\Omega$ to identify $\overline{\rho|\bu|^{2}}$ as $\mathbbm{1}_{\mathcal{O}_{\eta^{*}}} \rho|\bu|^{2}$.
\end{proof}

Using Lemma \ref{rhousquared}, we can establish the following convergence, which will be useful for passing to the limit in the stochastic integral as $\delta \to 0$. 

\begin{lemma}\label{energyconvdelta}
We have the following convergence $\tilde\bP$-almost surely as $\delta \to 0$:
\begin{equation*}
\int_{0}^{T} \int_{\mathcal{O}_{\alpha}} \mathbbm{1}_{\mathcal{O}_{\hat{\eta}_{\delta}^{*}}} \hat{\rho}_{\delta} |\hat{\bu}_{\delta} - \bu|^{2} \to 0.
\end{equation*}
\end{lemma}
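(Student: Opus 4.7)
The plan is to expand the square and show that each of the three resulting integrals converges $\tilde{\bP}$-almost surely to the same limit $\int_0^T\int_{\mathcal{O}_\alpha}\mathbbm{1}_{\mathcal{O}_{\eta^*}}\rho|\bu|^2$, so that the alternating signs produce cancellation in the sum. Writing
\begin{equation*}
\mathbbm{1}_{\mathcal{O}_{\hat\eta^*_\delta}}\hat\rho_\delta|\hat\bu_\delta-\bu|^2
= \mathbbm{1}_{\mathcal{O}_{\hat\eta^*_\delta}}\hat\rho_\delta|\hat\bu_\delta|^2
- 2\,\mathbbm{1}_{\mathcal{O}_{\hat\eta^*_\delta}}\hat\rho_\delta\hat\bu_\delta\cdot\bu
+ \mathbbm{1}_{\mathcal{O}_{\hat\eta^*_\delta}}\hat\rho_\delta|\bu|^2,
\end{equation*}
I would handle each term separately. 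For the first term I would invoke Lemma~\ref{rhousquared} directly: the weak convergence $\mathbbm{1}_{\mathcal{O}_{\hat\eta^*_\delta}}\hat\rho_\delta|\hat\bu_\delta|^2\rightharpoonup\mathbbm{1}_{\mathcal{O}_{\eta^*}}\rho|\bu|^2$ in $L^r((0,T)\times\mathcal{O}_\alpha)$ for some $r>1$, tested against the function $1\in L^{r'}$, yields the desired limit.

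For the third term, I would use the convergence $\hat\rho_\delta\to\rho$ in $C_w(0,T;L^\gamma(\mathcal{O}_\alpha))$ from Theorem~\ref{skorohod}, which in particular gives weak-$*$ convergence in $L^\infty(0,T;L^\gamma)$. Splitting
\begin{equation*}
\int_0^T\!\!\int_{\mathcal{O}_\alpha}\!\mathbbm{1}_{\mathcal{O}_{\hat\eta^*_\delta}}\hat\rho_\delta|\bu|^2
= \int_0^T\!\!\int_{\mathcal{O}_\alpha}\!\hat\rho_\delta\,\mathbbm{1}_{\mathcal{O}_{\eta^*}}|\bu|^2
+ \int_0^T\!\!\int_{\mathcal{O}_\alpha}\!\hat\rho_\delta\,(\mathbbm{1}_{\mathcal{O}_{\hat\eta^*_\delta}}-\mathbbm{1}_{\mathcal{O}_{\eta^*}})|\bu|^2,
\end{equation*}
the first piece converges to $\int\mathbbm{1}_{\mathcal{O}_{\eta^*}}\rho|\bu|^2$ by the weak-$*$ convergence, since $\mathbbm{1}_{\mathcal{O}_{\eta^*}}|\bu|^2\in L^1(0,T;L^{\gamma/(\gamma-1)})$: here I use that $\bu\in L^2(0,T;L^6)$ so $|\bu|^2\in L^1(0,T;L^3)$, and $\gamma/(\gamma-1)<3$ is equivalent to $\gamma>3/2$. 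The second piece is controlled by the H\"older bound $\|\hat\rho_\delta\|_{L^\infty_t L^\gamma_x}\|(\mathbbm{1}_{\mathcal{O}_{\hat\eta^*_\delta}}-\mathbbm{1}_{\mathcal{O}_{\eta^*}})|\bu|^2\|_{L^1_t L^{\gamma/(\gamma-1)}_x}$, and the second factor tends to zero by dominated convergence, using that $\hat\eta^*_\delta\to\eta^*$ uniformly on $[0,T]\times\Gamma$ implies $\mathbbm{1}_{\mathcal{O}_{\hat\eta^*_\delta}}\to\mathbbm{1}_{\mathcal{O}_{\eta^*}}$ pointwise almost everywhere.

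The mixed (middle) term is the most delicate and is where I expect the main technical work. I would use the weak-$*$ convergence $\hat\rho_\delta\hat\bu_\delta\rightharpoonup\rho\bu$ in $L^\infty(0,T;L^{2\gamma/(\gamma+1)}(\mathcal{O}_\alpha))$ from Theorem~\ref{skorohod}. The dual exponent is $2\gamma/(\gamma-1)$, and the assumption $\gamma>3/2$ gives $2\gamma/(\gamma-1)<6$, hence $\mathbbm{1}_{\mathcal{O}_{\eta^*}}\bu\in L^2(0,T;L^{2\gamma/(\gamma-1)})\subset L^1(0,T;L^{2\gamma/(\gamma-1)})$ is an admissible test function and we obtain
\begin{equation*}
\int_0^T\!\!\int_{\mathcal{O}_\alpha}\hat\rho_\delta\hat\bu_\delta\cdot\mathbbm{1}_{\mathcal{O}_{\eta^*}}\bu
\;\longrightarrow\;\int_0^T\!\!\int_{\mathcal{O}_\alpha}\rho\bu\cdot\mathbbm{1}_{\mathcal{O}_{\eta^*}}\bu
=\int_0^T\!\!\int_{\mathcal{O}_\alpha}\mathbbm{1}_{\mathcal{O}_{\eta^*}}\rho|\bu|^2.
\end{equation*}
To replace $\mathbbm{1}_{\mathcal{O}_{\eta^*}}$ by $\mathbbm{1}_{\mathcal{O}_{\hat\eta^*_\delta}}$ on the left-hand side I would bound the error by
$\|\hat\rho_\delta\hat\bu_\delta\|_{L^\infty_t L^{2\gamma/(\gamma+1)}_x}\|(\mathbbm{1}_{\mathcal{O}_{\hat\eta^*_\delta}}-\mathbbm{1}_{\mathcal{O}_{\eta^*}})\bu\|_{L^1_t L^{2\gamma/(\gamma-1)}_x}$, the first factor being uniformly bounded by Lemma~\ref{energy_delta}(6) and the second tending to zero by the dominated convergence argument above. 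Combining the three limits, the right-hand side of the expansion tends to $(1-2+1)\int\mathbbm{1}_{\mathcal{O}_{\eta^*}}\rho|\bu|^2=0$, completing the proof. The key structural point throughout is that the assumption $\gamma>3/2$ provides \emph{strict} inequalities for the critical dual exponents, which is precisely what is needed to apply dominated convergence to the indicator differences multiplied by the appropriate power of $\bu$.
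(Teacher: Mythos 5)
Your proof is correct, and it relies on exactly the same ingredients as the paper's, though the algebraic bookkeeping is organized differently. You expand the square directly into three terms whose integrals each converge $\tilde{\bP}$-almost surely to the common value $\int_0^T\int_{\mathcal{O}_\alpha}\mathbbm{1}_{\mathcal{O}_{\eta^*}}\rho|\bu|^2$, with coefficients $1,-2,1$, so the sum cancels to zero. The paper instead uses the algebraically equivalent four-term regrouping
\begin{equation*}
\chi_\delta\bigl(\hat\rho_\delta|\hat\bu_\delta|^2-\rho|\bu|^2\bigr)+\chi(\hat\rho_\delta-\rho)|\bu|^2+(\chi_\delta-\chi)(\hat\rho_\delta-\rho)|\bu|^2+2\chi_\delta(\rho\bu-\hat\rho_\delta\hat\bu_\delta)\cdot\bu,
\end{equation*}
where $\chi_\delta=\mathbbm{1}_{\mathcal{O}_{\hat\eta^*_\delta}}$ and $\chi=\mathbbm{1}_{\mathcal{O}_{\eta^*}}$, engineered so that each piece tends to zero individually. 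Both versions use Lemma~\ref{rhousquared} for the identification of the weak limit of $\mathbbm{1}_{\mathcal{O}_{\hat\eta^*_\delta}}\hat\rho_\delta|\hat\bu_\delta|^2$, the weak-$*$ convergences $\hat\rho_\delta\rightharpoonup\rho$ in $L^\infty_tL^\gamma_x$ and $\hat\rho_\delta\hat\bu_\delta\rightharpoonup\rho\bu$ in $L^\infty_tL^{2\gamma/(\gamma+1)}_x$ from Theorem~\ref{skorohod}, the strong (pointwise and in $L^\infty_tL^p_x$) convergence of the indicator functions, and $\gamma>3/2$ to place $|\bu|^2$ and $\bu$ in the requisite dual Lebesgue spaces. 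Your expansion is arguably more transparent but requires carrying a common nonzero limit through three separate terms; the paper's regrouping makes each summand vanish on its own and so avoids that accounting. Your Hölder/dominated-convergence treatment of the indicator discrepancies is the same device the paper uses on its terms 1, 3 and 4, so there is no gap — only a different but equivalent split of the integrand.
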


\medskip

\if 1 = 0
Before proving this lemma, we observe that $\hat{\bu}_{\delta}$ is not uniformly bounded in $L^{2}(0, T; H^{1}(\mathcal{O}_{\alpha}))$. Rather, we only have that $\mathbbm{1}_{\mathcal{O}_{\hat{\eta}_{\delta}^{*}}} \bu$ and $\mathbbm{1}_{\mathcal{O}_{\hat{\eta}_{\delta}^{*}}} \nabla \bu$ are bounded in $L^{2}(0, T; L^{2}(\mathcal{O}_{\alpha}))$ $\tilde\bP$-almost surely. However, to prove this lemma, it will be helpful to have functions $\hat{\bu}_{\delta, \text{ext}}$ which are in actually bounded in $L^{2}(0, T; H^{s}(\mathcal{O}_{\alpha}))$ $\tilde\bP$ almost surely for $0 \le s < 1$. To do this, we define
\begin{equation*}
\hat{\bu}_{\delta, \text{ext}}(t, x, y, z) = \hat{\bu}_{\delta}(t, x, y, z) + \left[\min\left(\frac{z - \hat{\eta}^{*}_{\delta}(t, x, y)}{\delta^{\frac{1}{2} - \frac{1}{\beta}}}, 1 \right)\right]^{+} \Big(\hat{v}_{\delta}(t, x, y) \bd{e}_{z} - \hat{\bu}_{\delta}(t, x, y, z)\Big)
\end{equation*}
which keeps the same values as $\hat{\bu}_{\delta}$ on $\mathcal{O}_{\hat{\eta}_{\delta}^{*}}$, and then interpolates between $\hat{\bu}_{\delta}$ and $\hat{v}_{\delta} \bd{e}_{z}$ in the $z$ direction on the tubular neighborhood $T^{\delta}_{\hat{\eta}^{*}_{\delta}}$ of width $\delta^{\left(\frac{1}{2} - \frac{1}{\beta}\right)}$, which we recall from \eqref{tube} is defined as
\begin{equation*}
T^{\delta}_{\eta} := \left\{(t, x, y, z) \in [0, T] \times \mathcal{O}_{\alpha} \setminus \mathcal{O}_{\eta} : 0 < (z - 1  - \eta) < \delta^{\left(\frac{1}{2} - \frac{1}{\beta}\right)}\right\},
\end{equation*}
and then takes the value $\hat{v}_{\delta} \bd{e}_{z}$ on $\mathcal{O}_{\alpha} \setminus (\mathcal{O}_{\hat{\eta}^{*}_{\delta}} \cup T^{\delta}_{\hat{\eta}^{*}_{\delta}}$). It is easy to check that $\hat{\bu}_{\delta, \text{ext}}$ is uniformly bounded in $L^{2}(0, T; W^{1, r}(\mathcal{O}_{\alpha}))$ norm for all $1 \le r < 2$, $\tilde\bP$ almost surely (by a constant depending on the outcome), by observing the following facts:
\begin{itemize}
    \item $\|\hat{\bu}_{\delta}\|_{L^{2}(0, T; H^{1}(\mathcal{O}_{\hat{\eta}^{*}_{\delta}}))}$ is uniformly bounded in $\delta$, $\tilde\bP$-almost surely.
    \item $\|\hat{v}_{\delta} \bd{e}_{z}\|_{L^{2}(0, T; H^{1}(\Gamma))}$ is uniformly bounded in $\delta$, $\tilde\bP$-almost surely.
    \item By \eqref{penaltybound} and equivalence of laws, $\tilde{\mathbb{E}} \|\hat{\bu}_{\delta} - \hat{v}_{\delta} \bd{e}_{z}\|^{p}_{L^{2}(0, T; L^{2}(T^{\delta}_{\hat{\eta}^{*}_{\delta}})} \le C\delta^{p/2}$, so we can observe that
    \begin{equation*}
    \frac{1}{\delta^{\left(\frac{1}{2} - \frac{1}{\beta}\right)}} \|\hat{\bu}_{\delta} - \hat{v}_{\delta} \bd{e}_{z})\|_{L^{2}(0, T; L^{2}(T^{\delta}_{\hat{\eta}^{*}_{\delta}}))} \to 0,
    \end{equation*}
    $\tilde\bP$-almost surely. Furthermore, $\nabla \hat{\eta}^{*}_{\delta}(t, x, y)$ is bounded $\tilde\bP$-almost surely in $L^{\infty}(0, T; L^{q}(\Gamma))$ for all $1 \le q < \infty$ (uniformly in $\delta$) by Sobolev embedding. These facts together help establish that $\|\nabla \hat{\bu}_{\delta, \text{ext}}\|_{L^{2}(0, T; L^{r}(\mathcal{O}_{\alpha}))}$ is uniformly bounded for $1 \le r < 2$, by estimating the contribution of the gradient of the interpolation term in the definition of $\hat{\bu}_{\delta, \text{ext}}$.
\end{itemize}
So $\hat{\bu}_{\delta, \text{ext}}$ are functions that are uniformly bounded $\tilde\bP$-almost surely in $L^{2}(0, T; W^{1, r}(\mathcal{O}_{\alpha})$ for $1 \le r < 2$ and hence in $L^{2}(0, T; H^{s}(\mathcal{O}_{\alpha}))$ for any $0 \le s < 1$, and furthermore, $\hat{\bu}_{\delta, \text{ext}}|_{\mathcal{O}_{\hat{\eta}_{\delta}^{*}}} = \hat{\bu}_{\delta}$.

Now we prove Lemma \ref{energyconvdelta}.

\fi
\begin{proof}
We calculate that
\begin{align}\label{energyconvdeltaeq}
\int_{0}^{T} \int_{\mathcal{O}_{\alpha}} \mathbbm{1}_{\sO_{\hat\eta^*_\delta}}\hat{\rho}_{\delta}|\hat{\bd{u}}_{\delta} - \bd{u}|^{2}& = \int_{0}^{T} \int_{\mathcal{O}_{\alpha}}\mathbbm{1}_{\sO_{\hat\eta^*_\delta}} \Big(\hat{\rho}_{\delta} |\hat{\bd{u}}_{\delta}|^{2} - \rho |\bd{u}|^{2}\Big) +  \int_{0}^{T} \int_{\mathcal{O}_{\alpha}}\mathbbm{1}_{\sO_{\eta^*}} (\hat{\rho}_{\delta} - \rho) |\bd{u}|^{2} \nonumber \\
&+ \int_{0}^{T} \int_{\mathcal{O}_{\alpha}} \Big(\mathbbm{1}_{\mathcal{O}_{\hat{\eta}^{*}_{\delta}}} - \mathbbm{1}_{\mathcal{O}_{\eta^{*}}}\Big) (\hat{\rho}_{\delta} - \rho) |\bu|^{2} + 2 \int_{0}^{T} \int_{\mathcal{O}_{\alpha}} \mathbbm{1}_{\sO_{\hat\eta^*_\delta}}(\rho \bd{u} - \hat{\rho}_{\delta} \hat{\bd{u}}_{\delta}) \cdot \bd{u},
\end{align}
and we show that each of the terms on the right-hand side of \eqref{energyconvdeltaeq} converges to $0$, $\tilde\bP$-almost surely.

\medskip

\noindent \textbf{Term 1.} We estimate the first term on the right-hand side of \eqref{energyconvdeltaeq} by noting that 
\begin{align}
\left|\int_0^{T} \int_{\sO_{\alpha}} \mathbbm{1}_{\sO_{\hat\eta^*_\delta}}(\hat\rho_\delta|\hat\bu_\delta|^2-\rho|\bu|^2) \right|&\leq \left|\int_0^{T}\int_{\sO_\alpha} \mathbbm{1}_{\mathcal{O}_{\eta^{*}}} (\mathbbm{1}_{\mathcal{O}_{\hat{\eta}^{*}_{\delta}}} \hat{\rho}_{\delta} |\hat{\bu}_{\delta}|^{2} - \mathbbm{1}_{\mathcal{O}_{\eta^{*}}} \rho |\bu|^{2})\right|\notag \\
& + \left|\int_0^{T} \int_{\mathcal{O}_{\alpha}}\mathbbm{1}_{\mathcal{O}_{\hat{\eta}^{*}_{\delta}}}(\mathbbm{1}_{\mathcal{O}_{\eta^{*}}} - \mathbbm{1}_{\mathcal{O}_{\hat{\eta}^{*}_{\delta}}}) \hat\rho_\delta|\hat\bu_\delta|^{2}\right| + \left|\int_{0}^{T} \int_{\mathcal{O}_{\alpha}} (\mathbbm{1}_{\mathcal{O}_{\eta^{*}}} - \mathbbm{1}_{\mathcal{O}_{\hat{\eta}^{*}_{\delta}}}) \rho|\bu|^{2}\right|.\label{term1deltaestimate}
\end{align}

By Lemma \ref{rhousquared}, $\mathbbm{1}_{\mathcal{O}_{\hat{\eta}_{\delta}^{*}}} \hat{\rho}_{\delta} |\hat{\bu}_{\delta}|^{2} \rightharpoonup \mathbbm{1}_{\mathcal{O}_{\eta^{*}}} \rho |\bu|^{2}$ weakly in $L^{r}((0, T) \times \mathcal{O}_{\alpha})$ for some $r > 1$, $\tilde\bP$-almost surely, so since $\mathbbm{1}_{\mathcal{O}_{\eta^{*}}} \in L^{q}((0, T) \times \mathcal{O}_{\alpha})$ $\tilde\bP$-almost surely, for all $1 \le q < \infty$, we have that $\displaystyle \left|\int_0^{T}\int_{\sO_\alpha} \mathbbm{1}_{\mathcal{O}_{\eta^{*}}} (\mathbbm{1}_{\mathcal{O}_{\hat{\eta}^{*}_{\delta}}} \hat{\rho}_{\delta} |\hat{\bu}_{\delta}|^{2} - \mathbbm{1}_{\mathcal{O}_{\eta^{*}}} \rho |\bu|^{2})\right| \to 0$ as $\delta \to 0$, $\tilde\bP$-almost surely.

Combining \eqref{convindicator} with the $\tilde\bP$-almost sure boundedness of $\hat{\rho}_{\delta}|\hat\bu_{\delta}|^{2}$ and $\rho|\bu|^{2}$ in $L^{\infty}(0, T; L^{\frac{2\gamma}{\gamma + 1}}(\mathcal{O}_{\alpha}))$ independently of $\delta$ (but potentially depending on the outcome $\tilde\omega \in \tilde\Omega$), we obtain that the remaining terms on the right-hand side of \eqref{term1deltaestimate} converge to zero $\tilde\bP$-almost surely as $\delta \to 0$. This shows that Term 1 on the right-hand side of \eqref{energyconvdeltaeq} converges to $0$, $\tilde\bP$-almost surely.

\medskip

\noindent \textbf{Term 2.} Since $\hat{\rho}_{\delta} \to \rho$ weakly-star in $L^{\infty}(0, T; L^{\gamma}(\mathcal{O}_{\alpha}))$ and $\mathbbm{1}_{\mathcal{O}_{\eta^{*}}} |\bu|^{2} \in L^{1}(0, T; L^{3}(\mathcal{O}_{\alpha}))$, $\tilde\bP$-almost surely, we have that
\begin{equation*}
\int_{0}^{T} \int_{\mathcal{O}_{\alpha}} \mathbbm{1}_{\mathcal{O}_{\eta^{*}}} (\hat{\rho}_{\delta} - \rho) |\bu|^{2} \to 0, \quad \tilde\bP\text{-almost surely.}
\end{equation*}

\medskip

\noindent \textbf{Term 3.} Using the fact that $\hat{\rho}_{\delta}$ is uniformly bounded in $L^{\infty}(0, T; L^{\gamma}(\mathcal{O}_{\alpha}))$ $\tilde\bP$-almost surely, $|\bu|^{2} \in L^{1}(0, T; L^{3}(\mathcal{O}_{\alpha}))$ $\tilde\bP$-almost surely, and \eqref{convindicator}, we have that
\begin{equation*}
\int_{0}^{T} \int_{\mathcal{O}_{\alpha}} \Big(\mathbbm{1}_{\mathcal{O}_{\hat{\eta}^{*}_{\delta}}} - \mathbbm{1}_{\mathcal{O}_{\eta^{*}}}\Big) (\hat{\rho}_{\delta} - \rho)|\bu|^{2} \to 0, \quad \text{$\tilde\bP$-almost surely.}
\end{equation*}

\medskip

\noindent \textbf{Term 4.} By using the $\tilde\bP$-almost sure convergence $\hat\rho_{\delta} \hat\bu_{\delta} \rightharpoonup \rho \bd{u}$ weakly-star in $L^\infty(0, T; L^{\frac{2\gamma}{\gamma + 1}}(\mathcal{O}_{\alpha}))$ and the fact that $\mathbbm{1}_{\mathcal{O}_{\eta^{*}}} \bd{u} \in L^{2}(0, T; L^{6}(\mathcal{O}_{\alpha}))$ $\tilde\bP$-almost surely, we have 
\begin{equation*}
\int_{0}^{T} \int_{\mathcal{O}_{\alpha}} \mathbbm{1}_{\mathcal{O}_{\eta^{*}}} (\rho \bu - \hat\rho_{\delta} \hat\bu_{\delta}) \cdot \bd{u} \to 0, \quad \text{$\tilde\bP$-almost surely.}
\end{equation*}
Since uniform bounds combined with \eqref{convindicator} imply that 
\begin{equation*}
\int_{0}^{T} \int_{\mathcal{O}_{\alpha}} \Big(\mathbbm{1}_{\mathcal{O}_{\eta^{*}}} - \mathbbm{1}_{\mathcal{O}_{\hat{\eta}^{*}_{\delta}}}\Big) (\rho \bu - \hat{\rho}_{\delta} \hat{\bu}_{\delta}) \cdot \bu \to 0, \quad \text{$\tilde\bP$-almost surely},
\end{equation*}
we thus obtain the desired convergence that
\begin{equation*}
\int_{0}^{T} \int_{\mathcal{O}_{\alpha}} \mathbbm{1}_{\mathcal{O}_{\hat{\eta}^{*}_{\delta}}} (\rho \bu - \hat{\rho}_{\delta} \hat{\bu}_{\delta}) \cdot \bu \to 0, \quad \text{$\tilde\bP$-almost surely.}
\end{equation*}

\medskip

\noindent \textbf{Conclusion of the proof.} Since each of the four terms on the right-hand side of \eqref{energyconvdeltaeq} converges to zero $\tilde\bP$-almost surely, this completes the proof of the lemma. 
\end{proof}

Before showing convergence of the stochastic integrals, we show two technical lemmas, regarding properties of compositions of the fluid densities with Lipschitz functions $b$.

\begin{lemma}\label{outsidefluid}
There exists $p_{0} > 1$ such that for every Lipschitz function $b: \R \to \R$ with $b(0) = 0$ and $b'(w) = 0$ for $|w|$ sufficiently large and such that for all $1 \le p < p_{0}$:
\begin{equation*}
\mathbbm{1}_{\mathcal{O}_{\hat{\eta}^{*}_{\delta}}^{c}} b(\hat{\rho}_{\delta}) \hat{u}_{\delta} \to 0, \qquad \mathbbm{1}_{\mathcal{O}_{{\hat{\eta}^{*}_{\delta}}}^{c}} (b'(\hat{\rho}_{\delta})\hat{\rho}_{\delta} - b(\hat{\rho}_{\delta})) (\nabla \cdot \hat{\bu}_{\delta}) \to 0,
\end{equation*}
both strongly in $L^{p}([0, T] \times \mathcal{O}_{\alpha})$.
\end{lemma}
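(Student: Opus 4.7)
The plan is to exploit two complementary pointwise bounds that follow from $b$ being Lipschitz with $b(0)=0$ and $b'(w)=0$ for $|w|\ge M_b$: namely $|b(\hat\rho_\delta)| \le L\min(\hat\rho_\delta, M_b)$ and $|b'(\hat\rho_\delta)\hat\rho_\delta - b(\hat\rho_\delta)| \le 2L\min(\hat\rho_\delta, M_b)$. The first lets me extract a factor of $\hat\rho_\delta$, which is small outside the physical domain by Proposition \ref{vacuum2}; the second provides a uniform $L^\infty$ control that will be useful where $\hat\rho_\delta$ is large. Accordingly, I would decompose the exterior as $\mathcal{O}_{\hat\eta_\delta^*}^{c} = T^\delta_{\hat\eta_\delta^*} \cup E_\delta$ with $E_\delta := (\mathcal{O}_{\hat\eta_\delta^*} \cup T^\delta_{\hat\eta_\delta^*})^c$, and treat each piece separately.

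On the tubular neighborhood $T^\delta_{\hat\eta_\delta^*}$, the measure satisfies $|T^\delta_{\hat\eta_\delta^*}| \lesssim \delta^{1/2-1/\beta}$, and the numerology choice recorded in \eqref{muconstantremark} guarantees $\mu^{\hat\eta_\delta^*}_\delta = \mu$ on this slab, so that Lemma \ref{energy_delta}(7) yields uniform control of $\hat{\bu}_\delta$ in $L^2(0,T;H^1(T^\delta_{\hat\eta_\delta^*}))$ and of $\nabla\cdot\hat{\bu}_\delta$ in $L^2 L^2$. Combining these with the $L^\infty L^\gamma$ bound on $\hat\rho_\delta$ and H\"older then gives $L^1$-smallness of both $b(\hat\rho_\delta)\hat{\bu}_\delta$ and $(b'(\hat\rho_\delta)\hat\rho_\delta - b(\hat\rho_\delta))\nabla\cdot\hat{\bu}_\delta$ on $T^\delta_{\hat\eta_\delta^*}$ at rate $\delta^{(1/2-1/\beta)/2}$. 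On $E_\delta$ the pathwise estimate \eqref{vacuumpathwise} of Proposition \ref{vacuum2} gives $\|\hat\rho_\delta\|_{L^\infty(L^1(E_\delta))} \lesssim \delta^{\nu_*}$ up to $\tau^\eta_\delta$. For the first term, Cauchy-Schwarz against the kinetic energy bound,
\[
\int_{E_\delta} \hat\rho_\delta |\hat{\bu}_\delta| \;\le\; \Bigl(\int_{E_\delta}\hat\rho_\delta\Bigr)^{1/2}\Bigl(\int_{E_\delta}\hat\rho_\delta|\hat{\bu}_\delta|^2\Bigr)^{1/2},
\]
yields rate $\delta^{\nu_*/2}$. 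For the second term, I would use a Chebyshev split $E_\delta = E_\delta^{\le}\cup E_\delta^{>}$ at level $\hat\rho_\delta = 1$: the bound $\hat\rho_\delta^2\le\hat\rho_\delta$ on $E_\delta^{\le}$ paired with the viscosity-extension gradient estimate $\|\nabla\hat{\bu}_\delta\|_{L^2(\mathcal{O}_\alpha)}\lesssim\delta^{-\nu_0/2}$ coming from \eqref{mulowerbound}, combined with $|E_\delta^{>}|\le\|\hat\rho_\delta\|_{L^1(E_\delta)}\lesssim\delta^{\nu_*}$ and the uniform $L^\infty$ bound on $b'(\hat\rho_\delta)\hat\rho_\delta-b(\hat\rho_\delta)$ on $E_\delta^{>}$, will give a net rate $\delta^{(\nu_*-\nu_0)/2}$. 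This is positive precisely because $\nu_0 = \theta\nu_*/2 < \nu_*$ via the relation recorded in \eqref{nu0}.

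Finally, to upgrade $L^1$-convergence to $L^p$ for some $p>1$, I would interpolate against uniform bounds in a higher exponent. For Term~1, Lemma \ref{energy_delta}(6) gives $\hat\rho_\delta\hat{\bu}_\delta \in L^\infty(0,T;L^{2\gamma/(\gamma+1)}(\mathcal{O}_\alpha))$ uniformly, hence $b(\hat\rho_\delta)\hat{\bu}_\delta$ is uniformly bounded in $L^{2\gamma/(\gamma+1)}((0,T)\times\mathcal{O}_\alpha)$, and the interpolation $\|f\|_{L^p}\le\|f\|_{L^1}^\theta\|f\|_{L^{2\gamma/(\gamma+1)}}^{1-\theta}$ gives the convergence for any $1\le p < 2\gamma/(\gamma+1)$, a quantity strictly bigger than one thanks to $\gamma>3/2$. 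For Term~2 the same interpolation strategy applies, combining the above $L^1$-rate with the uniform $L^2$ bound on $(b'(\hat\rho_\delta)\hat\rho_\delta - b(\hat\rho_\delta))\nabla\cdot\hat{\bu}_\delta$ over $\mathcal{O}_{\hat\eta_\delta^*}\cup T^\delta_{\hat\eta_\delta^*}$ (from $|b'\rho-b|\le LM_b$ and Lemma \ref{energy_delta}(7)) and with the Chebyshev-based $E_\delta$ decomposition above. I expect the main obstacle to be the delicate balance between the vanishing density rate $\delta^{\nu_*}$ and the gradient blow-up rate $\delta^{-\nu_0/2}$ forced by the low viscosity in the far exterior; it is exactly this balance that makes the Chebyshev splitting essential (a direct interpolation of $\hat\rho_\delta$ between $L^1$ and $L^\gamma$ would fail for $\gamma$ close to $3/2$) and that forces the scheme to choose $\nu_0\sim(1/2-1/\beta)^2$ with $\nu_*<(1/2-1/\beta)^2$.
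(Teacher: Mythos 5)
Your proposal is correct, but it takes a genuinely different route from the paper's proof. The paper works directly at the $L^p$ level: it replaces the raw bounds $|b(z)|\le\min(Lz,LM_b)$ and $|b'(z)z-b(z)|\le\min(2Lz,2LM_b)$ by the single interpolated bound $|b(z)|, |b'(z)z|\le Cz^{3(2-p)/(4p)}$, raises to the $p$th power, and applies H\"older once to send the density factor to $L^{3/2}(E_\delta)$ on the far exterior (exploiting the very form of \eqref{nu0}) and to $L^{\gamma}$ on the tube (exploiting the measure estimate). The numerology then reduces to requiring $3(2-p)\nu_0/2 - \nu_0 p/2 > 0$ and $2\gamma - 3(2-p) > 0$, both of which hold for $p$ near $1$. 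Your version instead proves $L^1$ convergence first and then interpolates up to $L^p$, handling Term~1 on $E_\delta$ via Cauchy--Schwarz against the kinetic energy (a clean trick that the paper does not use and that avoids touching the degraded exterior $L^2$ bound on $\hat\bu_\delta$), and Term~2 on $E_\delta$ via a Chebyshev split at the density level $\hat\rho_\delta=1$. The Chebyshev split is morally the same move as the paper's interpolated exponent $z^{3(2-p)/(4p)}$ sitting between $z$ and $M_b$, just implemented as a set decomposition rather than a pointwise bound; the paper's version is tidier, yours is more transparent about exactly which competing effects are being traded.

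One point deserving more care in your write-up: in the final interpolation $L^1\to L^p$ for Term~2 you invoke ``the uniform $L^2$ bound on $(b'(\hat\rho_\delta)\hat\rho_\delta-b(\hat\rho_\delta))\nabla\cdot\hat\bu_\delta$ over $\mathcal{O}_{\hat\eta^*_\delta}\cup T^\delta_{\hat\eta^*_\delta}$'', but that uniform bound does \emph{not} extend to $E_\delta$, where the only available gradient control is $\|\nabla\cdot\hat\bu_\delta\|_{L^2(E_\delta)}\lesssim\delta^{-\nu_0/2}$ by \eqref{mulowerbound}. So on $E_\delta$ you are interpolating a vanishing $L^1$ norm ($\sim\delta^{(\nu_*-\nu_0)/2}$) against a \emph{growing} $L^2$ norm ($\sim\delta^{-\nu_0/2}$), and the resulting exponent $\theta(\nu_*-\nu_0)/2-(1-\theta)\nu_0/2$ is positive only because $\theta\nu_*>\nu_0$, which holds when $p$ is close enough to $1$ since $\nu_0=\theta\nu_*/2<\nu_*$. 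This works, but you should state it: the $L^p$ threshold $p_0$ is limited on $E_\delta$ precisely by this competition, rather than by the $L^{2\gamma/(\gamma+1)}$ bound governing Term~1.
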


\begin{proof}
Consider such a function $b$ and some $1 \le p < p_{0}$ where $p_{0}$ will be appropriately chosen later in the proof. By the boundedness of $b$, $|b(z)| \le Cz^{\frac{3(2 - p)}{4p}}$ and $|b'(z) z| \le Cz^{\frac{3(2-p)}{4p}}$ for all $z \ge 0$. We estimate using \eqref{nu0}:
\begin{align*}
\|\mathbbm{1}_{\mathcal{O}_{\hat{\eta}^{*}_{\delta}}^{c}} b(\hat{\rho}_{\delta}) \hat{u}_{\delta}&\|_{L^{p}([0, T] \times \mathcal{O}_{\alpha})}^{p} + \|\mathbbm{1}_{\mathcal{O}_{{\hat{\eta}^{*}_{\delta}}}^{c}} (b'(\hat{\rho}_{\delta})\hat{\rho}_{\delta} - b(\hat{\rho}_{\delta})) (\nabla \cdot \hat{\bu}_{\delta})\|_{L^{p}([0, T] \times \mathcal{O}_{\alpha})}^{p} \\
&\le C \int_{0}^{t} \int_{(\mathcal{O}_{\hat{\eta}^{*}_{\delta}} \cup T^{\delta}_{\hat\eta^{*}_{\delta}})^{c}} \hat{\rho}_{\delta}^{\frac{3(2-p)}{4}}(|\hat{\bd{u}}_{\delta}|^{p} + |\nabla \cdot \hat{\bd{u}}_{\delta}|^{p}) + C \int_{0}^{t} \int_{T^{\delta}_{\hat{\eta}^{*}_{\delta}}} \hat{\rho}_{\delta}^{\frac{3(2-p)}{4}} (|\hat{\bd{u}}_{\delta}|^{p} + |\nabla \cdot \hat{\bu}_{\delta}|^{p}) \\
&\le C \|\hat{\rho}_{\delta}\|_{L^{\infty}(0, T; L^{3/2}((\sO_{\hat{\eta}^{*}_{\delta}} \cup T^{\delta}_{\hat{\eta}^{*}_{\delta}})^{c})}^{\frac{3(2-p)}{4}} \left(\|\hat{\bu}_\delta\|_{L^{\infty}(0, T; L^{2}(\mathcal{O}_{\alpha}))}^p + \|\nabla \cdot \hat{\bu}_{\delta}\|_{L^{2}(0, T; L^{2}((\sO_{\hat{\eta}^{*}_{\delta}} \cup T^{\delta}_{\hat{\eta}^{*}_{\delta}})^{c}))}^p\right) \\
& \quad + C |T^{\delta}_{\hat{\eta}^{*}_{\delta}}|^{\frac{2\gamma - 3(2-p)}{4\gamma}} \|\hat{\rho}_{\delta}\|_{L^{\infty}(0, T; L^{\gamma}(\mathcal{O}_{\alpha}))}^{\frac{3(2-p)}{4}} \left(\|\hat{\bu}_\delta\|_{L^{\infty}(0, T; L^{2}(\mathcal{O}_{\alpha}))}^p + \|\nabla \cdot \hat{\bu}_\delta\|_{L^{2}(0, T; L^{2}(\sO_{\hat{\eta}^{*}_{\delta}} \cup T^{\delta}_{\hat{\eta}^{*}_{\delta}}))}^p\right) \\
&\le C_{\phi, k}(\tilde\omega) \delta^{\frac{3(2-p)\nu_0}{2}} (1 + \delta^{-\frac{\nu_0 p}{2}}) + C_{\phi, \kappa}(\tilde\omega)\delta^{\frac{2\gamma - 3(2-p)}{4\gamma}},
\end{align*}
where in the last step, we use the fact from Theorem \ref{skorohod} that $\sqrt{\lambda^{{\eta}^{*}}_{\delta}} \nabla \cdot \hat{\bu}_{\delta}$ and $\mathbbm{1}_{\mathcal{O}_{\hat{\eta}^{*}_{\delta}} \cup T^{\delta}_{\hat{\eta}^{*}_{\delta}}}$ are both $\tilde\bP$-almost surely bounded in $L^{2}(0, T;L^{2}(\mathcal{O}_{\alpha}))$ and $\lambda^{\hat\eta^{*}_{\delta}}_{\delta} \ge \delta^{\nu_0}$ on $\mathcal{O}_{\alpha}$ by \eqref{mulowerbound}. We can then choose $p_0 > 1$ sufficiently small so that
\begin{equation*}
\frac{3(2- p)\nu_0}{2} - \frac{\nu_0 p}{2} > 0 \ \ \text{ and } \ \ \frac{2\gamma - 3(2 - p)}{4\gamma} > 0
\end{equation*}
for all $1 \le p < p_0$, recalling that $\gamma > 3/2$.
\end{proof}

We can use the previous technical lemma to prove the following weak continuity result on compositions of Lipschitz functions $b$ with the fluid densities $\hat{\rho}_{\delta}$.

\begin{lemma}\label{brholemma}
    Let $b: \R \to \R$ be a $C^{1}$ function such that $b(0) = 0$ and $b'(w) = 0$ for $|w|$ sufficiently large. Then, the following convergence holds $\tilde\bP$ almost surely:
    \begin{equation*}
    \mathbbm{1}_{\mathcal{O}_{\hat{\eta}^{*}_{\delta}}} b(\hat{\rho}_\delta) \to \mathbbm{1}_{\mathcal{O}_{\eta^{*}}} \overline{b(\rho)} \quad \text{ in } C_{w}(0, T; L^{\gamma}(\mathcal{O}_{\alpha})).
    \end{equation*}
\end{lemma}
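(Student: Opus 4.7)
The plan is to first establish $C_{w}(0, T; L^{\gamma}(\mathcal{O}_{\alpha}))$-convergence of $b(\hat{\rho}_{\delta})$ itself, and then promote this to the stated limit for $\mathbbm{1}_{\mathcal{O}_{\hat{\eta}^{*}_{\delta}}} b(\hat{\rho}_{\delta})$ using the uniform-in-time strong convergence of the indicator functions. Since $b(0) = 0$ and $b'(w) = 0$ for $|w| \ge M_{b}$, the function $b$ is bounded on $\R$, so $b(\hat{\rho}_{\delta})$ is uniformly bounded in $L^{\infty}([0,T] \times \mathcal{O}_{\alpha}) \subset L^{\infty}(0, T; L^{\gamma}(\mathcal{O}_{\alpha}))$, $\tilde{\mathbb{P}}$-almost surely. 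Applying the Young-measure convergence \eqref{caratheodoryconv} from Theorem \ref{skorohod} to the Carath\'{e}odory function $H(\rho, \bu, \bd{Q}) = b(\rho)$, which trivially satisfies the growth condition since $b$ is bounded, gives $b(\hat{\rho}_{\delta}) \rightharpoonup \overline{b(\rho)}$ weakly in $L^{p}((0, T) \times \mathcal{O}_{\alpha})$ for any $1 < p < \infty$, $\tilde{\mathbb{P}}$-almost surely, with $\overline{b(\rho)}(t,x) = \int_{\R^{13}} b(\rho')\, d\nu_{t,x}(\rho',\bu',\bd{Q}')$.

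Next I would establish equi-continuity in time of $t \mapsto \int_{\mathcal{O}_{\alpha}} b(\hat{\rho}_{\delta})(t)\phi$ for every $\phi \in C_{c}^{\infty}(\mathcal{O}_{\alpha})$, uniformly in $\delta$. At this approximation level, the renormalized continuity equation
$$\int_{\mathcal{O}_{\alpha}} b(\hat{\rho}_{\delta})(t)\phi - \int_{\mathcal{O}_{\alpha}} b(\rho_{0,\delta})\phi = \int_{0}^{t} \int_{\mathcal{O}_{\alpha}} b(\hat{\rho}_{\delta})\hat{\bu}_{\delta} \cdot \nabla\phi - \int_{0}^{t} \int_{\mathcal{O}_{\alpha}} (b'(\hat{\rho}_{\delta})\hat{\rho}_{\delta} - b(\hat{\rho}_{\delta}))(\nabla \cdot \hat{\bu}_{\delta})\phi$$
follows from DiPerna-Lions theory applied to the distributional continuity equation \eqref{contdelta}, since $\hat{\rho}_{\delta} \in L^{\infty}(0, T; L^{\beta}(\mathcal{O}_{\alpha}))$ with $\beta \ge 4$ and $\hat{\bu}_{\delta}$ has its gradient in $L^{2}$ locally. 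Taking the difference between times $s$ and $t$, I would split each right-hand-side integral into its contribution over $\mathcal{O}_{\hat{\eta}^{*}_{\delta}}$ and its complement. On $\mathcal{O}_{\hat{\eta}^{*}_{\delta}}$ the extended viscosity coefficients equal $\mu$ and $\lambda$ by \eqref{muconstantremark}, so $\hat{\bu}_{\delta}$ and $\nabla \cdot \hat{\bu}_{\delta}$ enjoy uniform $L^{2}(0, T; L^{2})$-bounds; together with $\|b(\hat{\rho}_{\delta})\|_{\infty} + \|b'(\hat{\rho}_{\delta})\hat{\rho}_{\delta}\|_{\infty} \le C$, this yields a $C |t-s|^{1/2}$ bound. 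On the exterior, Lemma \ref{outsidefluid} provides some $p_{0} > 1$ and $o_{\delta} \to 0$ with $\|\mathbbm{1}_{\mathcal{O}_{\hat{\eta}^{*}_{\delta}}^{c}} b(\hat{\rho}_{\delta}) \hat{\bu}_{\delta}\|_{L^{p}([0, T] \times \mathcal{O}_{\alpha})} + \|\mathbbm{1}_{\mathcal{O}_{\hat{\eta}^{*}_{\delta}}^{c}} (b'(\hat{\rho}_{\delta})\hat{\rho}_{\delta} - b(\hat{\rho}_{\delta}))(\nabla \cdot \hat{\bu}_{\delta})\|_{L^{p}([0, T] \times \mathcal{O}_{\alpha})} \le o_{\delta}$ for any $1 \le p < p_{0}$; H\"{o}lder in time produces a bound $C |t-s|^{1/p'} o_{\delta}$, which is equi-continuous in $\delta$.

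Combining the uniform $L^{\infty}(0, T; L^{\gamma})$-bound with this equi-continuity, a standard Arzel\`{a}-Ascoli argument on the separable dense subspace $C_{c}^{\infty}(\mathcal{O}_{\alpha}) \subset L^{\gamma'}(\mathcal{O}_{\alpha})$, together with uniqueness of weak-$L^{p}$ limits identifying the subsequential limit with $\overline{b(\rho)}$, yields
$$b(\hat{\rho}_{\delta}) \to \overline{b(\rho)} \quad \text{in } C_{w}(0, T; L^{\gamma}(\mathcal{O}_{\alpha})), \qquad \tilde{\mathbb{P}}\text{-almost surely}.$$
To deduce the statement of the lemma, I would decompose
$$\mathbbm{1}_{\mathcal{O}_{\hat{\eta}^{*}_{\delta}}} b(\hat{\rho}_{\delta}) - \mathbbm{1}_{\mathcal{O}_{\eta^{*}}} \overline{b(\rho)} = \bigl(\mathbbm{1}_{\mathcal{O}_{\hat{\eta}^{*}_{\delta}}} - \mathbbm{1}_{\mathcal{O}_{\eta^{*}}}\bigr) b(\hat{\rho}_{\delta}) + \mathbbm{1}_{\mathcal{O}_{\eta^{*}}} \bigl(b(\hat{\rho}_{\delta}) - \overline{b(\rho)}\bigr),$$
and test against $\phi \in L^{\gamma'}(\mathcal{O}_{\alpha})$. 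The first term is handled by H\"{o}lder, combining $\|b\|_{\infty}$ with the uniform-in-time estimate $\|\mathbbm{1}_{\mathcal{O}_{\hat{\eta}^{*}_{\delta}}} - \mathbbm{1}_{\mathcal{O}_{\eta^{*}}}\|_{L^{\infty}(0, T; L^{\gamma}(\mathcal{O}_{\alpha}))} \le C \|\hat{\eta}^{*}_{\delta} - \eta^{*}\|_{L^{\infty}([0, T] \times \Gamma)}^{1/\gamma} \to 0$ coming from \eqref{etauniform}; the second term reduces to the established $C_{w}(0, T; L^{\gamma})$-convergence tested against the fixed function $\mathbbm{1}_{\mathcal{O}_{\eta^{*}}} \phi \in L^{\gamma'}(\mathcal{O}_{\alpha})$.

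The main obstacle is the rigorous justification of the renormalized continuity equation together with the quantitative equi-continuity estimate: the interior integrals present no real difficulty once one observes that the viscosity coefficients are uncorrupted inside $\mathcal{O}_{\hat{\eta}^{*}_{\delta}}$, but the exterior must be treated via Lemma \ref{outsidefluid} in a manner that converts the qualitative vanishing into an estimate compatible with a H\"{o}lder-in-$(t-s)$ modulus, and one must verify that the limit $\overline{b(\rho)}$ provided by the Young-measure machinery coincides with the $C_{w}$-limit extracted via Arzel\`{a}-Ascoli, which is immediate by uniqueness of weak-$L^{p}$ limits.
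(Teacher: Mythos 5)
Your strategy closely parallels the paper's: establish $b(\hat{\rho}_{\delta}) \to \overline{b(\rho)}$ in $C_{w}(0,T;L^{\gamma}(\mathcal{O}_{\alpha}))$ via the renormalized continuity equation (identifying the limit through the Young-measure machinery and upgrading via an equicontinuity/Arzel\`a-Ascoli argument, splitting the right-hand side into interior and exterior contributions exactly as the paper does, with Lemma \ref{outsidefluid} handling the exterior), and then transfer this to the statement with indicator functions. Steps 1--3 of your argument are essentially the paper's Step 1, and they are sound.

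There is, however, a genuine gap in the final transfer. You write that the second term of your decomposition
\[
\mathbbm{1}_{\mathcal{O}_{\eta^{*}}}\bigl(b(\hat{\rho}_{\delta}) - \overline{b(\rho)}\bigr)
\]
``reduces to the established $C_{w}(0,T;L^{\gamma})$-convergence tested against the fixed function $\mathbbm{1}_{\mathcal{O}_{\eta^{*}}}\phi\in L^{\gamma'}(\mathcal{O}_{\alpha})$.'' But $\eta^{*}=\eta^{*}(t,\cdot)$ is time-dependent, so $\mathbbm{1}_{\mathcal{O}_{\eta^{*}}}\phi$ is not a fixed element of $L^{\gamma'}(\mathcal{O}_{\alpha})$, and the $C_{w}$-convergence (which means: for each \emph{fixed} $\psi$ the map $t\mapsto\int(\cdot)\psi$ converges uniformly) cannot be invoked against it directly. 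This can be repaired, but it requires an additional argument: since $\eta^{*}\in C(0,T;C(\Gamma))$ $\tilde{\bP}$-a.s., the orbit $\{\mathbbm{1}_{\mathcal{O}_{\eta^{*}(t)}}\phi : t\in[0,T]\}$ is compact in $L^{\gamma'}(\mathcal{O}_{\alpha})$, and the uniform $L^{\gamma}$-bound on $b(\hat{\rho}_{\delta})-\overline{b(\rho)}$ combined with an $\epsilon$-net argument then upgrades the $C_{w}$-convergence to the required uniform bound. The paper sidesteps this altogether by first observing that $\overline{b(\rho)}=\mathbbm{1}_{\mathcal{O}_{\eta^{*}}}\overline{b(\rho)}$ (the limit is supported in the moving domain, a consequence of the vanishing of mass outside $\mathcal{O}_{\hat{\eta}^{*}_{\delta}}$ together with $|b(z)|\lesssim\min(z,1)$), which reduces the transfer to estimating only the vanishing exterior contribution $\mathbbm{1}_{\mathcal{O}_{\hat{\eta}^{*}_{\delta}}^{c}}b(\hat{\rho}_{\delta})$ and thereby avoids testing against any time-dependent indicator. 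Either route works, but as written your step is not justified.
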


\begin{proof}
We use the renormalized weak formulation, which holds $\tilde\bP$-almost surely in the following distributional sense: 
\begin{equation}\label{renorm_delta}
\partial_{t}b(\hat{\rho}_{\delta}) + \text{div}(b(\hat{\rho}_{\delta}) \hat{\bd{u}}_{\delta}) + (b'(\hat{\rho}_{\delta})\hat{\rho}_{\delta} - b(\hat{\rho}_{\delta})) \text{div}(\hat{\bd{u}}_{\delta}) = 0,
\end{equation}
along with the weak limit properties of compositions with Carath\'{e}odory functions. Using the weak limit property stated in \eqref{caratheodoryconv}, we have the following $\tilde\bP$-almost sure convergences:
\begin{equation}
\begin{split}\label{continuitycaratheordory}
b(\hat{\rho}_{\delta}) \rightharpoonup \overline{b(\rho)} \text{ weakly in } L^{\gamma}([0, T] \times \mathcal{O}_{\alpha})&, \quad 
\mathbbm{1}_{\mathcal{O}_{\eta_{\delta}^{*}}} b(\hat{\rho}_{\delta}) \hat{\bd{u}}_{\delta} \rightharpoonup \overline{b(\rho) \bd{u}} \text{ weakly in } L^{\min(\gamma, 6)}([0, T] \times \mathcal{O}_{\alpha}),\\
\mathbbm{1}_{\mathcal{O}_{\hat{\eta}_{\delta}^{*}}} b'(\hat{\rho}_{\delta})\hat{\rho}_{\delta} \text{div}(\hat{\bd{u}}_{\delta}) &\rightharpoonup \overline{b'(\rho)\rho \text{div}(\bd{u})} \text{ weakly in } L^{\min(\gamma, 2)}([0, T] \times \mathcal{O}_{\alpha}),
\\
\mathbbm{1}_{\mathcal{O}_{\hat{\eta}^{*}_{\delta}}} b(\hat{\rho}_{\delta})\text{div}(\hat{\bd{u}}_{\delta}) &\rightharpoonup \overline{b(\rho)\bd{u}} \text{ weakly in } L^{\min(\gamma, 2)}([0, T] \times \mathcal{O}_{\alpha}).
\end{split}
\end{equation}

Fix a choice of an arbitrary deterministic test function $\varphi \in C_{c}^{\infty}(\mathcal{O}_{\alpha})$, and fix an arbitrary outcome $\tilde\omega \in \tilde\Omega$ such that the renormalized weak formulation and the weak convergences in \eqref{continuitycaratheordory} hold. The proof will be complete if we can show that
\begin{equation}\label{brhogoal}
\int_{\mathcal{O}_{\alpha}} \mathbbm{1}_{\mathcal{O}_{\hat{\eta}^{*}_{\delta}}} b(\hat{\rho}_{\delta}) \varphi \to \int_{\mathcal{O}_{\alpha}} \mathbbm{1}_{\mathcal{O}_{\eta^*}} \overline{b(\rho)} \varphi \quad \text{ in } C(0, T; \R). 
\end{equation}
We once again emphasize that for the remainder of the argument, the outcome $\tilde\omega$ is \textit{fixed but arbitrary} in a measurable set of probability one.

\noindent \textbf{Step 1.} \textit{Show that $b(\hat{\rho}_{\delta}) \to \overline{b(\rho)}$ in $C_{w}(0, T; L^{\gamma}(\sO_{\alpha}))$.} By the renormalized weak formulation, for any fixed deterministic test function $\varphi \in C_{c}^{\infty}(\sO_{\alpha})$:
\begin{equation}\label{continuitybrho}
\begin{split}
\int_{\mathcal{O}_{\alpha}} b(\hat{\rho}_{\delta})(t) \varphi &= \int_{\mathcal{O}_{\alpha}}  b(\hat{\rho}_{\delta})(0) \varphi + \int_{0}^{t} \int_{\mathcal{O}_{\alpha}} \mathbbm{1}_{\mathcal{O}_{\hat{\eta}^{*}_{\delta}}} b(\hat{\rho}_{\delta}) \hat\bu_{\delta} \cdot \nabla \varphi + \int_{0}^{t} \int_{\mathcal{O}_{\alpha}} \mathbbm{1}_{\mathcal{O}_{\hat{\eta}^{*}_{\delta}}^{c}} b(\hat{\rho})\hat{\bu}_{\delta} \cdot \nabla \varphi \\
&+ \int_{0}^{t} \int_{\mathcal{O}_{\alpha}} \mathbbm{1}_{\mathcal{O}_{\hat{\eta}^{*}_{\delta}}} (b(\hat{\rho}_{\delta}) - b'(\hat{\rho}_{\delta})\hat{\rho}_{\delta}) \text{div}(\hat{\bd{u}}_{\delta}) \varphi + \int_{0}^{t} \int_{\mathcal{O}_{\alpha}} \mathbbm{1}_{\mathcal{O}_{\hat{\eta}^{*}_{\delta}}^{c}} (b(\hat{\rho}_{\delta}) - b'(\hat{\rho}_{\delta})\hat{\rho}_{\delta})\text{div}(\hat{\bu}_{\delta})\varphi,
\end{split}
\end{equation}
where we divide the integrals into the parts inside and outside $\mathcal{O}^{\hat{\eta}^{*}_{\delta}}$. Note that by Lemma \ref{outsidefluid}, the two integrals in \eqref{continuitybrho} involving $\mathbbm{1}_{\sO_{\hat{\eta}^{*}_{\delta}}^{c}}$ converge to $0$, $\tilde\bP$-almost surely as $\delta \to 0$. Combined with the weak convergences in \eqref{continuitycaratheordory}, we conclude from \eqref{continuitybrho} that $\displaystyle \int_{\mathcal{O}_{\alpha}} b(\hat{\rho}_{\delta})(t) \varphi$ converges as $\delta \to 0$ to
\begin{equation*}
\int_{\mathcal{O}_{\alpha}} b(\rho_{0}) \varphi + \int_{0}^{t} \int_{\mathcal{O}_{\alpha}} \overline{b(\rho) \bd{u}} \cdot \nabla \varphi + \int_{0}^{t} \int_{\mathcal{O}_{\alpha}} (\overline{b(\rho)\text{div}(\bd{u})} - \overline{b'(\rho)\rho\text{div}(\bd{u})}) \varphi
\end{equation*}
Since $\mathbbm{1}_{\mathcal{O}_{\hat{\eta}^{*}_{\delta}}} b(\hat{\rho}_{\delta})\hat{\bd{u}}_{\delta}$ and $\mathbbm{1}_{\mathcal{O}_{\hat{\eta}^{*}_{\delta}}} (b(\hat{\rho}_{\delta}) - b'(\hat{\rho}_{\delta})\hat{\rho}_{\delta})\text{div}(\hat{\bd{u}}_{\delta})$ are bounded in $L^{2}((0,T) \times \mathcal{O}_{\alpha})$ independently of $\delta$ by some constant $C(\tilde\omega)$ depending on $\tilde{\omega} \in \tilde{\Omega}$ by \eqref{caratheodoryconv} and since we have higher bounds for some $p > 1$ in Lemma \ref{outsidefluid} for the terms involving $\mathbbm{1}_{\sO_{\hat{\eta}^{*}_{\delta}}^{c}}$, we conclude from \eqref{continuitybrho} that the functions $\displaystyle t \to \int_{\mathcal{O}_{\alpha}} b(\hat{\rho}_{\delta})(t) \varphi$ $\tilde\bP$-almost surely are uniformly equicontinuous (with an equicontinuity parameter depending only on $\tilde{\omega} \in \tilde{\Omega}$) independently of $\delta$, for our fixed but arbitrary $\tilde\omega \in \tilde\Omega$. Hence, for our fixed $\tilde\omega$, the continuous functions $\displaystyle t \to \int_{\mathcal{O}_{\alpha}} b(\hat{\rho}_{\delta})(t) \varphi$ converge in $C(0, T; \R)$ as $\delta \to 0$ for our fixed $\tilde\omega \in \tilde\Omega$. Since by \eqref{continuitycaratheordory}, $b(\hat{\rho}_{\delta})$ converges weakly to $ \overline{b(\rho)}$ in $L^{r}((0,T) \times \mathcal{O}_{\alpha})$ for $1 < r < \gamma$, we identify the $\tilde\bP$-almost sure limit of $\displaystyle \int_{\mathcal{O}_{\alpha}} b(\hat{\rho}_{\delta})\varphi$ in $C(0, T; \R)$ as $\displaystyle \int_{\mathcal{O}_{\alpha}} \overline{b(\rho)} \varphi$, which concludes Step 1.   

\medskip

\noindent \textbf{Step 2.} \textit{Show that $\displaystyle \int_{\mathcal{O}_{\alpha}} \mathbbm{1}_{\sO_{\hat{\eta}^{*}_{\delta}}} b(\hat{\rho}_{\delta}) \varphi \to \int_{\mathcal{O}_{\alpha}} \mathbbm{1}_{\eta^{*}} \overline{b(\rho)} \varphi$ in $L^{\infty}(0, T; \R)$ for any deterministic $\varphi \in C_{c}^{\infty}(\sO_{\alpha})$.} From Step 1, we already have that $\tilde\bP$-almost surely:
\begin{equation*}
\int_{\sO_{\alpha}} b(\hat{\rho})\varphi \to \int_{\sO_{\alpha}} \overline{b(\rho)} \varphi \quad \text{ in } C(0, T; \R).
\end{equation*}
Thus, Step 2 is immediate once we observe that $\displaystyle \int_{\mathcal{O}_{\alpha}} \mathbbm{1}_{\mathcal{O}_{\hat{\eta}^{*}_{\delta}}^{c}} b(\hat{\rho}_{\delta}) \varphi \to 0$ in $L^{\infty}(0, T; \R)$, $\tilde\bP$-almost surely, by an easier variant of the same argument in Lemma \ref{outsidefluid}, and once we observe that $\overline{b(\rho)} = \mathbbm{1}_{\sO_{\eta^{*}}}\overline{b(\rho)}$. At this stage, the only remaining item left to show is that this convergence is actually in $C(0, T; \R)$ rather than just $L^{\infty}(0, T; \R)$. This will be accomplished in the final step, Step 3.

\medskip

\noindent \textbf{Step 3.} \textit{Show that $\displaystyle t \to  \int_{\sO} \mathbbm{1}_{\mathcal{O}_{\hat{\eta}^{*}_{\delta}}} b(\hat{\rho}_{\delta}(t)) \varphi$ is continuous from $[0, T] \to \R$, for each deterministic $\varphi \in C_{c}^{\infty}(\mathcal{O}_{\alpha})$.} From Step 1, we know that $\displaystyle t \to \int_{\mathcal{O}_{\alpha}} b(\hat{\rho}_{\delta})(t)\varphi$ is a real-valued continuous function on $[0, T]$, $\bP$-almost surely for any deterministic $\varphi \in C^{\infty}_{c}(\mathcal{O}_{\alpha})$. For any fixed open connected set $B \subset \mathcal{O}_{\alpha}$, we can show by approximating $\mathbbm{1}_{B}\varphi$ in the $L^{\frac{\gamma}{\gamma - 1}}(\sO_{\alpha})$ norm by smooth compactly supported functions, that
\begin{equation}\label{fixedsetcontinuity}
t \to \int_{\sO_{\alpha}} \mathbbm{1}_{B} b(\hat{\rho}_{\delta}) \varphi \quad \text{ is continuous for any fixed, open and connected $B \subset \sO_{\alpha}$.}
\end{equation}
Since $\hat{\eta}^{*}_{\delta} \in C(0, T; C(\Gamma))$, $\tilde\bP$-almost surely, we can show the desired $\tilde\bP$-almost sure continuity of $\displaystyle t \to \int_{0}^{t} \int_{\mathcal{O}_{\alpha}} \mathbbm{1}_{\sO_{\hat{\eta}^{*}_{\delta}}} b(\hat{\rho}_{\delta}) \varphi$ by using the boundedness of $b$ and writing for any fixed $t_0 \in [0, T]$:
\begin{align*}
\int_{\mathcal{O}_{\alpha}} \mathbbm{1}_{\sO_{\hat{\eta}^{*}_{\delta}(t)}} b(\hat{\rho}_{\delta})(t) \varphi &- \int_{\mathcal{O}_{\alpha}} \mathbbm{1}_{\sO_{\hat{\eta}^{*}_{\delta}(t_0)}} b(\hat{\rho}_{\delta})(t_0) \varphi \\
&= \int_{\mathcal{O}_{\alpha}} \Big(\mathbbm{1}_{\sO_{\hat{\eta}^{*}_{\delta}(t)}} - \mathbbm{1}_{\sO_{\hat{\eta}^{*}_{\delta}(t_0)}}\Big) b(\hat{\rho}_{\delta})(t) + \int_{\mathcal{O}_{\alpha}} \mathbbm{1}_{\sO_{\hat{\eta}^{*}_{\delta}(t_0)}} \Big(b(\hat{\rho}_{\delta})(t) - b(\hat{\rho}_{\delta})(t_0)\Big),
\end{align*}
where we can use the continuity property for fixed sets in \eqref{fixedsetcontinuity} to estimate the second quantity. This completes the proof of Lemma \ref{brholemma}.
\end{proof}

Now that we have the necessary convergence results from Lemma \ref{energyconvdelta} and Lemma \ref{brholemma}, we can now establish the desired convergence of the stochastic integrals for the limit passage as $\delta \to 0$. 

\begin{lemma}[Convergence of stochastic integrals]\label{stochint}
For any collection of $(\hat{\sF}_{t}^{\delta})_{t\geq 0}$-adapted, essentially bounded, smooth processes $\{\psi_\delta\}_{\delta\geq 0}\in C(0,T;C^\infty(\Gamma))$ and $\{\bd{q}_{\delta}\}_{\delta \ge 0}\in C(0,T;C^\infty(\sO_\alpha))$ such that $\bd{q}_{\delta}$ converges almost surely to some $(\hat{\sF}_{t}^{})_{t\geq 0}$-adapted process $ \bd{q}$ in $C(0, T; H^{l}(\mathcal{O}_{\alpha}))$ and $\psi_\delta $ converges almost surely to some $(\hat{\sF}_{t}^{})_{t\geq 0}$-adapted process $ \psi$ in $C(0, T; H^{l}(\Gamma))$ 
for $l > \frac{5}{2}$, $\tilde\bP$-a.s. such that 
\begin{equation}\label{testbound}
\|\psi_\delta\|_{L^\infty(\tilde\Omega \times [0,T]\times\Gamma)}+\|\bd{q}_{\delta}\|_{L^{\infty}(\tilde\Omega \times [0, T]; H^l_0(\mathcal{O}_{\alpha}))} \le C,
\end{equation}
for a constant $C$ that is independent of $\delta$, we have the following $\tilde\bP$-almost sure convergence:
\begin{align*}
\int_{0}^{ T} \int_{\mathcal{O}_{\alpha}} \mathbbm{1}_{\sO_{\hat\eta^*_\delta}}\bd{F}(\hat{\rho}_{\delta}, \hat{\rho}_{\delta} \hat{\bu}_{\delta}) \cdot \bd{q}_{\delta} d\hat W^1_\delta(t) \to \int_{0}^{ T} \int_{\mathcal{O}_{\eta^{*}}} \overline{\bd{F}(\rho, \rho \bu)} \cdot \bd{q} d\hat W^1(t),\\
\int_{0}^{T} \int_{\Gamma} G(\hat{\eta}_{\delta}, \hat{v}_{\delta} ) \cdot \psi_\delta d\hat W_\delta^2(t) \to \int_{0}^{T} \int_{\Gamma} G(\eta, v) \cdot \psi d\hat W^2(t).
\end{align*}
Here the limiting $\overline{\bd{F}(\rho, \rho \bu)}$ is defined by its action on the orthonormal basis of $\mathcal{U}_0$, via $\overline{\bd{F}(\rho, \rho\bu)}\bd{e}_{k} = \overline{f_{k}(\rho, \rho \bu)}$ where the functions $\overline{{f}_{k}(\rho, \rho \bu)}$ are the weak limits of $f_{k}(\hat\rho_{\delta}, \mathbbm{1}_{\mathcal{O}_{\hat\eta^{*}_{\delta}}}\hat{\bu}_\delta, \mathbbm{1}_{\mathcal{O}_{\hat\eta^{*}_{\delta}}} \nabla \hat{\bu}_\delta)$ in $L^{p}((0, T) \times \mathcal{O}_{\alpha})$ for some $p > 1$, given by the stochastic noise assumption \eqref{fassumption} and the result on weak limits of Carath\'{e}odory functions composed with the approximate solutions, see \eqref{caratheodoryconv}.
\end{lemma}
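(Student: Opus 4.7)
My plan is to invoke the classical convergence criterion for stochastic integrals (Lemma 2.6.6 in \cite{BFH18}, Lemma 2.1 of \cite{DGHT}) so that in each case it suffices to establish convergence of the integrand in probability in $L^2(0,T;L_2(\mathcal{U}_0;\R))$. Because Theorem \ref{skorohod} produces a single Brownian pair $(\hat W^1,\hat W^2)$ on the new space that is independent of $\delta$, no separate convergence of the noise is required. For the structure stochastic integral this reduces to a routine argument in the spirit of Lemma \ref{Nconvergence} and the structure stochastic integral estimate from Section \ref{sec:timedis}: I would combine the $\tilde\bP$-almost sure strong convergences $\hat\eta_\delta\to\eta$ in $C(0,T;H^s(\Gamma))$ and $\hat v_\delta\to v$ in $L^2(0,T;L^2(\Gamma))$ from Theorem \ref{skorohod}, the strong convergence $\psi_\delta\to\psi$ in $C(0,T;H^l(\Gamma))$ with the bound \eqref{testbound}, the Lipschitz/growth condition \eqref{gassumption}, and the summability $\sum_k c_k^2<\infty$, and apply Vitali on the basis of uniform moment estimates to pass to the limit.

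The fluid stochastic integral is the principal obstacle. For each basis vector $\bd{e}_k$, I would decompose
\[
\int_{\sO_\alpha}\mathbbm{1}_{\sO_{\hat\eta^*_\delta}}f_k(\hat\rho_\delta,\hat\rho_\delta\hat\bu_\delta)\cdot\bd{q}_\delta - \int_{\sO_{\eta^*}}\overline{f_k(\rho,\rho\bu)}\cdot\bd{q}
\]
into three differences: one replacing $\bd{q}_\delta$ by $\bd{q}$, one exchanging $\mathbbm{1}_{\sO_{\hat\eta^*_\delta}}$ for $\mathbbm{1}_{\sO_{\eta^*}}$, and the essential piece $\int_{\sO_\alpha}\mathbbm{1}_{\sO_{\eta^*}}\bigl(f_k(\hat\rho_\delta,\hat\rho_\delta\hat\bu_\delta)-\overline{f_k(\rho,\rho\bu)}\bigr)\cdot\bd{q}$. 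The first is controlled by the Sobolev embedding $H^l(\sO_\alpha)\hookrightarrow L^\infty(\sO_\alpha)$ (for $l>5/2$) applied to $\bd{q}_\delta-\bd{q}$, together with the $L^1$ bound $|f_k(\hat\rho_\delta,\hat\rho_\delta\hat\bu_\delta)|\le c_k(\hat\rho_\delta+\hat\rho_\delta|\hat\bu_\delta|)$ from \eqref{fassumption} and the uniform estimates of Lemma \ref{energy_delta}; the second is handled by the $\tilde\bP$-almost sure convergence $\mathbbm{1}_{\sO_{\hat\eta^*_\delta}}\to\mathbbm{1}_{\sO_{\eta^*}}$ in $L^\infty(0,T;L^q(\sO_\alpha))$ recorded in \eqref{convindicator}. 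For the third piece I would appeal to the Carath\'{e}odory weak-limit statement \eqref{caratheodoryconv} of Theorem \ref{skorohod}: the map $(\rho,\bu,\nabla\bu)\mapsto f_k(\rho,\rho\bu)$ is Carath\'{e}odory and, using $\gamma>3/2$, satisfies an admissible polynomial growth, so $f_k(\hat\rho_\delta,\mathbbm{1}_{\sO_{\hat\eta^*_\delta}}\hat\rho_\delta\hat\bu_\delta)\rightharpoonup\overline{f_k(\rho,\rho\bu)}$ weakly in $L^p((0,T)\times\sO_\alpha)$ for some $p>1$. Pairing this weak limit against the fixed $L^{p'}$ function $\mathbbm{1}_{\sO_{\eta^*}}\bd{q}(\tilde\omega,\cdot)$ yields the desired identification, $\tilde\bP$-almost surely and uniformly in $t\in[0,T]$.

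To upgrade these pointwise-in-$(\tilde\omega,t)$ convergences, summed against the $\ell^2$-weights $c_k^2$ via dominated convergence in $k$, to convergence in probability in $L^2(0,T;L_2(\mathcal{U}_0;\R))$, I would apply Vitali. The requisite higher moment bound follows from \eqref{fassumption}, the uniform bound \eqref{testbound}, and the estimates of Lemma \ref{energy_delta} on $\sqrt{\hat\rho_\delta}\hat\bu_\delta\in L^p(\tilde\Omega;L^\infty(0,T;L^2(\sO_\alpha)))$ and $\hat\rho_\delta\in L^p(\tilde\Omega;L^\infty(0,T;L^\gamma(\sO_\alpha)))$ for all $1\le p<\infty$, reproducing the computation
\[
\tilde\bE\Bigl\|\int_{\sO_\alpha}\mathbbm{1}_{\sO_{\hat\eta^*_\delta}}\bd{F}(\hat\rho_\delta,\hat\rho_\delta\hat\bu_\delta)\cdot\bd{q}_\delta\Bigr\|_{L^2(0,T;L_2(\mathcal{U}_0;\R))}^p\le C,
\]
for some $p>2$ and $C$ independent of $\delta$. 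The hard part of the argument is conceptual rather than technical: the density and velocity converge only weakly in topologies where $f_k$ is not continuous, so the limit must remain in the Young-measure form $\overline{f_k(\rho,\rho\bu)}$; identifying this with $f_k(\rho,\rho\bu)$ is deferred to the subsequent step of the proof, once strong convergence of $\hat\rho_\delta$ has been established, at which point the limiting weak formulation \eqref{weaksol} can finally be closed.
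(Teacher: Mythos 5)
Your reduction to the classical criterion, your treatment of the structure stochastic integral, and the first two of your three differences in the decomposition of the fluid integrand are all fine and in the spirit of the paper. The problem is in your handling of the third, essential piece. You propose to conclude that $\int_{\sO_\alpha}\mathbbm{1}_{\sO_{\eta^*}}\bigl(f_k(\hat\rho_\delta,\hat\rho_\delta\hat\bu_\delta)-\overline{f_k(\rho,\rho\bu)}\bigr)\cdot\bd{q}$ vanishes ``$\tilde\bP$-almost surely and uniformly in $t\in[0,T]$'' by pairing the Young-measure weak limit \eqref{caratheodoryconv} in $L^p((0,T)\times\sO_\alpha)$ against the fixed $L^{p'}$ function $\mathbbm{1}_{\sO_{\eta^*}}\bd{q}$. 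This does not work: weak convergence in the space--time product $L^p((0,T)\times\sO_\alpha)$ only controls duality pairings against fixed test functions of both variables simultaneously, and gives \emph{no} pointwise-in-$t$, $L^\infty(0,T)$, or even $L^2(0,T)$ control on the spatial integrals $t\mapsto\int_{\sO_\alpha}(\cdots)\cdot\bd{q}(t,\cdot)$. (Take $g_\delta(t,x)=h_\delta(t)$ with $h_\delta\rightharpoonup 0$ but $\|h_\delta\|_{L^2(0,T)}=1$: then $g_\delta\rightharpoonup 0$ in $L^2((0,T)\times\sO_\alpha)$, yet $\int_0^T\bigl(\int_{\sO_\alpha}g_\delta\,dx\bigr)^2dt$ stays bounded away from zero.) The Bensoussan criterion you invoke requires convergence of the $L_2(\mathcal{U}_0;\R)$-valued integrand in probability in $L^2(0,T)$, which is strictly stronger than anything the Young-measure limit \eqref{caratheodoryconv} gives.

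This is exactly why the paper's proof takes a longer and substantially different route for the fluid term, which your proposal bypasses entirely. The paper first splits off the $\bu$-dependence by writing $\mathbbm{1}_{\sO_{\hat\eta^*_\delta}}\bigl(f_k(\hat\rho_\delta,\hat\rho_\delta\hat\bu_\delta)-f_k(\hat\rho_\delta,\hat\rho_\delta\bu)\bigr)$ and controls this in $L^2(0,T;H^{-l}(\sO_\alpha))$ using the \emph{strong} convergence $\int_0^T\int_{\sO_\alpha}\mathbbm{1}_{\sO_{\hat\eta^*_\delta}}\hat\rho_\delta|\hat\bu_\delta-\bu|^2\to 0$ of Lemma~\ref{energyconvdelta} (itself a consequence of Lemma~\ref{rhousquared}). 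The remaining term $\mathbbm{1}_{\sO_{\hat\eta^*_\delta}}f_k(\hat\rho_\delta,\hat\rho_\delta\bu)$ depends only on $\hat\rho_\delta$ and $\bu$, and the paper obtains its convergence to $\mathbbm{1}_{\sO_{\eta^*}}\overline{f_k(\rho,\rho\bu)}$ by: (i) proving $\mathbbm{1}_{\sO_{\hat\eta^*_\delta}}b(\hat\rho_\delta)\to\mathbbm{1}_{\sO_{\eta^*}}\overline{b(\rho)}$ in $C_w(0,T;L^\gamma(\sO_\alpha))$ via the renormalized continuity equation and Lemma~\ref{outsidefluid} (Lemma~\ref{brholemma}); (ii) compactly embedding into $L^2(0,T;H^{-\sigma}(\sO_\alpha))$; and (iii) a density argument approximating $f_k(\rho,\rho\bu)$ by finite sums $\sum b_m(\rho)B_m(\bu)$, where the identification $\overline{b(\rho)}B(\bu)=\overline{b(\rho)B(\bu)}$ is closed by another appeal to Lemma~\ref{energyconvdelta}. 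The $C_w(0,T;L^\gamma)$-topology is precisely the time-uniform replacement for the space--time weak topology that your argument lacks, and it requires the continuity equation rather than the Young-measure theorem. To repair your proposal you would have to import all of this machinery; there is no shortcut through \eqref{caratheodoryconv} alone.
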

\begin{proof}
By classical methods in \cite{Ben} (see Lemma 2.1 in \cite{DGHT}, Lemma 2.6.6. in \cite{BFH18}), it suffices to show:
\begin{equation*}
\int_{\mathcal{O}_{\alpha}} \mathbbm{1}_{\mathcal{O}_{\hat{\eta}_{\delta}^{*}}} \bd{F}(\hat{\rho}_{\delta}, \hat{\rho}_{\delta} \hat{\bu}_{\delta}) \cdot \bd{q}_{\delta} \to \int_{\mathcal{O}_{\eta^{*}}} \overline{\bd{F}(\rho, \rho \bu)} \cdot \bd{q} \quad\text{ in $L^{2}(0, T; L_{2}(\mathcal{U}_0; \mathbb{R}))$ in probability}.
\end{equation*}
We observe that this will be established if we show that $\mathbbm{1}_{\sO_{\hat\eta^*_\delta}}{f}_{k}(\rho_{\delta}, \rho_{\delta} \bu_{\delta}) \to \mathbbm{1}_{\sO_{\eta^*}}\overline{{f}_{k}(\rho, \rho \bu)}$ almost surely in $L^{2}(0, T; H^{-l}(\mathcal{O}_{\alpha}))$ for $l > 5/2$, by the strong $\tilde\bP$-almost sure convergence of $\bd{q}_{\delta} \to \bd{q}$ in $C(0, T; C^{m}(\mathcal{O}_{\alpha}))$ for all $m\in\mathbb{N}$. To prove this, it suffices to show the following two $\tilde\bP$-almost sure strong convergences:
\begin{align}\label{negativeSobconv1}
\mathbbm{1}_{\sO_{\hat\eta^*_\delta}}f_k(\hat{\rho}_{\delta}, \hat{\rho}_{\delta} \hat{\bu}_{\delta}) -{\mathbbm{1}_{\sO_{\hat{\eta}_{\delta}^*}}}f_k(\hat{\rho}_{\delta}, \hat{\rho}_{\delta} \bu) \to 0 \quad \text{ in } L^{2}(0, T; H^{-l}(\mathcal{O}_{\alpha})), \\ 
{\mathbbm{1}_{\sO_{\hat{\eta}_{\delta}^*}}}f_k(\hat{\rho}_{\delta}, \hat{\rho}_{\delta} \bu) \to {\mathbbm{1}_{\sO_{\eta^*}}}\overline{f_k(\rho, \rho \bu)} \quad \text{ in } L^{2}(0, T; H^{-l}(\mathcal{O}_{\alpha})).\label{negativeSobconv2}
\end{align}
and the uniform estimate
\begin{equation}\label{deltatail}
\lim_{m \to \infty} \left(\sup_{\delta} \tilde{\mathbb{E}}\int_{0}^{T} \sum_{k = m}^{\infty}\left(\int_{\mathcal{O}_{\alpha}} \mathbbm{1}_{\mathcal{O}_{\hat{\eta}^{*}_{\delta}}} |f_{k}(\hat{\rho}_\delta, \hat{\rho}_\delta \hat{\bu}_\delta) \cdot \bd{q}_{\delta}| + \mathbbm{1}_{\mathcal{O}_{\eta^{*}}} |\overline{f_{k}(\rho, \rho \bd{u})} \cdot \bd{q}|\right)^{2}\right) = 0.
\end{equation}

To prove these statements, we use techniques from the proof of Proposition 4.4.12 in \cite{BFH18} for the analogous convergence of stochastic integrals in the context of stochastic Navier-Stokes equations on a fixed domain, where we must adapt many of the calculations to account for the fact that we only have uniform bounds of $\hat{\bd{u}}_{\delta}$ on (moving) domains $\mathcal{O}_{\hat\eta^*_{\delta}}$, rather than on a fixed domain.

\medskip

\noindent \textbf{Proof of \eqref{deltatail}.} Since $\frac{2\gamma}{\gamma + 1} > \frac{6}{5}$, thanks to \eqref{fassumption}, we note that $f_{k}(\hat{\rho}_{\delta}, \hat{\rho}_{\delta} \hat{\bu}_{\delta})$ is uniformly bounded in $L^{p}(\tilde\Omega; L^{\infty}(0, T; L^{6/5}(\mathcal{O}_{\alpha})))$, and in fact it is also uniformly bounded in $L^{\infty}(0, T; L^{6/5}(\mathcal{O}_{\alpha}))$ $\tilde\bP$-almost surely, thanks to Theorem \ref{skorohod}. So along a subsequence depending on the outcome $\tilde\omega$, $f_{k}(\hat{\rho}_{\delta}, \hat{\rho}_{\delta} \hat{\bu}_{\delta})$ converges weakly-star in $L^{\infty}(0, T; L^{6/5}(\mathcal{O}_{\alpha}))$ to some limit, but this limit must $\tilde\bP$-almost surely be $\overline{f_{k}(\rho, \rho \bu)}$, since this has already been identified via \eqref{caratheodoryconv} as the $\tilde\bP$-almost sure limit of $f_{k}(\hat{\rho}_{\delta}, \hat{\rho}_{\delta} \hat{\bu}_{\delta})$ in $L^{r}((0, T) \times \mathcal{O}_{\alpha})$ for some $r > 1$. So by weak lower semi-continuity and the assumption on the noise \eqref{fassumption}, we conclude that for all $1 \le p < \infty$:
\begin{equation*}
\tilde{\mathbb{E}} \left(\|f_{k}(\hat{\rho}_{\delta}, \hat{\rho}_{\delta} \bu_{\delta})\|_{L^{\infty}(0, T; L^{6/5}(\mathcal{O}_{\alpha}))} + \|\overline{f_{k}(\rho, \bu)}\|_{L^{\infty}(0, T; L^{6/5}(\mathcal{O}_{\alpha})))}\right)^{p} \le C_{p} c_{k}^{p},
\end{equation*}
for some constant $C_{p}$ that is independent of $k$ and $\delta$. Therefore, using the assumption on the test functions in \eqref{testbound}:
\begin{equation*}
\sup_{\delta} \tilde{\mathbb{E}}\int_{0}^{T} \sum_{k = m}^{\infty}\left(\int_{\mathcal{O}_{\alpha}} \mathbbm{1}_{\mathcal{O}_{\hat{\eta}^{*}_{\delta}}} |f_{k}(\hat{\rho}_\delta, \hat{\rho}_\delta \hat{\bu}_\delta) \cdot \bd{q}_{\delta}| + \mathbbm{1}_{\mathcal{O}_{\eta}} |\overline{f_{k}(\rho, \rho \bd{u})} \cdot \bd{q}|\right)^{2} \le C_{p, T}\sum_{k = m}^{\infty} c_{k}^{2},
\end{equation*}
which establishes \eqref{deltatail} since $\displaystyle \sum_{k = 1}^{\infty} c_{k}^{2} < \infty$ by the assumptions on the noise.

\medskip

\noindent \textbf{Proof of \eqref{negativeSobconv1}.} First, by using \eqref{fassumption} we can estimate:
\begin{multline*}
\int_{0}^{T} \|\mathbbm{1}_{\sO_{\hat\eta^*_\delta}}\left(f_k(\hat{\rho}_{\delta},\hat{\rho}_{\delta}\hat{\bu}_{\delta}) - f_k(\hat{\rho}_{\delta}, \hat{\rho}_{\delta} \bu)\right)\|_{H^{-l}(\mathcal{O}_{\alpha})}^{2} \le C\int_{0}^{T} \left(\int_{\mathcal{O}_{\alpha}} \mathbbm{1}_{\sO_{\hat\eta^*_\delta}}|f_k(\hat{\rho}_{\delta}, \hat{\rho}_{\delta} \hat{\bu}_{\delta}) - f_k(\hat{\rho}_{\delta}, \hat{\rho}_{\delta} \bu)|\right)^{2} \\
\le c_k \int_{0}^{T} \left(\int_{\mathcal{O}_{\alpha}}\mathbbm{1}_{\sO_{\hat\eta^*_\delta}} \hat{\rho}_{\delta}|\hat{\bu}_{\delta} - \bu|\right)^{2} \le c_k
{\|\hat{\rho}_{\delta}\|_{L^{\infty}(0, T; L^{1}(\mathcal{O}_{\alpha}))}} \int_{0}^{T} \int_{\mathcal{O}_{\alpha}}\mathbbm{1}_{\sO_{\hat\eta^*_\delta}} \hat{\rho}_{\delta} |\hat{\bu}_{\delta} - \bu|^{2}.
\end{multline*}

Since $\hat\rho_{\delta} \rightharpoonup \rho$ weakly-star in $L^{\infty}(0, T; L^{\gamma}(\mathcal{O}_{\alpha}))$ $\tilde\bP$-almost surely, the norms $\|\hat\rho_{\delta}\|_{L^{\infty}(0, T; L^{\gamma}(\mathcal{O}_{\alpha}))}$ are bounded $\tilde\bP$-almost surely (independently of $\delta$). Combining this with Lemma \ref{energyconvdelta} establishes \eqref{negativeSobconv1}.
\medskip

\noindent \textbf{Proof of \eqref{negativeSobconv2}.} Using the renormalized continuity equation, we established the convergence of certain Carath\'{e}odory functions composed with $\hat{\rho}_{\delta}$ in Lemma \ref{brholemma}, and we will combine the result of this lemma with an approximation and density argument to conclude \eqref{negativeSobconv2}. This is in the spirit of the proof of Proposition 4.4.12 in \cite{BFH18}, adapted in the current setting to handle difficulties arising from the moving domain. Recall from Lemma \ref{brholemma} that we have established convergence of Lipschitz functions of $\hat{\rho}_{\delta}$. Specifically, given an arbitrary $C^{1}$ function $b: \R \to \R$ with $b(0) = 0$ such that $b'(w) = 0$ for $|w|$ sufficiently large, we have that:
\begin{equation}\label{Cwb}
\mathbbm{1}_{\mathcal{O}_{\hat\eta^{*}_{\delta}}} b(\hat{\rho}_{\delta}) \to \mathbbm{1}_{\mathcal{O}_{\eta^{*}}} \overline{b(\rho)} \quad \text{ in } C_{w}(0, T; L^{\gamma}(\mathcal{O}_{\alpha})), \quad \text{$\tilde\bP$-almost surely}.
\end{equation}

From \eqref{Cwb} and the compact embedding $C_{w}(0, T; L^{\gamma}(\mathcal{O}_{\alpha})) \subset \subset L^{2}(0, T; H^{-\sigma}(\mathcal{O}_{\alpha}))$ for $\sigma > \frac{1}{6}$:
\begin{equation}\label{indicatorCwb}
\mathbbm{1}_{\mathcal{O}_{\hat{\eta}_{\delta}^{*}}} b(\hat{\rho}_{\delta}) \to \mathbbm{1}_{\mathcal{O}_{\eta^{*}}} \overline{b(\rho)}, \quad \text{ $\tilde\bP$-almost surely and strongly in $L^{2}(0, T; H^{-\sigma}(\mathcal{O}_{\alpha}))$ for $\sigma > \frac{1}{6}$.}
\end{equation}
Note that, for any Lipschitz function $B: \R^{3} \to \R$, $B(\mathbbm{1}_{\mathcal{O}_{\eta^{*}}}\bu) \in L^{2}(0, T; H^{\sigma}(\mathcal{O}_{\alpha}))$ $\tilde\bP$-almost surely for some $\sigma > \frac{1}{6}$ sufficiently small by Theorem \ref{thm:extension}, since $\bu \in L^{2}(0, T; H^{1}(\mathcal{O}_{\eta^{*}}))$ and $\eta^{*} \in C(0, T; H^{s}(\Gamma))$ are bounded $\tilde\bP$-almost surely, for $3/2 < s < 2$. Therefore, 
we have the following strong convergence for $l > \frac{5}{2}$,
\begin{equation}\label{L2Hnegativel}
\mathbbm{1}_{\mathcal{O}_{\hat{\eta}^{*}_{\delta}}} b(\hat{\rho}_{\delta}) B(\mathbbm{1}_{\mathcal{O}_{\eta^{*}}} \bu)=\mathbbm{1}_{\mathcal{O}_{\hat{\eta}_{\delta}^{*}}} b(\hat\rho_{\delta}) B(\bu) \to \mathbbm{1}_{\mathcal{O}_{\eta^{*}}} \overline{b(\rho)} B(\bu), \quad \tilde\bP\text{-almost surely in $L^{2}(0, T; H^{-l}(\mathcal{O}_{\alpha}))$},
\end{equation}
because, by the weak convergence result in Theorem \ref{skorohod}, we have $\bu=\bu\mathbbm{1}_{\sO_{\eta^*}}$.

Finally, we claim that 
\begin{equation}\label{indicatorstrong}
\mathbbm{1}_{\mathcal{O}_{\hat{\eta}^{*}_{\delta}}} b(\hat{\rho}_{\delta}) B(\bd{u}) \to \mathbbm{1}_{\mathcal{O}_{\eta^{*}}} \overline{b(\rho)B(\bd{u})}, \quad \text{ $\tilde\bP$-almost surely and strongly in $L^{2}(0, T; H^{-l}(\mathcal{O}_{\alpha}))$,}
\end{equation}
where $\overline{b(\rho)B(\bd{u})}$ is the weak limit of $b(\hat{\rho}_{\delta})B(\mathbbm{1}_{\mathcal{O}_{\hat{\eta}^{*}_{\delta}}}\hat{\bd{u}}_{\delta})$ in $L^{\min(\gamma, 6)}([0, T] \times \mathcal{O}_{\alpha})$ obtained from \eqref{caratheodoryconv}. This will follow from \eqref{L2Hnegativel} once we show that $\mathbbm{1}_{\mathcal{O}_{\eta^{*}}} \overline{b(\rho)} B(\bu)$ is the same as $\mathbbm{1}_{\mathcal{O}_{\eta^{*}}} \overline{b(\rho) B(\bu)}$, where we recall from \eqref{caratheodoryconv} that the $\tilde{\mathbb{P}}$-almost sure weak convergences in \eqref{continuitycaratheordory} hold and furthermore, 
\begin{equation*}
b(\hat{\rho}_{\delta})B(\mathbbm{1}_{\mathcal{O}_{\hat{\eta}^{*}_{\delta}}} \hat{\bu}_{\delta}) \rightharpoonup \overline{b(\rho)B(\bu)}, \quad \text{$\tilde\bP$-almost surely, weakly in $L^{\min(\gamma, 6)}((0,T) \times \mathcal{O}_{\alpha})$}. 
\end{equation*}
By the boundedness of $B$ and \eqref{convindicator} we have for $1 \le r < \min(\gamma, 6)$ that:
\begin{align}\label{weakgamma1}
\mathbbm{1}_{\mathcal{O}_{\hat{\eta}_{\delta}^{*}}} b(\hat{\rho}_{\delta}) B(\bu) \rightharpoonup \mathbbm{1}_{\mathcal{O}_{\eta^{*}}} \overline{b(\rho)} B(\bu), \quad \text{$\tilde\bP$-almost surely, weakly in $L^{r}((0, T) \times \mathcal{O}_{\alpha})$},\\
\label{weakgamma2}
\mathbbm{1}_{\mathcal{O}_{\hat{\eta}_{\delta}^{*}}} b(\hat{\rho}_{\delta}) B(\hat{\bu}_{\delta}) \rightharpoonup \mathbbm{1}_{\mathcal{O}_{\eta^{*}}} \overline{b(\rho)B(\bu)}, \quad \text{$\tilde\bP$-almost surely, weakly in $L^{r}((0, T) \times \mathcal{O}_{\alpha})$,}
\end{align}
where in the second convergence, we used that $\mathbbm{1}_{\mathcal{O}_{\hat{\eta}^{*}_{\delta}}} B(\mathbbm{1}_{\mathcal{O}_{\hat{\eta}^{*}_{\delta}}} \hat{\bu}_{\delta}) = \mathbbm{1}_{\mathcal{O}_{\hat{\eta}^{*}_{\delta}}} B(\hat{\bu}_{\delta})$ since $B$ is defined pointwise $B: \R^{3} \to \R$. For any test function $\varphi \in C_{c}^{\infty}([0, T] \times \mathcal{O}_{\alpha})$:
\begin{small}
\begin{align}\label{weaklimitunique}
\left|\int_{0}^{T} \int_{\mathcal{O}_{\alpha}} \mathbbm{1}_{\mathcal{O}_{\hat\eta^{*}_{\delta}}} b(\hat{\rho}_{\delta}) \Big(B(\bu) - B(\hat{\bu}_{\delta})\Big) \varphi \right| \le\text{Lip}(b) \cdot \text{Lip}(B) \cdot \|\varphi\|_{L^{\infty}([0, T] \times \mathcal{O}_{\alpha})} &\int_{0}^{T} \int_{\mathcal{O}_{\alpha}} \mathbbm{1}_{\mathcal{O}_{\hat{\eta}^{*}_{\delta}}} \hat{\rho}_{\delta}|\bu - \hat{\bu}_{\delta}| \nonumber \\
\le \text{Lip}(b) \cdot \text{Lip}(B) \cdot \|\varphi\|_{L^{\infty}([0, T] \times \mathcal{O}_{\alpha})} \cdot  \|\hat{\rho}_{\delta}\|^{1/2}_{L^{\infty}(0, T; L^{1}(\mathcal{O}_{\alpha}))} &\int_{0}^{T} \int_{\mathcal{O}_{\alpha}} \mathbbm{1}_{\mathcal{O}_{\hat{\eta}^{*}_{\delta}}} \hat{\rho}_{\delta} |\bu - \hat{\bu}_{\delta}|^{2} \to 0
\end{align}
\end{small}
$\tilde\bP$-almost surely, by the $\tilde\bP$-almost sure boundedness of $\|\hat{\rho}_{\delta}\|_{L^{\infty}(0, T; L^{1}(\mathcal{O}_{\alpha}))} \le C(\tilde{\omega})$ and Lemma \ref{energyconvdelta}. Thus, we conclude from \eqref{weakgamma1} and \eqref{weakgamma2} that $\mathbbm{1}_{\mathcal{O}_{\eta^{*}}} \overline{b(\rho)} B(\bu) = \mathbbm{1}_{\mathcal{O}_{\eta^{*}}} \overline{b(\rho)B(\bu)}$, from which the desired convergence \eqref{indicatorstrong} follows. 

Now that \eqref{indicatorstrong} is established, the desired $\tilde\bP$-almost sure convergence in \eqref{negativeSobconv2} 
follows by an approximation argument, by approximating $f_{k}(\rho,\rho \bd{u})$ by finite sums of the form $\displaystyle \sum_{i = 1}^{m} b_{m}(\rho) B_{m}(\bd{u})$ almost surely in the $L^{2}(0, T; H^{-l}(\mathcal{O}_{\alpha}))$ norm, where $b_{m}$ and $B_{m}$ are bounded Lipschitz functions with $b_m(0) = 0$ and $b_m'(w) = 0$ for sufficiently large $|w|$. 
\end{proof}

\subsection{Construction of test functions}\label{sec:testfunction}

The aim of this section is to construct smooth test functions for the weak formulation \eqref{newdelta} in order to pass  $\delta\to 0$. Recall that this formulation contains a term penalizing the boundary behavior of the fluid and structure velocities. Hence, we must construct test functions that satisfy the kinematic coupling condition so that this penalty term drops out. We must also ensure that these test functions are appropriately adapted so that the stochastic integrals appearing in the equation \eqref{newdelta} are well-defined.

For that purpose, we begin by considering an $(\hat\sF_t)_{t\geq 0}-$adapted process $\bq$ taking $C^1$-paths in $C^\infty(\sO_\alpha)$ such that $\bq|_{\Gamma_{\eta^*}}=\psi{\bf e}_z$ for some $\psi\in C^\infty(\Gamma)$. Hence $(\bq,\psi)$ is an admissible test pair for the limiting equations. However, $(\bd{q}, \psi)$ might not be an admissible test function for the approximate equation due to the kinematic coupling condition being dependent on the structure displacement. Hence, we construct a sequence of $(\hat\sF^\delta_t)_{t\geq 0}-$adapted test functions that are admissible for the approximate problem with $\delta > 0$, that converge to the target test function $(\bd{q}, \psi)$ for the limiting problem as $\delta \to 0$.

 To do this, we first define test functions $(\bd{q}_{\lambda}, \psi_{\lambda})$, that approximate $(\bq,\psi)$ as $\lambda\to 1$, such that $\bd{q}_{\lambda}$ is smooth on the entire domain $\sO_\alpha$ and is equal to $\psi_{\lambda} \bd{e}_{z}$ on $\Gamma_{\eta^{*}}$.
In particular, 
we define a random function $\tilde{\bd{q}}_{\lambda}$ on $[0, T] \times \mathcal{O}_{\alpha}$ by
\begin{equation*}
\tilde{\bd{q}}_{\lambda}(x, y, z) := \bd{q}(x, y, \lambda z), \quad \text{ if } 0 \le z \le \lambda^{-1}(1 + \eta^*(x, y)),
\end{equation*}
\begin{equation}\label{tildeqlambda}
\tilde{\bd{q}}_{\lambda}(x, y, z) := \psi (x,y)\bd{e}_{z}, \quad \text{ if } z \ge \lambda^{-1}(1 + \eta^*(x, y)).
\end{equation}
{ See Fig. \ref{testpic} for a realization.}
Now, thanks to the fact that
$1 + \eta^{*} \ge \alpha > 0,$ we can find a constant $\sigma_\lambda>0$, depending only on $\alpha,\lambda$, such that (see \cite{TC23})
\begin{equation}
\begin{split}
        &\sigma_\lambda \leq \text{distance between the two curves $1+\eta^*$ and $\lambda^{-1}(1+\eta^*)$},\\
    &\sigma_\lambda \to 0  \text{ as the parameter } \lambda\to 1.
\end{split}
\end{equation}
Next we smooth out $\tilde\bq_\lambda$ by defining,
\begin{equation}\label{qlambda}
\bd{q}_{\lambda} := \tilde{\bd{q}}_{\lambda} \ast  \zeta_{\sigma_\lambda},
\end{equation}
where $\zeta_{\sigma_\lambda}$ is the standard rescaled three-dimensional convolution kernel supported in a ball of radius $\sigma_\lambda$. Then, we note that for all $\lambda \in ( 1,2)$, $\bd{q}_{\lambda}|_{\Gamma_{\eta^{*}}} = \psi_\lambda \bd{e}_{z}$, where
\begin{equation}
\psi_\lambda=\psi\ast \zeta_{\sigma_\lambda}.
\end{equation}
Hence, $(\bd{q}_{\lambda}, \psi_{\lambda})$ is another admissible test function for the limiting weak formulation. 


Next, we will approximate this $(\hat\sF_t)_{t\in[0,T]}$-adapted process by $(\hat\sF^\delta_t)_{t\in[0,T]}$-adapted processes. In fact, due to Proposition 3.3.2 in \cite{CKMT25}, there exist $(\hat\sF^\delta_t)_{t\in[0,T]}$-adapted  processes $\bq_{\lambda,\delta}$ in $C^1(0,T;H^k(\sO_\alpha))$ and $\psi_{\delta,\lambda}$ in $C^1(0,T;H^k(\Gamma))$, for any $k \geq 0$ such that 
\begin{equation}
\begin{split}\label{approxfilq}
&\bq_{\lambda,\delta} \to \bq_\lambda \quad \text{almost surely in } C^1(0,T;C^k(\sO_\alpha))\\
&\psi_{\lambda,\delta}\to\psi_{\lambda} \quad \text{almost surely in } C^1(0,T;C^k(\Gamma)).
\end{split}
\end{equation}
These processes may not satisfy the kinematic coupling condition.
To construct test functions for the approximate weak formulation \eqref{newdelta} which approximate the limiting test function $(\bd{q}_{\lambda}, \psi_{\lambda})$ for $\lambda > 1$, we introduce an arbitrary but fixed parameter $\kappa > 0$. The corresponding fluid test function will be built so that it transitions smoothly, in a layer of width $\kappa$, from $\bq_{\lambda,\delta}$ in the interior of $\sO_{\hat\eta^*_\delta}$ to $\psi_{\lambda,\delta }$ on the fluid-structure interface $\Gamma_{\hat\eta^*_\delta}$ which enforces the kinematic coupling condition along $\Gamma_{\hat{\eta}^{*}_{\delta}}$, $\tilde\bP$-almost surely. 


We begin the construction by recalling from Section \ref{extension}: since for some $s \in (3/2, 2)$, we have $\|\hat \eta^*_{\delta}\|_{L^{\infty}(\Gamma)}<\frac1\alpha$ and thus $\|\eta^*_{}\|_{L^{\infty}(\Gamma)}<\frac1\alpha$ for almost any $\omega \in \tilde\Omega$ and $t\in[0,T]$,
we can construct smooth bounding functions bounding functions $a^{\hat\eta^{*}_{\delta}}_{\kappa}$ and $b^{\hat\eta^{*}_{\delta}}_{\kappa}$ for each $\delta > 0$. Furthermore, we recall that $a^{ \eta^{*}}_{\kappa}$  and $b^{ \eta^{*}}_{\kappa}$ for the limiting structure displacement $\eta^{*}$, 
satisfy the properties \eqref{abbound0}, \eqref{abound}, \eqref{bbound} and Lemma \ref{abconv}.

We further recall from Section \ref{extension} that $\phi_0: \R \to \R$ is chosen to be a smooth function on $[0, \infty)$ such that $\phi_0(w) = 1$ for $w \le 1/4$, $\phi_0(w) = 0$ for $w \ge 3/4$, and $\phi_0$ is decreasing on $[1/4, 3/4]$. We then define a smooth function $\varphi_{\delta, \kappa}$ for each $\hat\eta^{*}_{\delta}$ and each parameter $\kappa$ by: 
\begin{align}
&\varphi_{\delta, \kappa}(t,x, y, z) = \phi_0\left(\frac{z-b^{ \hat\eta^{*}_{\delta}}_{\kappa}(t,x, y) }{C_\alpha\kappa^{1/2}}\right),
\label{phideltakappa}
\end{align}
where the constant $C_\alpha$ is as in \eqref{Calphadef}.

Now, given the smooth test pair $(\bd{q}_{\lambda,\delta}, \psi_{\lambda,\delta})$ on the extended domain, approximating $(\bd{q}_\lambda, \psi_\lambda)$ (and thus $(\bd{q}, \psi)$) for the limiting weak formulation, we can construct a family of test functions $(\bd{q}_{\lambda, \delta, \kappa}, \psi_{\lambda, \delta})$ for $\lambda \in (1, 2)$, $\delta > 0$, and $\kappa > 0$, which satisfy the kinematic coupling condition along the approximate moving boundary 
\begin{equation*}
\bd{q}_{\lambda, \delta, \kappa}|_{\Gamma_{\hat\eta^{*}_{\delta}}} = \psi_{\lambda, \delta} \bd{e}_{z},
\end{equation*}
as follows, using the previously constructed function $\varphi_{\delta,\kappa}$ from \eqref{phideltakappa}:
\begin{equation*}
\bd{q}_{\lambda, \delta, \kappa} = \bd{q}_{\lambda,\delta} + (\psi_{\lambda,\delta} \bd{e}_{z} - \bd{q}_{\lambda,\delta})(1-\varphi_{\delta, \kappa}), \qquad \psi_{\lambda, \delta} = \psi_{\lambda}.
\end{equation*}
Note that this construction ensures that $(\bd{q}_{\lambda, \delta, \kappa}, \psi_{\lambda, \delta})$ is adapted to $(\hat\sF_t^\delta)_{t\in [0,T]}$ and satisfies the kinematic coupling condition so that the penalty in the weak formulation drops out for this pair of test functions.\\
\noindent\textbf{Step 1: $\delta\to 0$.}
We will fix $\kappa$ and $\lambda$ and handle the limit as $\delta \to 0$ first, which requires defining the following limiting function $\varphi_{\kappa}$ analogously to \eqref{phikappa}, corresponding to the limiting structure displacement $\eta^{*}$, where we recall that  $b^{\eta^{*}}_{\kappa}$ bounds from below the limiting structure displacement $\eta^{*}$:
\begin{equation}\label{phikappa}
\varphi_{\kappa}(t,x, y, z) = \phi_0\left(\frac{z-b^{ \eta^{*}}_{\kappa}(t,x, y) }{C_\alpha\kappa^{1/2}}\right). 
\end{equation}
and we similarly define
\begin{equation}\label{qlambdakappa}
\bd{q}_{\lambda, \kappa} = \bd{q}_{\lambda} + (\psi_{\lambda} \bd{e}_{z} - \bd{q}_{\lambda}) (1-\varphi_{\kappa}), \qquad \psi_{\lambda, \kappa} = \psi_{\lambda}.
\end{equation}
We have the following convergence result for the convergence of the approximate test functions $(\bd{q}_{\lambda, \delta, \kappa}, \psi_{\lambda,\delta})$ satisfying $\bd{q}_{\lambda, \delta, \kappa}|_{\Gamma_{\hat\eta_{\delta}^{*}}} = \psi_{\lambda, \delta} \bd{e}_{z}$ almost surely, to a limiting test function $(\bd{q}_{\lambda, \kappa}, \psi_{\lambda})$ in the limit as $\delta \to 0$ which satisfies $\bd{q}_{\lambda, \kappa}|_{\Gamma_{\eta^{*}}} = \psi_{\lambda} \bd{e}_{z}$ almost surely. 

\begin{lemma}\label{qlambdadeltakappa}
For all positive integers $k$ and fixed parameters $\kappa > 0$ and $\lambda \in (1/2, 1)$, $$ (\bd{q}_{\lambda, \delta, \kappa}, \psi_{\lambda, \delta} )\to  (\bd{q}_{\lambda, \kappa},\psi_{\lambda})\quad \text{almost surely in } C([0, T]; C^k(\mathcal{O}_{\alpha})\times C^k(\Gamma)), \quad\text{ as } \delta \to 0.$$
\end{lemma}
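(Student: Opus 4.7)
The plan is to decompose the convergence into three independent pieces and then combine them via the continuity of the algebraic operations (sum, product, scalar multiplication) and the continuity of composition with the smooth fixed cutoff $\phi_0$. The three pieces are: (i) $\bd{q}_{\lambda,\delta} \to \bd{q}_\lambda$ in $C^1([0,T];C^k(\mathcal{O}_\alpha))$ almost surely, (ii) $\psi_{\lambda,\delta} \to \psi_\lambda$ in $C^1([0,T];C^k(\Gamma))$ almost surely, and (iii) $\varphi_{\delta,\kappa} \to \varphi_\kappa$ in $C([0,T];C^k(\mathcal{O}_\alpha))$ almost surely. Items (i) and (ii) are exactly the content of \eqref{approxfilq}, so the work is to establish (iii) and then combine.

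For (iii), I will apply Lemma \ref{abconv} with $\eta = \hat\eta^*_\delta$ and the limit $\eta^*$, which is permitted because Theorem \ref{skorohod} supplies the almost sure convergence $\hat\eta^*_\delta \to \eta^*$ in $C([0,T];H^s(\Gamma))$ for $s \in (3/2,2)$ and hence, by Sobolev embedding, in $C([0,T];C(\Gamma))$. Lemma \ref{abconv} then yields $b^{\hat\eta^*_\delta}_\kappa \to b^{\eta^*}_\kappa$ in $C([0,T];C^{k+1}(\Gamma))$ almost surely for every $k\ge 0$. Recalling the definitions \eqref{phideltakappa} and \eqref{phikappa},
\[
\varphi_{\delta,\kappa}(t,x,y,z) = \phi_0\!\left(\tfrac{z - b^{\hat\eta^*_\delta}_\kappa(t,x,y)}{C_\alpha\kappa^{1/2}}\right), \qquad \varphi_\kappa(t,x,y,z) = \phi_0\!\left(\tfrac{z - b^{\eta^*}_\kappa(t,x,y)}{C_\alpha\kappa^{1/2}}\right),
\]
and since $\phi_0 \in C^\infty(\mathbb{R})$ and $\kappa > 0$ is fixed, the chain and Leibniz rules show that every spatial derivative of $\varphi_{\delta,\kappa}$ up to order $k$ is a smooth polynomial expression in the derivatives of $b^{\hat\eta^*_\delta}_\kappa$ up to order $k$ (uniformly bounded in the relevant $C^k$ norm by the convergence above), composed with derivatives of $\phi_0$. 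Since $\phi_0$ and its derivatives are uniformly continuous on any bounded interval, uniform convergence of the arguments propagates to uniform convergence of the compositions, giving (iii).

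Combining (i)--(iii) is routine: writing
\[
\bd{q}_{\lambda,\delta,\kappa} - \bd{q}_{\lambda,\kappa} = (\bd{q}_{\lambda,\delta} - \bd{q}_\lambda)\varphi_{\delta,\kappa} + \bd{q}_\lambda(\varphi_{\delta,\kappa}-\varphi_\kappa) + (\psi_{\lambda,\delta}-\psi_\lambda)(1-\varphi_{\delta,\kappa})\bd{e}_z + \psi_\lambda(\varphi_\kappa - \varphi_{\delta,\kappa})\bd{e}_z,
\]
each summand is a product of an almost surely $C([0,T];C^k)$-convergent factor with an almost surely $C([0,T];C^k)$-bounded factor. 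Since $C^k(\mathcal{O}_\alpha)$ is a Banach algebra (after absorbing constants from the Leibniz rule), each product converges to zero in $C([0,T];C^k(\mathcal{O}_\alpha))$ almost surely. The convergence $\psi_{\lambda,\delta}\to\psi_\lambda$ in $C([0,T];C^k(\Gamma))$ is immediate from (ii). Intersecting the (countably many) probability-one events on which (i), (ii), and the convergence in Lemma \ref{abconv} for each $k$ hold yields a single probability-one event on which the full convergence stated in the lemma is valid.

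The only mildly delicate point is (iii), specifically the dependence of the $C^k$ bounds for $\varphi_{\delta,\kappa}$ on the (negative powers of) the fixed parameter $\kappa^{1/2}$ arising from differentiating $\phi_0\big(\cdot/(C_\alpha\kappa^{1/2})\big)$; since $\kappa$ is held fixed throughout this lemma, these constants are harmless. The construction in Section \ref{sec:testfunction} is precisely arranged so that this $\kappa$-dependence can be deferred to the subsequent limit passages $\delta\to 0$, then $\kappa\to 0$, then $\lambda\to 1$, which is where genuine difficulty arises (and is treated outside the present lemma).
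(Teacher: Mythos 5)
Your proof is correct and takes essentially the same route as the paper: combine Lemma \ref{abconv} with the smoothness of $\phi_0$ to get $\varphi_{\delta,\kappa}\to\varphi_\kappa$ in $C([0,T];C^k(\mathcal{O}_\alpha))$, then combine with \eqref{approxfilq} via the product/Banach-algebra structure of $C^k$; you have merely spelled out the telescoping decomposition that the paper leaves implicit. One small inaccuracy in your closing aside: the subsequent limit passages are not $\kappa\to 0$ then $\lambda\to 1$, but rather (Step 2) a \emph{fixed choice} of $\kappa$ depending on $\lambda$ so that $\bd{q}_{\lambda,\kappa}=\bd{q}_\lambda$, followed by (Step 3) $\lambda\to 1$; this does not affect the correctness of your argument for the present lemma.
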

\begin{proof}
    Observe that thanks to Lemma \ref{abconv} and the fact that by definition, $\phi_0:\R\to\R$ is smooth, we have that $ \varphi_{\kappa,\delta} \to \varphi_{\kappa}$ almost surely in $C([0, T]; C^k(\mathcal{O}_{\alpha}))$ for any $k\geq 0$ as $\delta \to 0$. This, combined with \eqref{approxfilq}, immediately gives us the desired result.
\end{proof}
\if 1 = 0
\begin{proof}
It suffices to show that $\nabla^{k} \varphi_{\kappa,\delta} \to \nabla^{k} \varphi_{\kappa}$ almost surely in $C([0, T] \times \mathcal{O}_{\alpha})$ as $\delta \to 0$. This, however, follows immediately from the convergence result in Lemma \ref{abconv}.
Indeed, since $a^{ \hat\eta^{*}_{\delta}}_{\kappa} \to a^{ \eta^{*}}_{\kappa}$ in $C(0, T; C^k(\Gamma))$, it suffices by the definitions of $\varphi_{\delta,\kappa}$ and $\varphi_{\kappa}$ to show that 
\begin{equation*}
\nabla^{k} \phi_0\left(\frac{ b^{ \hat\eta_\delta^{*}}_\kappa(\omega, x, y)-z}{\kappa^{1/2}}\right) \to \nabla^{k}\phi_0\left(\frac{ b^{\eta^{*}}_{\kappa}(\omega, x, y)-z}{\kappa^{1/2}}\right) \quad \text{ as } \delta \to 0,
\end{equation*}
but this is an immediate result of the chain rule, the smoothness of $\phi_0: \R \to \R$, and the convergence from Lemma \ref{abconv}. 
\end{proof}

\medskip

This is done by transforming $\bq$ while preserving its adaptiveness and the kinematic coupling condition.
For that purpose, for any $t\in [0,T]$ and given process $\bq$, let the map $\mathcal{C}_{\bq}$ be defined as $$\mathcal{C}_{\bq}(t,\omega,\eta,\eta_\delta)
	=F^\ep_{\eta,\eta_\delta}(\bq(t,\omega)),$$
	where for any $\epsilon<\alpha$,
 $$F^\ep_{\eta,\eta_\delta}({\bq})(x,y,z):={\bq}\left(x,y,\frac{(1+\eta(x,y))}{(1+{\eta_\delta(x,y)}-\ep)}z\right) = {\bq}\circ (\bd{\Phi}^\eta)^{-1}\circ \bd{\Phi}^{\eta_\delta-\epsilon}, $$ 
 is a well-defined map from $C(\bar\sO_{\eta})$ to $C(\bar\sO_{\eta_\delta-\varepsilon})$ 
 for  any $\eta,\eta_\delta\in C(\Gamma)$. Recall that, by assumption, $\bq(t)$ is $\hat\sF_t$-measurable and hence $\hat\sF^\delta_t$-measurable.
 This observation combined with the continuity of the composition operator $F^\ep_{\eta,\eta_\delta}$  gives us, for any $\eta$ and $\eta_\delta$, that the $C^1(\bar\sO_{\eta_\delta})$-valued map $\omega \mapsto \mathcal{C}_{\bg}(\omega,\eta,\eta_\delta)$ is $\hat\sF^\delta_t$-measurable (where $C^1(\bar\sO_{\eta_\delta})$ is endowed with Borel $\sigma$-algebra).
	Note also that for any fixed $\omega$, the maps  $\eta\mapsto \mathcal{C}_{\bg}(\omega,\eta,\eta_\delta)$ and $\eta_\delta\mapsto \mathcal{C}_{\bg}(\omega,\eta,\eta_\delta)$ are continuous.
	Hence we deduce that $\mathcal{C}_\bg$ is a Carath\'eodory function.
	Now, by the construction,  we know that $\eta^*$ and $\hat\eta_\delta^*$ are $(\hat{\sF}^\delta_t)_{t\geq 0}$-adapted. Therefore, we conclude that the $C^1(\bar\sO_{\hat\eta^*_\delta})$-valued process,  
 $$\bq^1_{\delta}(t,\omega):=\mathcal{C}_\bq(\omega,\eta^*(t,\omega),\hat\eta^*_\delta(t,\omega)),$$
	is $(\hat\sF^\delta_t)_{t\geq 0}$-adapted as well. The same conclusions follow for the process $\psi$, using the same argument.
 
  Unfortunately, $\bq^1_{\delta}$ inherits its spatial regularity from the ALE maps $\bd{\Phi}^{\eta^*}$ and $\bd{\Phi}^{\hat\eta^*_\delta}$ which is not
 enough to be used as a test function for the momentum equations. Hence, we will mollify it to attain an appropriate test function.  To that end, we denote by $\bq^2_{\delta}$ the extension of $\bq^1_{\delta}$ by $\psi{\bf e}_z$ constantly in the $z$-direction. Then define $\bq_{\delta,\lambda}$ and $\psi_\lambda$ to be the space regularization of $\bq^2_{\delta}$ and $\psi$ respectively, using the standard 3D mollifiers. We choose the mollification parameter $\lambda=c\epsilon$ where the deterministic constant $c$, depending only on $\alpha$,  is appropriately small (see equation (44) in \cite{TC23}) to ensure that, $$\bq_{\delta,\lambda}|_{\Gamma_{\hat\eta^*_\delta}}=\psi_\lambda{\bf e}_z.$$
Observe that,
\begin{align}\label{conv_q}
    \bq_{\delta,\lambda} \to \bq^1_\lambda \,\,\quad\tilde\bP\text{-almost surely in } \,\, L^\infty(0,T;H^k(\sO_\alpha)), \quad \forall k\geq 1, \quad \text{ as }\delta\to 0,
\end{align}
where $\bq^1_\lambda$ is the space mollification of $F_{\eta,\eta}^\ep(\bq) + \mathbbm{1}_{\sO^c_{\eta^*}}\psi{\bf e}_z.$
We can also see that,
\begin{equation}\label{conv_lambda}
\begin{split}
\bq^1_\lambda &\to \bq  \quad\,\,\tilde\bP-a.s.\text{ in } \,\, L^\infty(0,T;H^k(\sO_{\eta^*})), \quad \text{ as }\lambda\to 0, {k??}\\
\psi_\lambda &\to \psi \quad\,\,\tilde\bP-a.s.\text{ in } \,\, L^\infty(0,T;H^k(\Gamma)), \forall k\geq 1 \quad \text{ as }\lambda\to 0.
\end{split}
\end{equation}
\fi
Observe that, for any $\delta, \kappa, \lambda > 0$, the pair $(\bq_{\lambda,\delta,\kappa},\psi_{\lambda,\delta})$ is an admissible test function for the coupled equation \eqref{newdelta}. We now apply a special version of the It\^o formula, which can be proven by using a regularization argument as outlined in Lemma 5.1 in \cite{BO13}, to obtain that:
\begin{multline}\label{newdeltatest1}
\int_{\mathcal{O}_{\alpha}} \hat\rho_{\delta}(t) \hat{\bu}_{\delta}(t) \cdot \bd{q}_{\lambda,\delta,\kappa}(t) + \int_{\Gamma} \hat{v}_{\delta}(t) \psi_{\lambda,\delta}(t) = \int_{\mathcal{O}_{\alpha}} \bd{p}_{0,\delta} \cdot \bd{q}_{\lambda,\delta,\kappa}(0) + \int_{\Gamma} v_{0} \psi_{\lambda,\delta}(0)\\
+\int_0^t\int_{\mathcal{O}_{\alpha}} \hat{\bp}_{0,\delta} \cdot \partial_t\bd{q}_{\lambda, \delta, \kappa} + \int_0^t\int_{\Gamma} \hat v_{\delta}\partial_t\psi_{\lambda,\delta}
+ \int_{0}^{t} \int_{\mathcal{O}_{\alpha}} (\hat\rho_{\delta} \hat{\bu}_{\delta} \otimes \hat{\bu}_{\delta}) : \nabla \bd{q}_{\lambda, \delta,\kappa} 
+ \int_{0}^{t} \int_{\mathcal{O}_{\alpha}} \Big(a\hat\rho_{\delta}^{\gamma} + \delta \hat\rho_{\delta}^{\beta}\Big) (\nabla \cdot \bd{q}_{\lambda, \delta,\kappa}) \\
- \int_{0}^{t} \int_{\mathcal{O}_{\alpha}} {\mu^{\hat{\eta}^{*}_{\delta}}_{\delta}} \nabla\hat{ \bu}_{\delta} : \nabla \bd{q}_{\lambda,\delta,\kappa}
- \int_{0}^{t} \int_{\mathcal{O}_{\alpha}} {\lambda^{\hat{\eta}^{*}_{\delta}}_{\delta}} \text{div}(\hat{\bu}_{\delta}) \text{div}(\bd{q}_{\lambda,\delta,\kappa}) - \frac{1}{\delta} \int_{0}^{t} \int_{\Gamma} (\hat{\bu}_{\delta}|_{\Gamma_{\hat\eta_{\delta}^{*}}} - \hat v_{\delta} \bd{e}_{z}) \cdot (\bd{q}_{\lambda,\delta,\kappa}|_{\Gamma_{\hat\eta_{\delta}^{*}}} - \psi_{\lambda,\delta} \bd{e}_{z}) \\
- \int_{0}^{t} \int_{\Gamma} \nabla \hat v_{\delta} \cdot \nabla \psi_{\lambda,\delta} - \int_{0}^{t} \int_{\Gamma} \nabla \hat\eta_{\delta} \cdot \nabla \psi_{\lambda,\delta} - \int_{0}^{t} \int_{\Gamma} \Delta \hat\eta_{\delta} \Delta \psi_{\lambda,\delta} \\
+ \int_{0}^{t} \int_{\mathcal{O}_{\alpha}} \bd{F}_\delta(\hat\rho_{\delta} , \hat\rho_{\delta} \hat{\bu}_{\delta} ) \cdot \bd{q}_{\lambda,\delta,\kappa} d\hat W^1_\delta (t) + \int_{0}^{t} \int_{\Gamma} G(\hat\eta_{\delta}, \hat v_{\delta}) \psi_{\lambda,\delta} d\hat W^2_\delta(t),
\end{multline}
holds $\tilde\bP$-almost surely and for almost every $t\in [0,T]$.

We will first pass $\delta \to 0$ using Theorem \ref{skorohod}, the convergence result for test functions in Lemma \ref{qlambdadeltakappa}, and convergence result for the stochastic integrals Lemma \ref{stochint}. This yields:
\begin{equation}\label{newdeltatest}
\begin{split}
&\int_{\mathcal{O}_{\eta^*}} \rho(t) {\bu}(t) \cdot \bd{q}_{\lambda, \kappa}(t) + \int_{\Gamma}  v(t) \psi_\lambda(t) = \int_{\mathcal{O}_{\eta_0}} \bp_{0} \cdot \bd{q}_{\lambda, \kappa}(0) + \int_{\Gamma} v_{0} \psi_\lambda(0)
 \\
&+\int_0^t\int_{\mathcal{O}_{\eta^*}} \rho{\bu} \cdot \partial_t\bd{q}_{\lambda, \kappa}+ \int_{0}^{t} \int_{\mathcal{O}_{\eta^*}} (\rho{\bu} \otimes {\bu}) : \nabla \bd{q}_{\lambda, \kappa} 
+ \int_{0}^{t} \int_{{{\mathcal{O}_{\alpha}}}} \bar p (\nabla \cdot \bd{q}_{\lambda, \kappa}) - \int_{0}^{t} \int_{\mathcal{O}_{\eta^*}} \mu \nabla{ \bu} : \nabla \bd{q}_{\lambda, \kappa}\\
&- \int_{0}^{t} \int_{\mathcal{O}_{\eta^*}} \lambda \text{div}({\bu}) \text{div}(\bd{q}_{\lambda, \kappa})  
+ \int_0^t\int_{\Gamma} v\partial_t\psi_\lambda- \int_{0}^{t} \int_{\Gamma} \nabla  v \cdot \nabla \psi_\lambda - \int_{0}^{t} \int_{\Gamma} \nabla \eta \cdot \nabla \psi_\lambda - \int_{0}^{t} \int_{\Gamma} \Delta \eta \Delta \psi_\lambda \\
&+ \int_{0}^{t} \int_{\mathcal{O}_{\alpha}} \overline{\bd{F}(\rho , \rho {\bu} )} \cdot \bd{q}_{\lambda, \kappa}d\hat W^1 + \int_{0}^{t} \int_{\Gamma} G(\eta,  v) \psi_\lambda d\hat W^2 ,
\end{split}
\end{equation}
holds $\tilde\bP$-almost surely for almost every $t\in (0,\tau^\eta)$ and any specific choice of parameters $\lambda \in (1, 2)$ and $\kappa > 0$. 
 To handle the convergence of the terms involving  the viscosity coefficients $\mu^{\hat{\eta}^*_\delta}_{\delta}$ and $\lambda^{\hat{\eta}^*_\delta}_{\delta}$, we recall that $\mathbbm{1}_{\sO_{\hat\eta^*_\delta}}\nabla\hat\bu_{\delta}$ converges weakly to $\mathbbm{1}_{\mathcal{O}_{\eta^{*}}}\nabla\bu$ in $L^{2}(\sO_\alpha)$ due to Theorem \ref{skorohod}, so since $\mu^{\hat{\eta}^{*}_{\delta}}_{\delta} = \mu$ on $\mathcal{O}_{\hat\eta^{*}_{\delta}}$, we calculate
\begin{equation*}
\mu \int_0^t\int_{\mathcal{O}_{\hat\eta_{\delta}^{*}}}  \nabla \hat{\bu}_{\delta} : \nabla \bd{q}_{\lambda, \delta, \kappa} \to \mu\int_0^t \int_{\mathcal{O}_{\eta^{*}}}\nabla\bu: \nabla \bd{q}_{\lambda, \kappa}, \qquad a.e.\,\, t\in [0,T].
\end{equation*}
The remaining part of the integral outside will vanish in the limit, since 
\begin{equation*}
\int_{0}^{t} \int_{\mathcal{O}_{\alpha} \setminus \mathcal{O}_{\hat{\eta}^{*}_{\delta}}} \mu^{\hat{\eta}^{*}_{\delta}}_{\delta} \nabla \hat{\bu}_{\delta} : \nabla \bd{q}_{\lambda, \delta, \kappa} = \int_{\mathcal{O}_{\alpha}} (\sqrt{\mu^{\hat{\eta}^{*}_{\delta}}_{\delta}} \nabla \hat{\bu}_{\delta}) : \mathbbm{1}_{\mathcal{O}_{\alpha} \setminus \mathcal{O}_{\hat{\eta}^{*}_{\delta}}} (\sqrt{\mu^{\hat{\eta}^{*}_{\delta}}_{\delta}} \nabla \bd{q}_{\lambda, \delta, \kappa}) \to 0 \quad \text{ almost surely}, 
\end{equation*}
by the weak convergence of $\sqrt{\mu_\delta^{\hat{\eta}^{*}_{\delta}}} \nabla \hat{\bu}_{\delta}$ in $L^{2}(\mathcal{O}_{\alpha})$ and the fact that $\mathbbm{1}_{\mathcal{O}_{\alpha} \setminus \mathcal{O}_{\hat{\eta}^{*}_{\delta}}} (\sqrt{\mu^{\hat{\eta}^{*}_{\delta}}_{\delta}} \nabla \bd{q}_{\lambda, \delta, \kappa}) \to 0$ in $L^{q}(\mathcal{O}_{\alpha})$, for any $q\in(1,\infty)$ by the properties of the extension of the viscosity coefficients \eqref{viscosityextension} and the almost sure uniform pointwise convergence of $\nabla \bd{q}_{\lambda, \delta, \kappa}$ in Lemma \ref{qlambdadeltakappa}.\\

\noindent{\bf Step 2: Choosing an appropriate $\kappa$.} Next, we will fix a specific and strategic choice of $\kappa$ in \eqref{newdeltatest} for the test functions $(\bd{q}_{\lambda, \kappa}, \psi_{\lambda})$, which are admissible test functions for the limiting weak formulation satisfying $\bd{q}_{\lambda, \kappa}|_{\Gamma_{{\eta}^{*}}} = \psi_{\lambda} \bd{e}_{z}$.

Given $\lambda \in (1, 2)$, we choose $\kappa$ such that the curve $b^{\eta^*}_{\kappa}+\frac14 C_\alpha \kappa^{\frac12}$ is above the 
set $\bigcup_{w\in \Gamma^\lambda_{\eta^*} }B(w,\sigma_\lambda)$, where $B(w,\sigma_\lambda)$ denotes the 3D ball of radius $\sigma_\lambda$ centered at $w$ and $\Gamma^\lambda_{\eta^*}=\{(x,y,\frac1\lambda(1+\eta^*)): (x,y)\in \Gamma \}$; see Fig. \ref{testpic}. 

\begin{figure}
    \centering
    \includegraphics[width=0.5\linewidth]{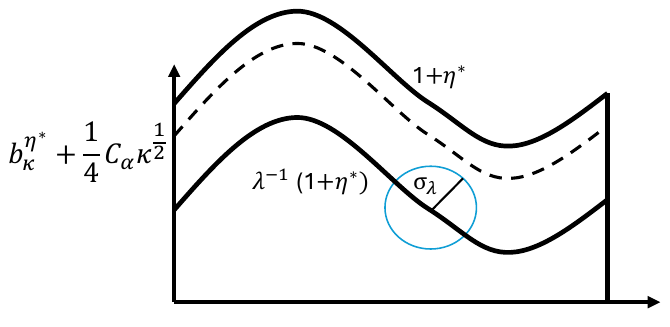}
    \caption{Choice of $\kappa$}
    \label{testpic}
\end{figure}

This will ensure that that for all $(x, y, z) \in \mathcal{O}_{\alpha}$, either $\varphi_{\kappa} = 1$ or $\bd{q}_{\lambda} = \psi_{\lambda} \bd{e}_{z}$. By the definition in \eqref{qlambdakappa}, this means that for this specific choice of $\kappa$, we have that $$\bd{q}_{\lambda, \kappa} = \bd{q}_{\lambda}$$ and hence the weak formulation \eqref{newdeltatest} also holds, almost surely for almost every $t\in (0,\tau^\eta)$, with $\bd{q}_{\lambda}$ in place of $\bd{q}_{\lambda, \kappa}$:
\begin{equation}\label{qlambdatest}
\begin{split}
&\int_{\mathcal{O}_{\eta^*}} \rho(t) {\bu}(t) \cdot \bd{q}_{\lambda}(t) + \int_{\Gamma}  v(t) \psi_{\lambda}(t) = \int_{\mathcal{O}_{\eta_0}} {\bd{p}_{0} }\cdot \bd{q}_{\lambda}(0) + \int_{\Gamma} v_{0} \psi_\lambda(0)
+\int_0^t\int_{\mathcal{O}_{\eta^*}} \rho{\bu} \cdot \partial_t\bd{q}_{\lambda} \\
&+ \int_{0}^{t} \int_{\mathcal{O}_{\eta^*}} (\rho{\bu} \otimes {\bu}) : \nabla \bd{q}_{\lambda} 
+ \int_{0}^{t} \int_{{\mathcal{O}_{\alpha}}} \bar p (\nabla \cdot \bd{q}_{\lambda}) - \int_{0}^{t} \int_{\mathcal{O}_{\eta^*}} \mu \nabla{ \bu} : \nabla \bd{q}_{\lambda}
- \int_{0}^{t} \int_{\mathcal{O}_{\eta^*}} \lambda \text{div}({\bu}) \text{div}(\bd{q}_{\lambda})  \\
&+ \int_0^t\int_{\Gamma} v\partial_t\psi_\lambda- \int_{0}^{t} \int_{\Gamma} \nabla  v \cdot \nabla \psi_\lambda - \int_{0}^{t} \int_{\Gamma} \nabla \eta \cdot \nabla \psi_\lambda - \int_{0}^{t} \int_{\Gamma} \Delta \eta \Delta \psi_\lambda \\
&+ \int_{0}^{t} \int_{\Gamma} G(\eta,  v) \psi_\lambda d\hat W^2 + \int_{0}^{t} \int_{\mathcal{O}_{\alpha}} \overline{\bd{F}(\rho , \rho {\bu} )} \cdot \bd{q}_{\lambda}d\hat W^1 ,
\end{split}
\end{equation}

\noindent\textbf{Step 3: $\lambda\to 1$.} Finally, we pass to the limit as $\lambda \to 1$ in the ``squeezed" test functions $(\bd{q}_{\lambda}, \psi_\lambda)$, which we recall are defined by squeezing by a factor of $\lambda$  and doing a constant extension by $\psi \bd{e}_{z}$ to the maximal domain to get $\tilde{q}_{\lambda}$ defined in \eqref{tildeqlambda}, then using a spatial convolution to get a smooth function $\bd{q}_{\lambda}$ as defined in \eqref{qlambda}. It is clear that for $\lambda \in (1, 2)$, $\|\tilde{\bd{q}}_{\lambda}\|_{W^{1, \infty}(\mathcal{O}_{\alpha})} \le \lambda \|\bd{q}\|_{W^{1, \infty}(\mathcal{O}_{\alpha})}$, so we have that $\|\bd{q}_{\lambda}\|_{W^{1, \infty}(\mathcal{O}_{\alpha})} \le 2\|\bd{q}\|_{W^{1, \infty}(\mathcal{O}_{\alpha})}$ after spatial convolution, for $\lambda \in (1, 2)$. Furthermore, it is clear that $\bd{q}_{\lambda}$ and $\nabla \bd{q}_{\lambda}$ converge pointwise almost everywhere to $\tilde{\bd{q}}$ and $\nabla \tilde{\bd{q}}$, where
\begin{equation*}
\tilde{\bd{q}}|_{\mathcal{O}_{{\eta}^{*}}} = \bd{q}, \qquad \tilde{\bd{q}}|_{\mathcal{O}_{\alpha} \setminus \mathcal{O}_{{\eta}^{*}}} = \psi \bd{e}_{z}.
\end{equation*}

Hence, we can use dominated convergence to pass to the limit as $\lambda \to 1$ in \eqref{qlambdatest} in order to obtain that the limiting weak formulation holds almost surely for $(\tilde{\bd{q}}, \psi)$, which although is not smooth is piecewise smooth. Since $\tilde{\bd{q}}$ agrees with $\bd{q}$ on $\mathcal{O}_{{\eta}^{*}}$, the limiting weak formulation will hold almost surely with the originally given adapted test function $(\bd{q}, \psi)$ satisfying $\bd{q}|_{\Gamma_{{\eta}^{*}}} = \psi \bd{e}_{z}$, once we show that the pressure integral involving $\bar{p}$ is the same as a pressure integral over $\mathcal{O}_{{\eta}^{*}}$, which will require us showing that the pressure vanishes outside $\mathcal{O}_{{\eta}^{*}}$. We will show this in Section \ref{sec:pressure}, by showing that in fact $\overline{p} = \rho^{\gamma}$ and deriving the renormalized continuity equation \eqref{renorm_cont}. In addition, notice that some terms in the limiting weak formulation still contain $\eta^*$ instead of $\eta$, however thanks to \eqref{positivetau} we know that $\eta$ and $\eta^*$ agree at least until the almost surely positive stopping time $\tau^\eta$. 

\subsection{Renormalized solutions}
The aim of this section is to show that the limiting random variables $\bu,\rho$ satisfy continuity equations in the renormalized sense, that is, the equation:
\begin{align}\label{renorm}
	\int_0^t\int_{\sO_\alpha}  b(\rho)(\partial_t\phi+\bu\cdot\nabla \phi)=\int_0^t\int_{\sO_\alpha} (b'(\rho)\rho-b(\rho))(\nabla \cdot\bu)\phi,
\end{align} 
holds $\tilde\bP$-almost surely for almost every $t\in(0,T)$ and for any  $\phi\in C^\infty_c((0,T)\times \sO_{\alpha})$ and $b \in C(\R)$ with $b'(z)=0$ when $z\geq M_b$.
The idea is to choose an appropriate $b$ in the approximate normalized continuity equation satisfied by the new random variables $\hat\bu_\delta,\hat\rho_\delta$ which, at this stage, states that \eqref{renorm_delta} 
holds $\tilde\bP$-almost surely for almost every $t\in(0,T)$ and for any  $\phi$  in $C^\infty_c((0,T)\times \sO_{\alpha})$ and $b \in C(\mathbb{R})$.
To that end, we define
\begin{equation}\label{T}
\begin{split}
    	T(z)= \begin{cases} 
		\begin{array}{cc}
			z & z \in [0,1] \\
			2 &  z \geq 3 \\
			\text{concave otherwise}. 
		\end{array}
	\end{cases},
 \quad \text{and }\quad T_k(z)=kT(\frac{z}k).
\end{split}
\end{equation}
We will thus choose $b=T_k$ in \eqref{renorm_delta}, pass $\delta\to0$ and then pass $k\to\infty$. To analyze these limits we will first study the relevant properties of $T_k$.

First, observe, due to Corollary 6.4 in \cite{BreitHofmanova}, which is a consequence of a fundamental theorem on Young measures, that,
\begin{align}\label{conv_Tk}
	T_k(\hat\rho_\delta) \to \overline{T_k(\rho)} \text{ in } C_w([0,T];L^p(\sO_\alpha)),\quad\tilde\bP\text{-almost surely,} \quad \forall p\in[1,\infty).
\end{align}

\begin{lemma}\label{lem:effectflux}
We have the following convergence result {{for fixed $k$, as $\delta \to 0$:}}
\begin{align}\label{effectflux2}
\tilde\bE\int_0^T\int_{\sO_\alpha}(	\hat\rho_\delta^\gamma + \delta\hat\rho_\delta^\beta - \lambda^{\hat\eta^*_\delta}_\delta \nabla\cdot\hat\bu_\delta)T_k(\hat\rho_\delta) \,dxdt \to \tilde\bE\int_0^T\int_{{\sO_{\alpha}}}(\bar p-\mathbbm{1}_{\sO_{\eta^*}}\lambda\nabla\cdot\bu)\overline{T_k(\rho)}\, dxdt.
\end{align}
\end{lemma}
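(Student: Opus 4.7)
The plan is to adapt the classical Feireisl--Lions \emph{effective viscous flux} identity to our stochastic moving-boundary setting, following the strategy that underlies Proposition \ref{interior_pressure} but now testing with an inverse-Laplacian of $T_k(\rho)$. Fix a smooth deterministic cutoff $\psi \in C_c^\infty((0,T))$ and a smooth spatial cutoff $\phi\in C_c^\infty(\sO_\alpha)$; the final identity will be obtained by an exhaustion/approximation argument letting $\psi\phi \nearrow \mathbbm{1}_{(0,T)\times\sO_\alpha}$. For each $\delta>0$ define the $(\hat{\sF}^\delta_t)$-adapted test function
\[
\bd{q}_\delta(t,x) \;=\; \psi(t)\,\phi(x)\,\nabla \Delta^{-1}\bigl[T_k(\hat\rho_\delta)(t,\cdot)\bigr](x),
\]
where $\Delta^{-1}$ is the Dirichlet inverse Laplacian on $\sO_\alpha$, and the analogous $(\hat\sF_t)$-adapted limit object $\bd{q}=\psi\phi\,\nabla\Delta^{-1}[\overline{T_k(\rho)}]$. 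These are admissible in \eqref{newdelta} and in \eqref{qlambdatest} after the standard smoothing argument combined with the It\^o-type chain rule of Lemma 5.1 of \cite{BO13}, exactly as in Proposition \ref{interior_pressure}, so all stochastic integrals appearing after the application of It\^o's formula make sense.

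First I would apply It\^o's formula to $f_\delta(\hat\rho_\delta,\hat\bu_\delta)=\int_{\sO_\alpha}\hat\rho_\delta\hat\bu_\delta\cdot\bd{q}_\delta$ in the weak formulation \eqref{newdelta} (with $\psi_{\text{str}}=0$), and analogously apply a (deterministic) chain rule in \eqref{qlambdatest} with $\bd{q}$ in place of $\bd{q}_\lambda$. Isolating the terms
$\int\psi\phi(a\hat\rho_\delta^\gamma+\delta\hat\rho_\delta^\beta)T_k(\hat\rho_\delta)$
and $\lambda\int\psi\phi\,\lambda_\delta^{\hat\eta^*_\delta}(\nabla\cdot\hat\bu_\delta)T_k(\hat\rho_\delta)$ (respectively $\int\psi\phi\,\bar p\,\overline{T_k(\rho)}$ and $\lambda\int\psi\phi\,\mathbbm{1}_{\sO_{\eta^*}}(\nabla\cdot\bu)\overline{T_k(\rho)}$) on the left, the remaining terms split into: (i) inertial terms involving $\hat\rho_\delta\hat\bu_\delta\cdot\partial_t\bd{q}_\delta$ and $\hat\rho_\delta\hat\bu_\delta\otimes\hat\bu_\delta:\nabla\bd{q}_\delta$; (ii) viscous commutator terms involving $\nabla\phi$; (iii) the penalty term over $T^\delta_{\hat\eta^*_\delta}$; (iv) stochastic integrals; and (v) boundary value contributions at $t=0,T$. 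Steps (ii), (iii), (v) vanish or pass to their expected limits directly: (iii) is controlled by \eqref{penaltybound} together with the $L^\infty$-boundedness of $\bd{q}_\delta$ via elliptic regularity for $\Delta^{-1}$ and the uniform bound $\|T_k(\hat\rho_\delta)\|_{L^\infty_t L^p_x}\le k|\sO_\alpha|^{1/p}$; (iv) converges by the argument of Lemma \ref{stochint} since $\bd{q}_\delta\to\bd{q}$ in $C([0,T];H^l(\sO_\alpha))$ for any $l\geq 0$ thanks to \eqref{conv_Tk} and elliptic regularity.

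The delicate part is handling (i), the inertial terms. I will use the continuity equation for $\hat\rho_\delta$ to rewrite $\partial_t\nabla\Delta^{-1}[T_k(\hat\rho_\delta)]=-\nabla\Delta^{-1}[\mathrm{div}(T_k(\hat\rho_\delta)\hat\bu_\delta)+(T_k'(\hat\rho_\delta)\hat\rho_\delta-T_k(\hat\rho_\delta))\,\mathrm{div}\,\hat\bu_\delta]$, and the analogous identity for the limit using the renormalized limiting continuity equation (which is being derived in the parallel argument of which this lemma is a step). After this substitution, the inertial contribution reduces to the classical Lions commutator
$\int \psi\phi\,\hat\rho_\delta\hat\bu_\delta\cdot\bigl(\hat\bu_\delta\cdot\nabla[\nabla\Delta^{-1}T_k(\hat\rho_\delta)]-\nabla\Delta^{-1}[\mathrm{div}(T_k(\hat\rho_\delta)\hat\bu_\delta)]\bigr)$ plus lower order terms, which is handled pathwise ($\tilde\bP$-a.s.) by the div--curl lemma exactly as in the deterministic Feireisl argument: the strong convergence $T_k(\hat\rho_\delta)\to\overline{T_k(\rho)}$ in $C_w([0,T];L^p)\cap L^2(0,T;W^{-1,2})$ from \eqref{conv_Tk} (upgraded by Aubin--Lions) combined with the weak convergence of $\hat\rho_\delta\hat\bu_\delta$ in $C_w([0,T];L^{2\gamma/(\gamma+1)})$ permits identification of the weak limit of the Riesz-transform commutator. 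Outside $\sO_{\hat\eta^*_\delta}\cup T^\delta_{\hat\eta^*_\delta}$ the contribution from the inertial terms vanishes by Proposition \ref{vacuum2} combined with the bounds \eqref{mulowerbound}, while inside, Lemma \ref{rhousquared} identifies the weak limit of $\mathbbm{1}_{\sO_{\hat\eta^*_\delta}}\hat\rho_\delta\hat\bu_\delta\otimes\hat\bu_\delta$.

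The main obstacle, as in every implementation of the effective viscous flux identity, is controlling the commutator in (i); the novelty here is that this has to be done $\tilde\bP$-almost surely on the random moving geometry, with the extended viscosity coefficient $\lambda_\delta^{\hat\eta^*_\delta}$ collapsing to $\lambda\mathbbm{1}_{\sO_{\eta^*}}$ in the limit. Showing that $\lambda_\delta^{\hat\eta^*_\delta}(\nabla\cdot\hat\bu_\delta)T_k(\hat\rho_\delta)$ converges weakly to $\lambda\mathbbm{1}_{\sO_{\eta^*}}(\nabla\cdot\bu)\overline{T_k(\rho)}$ in $L^1$ requires combining the almost-sure convergence $\lambda_\delta^{\hat\eta^*_\delta}\to\lambda\mathbbm{1}_{\sO_{\eta^*}}$ from \eqref{muconstantremark}--\eqref{muexteriorremark} and Lemma \ref{abconv} with the weak convergence of $\nabla\cdot\hat\bu_\delta$ on the moving set and the vanishing-density estimate Proposition \ref{vacuum2} on the exterior; a Vitali-type argument using the $L^1(\tilde\Omega)$ equi-integrability of $|\lambda_\delta^{\hat\eta^*_\delta}\nabla\cdot\hat\bu_\delta|^2$ (from Lemma \ref{energy_delta}(2)) then upgrades the almost-sure identification to convergence of expectations, yielding \eqref{effectflux2}.
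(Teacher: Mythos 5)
Your proposal has two genuine gaps, both concerning how the localized effective-viscous-flux identity is upgraded to the whole domain.

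First, your spatial cutoff $\phi\in C_c^\infty(\sO_\alpha)$ is deterministic and time-independent, so for $\delta$ small its support will intersect the exterior tubular neighborhood $T^\delta_{\hat\eta^*_\delta}$. The penalty term then survives in the weak formulation as
\[
\frac{1}{\delta}\int_0^T\int_{T^\delta_{\hat\eta^*_\delta}}(\hat\bu_\delta - \hat v_\delta\bd{e}_z)\cdot\bd{q}_\delta,
\]
and the estimate you propose does not control it: with \eqref{penaltybound} giving $\|\hat\bu_\delta-\hat v_\delta\bd{e}_z\|_{L^2(T^\delta_{\hat\eta^*_\delta})}\lesssim \delta^{1/2}$, the $L^\infty$-boundedness of $\bd{q}_\delta$, and $|T^\delta_{\hat\eta^*_\delta}|\sim\delta^{1/2-1/\beta}$, the bound is $\delta^{-1}\cdot\delta^{1/2}\cdot\delta^{1/4-1/(2\beta)}=\delta^{-1/4-1/(2\beta)}\to\infty$. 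The paper avoids this entirely by choosing the cutoff $\varphi_\delta$ to be \emph{random, adapted, and compactly supported in $\sO_{\hat\eta^*_\delta}(t)$} (as in Proposition \ref{interior_pressure}), so that the test function vanishes identically on $T^\delta_{\hat\eta^*_\delta}$ and the penalty term drops out before any estimate is needed. Without this the argument does not close.

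Second, the exhaustion step "$\psi\phi\nearrow\mathbbm{1}_{(0,T)\times\sO_\alpha}$" is not justified. The localized identity (the paper's \eqref{convwithphi}, with cutoff $\varphi_\delta$ equal to $1$ on $A^l_{\hat\eta^*_\delta}$) gives the convergence only away from a layer of width $l$ near the moving boundary and the rigid walls, and you must show the contribution from that layer — which contains the non-equi-integrable pressure — is small uniformly in $\delta$. This is exactly what Proposition \ref{bdry_pressure} provides: for any $\epsilon>0$ one can choose $l$ so small that $\tilde\bE\int_0^T\int_{\sO_{\hat\eta^*_\delta}\cap(A^l_{\hat\eta^*_\delta})^c}(a\hat\rho_\delta^\gamma+\delta\hat\rho_\delta^\beta)\le\epsilon$, and the analogous bound for $\bar p$. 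Your proposal never invokes this; a naive "let the cutoff tend to $1$" is not legitimate because the pressure is only $L^1$ near the boundary and one has no dominating function. The viscous term $\lambda_\delta^{\hat\eta^*_\delta}(\nabla\cdot\hat\bu_\delta)T_k(\hat\rho_\delta)$ in the boundary layer you can control with Cauchy--Schwarz, so it is the pressure contribution that makes Proposition \ref{bdry_pressure} indispensable, and it needs to be named as the missing piece.

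The rest of the proposal — the inverse Laplacian test function, the commutator/div-curl identification, the treatment of the stochastic integrals via Lemma \ref{stochint}, and the use of Lemma \ref{rhousquared} and Proposition \ref{vacuum2} for the inertial terms — is in the same spirit as the references cited in the paper for the localized step. But you need to (a) replace the deterministic cutoff with a random adapted cutoff compactly supported in $\sO_{\hat\eta^*_\delta}$ so the penalty term genuinely vanishes, and (b) control the boundary layer via Proposition \ref{bdry_pressure} rather than via an exhaustion.
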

\begin{proof}
We consider a process $\varphi_\delta \in C^\infty_c(\sO_{\hat\eta^*_\delta})$ such that $\varphi_\delta  \to\varphi \in C^\infty_c(\sO_{\eta^*})$ almost surely.
First, as earlier, we apply Ito's formula with ${\Psi}_k( \rho,\bu)=\int_{\sO_\alpha}\bu\cdot \varphi\nabla\Delta^{-1}\varphi T_k(\rho)$ to the approximate \eqref{delta} and limiting momentum equation \eqref{newdeltatest}. Then using the convergence results obtained in Theorem \ref{skorohod}, \eqref{conv_Tk} and \eqref{conv_weakpressure}
we can see that,
\begin{align}\label{convwithphi}
\tilde\bE\int_0^T\int_{\sO_\alpha}\varphi_\delta^2(	\hat\rho_\delta^\gamma + \delta\hat\rho_\delta^\beta - \lambda^{\hat\eta^*_\delta}_\delta \nabla\cdot\hat\bu_\delta)T_k(\hat\rho_\delta) \, \to \tilde\bE\int_0^T\int_{\sO_\alpha}\varphi^2(\bar p-\lambda\nabla\cdot\bu)\overline{T_k(\rho)}\, .
\end{align}
The proof of \eqref{convwithphi} is standard and so we skip it; see e.g. \cite{BreitHofmanova, BreitSchwarzacherNSF}. 
In \eqref{convwithphi}, we then take $\varphi_\delta \in C^\infty_c(\sO_{\hat\eta^*_\delta})$  such that $\varphi_\delta=1$ on $ A^l_{\hat\eta^*_\delta}$, (see Definition \eqref{Alset}) for some $l>0$ to be chosen later, that converges $\tilde\bP$-almost surely to some $\varphi \in C^\infty_c(\sO_{\eta^*})$ such that $\varphi = 1$ on $ A^{l}_{\eta^*}$. Then we write,
\begin{align*}
&\tilde\bE\int_0^T\int_{\sO_{\hat\eta^*_\delta}}(	\hat\rho_\delta^\gamma + \delta\hat\rho_\delta^\beta - \lambda^{\hat\eta^*_\delta}_\delta \nabla\cdot\hat\bu_\delta)T_k(\hat\rho_\delta) \,-\tilde\bE\int_0^T\int_{\sO_{\alpha}}(\bar p-\lambda\nabla\cdot\bu)\overline{T_k(\rho)}\, \\
	&= \tilde\bE\int_0^T\int_{\sO_{\hat\eta^*_\delta}\cap (A^l_{\hat\eta^*_\delta})^c}(1-\varphi_\delta)(	\hat\rho_\delta^\gamma + \delta\hat\rho_\delta^\beta - \lambda^{\hat\eta^*_\delta}_\delta \nabla\cdot\hat\bu_\delta)T_k(\hat\rho_\delta) \,+\tilde\bE\int_0^T\int_{\sO_{\hat\eta^*_\delta}}\varphi_\delta(	\hat\rho_\delta^\gamma + \delta\hat\rho_\delta^\beta - \lambda^{\hat\eta^*_\delta}_\delta \nabla\cdot\hat\bu_\delta)T_k(\hat\rho_\delta) \,\\
	&-\tilde\bE\int_0^T\int_{\sO_{\eta^*}\cap (A^l_{\eta^*})^c}(1-\varphi)(\bar p-\lambda\nabla\cdot\bu)\overline{T_k(\rho)}\, -\tilde\bE\int_0^T\int_{\sO_{\eta^*}}\varphi(\bar p-\lambda\nabla\cdot\bu)\overline{T_k(\rho)}.\, 
\end{align*}
Observe that for any $\epsilon>0$,  Proposition \ref{bdry_pressure} and an equivalent statement for $\bar p$, give the existence of some $l>0$ for which,
\begin{align*}
\left|	\tilde\bE\int_0^T\int_{\sO_{\hat\eta^*_\delta}\cap (A^l_{\hat\eta^*_\delta})^c}(1-\varphi_\delta)(	\hat\rho_\delta^\gamma + \delta\hat\rho_\delta^\beta )T_k(\hat\rho_\delta)-	\tilde\bE\int_0^T\int_{\sO_{\eta^*}\cap (A^l_{\eta^*})^c}(1-\varphi)\bar p\overline{T_k(\rho)}\right|<\epsilon.
\end{align*}
This observation combined with \eqref{convwithphi} then gives us the desired result \eqref{effectflux2}.
\end{proof}
Next, using the convergence result \eqref{effectflux2} we will next prove that the oscillation defect measure, first used in \cite{FeireislCompressible}, is bounded in expectation.
\begin{lemma}\label{osc}
For the oscillation defect measure we have,
$$osc_{\gamma+1}[\hat\rho_\delta \to \rho](\sO_{\alpha}):=\limsup_{\delta\to 0}\tilde\bE\int_0^T\int_{\sO_\alpha}|T_k(\hat\rho_\delta)-T_k(\rho)|^{\gamma+1} \leq C .$$
\end{lemma}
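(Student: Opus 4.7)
\textbf{Proof Proposal for Lemma \ref{osc}.} The plan is to bound the oscillation defect measure by quantities that can be controlled via the effective viscous flux identity of Lemma \ref{lem:effectflux}, following the strategy pioneered by Feireisl and adapted to the stochastic compressible setting in \cite{BreitHofmanova, BFH18}, with modifications to accommodate the moving maximal domain and the exterior tubular neighborhood.

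The first step is the pointwise convexity inequality: since $z \mapsto z^\gamma$ is convex and strictly increasing on $[0,\infty)$ and $T_k$ is concave and non-decreasing, there exists a constant $c_\gamma > 0$ (independent of $k$) such that for every $a,b \geq 0$,
\begin{equation*}
|T_k(a)-T_k(b)|^{\gamma+1} \leq c_\gamma (a^\gamma - b^\gamma)(T_k(a) - T_k(b)).
\end{equation*}
Taking $a=\hat\rho_\delta$, $b=\rho$, integrating over $[0,T]\times\sO_\alpha$, taking expectation, and applying $\limsup_{\delta\to 0}$, the task reduces to bounding
\begin{equation*}
\limsup_{\delta\to 0}\tilde\bE\int_0^T\!\!\int_{\sO_\alpha} (\hat\rho_\delta^\gamma - \rho^\gamma)(T_k(\hat\rho_\delta) - T_k(\rho)).
\end{equation*}

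Next I would expand the product and identify the weak limits of each of the four cross-terms using Theorem \ref{skorohod}, the Young-measure convergence \eqref{conv_Tk}, and the $L^1$-weak convergence of the pressure from Proposition \ref{L1weakpressure}. Since $\delta\hat\rho_\delta^\beta \to 0$ in $L^1$ by the uniform $L^\infty_t L^\beta_x$ bound for $\delta^{1/\beta}\hat\rho_\delta$, the weak limit of $\hat\rho_\delta^\gamma$ is $\bar p$ on $\sO_\alpha$. One obtains
\begin{equation*}
\limsup_{\delta\to 0}\tilde\bE\int_0^T\!\!\int_{\sO_\alpha} (\hat\rho_\delta^\gamma - \rho^\gamma)(T_k(\hat\rho_\delta) - T_k(\rho))
= \tilde\bE\int_0^T\!\!\int_{\sO_\alpha}\!\Big[\overline{p\,T_k(\rho)} - \bar p\,\overline{T_k(\rho)}\Big] - \tilde\bE\int_0^T\!\!\int_{\sO_\alpha}(\bar p - \rho^\gamma)(\overline{T_k(\rho)}-T_k(\rho)),
\end{equation*}
where $\overline{p\,T_k(\rho)}$ denotes the weak $L^1$ limit of $(\hat\rho_\delta^\gamma+\delta\hat\rho_\delta^\beta)T_k(\hat\rho_\delta)$ (whose existence in $L^1$ follows by equi-integrability using the interior pressure estimate of Proposition \ref{interior_pressure}, a boundary estimate analogous to Proposition \ref{bdry_pressure}, and the vanishing-of-pressure-in-the-tubular-neighborhood result of Proposition \ref{tube_pressure}).

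The decisive step is to apply Lemma \ref{lem:effectflux} to rewrite the first bracket as
\begin{equation*}
\tilde\bE\int_0^T\!\!\int_{\sO_\alpha}\!\Big[\overline{p\,T_k(\rho)} - \bar p\,\overline{T_k(\rho)}\Big] = \tilde\bE\int_0^T\!\!\int_{\sO_\alpha}\!\Big[\,\overline{\lambda^{\eta^*}(\nabla\cdot\bu) T_k(\rho)} - \mathbbm{1}_{\sO_{\eta^*}}\lambda(\nabla\cdot\bu)\overline{T_k(\rho)}\Big].
\end{equation*}
Using Cauchy--Schwarz, the uniform bound $\tilde\bE\|\sqrt{\lambda_\delta^{\hat\eta^*_\delta}}\,\nabla\cdot\hat\bu_\delta\|_{L^2_tL^2_x}^2 \leq C$ from Lemma \ref{energy_delta}(2), the uniform bound $\|T_k(\hat\rho_\delta)\|_{L^\infty_{t,x}}\leq 2k$, and the $L^{\gamma+1}$-equi-integrability of $\hat\rho_\delta$ away from vacuum supplied by Proposition \ref{interior_pressure}, one estimates the right-hand side by a constant independent of $\delta$ and $k$; more precisely, splitting $T_k(\hat\rho_\delta) = \hat\rho_\delta - (\hat\rho_\delta - T_k(\hat\rho_\delta))$ and using that $(\hat\rho_\delta - T_k(\hat\rho_\delta))$ is supported on $\{\hat\rho_\delta \geq k\}$, where Chebyshev together with the uniform $L^{\gamma+\Theta}$ bound in $A^l_{\hat\eta^*_\delta}$ gives smallness, one bounds the expression uniformly in $k$. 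The term $(\bar p - \rho^\gamma)(\overline{T_k(\rho)}-T_k(\rho))$ has a definite sign (non-negative) by convexity, so it can only help.

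The main obstacle will be the careful handling of contributions from the exterior tubular neighborhood $T^\delta_{\hat\eta^*_\delta}$ and the far-field region $(\sO_{\hat\eta^*_\delta}\cup T^\delta_{\hat\eta^*_\delta})^c$, where the viscosity coefficients $\mu^{\hat\eta^*_\delta}_\delta, \lambda^{\hat\eta^*_\delta}_\delta$ degenerate like $\delta^{\nu_0}$ and therefore $\nabla\cdot\hat\bu_\delta$ is controlled only in a weighted space. The resolution is to isolate these regions using Proposition \ref{vacuum2} (which gives the vanishing density bound $\int_{(\sO_{\hat\eta_\delta}\cup T^\delta_{\hat\eta_\delta})^c}\hat\rho_\delta \lesssim \delta^{\nu_*}$) together with Proposition \ref{tube_pressure} (which gives $\mathbbm{1}_{T^\delta_{\hat\eta^*_\delta}}(a\hat\rho_\delta^\gamma+\delta\hat\rho_\delta^\beta)\to 0$ in $L^1$); on these sets the integrand of the oscillation defect measure is already $o(1)$. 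This reduces every estimate to the interior region $A^l_{\hat\eta^*_\delta}$ for small $l>0$, where the standard Feireisl argument applies verbatim and yields the desired $k$-independent bound $C$.
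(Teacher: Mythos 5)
Your first step (convexity inequality) and the overall strategy of invoking Lemma \ref{lem:effectflux} match the paper's approach, but the second half of your argument has a genuine gap. You try to bound the effective-flux difference
\begin{equation*}
\tilde\bE\int_0^T\!\!\int_{\sO_\alpha}\Big[\,\overline{\lambda^{\eta^*}(\nabla\cdot\bu) T_k(\rho)} - \mathbbm{1}_{\sO_{\eta^*}}\lambda(\nabla\cdot\bu)\overline{T_k(\rho)}\Big]
\end{equation*}
directly by a constant independent of $k$ and $\delta$, by splitting $T_k(\hat\rho_\delta)=\hat\rho_\delta-(\hat\rho_\delta-T_k(\hat\rho_\delta))$ and pairing $\hat\rho_\delta$ with $\nabla\cdot\hat\bu_\delta$. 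This would require a uniform $L^2_{t,x}$ bound on $\hat\rho_\delta$, but Proposition \ref{interior_pressure} only delivers $\hat\rho_\delta\in L^{\gamma+\Theta}$ locally with $\Theta<(2\gamma-3)/3$, so $\gamma+\Theta<2$ whenever $\gamma<9/5$; for the range $\gamma\in(3/2,9/5)$ this step fails. The paper avoids the need for such a bound by \emph{not} estimating this term by a constant: it replaces $\mathbbm{1}_{\sO_{\eta^*}}\lambda(\nabla\cdot\bu)$ with $\lambda_\delta^{\hat\eta^*_\delta}\nabla\cdot\hat\bu_\delta$ (using that it is the $L^2$ weak limit) to reduce to a pairing of $\lambda_\delta^{\hat\eta^*_\delta}\nabla\cdot\hat\bu_\delta$ against $T_k(\hat\rho_\delta)-\overline{T_k(\rho)}$, applies Cauchy--Schwarz in $L^2$, and then passes from $L^2$ to $L^{\gamma+1}$. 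This produces a bound that is \emph{linear} in $\|T_k(\hat\rho_\delta)-T_k(\rho)\|_{L^{\gamma+1}}$, and the lemma then follows from the self-bootstrapping inequality $X^{\gamma+1}\leq CX \Rightarrow X\leq C^{1/\gamma}$. The bootstrap is the key mechanism that makes the argument close, and it is missing from your proposal.

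There is also a sign slip in your expansion: the cross-term limit is $\bigl[\overline{p\,T_k(\rho)}-\bar p\,\overline{T_k(\rho)}\bigr] + (\bar p-\rho^\gamma)\bigl(\overline{T_k(\rho)}-T_k(\rho)\bigr)$, not minus, and the factor $(\bar p-\rho^\gamma)(\overline{T_k(\rho)}-T_k(\rho))$ is non-\emph{positive} (the pressure dominates its weak limit from below, $T_k$ the other way), not non-negative. The two sign errors cancel so the inequality direction you want happens to be right, but the intermediate claims are wrong as stated. Finally, the digression about the exterior tubular neighborhood and Propositions \ref{vacuum2}, \ref{tube_pressure} is unnecessary: once you have the weak $L^1$ convergence of the pressure from Proposition \ref{L1weakpressure} and the $L^2$ bound on $\sqrt{\lambda^{\hat\eta^*_\delta}_\delta}\nabla\cdot\hat\bu_\delta$ on the whole maximal domain, Lemma \ref{lem:effectflux} already packages the geometry, and the oscillation defect estimate proceeds on all of $\sO_\alpha$ without further splitting.
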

\begin{proof}
Observe that,
	\begin{align*}
	&\limsup_{\delta\to 0}	\tilde\bE\int_0^T\int_{\sO_\alpha}\left( (\hat\rho_\delta^\gamma+\delta\hat\rho^\beta_\delta)T_k(\hat\rho_\delta) - \bar p \overline{T_k(\rho)}\right) \\
	&= \limsup_{\delta\to 0}\tilde\bE\int_0^T\int_{\sO_\alpha}\left( (\hat\rho_\delta^\gamma+\delta\hat\rho^\beta_\delta-(\rho^\gamma+\delta\rho^\beta))(T_k(\hat\rho_\delta) -  {T_k(\rho)})\right)\\
	&+ \limsup_{\delta\to 0}\tilde\bE\int_0^T\int_{\sO_\alpha}(\hat\rho_\delta^\gamma+\delta\hat\rho^\beta_\delta-\bar p)T_k(\rho)+(\rho^\gamma+\delta\rho^\beta)(T_k(\hat\rho_\delta)-\overline{T_k(\rho)})\\	
		&+\limsup_{\delta\to 0}\tilde\bE\int_0^T\int_{\sO_{\alpha}}(\bar p -(\rho^\gamma+\delta\rho^\beta))(T_k(\rho) -  \overline{T_k(\rho)}) 
\shortintertext{thanks to \eqref{conv_weakpressure}, \eqref{conv_Tk}, and Theorem 11.27 in \cite{FN17}, we obtain}
		&\geq \limsup_{\delta\to 0}\tilde\bE\int_0^T\int_{\sO_\alpha}\left( (\hat\rho_\delta^\gamma+\delta\hat\rho^\beta_\delta-(\rho^\gamma+\delta\rho^\beta))(T_k(\hat\rho_\delta) - {T_k(\rho)})\right)
		\shortintertext{since $z \mapsto z^{\gamma}$ is convex and $T_k$ is concave, $(z^{\gamma}-y^{\gamma})(T_k(z)-T_k(y)) \geq |T_k(z)-T_k(y)|^{\gamma+1}$ for $z,y \geq 0$ we have}
		&\geq \limsup_{\delta\to 0}\tilde\bE \|T_k(\hat\rho_\delta)-T_k(\rho)\|^{\gamma+1}_{L^{\gamma+1}((0,T)\times\sO)}.
	\end{align*}
 On the other hand, since $\mathbbm{1}_{\sO_{\eta^*}}\lambda(\nabla\cdot\bu)$ is the weak limit of $\lambda^{\hat\eta^*_\delta}_\delta(\nabla\cdot\hat\bu_\delta )$ in $L^2(\tilde\Omega,L^2(0,T;L^2(\sO_\alpha)))$, we obtain
\begin{align*}
\limsup_{\delta\to0}&\left|\tilde\bE\int_0^T\int_{\sO_\alpha}\lambda^{\hat\eta^*_\delta}_\delta(\nabla\cdot\hat\bu_\delta )T_k(\hat\rho_\delta)-\mathbbm{1}_{\sO_{\eta^*}}\lambda(\nabla\cdot\bu) \overline{T_k(\rho)}\right| \\
& =\limsup_{\delta\to0}\left|\tilde\bE\int_0^T\int_{\sO_\alpha}\lambda^{\hat\eta^*_\delta}_\delta(\nabla\cdot\hat\bu_\delta )T_k(\hat\rho_\delta)-\lambda^{\hat\eta^*_\delta}_\delta(\nabla\cdot\hat\bu_\delta) \overline{T_k(\rho)}\right| \\
	&\leq \limsup_{\delta\to0} (\tilde\bE\|\lambda^{\hat\eta^*_\delta}_\delta\nabla\cdot\hat\bu_\delta\|^2_{L^2(0,T;L^2(\sO_\alpha))})^\frac12(\tilde\bE\|T_k(\hat\rho_\delta)-\overline{T_k(\rho)}\|^2_{L^2(0,T;L^2(\sO_\alpha))})^\frac12\\
 &\leq \limsup_{\delta\to0} C\|T_k(\hat\rho_\delta)-T_k(\rho)\|_{L^2(\tilde\Omega;L^2(0,T;L^2(\sO_\alpha)))}\\
  &\leq \limsup_{\delta\to0} C\|T_k(\hat\rho_\delta)-T_k(\rho)\|_{L^{\gamma+1}(\tilde\Omega\times (0,T)\times\sO_\alpha)}.
\end{align*}
These bounds and an application of  \eqref{effectflux2} thus give us the desired result as follows,
\begin{align*}
	&\limsup_{\delta\to 0} \|T_k(\hat\rho_\delta)-T_k(\rho)\|^{\gamma+1}_{L^{\gamma+1}(\tilde\Omega\times(0,T)\times\sO_\alpha)}\\ 
	&\leq \limsup_{\delta\to 0}	\tilde\bE\int_0^T\int_{\sO_\alpha}\left( (\hat\rho_\delta^\gamma+\delta\hat\rho^\beta_\delta-\lambda^{\hat\eta^*_\delta}_\delta\nabla\cdot\hat\bu_\delta)T_k(\hat\rho_\delta) - (\bar p-\mathbbm{1}_{\sO_{\eta^*}}\lambda\nabla\cdot\bu) \overline{T_k(\rho)}\right) \\
	&+	\limsup_{\delta\to0} |\tilde\bE\int_0^T\int_{\sO_\alpha}\lambda^{\hat\eta^*_\delta}_\delta(\nabla\cdot\hat\bu_\delta) T_k(\hat\rho_\delta)-\mathbbm{1}_{\sO_{\eta^*}}\lambda(\nabla\cdot\bu )\overline{T_k(\rho)}|\\
	& \leq C\limsup_{\delta\to 0} \|T_k(\hat\rho_\delta)-T_k(\rho)\|_{L^{\gamma+1}(\tilde\Omega\times(0,T)\times\sO_\alpha)}.
\end{align*}
\end{proof}

Lemma \ref{osc} is sufficient to show that the normalized continuity equation \eqref{renorm_delta} holds true in the limit i.e. that \eqref{renorm} holds true.
As mentioned earlier, we will take $b=T_k$ in \eqref{renorm_delta} where $T_k$ is defined in \eqref{T}. This yields, for any $\phi\in C^\infty(0,T;C^\infty_c(\sO_\alpha))$,
\begin{align}\label{tkrenorm}
	\int_0^t\int_{\sO_\alpha}  T_k(\hat\rho_\delta)(\partial_t\phi+\hat\bu_\delta\cdot\nabla \phi)=\int_0^t\int_{\sO_\alpha} (T_k'(\hat\rho_\delta)\hat\rho_\delta-T_k(\hat\rho_\delta))(\nabla \cdot\hat\bu_\delta)\phi .
\end{align} 
To pass $\delta\to 0$ in the equation above, we rewrite it as,
\begin{equation}\label{tkrenormerror}
\int_{0}^{t} \int_{\mathcal{O}_{\alpha}} T_{k}(\hat{\rho}_{\delta})(\partial_{t}\phi + \mathbbm{1}_{\mathcal{O}_{\hat{\eta}^{*}_{\delta}}} \hat{\bd{u}}_{\delta} \cdot \nabla \phi) = \int_{0}^{t} \int_{\mathcal{O}_{\alpha}} (T_{k}'(\hat{\rho}_{\delta}) \hat{\rho}_{\delta} - T_{k}(\hat{\rho}_{\delta})) \mathbbm{1}_{\mathcal{O}_{\hat{\eta}^{*}_{\delta}}} (\nabla \cdot \hat{\bd{u}}_{\delta}) \phi + E_{\delta, \phi},
\end{equation}
where the error term is:
\begin{equation*}
E_{\delta, \phi} := -\int_{0}^{t} \int_{\mathcal{O}_{\alpha} \setminus \mathcal{O}_{\hat{\eta}^{*}_{\delta}}} T_{k}(\hat{\rho}_{\delta}) \hat{\bd{u}}_{\delta} \cdot \nabla \phi + \int_{0}^{t} \int_{\mathcal{O}_{\alpha} \setminus \mathcal{O}_{\hat{\eta}^{*}_{\delta}}} (T_{k}'(\hat{\rho}_{\delta}) \hat{\rho}_{\delta} - T_{k}(\hat{\rho}_{\delta})) (\nabla \cdot \hat{\bd{u}}_{\delta}) \phi.
\end{equation*}
Observe that, due to Corollary 6.4 in \cite{BreitHofmanova}, we have the following weak convergence
$$ \mathbbm{1}_{\sO_{\hat\eta^*_\delta}}(T_k'(\hat\rho_\delta)\hat\rho_\delta-T_k(\hat\rho_\delta))(\nabla \cdot\hat\bu_\delta) \rightharpoonup \tilde T_k \quad \text{ in } L^2(\tilde\Omega\times(0,T)\times\sO_\alpha) .
$$ 


We now pass to the limit in \eqref{tkrenormerror} as $\delta \to 0$. 
Using the fact that $E_{\delta, \phi} \to 0$, $\tilde\bP$-almost surely by Lemma \ref{outsidefluid}, and using the notation \eqref{conv_Tk}, we let $\delta\to 0$ in \eqref{tkrenormerror} and obtain for any $\phi\in C^\infty(0,T;C_c^\infty(\sO_\alpha))$ that
\begin{align}\label{renormTk}
\int_0^t\int_{\sO_\alpha}\overline{T_k(\rho)} (\partial_t\phi +\bu\cdot\nabla\phi)= \int_0^t\int_{\sO_\alpha}\tilde T_k\phi,
\end{align}
holds $\tilde\bP$-almost surely for any $t\in [0,T]$. Note that we used the fact that {{$\mathbbm{1}_{\mathcal{O}_{\hat{\eta}^{*}_{\delta}}}T_{k}(\hat{\rho}_{\delta}) \hat{\bu}_{\delta}$}} converges weakly to $\overline{T_{k}(\rho) \bu}$ $\tilde{\bP}$-almost surely in $L^{\min(\gamma, 2)}([0, T] \times \mathcal{O}_{\alpha})$ by \eqref{caratheodoryconv}, and then we can show that $\overline{T_{k}(\rho) \bu} = \overline{T_{k}(\rho)} \bu$ by showing that {{$\displaystyle \left|\int_0^T\int_{\mathcal{O}_{\alpha}} T_{k}(\hat{\rho}_{\delta}) (\bu - \mathbbm{1}_{\mathcal{O}_{\hat{\eta}^{*}_{\delta}}} \hat{\bu}_{\delta}) \varphi \right| \to 0$}} $\tilde\bP$-almost surely for any test function $\varphi \in C_{c}^{\infty}([0, T] \times \mathcal{O}_{\alpha})$, using an argument similar to \eqref{weaklimitunique}.

Now, by using a standard regularization method (see e.g.~\cite{FeireislCompressible}) we write the renormalized equations for $\overline{T_k(\rho)}$ satisfying \eqref{renormTk}:
\begin{align}\label{renorm-k}
\partial_tb(\overline{T_k(\rho)})+ \text{div}(b(\overline{T_k(\rho)})\bu) + (b'(\overline{T_k(\rho)})\overline{T_k(\rho)}-b(\overline{T_k(\rho)}))(\nabla\cdot\bu) + b'(\overline{T_k(\rho)})\tilde T_k=0,
\end{align}
holds $\tilde\bP$-almost surely in the sense of distributions on $[0,T]\times { \sO_{\alpha}}$. 

Our next aim is to pass $k\to\infty$ in \eqref{renorm-k}.
First, observe that for any $p<\gamma$, thanks to Lemma \ref{osc},  we have that
\begin{align*}
\|\overline{T_k(\rho)} - \rho \|^p_{L^p(\tilde\Omega\times(0,T)\times\sO_\alpha)} &\leq \liminf_{\delta\to 0}\|{T_k(\hat\rho_\delta)}-\hat\rho_\delta\|^p_{L^p(\tilde\Omega\times(0,T)\times\sO_\alpha)}\leq C \tilde\bE\int_0^T\int_{\{|\hat\rho_\delta|\geq k\}} |\hat\rho_\delta|^p \\
&\leq C k^{{p-\gamma}}\tilde\bE\int_0^T\int_{\sO_\alpha} |\hat\rho_\delta|^\gamma \to 0, \qquad \text{ as }k\to \infty.
\end{align*}
That is, for any $p<\gamma$, we have proved that
\begin{align}\label{klimit}
\overline{T_k(\rho)} \to \rho \,\,\text{ in } \,\, L^p(\tilde\Omega\times(0,T)\times\sO_\alpha) \,\,\text{ as }\,\, k\to \infty.
\end{align}
This convergence result allows us to pass $k\to\infty$ in the first three terms on the left-hand side of \eqref{renorm-k}. 
To deal with the last term on the left-hand side of the equation \eqref{renorm-k}, we will next show that
\begin{align}\label{remainder}
b'(\overline{T_k(\rho)})\tilde T_k \to 0 \quad \tilde\bP\text{-almost surely } \quad\text { as } k\to\infty
\end{align}
For that purpose, as proposed in \cite{FeireislCompressible}, we consider the set,
\begin{equation}\label{QkM}
Q_{k,M}:=\{(\omega,t,x)\in \tilde\Omega\times(0,T)\times\sO_\alpha;\,\,\overline{T_k(\rho)} \leq M_b\},
\end{equation}
where $M_b$ is such that
$$b'(z) = 0,\qquad \text{ for } z\geq M_b.$$
Then, we obtain that 
\begin{align*}
 & \tilde\bE\int_0^T\int_{\sO_\alpha}  b'(\overline{T_k(\rho)})\tilde T_k \leq \sup_{z\leq M_b}|b'(z)|\tilde\bE\int_0^T\int_{\sO_\alpha}\mathbbm{1}_{Q_{k,M}}|\tilde T_k|\\
  &\leq \liminf_{\delta\to 0}\tilde\bE\int_0^T\int_{\sO_\alpha}{ \mathbbm{1}_{\sO_{\hat\eta^*_\delta}}}\mathbbm{1}_{Q_{k,M}}|(T_k'(\hat\rho_\delta)\hat\rho_\delta-T_k(\hat\rho_\delta))(\nabla \cdot\hat\bu_\delta)|\\
  &\leq \| \nabla\cdot\hat\bu_\delta\|_{L^2(\tilde\Omega;L^2(0,T;L^2(\sO_{\hat\eta^*_\delta})))}\liminf_{\delta\to 0}\|T_k'(\hat\rho_\delta)\hat\rho_\delta-T_k(\hat\rho_\delta)\|^{\frac{\alpha_0}2}_{L^1(\tilde\Omega\times(0,T)\times\sO_\alpha)}\|T_k'(\hat\rho_\delta)\hat\rho_\delta-T_k(\hat\rho_\delta)\|^{\frac{(1-\alpha_0)(1+\gamma)}2}_{L^{\gamma+1}(Q_{k,M})}
\end{align*}
where $\alpha_0=\frac{\gamma-1}{\gamma}$.
Next, note that since we have $$\sup_{\delta}\|T_k'(\hat\rho_\delta)\hat\rho_\delta-T_k(\hat\rho_\delta)\|_{L^1(\tilde\Omega\times(0,T)\times\sO_\alpha)}\leq k^{1-\gamma}\sup_{\delta}\tilde\bE\int_0^T\int_{\sO_\alpha}\hat\rho_\delta^\gamma \to0\quad \text{as } k\to \infty,$$
we will be done with the proof of \eqref{remainder} if we show that
\begin{equation}\label{Tkprimebound}
    \|T_k'(\hat\rho_\delta)\hat\rho_\delta-T_k(\hat\rho_\delta)\|_{L^{\gamma+1}(Q_{k,M})}\leq C.
\end{equation}
Indeed, observe that due to the fact that $T_k'(z)z \leq T_k(z)$, we have
\begin{align*}
    \|T_k'&(\hat\rho_\delta)\hat\rho_\delta-T_k(\hat\rho_\delta)\|_{L^{\gamma+1}(Q_{k,M})}\leq 2\|T_k(\hat\rho_\delta)\|_{L^{\gamma+1}(Q_{k,M})}\\
    &\leq 2\left(\|T_{k}(\hat{\rho}_{\delta}) - T_{k}(\rho)\|_{L^{\gamma + 1}(\tilde\Omega \times (0, T) \times \mathcal{O}_{\alpha})} + \|T_{k}(\rho)\|_{L^{\gamma + 1}(Q_{k, M})}\right) \\
    &\leq 2\left(\|T_{k}(\hat{\rho}_{\delta}) - T_{k}(\rho)\|_{L^{\gamma + 1}(\tilde\Omega \times (0, T) \times \mathcal{O}_{\alpha})} + \|T_{k}(\rho) - \overline{T_{k}(\rho)}\|_{L^{\gamma + 1}(\tilde\Omega \times (0, T) \times \mathcal{O}_{\alpha})} + \|\overline{T_{k}(\rho)}\|_{L^{\gamma + 1}(Q_{k, M})}\right) \\
    &\leq 2\left(\|T_{k}(\hat{\rho}_{\delta}) - T_{k}(\rho)\|_{L^{\gamma + 1}(\tilde\Omega \times (0, T) \times \mathcal{O}_{\alpha})} + \|T_{k}(\rho) - \overline{T_{k}(\rho)}\|_{L^{\gamma + 1}(\tilde\Omega \times (0, T) \times \mathcal{O}_{\alpha})} + M_b\right),
\end{align*}
where we used the definition of $Q_{k, M}$ in \eqref{QkM} in the last inequality. In addition, by using Lemma \ref{osc}, we can immediately bound $\|T_{k}(\hat{\rho}_{\delta}) - T_{k}(\rho)\|_{L^{\gamma + 1}(\tilde\Omega \times (0, T) \times \mathcal{O}_{\alpha})} \le C$ independently of $\delta$. To show a similar bound that $\|T_{k}(\rho) - \overline{T_{k}}(\rho)\|_{L^{\gamma + 1}(\tilde\Omega \times (0, T) \times \mathcal{O}_{\alpha})} \leq C$ which would establish the result \eqref{Tkprimebound}, we observe that by the convergence of $T_{k}(\hat{\rho}_{\delta}) \to \overline{T_{k}(\rho)}$ in $C_{w}(0, T; L^{p}(\mathcal{O}_{\alpha}))$ $\tilde\bP$-almost surely for $p \in [1, \infty)$, we have that $T_{k}(\hat{\rho}_{\delta}) - T_{k}(\rho) \rightharpoonup \overline{T_{k}(\rho)} - T_{k}(\rho)$ weakly in $L^{\gamma + 1}((0, T) \times \mathcal{O}_{\alpha})$ $\tilde\bP$-almost surely. So by weak lower semicontinuity of norms and Lemma \ref{osc}, we have $\displaystyle \tilde\bE\int_{0}^{T} \int_{\mathcal{O}_{\alpha}} |\overline{T_{k}(\rho)} - T_{k}(\rho)|^{\gamma + 1} dx dt \le C$, which hence establishes \eqref{Tkprimebound}. This proves \eqref{remainder} and thus ultimately proves \eqref{renorm}.
\subsection{Strong convergence of density}\label{sec:pressure}
The aim of this section is to prove that 
\begin{align}\label{pressure}
    \bar p =\rho^\gamma\quad\text{ a.e. on }\tilde\Omega\times(0,T)\times \sO_\alpha,
\end{align}
by appealing to the monotonicity of the pressure. This procedure is standard and follows closely the steps introduced in \cite{FeireislCompressible, BreitHofmanova}. For the sake of completion, we will briefly outline the steps involved in establishing \eqref{pressure} and, for details we will refer the reader to \cite{FeireislCompressible} wherever necessary. We begin by defining,
\begin{align}
L_k(z) = \begin{cases}
    z\ln{z},\quad 0 \leq z<k \\
    z\ln{z} + z\int_k^zT_k(s)/s^2 ds, \quad z \geq k,
\end{cases}
\end{align}
where $T_k$ is defined in \eqref{T}.
Now, we choose $b=L_k$ in \eqref{renorm_delta} and \eqref{renorm}, then we take the difference of the resulting equations and set $\phi\equiv 1$. This yields,
\begin{align*}
\int_{\sO_\alpha} (L_k(\hat\rho_\delta)-L_k(\rho)) (t)\leq \int_{\sO_\alpha}(L_k(\hat\rho_\delta)-L_k(\rho))(0)+\int_0^t\int_{\sO_\alpha}(T_k(\rho)\nabla\cdot\bu-T_k(\hat\rho_\delta)\nabla\cdot\hat\bu_\delta).
\end{align*}
Hence, upon letting $k\to\infty$ in the equation above and using Lemma \ref{lem:effectflux} and monotonicity of pressure, we obtain for any $p<\gamma$ that {{also uses vanishing outside for $\delta > 0$}} 
\begin{align*}
{{\limsup_{\delta \to 0}}} \tilde\bE&\int_{\sO_\alpha} ({L_k(\hat\rho_\delta)}-L_k(\rho)) (t)\le \limsup_{\delta\to 0} \tilde\bE\int_0^t\int_{\sO_\alpha}(T_k(\rho)\nabla\cdot\bu-T_k(\hat\rho_\delta)\nabla\cdot\hat\bu_\delta)\\
&{{=\limsup_{\delta\to 0} \tilde\bE\int_0^t\int_{\sO_\alpha}(T_k(\rho){{\mathbbm{1}_{\sO_{\eta^*}}}}\nabla\cdot\bu-T_k(\hat\rho_\delta)\mathbbm{1}_{\sO_{\hat\eta_\delta^*}}\nabla\cdot\hat\bu_\delta)}}\\
&{{\leq \tilde\bE \int_0^t\int_{\sO_\alpha}(T_k(\rho)-\overline{T_k(\rho)}){{\mathbbm{1}_{\sO_{\eta^*}}}}\nabla\cdot\bu}} \\
&\leq \tilde\bE (\|\nabla\cdot\bu\|_{L^2(0,T;L^2(\sO_{\eta^*}))}\|T_k(\rho)-\overline{T_k(\rho)}\|_{L^2(0,T;L^2(\sO_\alpha))})\\
&\leq \tilde\bE (\|\nabla\cdot\bu\|_{L^2(0,T;L^2(\sO_{\eta^*}))}\|T_k(\rho)-\overline{T_k(\rho)}\|_{L^p((0,T)\times\sO_\alpha)}^{\frac{\gamma-1}{\gamma+1-p}}\|T_k(\rho)-\overline{T_k(\rho)}\|_{L^{\gamma+1}((0,T)\times\sO_\alpha)}^{(\frac{2-p}{\gamma+1-p})}).
\end{align*}


Then the bounds derived in Lemma \ref{osc} and the convergence result \eqref{klimit} implies that,
\begin{align}
    \tilde\bE\int_{\sO_\alpha\times(0,T)} \hat\rho_\delta\ln\hat\rho_\delta \to \tilde\bE\int_{\sO_\alpha\times(0,T)} \rho\ln\rho.
\end{align}
Using the convexity of $\rho \mapsto\rho\ln\rho$ then gives us the desired strong convergence result for the density,
\begin{align}\label{den_strongconv}
    \hat\rho_\delta\to \rho, \quad\text{ in } \quad L^1(\tilde\Omega; L^1(0,T;L^1(\sO_\alpha) )).
\end{align}
This completes the proof of \eqref{pressure}. This strong convergence \eqref{den_strongconv} allows us to explicitly identify the term $\overline{\bd{F}(\rho, \rho \bd{u})}$ in the limiting stochastic integral, see Lemma \ref{stochint}, initially given as the weak limit via \eqref{caratheodoryconv} of $\mathbbm{1}_{\sO_{\hat{\eta}^{*}_{\delta}}} \bd{F}(\hat{\rho}_{\delta}, \hat{\rho}_{\delta} \hat{\bu}_{\delta})$, as $\bd{F}(\rho, \rho \bd{u})$.

Using \eqref{pressure} in the limit of \eqref{qlambdatest} we come to the conclusion that
for every $(\hat\sF_t)_{t \geq 0}-$adapted, essentially bounded smooth process $(\bq,\psi)$ such that 
	$\bq|_{\Gamma_{\eta^*}}=\psi\bd{e}_{z}$, $\bP$-almost surely, the following equation holds for $\bP$-almost surely, for almost every $t \in[0,\tau^\eta]$: 
\begin{equation}
\begin{split}
&{\int_{\sO_{\eta^*}(t)}\rho(t)\bu(t)\bq(t) d\bx +\int_\Gamma\partial_t\eta(t)\psi(t)dz}= \int_{\sO_{\eta_0}}\bp_0\bq(0) d\bx  + \int_\Gamma v_0\psi(0) d\bd{z} \\
&+\int_0^{t }\int_{\sO_{\eta^*}(t)}\rho\bu\cdot \partial_t\bq d\bx dt+\int_0^{t }\int_{\sO_{\eta^*}(t)} \rho\bu\otimes\bu:\nabla\bq d\bx dt
- 2\mu\int_0^{t } \int_{\sO_{\eta^*}(t)}\nabla\bu\cdot \nabla\bq d\bx dt\\
&+\int_0^t\int_{\sO_{\eta^*}(t)} \rho^\gamma (\nabla \cdot\bq )d\bx dt- \lambda \int_0^t\int_{\sO_{\eta^*}(t)} \text{div}(\bu)\text{div}(\bq)d\bx dt \\
&+\int_0^{t }\int_\Gamma\partial_t\eta\partial_t\psi d\bd{z}dt - { \int_0^{t }\int_\Gamma \nabla\partial_{t}\eta \cdot\nabla \psi d\bd{z} dt - \int_{0}^{t} \int_{\Gamma} (\nabla\eta\nabla \psi +\Delta\eta\Delta \psi) d\bd{z}dt}\\
&+\int_0^t\int_{\sO_{\eta^*}(t)} \bd{F}(\rho,\rho\bu)\cdot\bq\,d\hat W^1(t) + \int_0^t\int_\Gamma G(\eta, \partial_t\eta)\psi d\hat W^2(t).
\end{split}\end{equation}
Moreover, $\bu|_{\Gamma_{\eta^*}}=v\bd{e}_z.$
Observe that in this formulation some terms are defined on fluid domains corresponding to the artificial structure displacement $\eta^*$ whereas the others are given in terms of $\eta$. To resolve this discrepancy (cf. \eqref{weaksol}) we appeal to the fact that these two structural displacements are equal (see \eqref{etasequal2}) until the almost surely positive stopping time $\tau^\eta$ (see \eqref{positivetau}) defined in \eqref{tau}. This leads us to our final conclusion stated below.


\medskip

\noindent {\bf Conclusion:} We conclude that the stochastic basis $(\tilde\Omega,\tilde\sF, (\tilde\sF_t)_{t\geq 0},\tilde\bP, \hat W^1,\hat W^2)$ and the random variables $(\bu,\rho,\eta)$ constructed in Theorem \ref{skorohod} and the stopping time $\tau^\eta$ defined in \eqref{tau} determine a martingale solution to the FSI problem in the sense of Definition \ref{def:martingale}.

\section*{Acknowledgements}

J. Kuan was supported by the National Science Foundation under the NSF Mathematical Sciences Postdoctoral Research Fellowship DMS-2303177. K. Tawri was partially supported by the National Science Foundation grant DMS-2407197.

\section*{Appendix A: Equivalence of laws for stopped processes}

Recall that in the proof of the main existence result, we constructed approximate solutions consisting of an approximate structure displacement $\eta_{N}$ and a stopped structure displacement $\eta_{N}^{*}$, which is stopped at the first instance $\tau^\eta_N$, if it exists, at which $\eta_{N}$ leaves desired bounds depending on $\alpha$:
\begin{equation*}
\tau^\eta_N := T \wedge \inf\left\{t > 0 : \inf_{\Gamma}(1 + \eta_{N}(t)) \le \alpha \text{ or } \|\eta_{N}(t)\|_{H^{s}(\Gamma)} \ge \frac{1}{\alpha}\right\}.
\end{equation*}
We then used the Skorohod representation theorem to transfer these approximate solutions to a different probability space, but we want to justify that the new random variables $(\overline{\eta}, \overline{\eta}^{*})$ on the new probability space also have the property that they agree up until the time $\tau^\eta_N$ at which $\tilde{\eta}$ leaves the desired deterministic $\alpha$ bounds:
\begin{equation*}
\tau^\eta_N := T \wedge \inf\left\{t > 0 : \inf_{\Gamma}(1 + \overline{\eta}_{N}(t)) \le \alpha \text{ or } \|\overline{\eta}_{N}(t)\|_{H^{s}(\Gamma)} \le \frac{1}{\alpha}\right\}.
\end{equation*}
The Skorohod representation theorem gives us that there is equivalence of laws, so to use equivalence of laws to establish this, we must show that the set of functions $(\eta_{1}, \eta_{2})$, where $\eta_{2}$ is equal to $\eta_{1}$ stopped at the first time of leaving the desired $\alpha$ bounds, is a measurable set of the phase space $C(0, T; H^{s}(\Gamma))$ for a fixed $3/2 < s < 2$, after which we can use equivalence of laws to conclude. 

To illustrate the main idea behind the measurability of this set of ordered pairs of continuous processes with their stopped processes, we consider an analogue of this situation in the simpler case of continuous real-valued functions in $C(0, T; \R)$. Consider the phase space $C(0, T; \R) \times C(0, T; \R)$ and for a given function $f$ and a given positive number $R > 0$, define
\begin{equation*}
\tau_{R} = \inf\{t \in [0, T] : f(t) \ge R\},
\end{equation*}
and define the stopped process
\begin{equation*}
f^{*}(t) = f(t \wedge \tau_{R}) , \quad \text{ for } t \in [0, T],
\end{equation*}
where in using the star notation, we do not notate the explicit dependence on $R$, even though it is there implicitly. Define the set $B_{R}$ to be the set
\begin{equation}\label{BR}
B_{R} := \{(f, g) \in C(0, T; \R)^{2} : g = f^{*}\},
\end{equation}
or more informally, the set of all $(f, f^{*})$ as $f$ traverses through $C(0, T; \R)$. We claim the following result about $B_{R}$:

\begin{proposition}\label{equalinlaw}
    For each $R > 0$, $B_{R}$ is a measurable subset of $C(0, T; \R)^{2}$.
\end{proposition}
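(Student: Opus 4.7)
The strategy is to realize $B_R$ as the graph of a Borel measurable map from $C(0,T;\R)$ to itself. Define
\[
\Phi: C(0,T;\R) \to C(0,T;\R), \qquad \Phi(f)(t) := f\bigl(t \wedge \tau_R(f)\bigr).
\]
Note that $\Phi(f)$ does lie in $C(0,T;\R)$, because $s \mapsto f(s)$ is continuous and $t \mapsto t \wedge \tau_R(f)$ is continuous for each fixed $f$. Then $B_R = \{(f,g) : g = \Phi(f)\}$ is precisely the graph of $\Phi$, and once $\Phi$ is shown to be Borel measurable the conclusion follows by writing $B_R = \bigcap_{t \in \mathbb{Q}\cap[0,T]}\{(f,g) : g(t) = \Phi(f)(t)\}$, a countable intersection of measurable sets (equivalently, $B_R$ is the preimage of the closed diagonal $\Delta \subset C(0,T;\R)^2$ under the measurable map $(f,g)\mapsto(\Phi(f),g)$).

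The key technical step is to establish that $\tau_R : C(0,T;\R) \to [0,T]$ is Borel measurable, which I plan to do by proving it is lower semicontinuous. Assume $f_n \to f$ uniformly on $[0,T]$ and extract a subsequence with $\tau_R(f_{n_k}) \to \tau^* := \liminf_n \tau_R(f_n)$. If $\tau^* = T$ there is nothing to show. Otherwise $\tau_R(f_{n_k}) < T$ for all sufficiently large $k$, so $f_{n_k}(\tau_R(f_{n_k})) \ge R$ by continuity of $f_{n_k}$ and the definition of $\tau_R$; passing to the limit using uniform convergence yields $f(\tau^*) \ge R$, hence $\tau_R(f) \le \tau^*$. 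Thus $\tau_R(f) \le \liminf_n \tau_R(f_n)$, establishing Borel measurability.

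Given measurability of $\tau_R$, for any fixed $t \in [0,T]$ the map $f \mapsto (f,\, t \wedge \tau_R(f))$ is Borel from $C(0,T;\R)$ into $C(0,T;\R) \times [0,T]$, and post-composing with the jointly continuous evaluation $(g,s) \mapsto g(s)$ shows that $f \mapsto \Phi(f)(t)$ is Borel measurable. Since $C(0,T;\R)$ is a separable metric space whose Borel $\sigma$-algebra is generated by the point evaluations at any countable dense subset of $[0,T]$, it follows that $\Phi$ is Borel measurable as a map into $C(0,T;\R)$, and the argument of the first paragraph finishes the proof.

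The main obstacle, and the reason this is not entirely trivial, is that $\tau_R$ is genuinely discontinuous under uniform convergence: perturbing $f$ downward by $1/n$ can cause a hitting time strictly less than $T$ to jump up to $T$. Semicontinuity is exactly the substitute that survives and is enough for Borel measurability. I note that the argument transfers verbatim to the paper's actual setting, where one works with ordered pairs $(\eta,\eta^*) \in C(0,T;H^s(\Gamma))^2$ with $\eta^*$ the stopping of $\eta$ at the first exit from the closed set $\{\eta \in H^s(\Gamma) : \inf_{\Gamma}(1+\eta) \ge \alpha \text{ and } \|\eta\|_{H^s(\Gamma)} \le \alpha^{-1}\}$: the defining constraints are continuous on $H^s(\Gamma)$ (using the embedding $H^s(\Gamma)\hookrightarrow C(\Gamma)$ for $s>3/2$), the corresponding hitting time is again lower semicontinuous by the same uniform-convergence argument, and $C(0,T;H^s(\Gamma))$ is likewise separable metric with Borel $\sigma$-algebra generated by time evaluations.
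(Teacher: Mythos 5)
Your proof is correct, and it takes a genuinely different route from the paper's. The paper establishes measurability of $B_R$ by a time-discretization argument: for each $N$ it introduces the continuous map $F_N$ recording the maxima of $f$ and $g$ on the $N$ subintervals of $[0,T]$, constructs explicit Borel sets $E_{N,m}\subset\R^N\times\R^N$ encoding the stopped-process relation at the discretized level, and shows $B_R=\bigcap_{N}F_N^{-1}(E_N)$, verifying both inclusions by hand. Your approach instead realizes $B_R$ as the graph of the stopping map $\Phi(f)=f(\cdot\wedge\tau_R(f))$, establishes Borel measurability of the hitting time $\tau_R$ by proving lower semicontinuity under uniform convergence, and then shows $\Phi$ is Borel via pointwise evaluations so that $B_R$ is the preimage of the closed diagonal under the Borel map $(f,g)\mapsto(\Phi(f),g)$. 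Lower semicontinuity is exactly the right surrogate for the failed continuity of $\tau_R$: the only way a hitting time of the closed set $\{y\ge R\}$ can jump under a small uniform perturbation is upward, and your argument $f(\tau^*)\ge R$ by closedness plus uniform convergence makes that precise. The trade-off is that your proof is shorter and isolates the structural reason (lower semicontinuity of the first entry time into a closed set), whereas the paper's discretization is entirely elementary and does not invoke graph/measurability facts for infinite-dimensional maps. Both arguments transfer to the actual phase space $C(0,T;H^s(\Gamma))$ because the exit set is closed there, using the continuous embedding $H^s(\Gamma)\hookrightarrow C(\Gamma)$ for $s>3/2$, and $C(0,T;H^s(\Gamma))$ is a separable metric space whose Borel $\sigma$-algebra is generated by the evaluation maps.
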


\begin{proof}
The proof will use a time discretization argument, where the main idea will be to associate to each continuous function in $C(0, T; \R)$ and each time discretization parameter a finite sequence of real numbers, where we can more easily impose the ``stopped" process condition that $g = f^{*}$ for $(f, g) \in B_{R}$ for the finite sequences, and where we can impose this condition for infintely many (but countably many) time discretization parameters, to show that $B_{R}$ is a measurable set. 

Let $N$ be the number of subintervals, let $\Delta t = T/N$, and for $n = 0, 1, ..., N$, let $t_n = n\Delta t$. Consider the following continuous map $F_N: C(0, T; \R) \times C(0, T; \R) \to \R^N \times \R^N$:
\begin{small}
\begin{multline}\label{FN}
F_{N}: (f, g) \to \\
\left(\max_{t \in [0, \Delta t]} f(t), \max_{t \in [\Delta t, 2\Delta t]} f(t), ..., \max_{t \in [(N - 1)\Delta t, T]} f(t), \max_{t \in [0, \Delta t]} g(t), \max_{t \in [\Delta t, 2\Delta t]} g(t), ..., \max_{t \in [(N - 1)\Delta t, T]} g(t)\right).
\end{multline}
\end{small}
For each $N$, consider the following measurable subset $E_N$ of $\R^N \times \R^N$, defined as the union of $E_{N, m}$ for $m = 1, 2, ..., N + 1$, where
\begin{equation*}
E_{N, N + 1} = \{(a_{1}, a_{2}, ..., a_{N}, b_{1}, b_{2}, ..., b_{N}) \in \R^{N} \times \R^{N} : a_{i} = b_{i} \text{ and } b_{i} < R, \text{ for all } 1 \le i \le N\},
\end{equation*}
and for $1 \le m \le N$,
\begin{small}
\begin{equation*}
E_{N, m} = \{(a_{1}, a_{2}, ..., a_{N}, b_{1}, b_{2}, ..., b_{N}) \in \R^{N} \times \R^{N} : a_{i} = b_{i} < R \text{ for } 1 \le i < m, \ b_{i} = R \text{ for all } m \le i \le N\}.
\end{equation*}
\end{small}
Then, let $\displaystyle E_{N} = \bigcup_{m = 1}^{N + 1} E_{N, m}$. Note that for $B_{R}$ defined in \eqref{BR} and for the continuous map $F_{N}: C(0, T; \R) \times C(0, T; \R) \to \R^{N} \times \R^{N}$ defined in \eqref{FN}, we have that 
\begin{equation}\label{BRequality}
B_{R} = \bigcap_{N = 1}^{\infty} F_{N}^{-1}(E_{N}),
\end{equation}
where each $F_{N}^{-1}(E_{N})$ is a measurable subset of $C(0, T; \R) \times C(0, T; \R)$ since $E_{N}$ is measurable in $\R^N \times \R^N$ and by the continuity of $F_{N}$. This shows that $B_{R}$ is measurable in $C(0, T; \R) \times C(0, T; \R)$. 

It suffices to show that \eqref{BRequality} holds. By the definition of $B_{R}$, it is easy to show immediately that $\displaystyle B_{R} \subset \bigcap_{N = 1}^{\infty} F_{N}^{-1}(E_{N})$. To show the opposite inclusion, it suffices to show that if $(f, g) \notin B_{R}$, then $(f,g) \notin F_{N}^{-1}(E_{N})$ for some $N$. If $(f, g) \notin B_{R}$, there are two possibilities:
\begin{itemize}
\item $g$ cannot be written as a stopped process $g = h^{*}$ for $h \in C(0, T; \R)$. This can only happen if there exists $s, t \in [0, T]$ with $s < t$ such that $g(s) = R$ and $g(t) \ne R$. Thus, taking $N$ to be sufficiently large, we would see that $(f, g) \notin E_{N, m}$ for any $m$. 
\item $g$ can be written as $g = h^{*}$ for some $h$, but if $t_{0}$ is smallest time for which $g(t_0) = R$ (or $t_{0} = T$ otherwise if such a time does not exist), there exists some $0 \le t < t_0$ such that $f(t) \ne g(t)$, where $g(t) < R$. In this case, using continuity of $f$ and $g$, $(f, g) \notin E_{N, m}$ for any $m$ for some $N$ sufficiently large.
\end{itemize}
This verifies \eqref{BRequality} and concludes the proof of the claim that $B_{R}$ is measurable.

\end{proof}

\section*{Appendix B: Extension by 0}
\begin{theorem}\label{thm:extension}
    Assume that $\eta:\Gamma:=\mathbb{T}^2\to \mathbb{R}$ is an $\alpha$-H\"older continuous function such that $\sup_\Gamma|\eta| \leq L$. Let $\sO_\eta$ be the subgraph defined as $\sO_\eta:=\{(x,y,z):0<z<\eta(x,y), (x,y)\in\Gamma\}$. Now, for any $\bu\in H^s(\sO_\eta)$ define its extension by 0 as:
    \begin{align*}
        \tilde\bu &= \bu \qquad \text{in }\sO_\eta,\\
                \tilde\bu &= 0 \qquad\text{in }\mathbb{R}^3\setminus\sO_\eta.
        \end{align*}
        Then
        \begin{align}\label{extendHs}
            \|\tilde\bu\|_{H^{s\alpha}(\mathbb{R}^3)} \leq C_{s,L,\alpha}\|\bu\|_{H^s(\sO_\eta)},\qquad \forall s<\frac\alpha2.
        \end{align}
        \end{theorem}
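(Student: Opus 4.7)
The plan is to use the Gagliardo seminorm characterization of $H^{s\alpha}(\R^3)$, which is available since $0\le s\alpha<\alpha^2/2<\tfrac12$. The $L^2$ part of the norm is immediate because $\tilde{\bu}$ agrees with $\bu$ on $\sO_\eta$ and vanishes elsewhere. Splitting the Gagliardo double integral of $\tilde{\bu}$ according to whether $x,y$ lie inside $\sO_\eta$ or outside, the ``outside--outside'' contribution vanishes and the ``inside--inside'' contribution is dominated by $C\|\bu\|^2_{H^s(\sO_\eta)}$ via the trivial embedding $H^s(\sO_\eta)\hookrightarrow H^{s\alpha}(\sO_\eta)$ on the bounded set $\sO_\eta$. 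The only substantive contribution is the cross term
\[
\mathcal{C}:=2\int_{\sO_\eta}|\bu(x)|^2\int_{\sO_\eta^c}\frac{dy}{|x-y|^{3+2s\alpha}}\,dx.
\]

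Since the ball $B(x,d(x))$ lies inside $\sO_\eta$, the inner integral is bounded by $Cd(x)^{-2s\alpha}$, where $d(x)=\mathrm{dist}(x,\partial\sO_\eta)$. The $\alpha$-H\"older regularity of $\eta$ enters through the geometric estimate $d(x)\ge c_\alpha h(x)^{1/\alpha}$, where $h(x):=\min(z,\eta(x')-z)$ is the vertical distance to $\partial\sO_\eta$ for $x=(x',z)\in\sO_\eta$; this follows from $|\eta(y')-\eta(x')|\le C_\eta|y'-x'|^\alpha$ by a short case analysis comparing the horizontal radius with the vertical gap (the flat bottom $z=0$ is even easier). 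Raising to the $2s\alpha$-th power yields $d(x)^{2s\alpha}\ge c\,h(x)^{2s}$, so the proof reduces to the anisotropic Hardy inequality
\[
\int_{\sO_\eta}\frac{|\bu(x)|^2}{h(x)^{2s}}\,dx\le C\|\bu\|^2_{H^s(\sO_\eta)}.
\]

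To establish this Hardy inequality I would fibre over $x'\in\Gamma$. The classical one-dimensional Hardy inequality, available because $s<\alpha/2<\tfrac12$, controls the inner integral for each $x'$ by the vertical $H^s_z$-norm of $\bu(x',\cdot)$ on $(0,\eta(x'))$. Integrating in $x'$ and applying Fubini reduces the matter to bounding the partial Gagliardo seminorm in $z$, integrated over $\Gamma$, by the full three-dimensional Gagliardo seminorm of $\bu$ on $\sO_\eta$. This last comparison is the main obstacle: one cannot invoke a global $\R^3$ extension, since that is exactly what we are trying to construct, nor a Plancherel-type argument on the non-product domain. My plan is to argue by local averaging in $x'$: starting from the triangle inequality
\[
|\bu(x',z_1)-\bu(x',z_2)|^2\lesssim|\bu(x',z_1)-\bu(y',z_2)|^2+|\bu(y',z_2)-\bu(x',z_2)|^2,
\]
averaging $y'$ over a horizontal ball of radius $\rho\sim|z_1-z_2|$ produces an extra factor $\rho^2$, which is precisely what converts the full Gagliardo weight $|x-y|^{-3-2s}$ into the purely vertical weight $|z_1-z_2|^{-1-2s}$. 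The $\alpha$-H\"older continuity of $\eta$ is used a second time to guarantee that the averaging ball stays inside $\sO_\eta$, after restricting $\rho$ by a fractional power of the vertical gap if necessary; this is where the Hölder exponent enters again and ultimately produces the loss from $H^s$ to $H^{s\alpha}$ in the final estimate.
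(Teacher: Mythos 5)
Your proposal mirrors the paper's argument step for step. Splitting the Gagliardo double integral of $\tilde\bu$ into inside--inside, outside--outside, and cross contributions is precisely the content of the Grisvard/McLean identity that the paper quotes (Lemma 1.3.2.6 in Grisvard, Theorem 3.33 in McLean), giving $\|\tilde\bu\|^2_{H^{s\alpha}(\R^3)}=\|\bu\|^2_{H^{s\alpha}(\sO_\eta)}+\int_{\sO_\eta}|\bu|^2 w_{s\alpha}$ with $|w_{s\alpha}(x)|\lesssim\text{dist}(x,\partial\sO_\eta)^{-2s\alpha}$; your geometric estimate $\text{dist}(x,\partial\sO_\eta)\gtrsim h(x)^{1/\alpha}$ via the H\"older modulus is exactly the paper's bound $\text{dist}(x,\partial\sO_\eta)^\alpha\gtrsim|\eta(x')-x_3|$; and the fibered one-dimensional fractional Hardy inequality is the step the paper attributes to Grisvard (1.4.4.9) and McLean (Theorem 3.31). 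Up to the reduction to the weighted-$L^2$ inequality $\int_{\sO_\eta}h(x)^{-2s}|\bu(x)|^2\,dx\lesssim\|\bu\|^2_{H^s(\sO_\eta)}$, you are on the same path.

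The gap is in the final step and in the exponent bookkeeping around it. You have already spent the $\alpha$-power loss at the geometric estimate: $d(x)^{-2s\alpha}\lesssim h(x)^{-2s}$ costs precisely the factor of $\alpha$ separating $H^s$ from $H^{s\alpha}$. From that point on, in particular for the comparison of the vertical (direction-$z$) partial Gagliardo seminorm with the full $H^s(\sO_\eta)$-norm, you need a \emph{lossless} estimate. Yet your description of the averaging argument says that restricting the horizontal radius $\rho$ to a fractional power of the vertical gap (so that the averaging ball stays inside $\sO_\eta$) ``is where the H\"older exponent enters again and ultimately produces the loss from $H^s$ to $H^{s\alpha}$.'' Losing an $\alpha$-power a second time would only give $\|\tilde\bu\|_{H^{s\alpha^2}(\R^3)}\lesssim\|\bu\|_{H^s(\sO_\eta)}$, strictly weaker than Theorem \ref{thm:extension}. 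Your instinct about the difficulty is sound---averaging na\"ively with $\rho\sim|z_1-z_2|$ does fail near the graph, because $\eta(w')$ can dip below $z_2$ already for $|w'-x'|\lesssim(z_1-z_2)^{1/\alpha}\ll z_1-z_2$---but the proposed remedy overcorrects and destroys the claimed exponent. What is actually required at this step (and what the paper supplies by citing the Grisvard/McLean calculations) is a bound on the vertical partial seminorm by $\|\bu\|_{H^s(\sO_\eta)}$ that incurs no further exponent loss, so the averaging-with-shrunken-$\rho$ scheme you outline does not close the proof.
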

        \begin{proof}
 For any open set $\sO\subseteq \mathbb{R}^3$ we let $\mathcal{D}(\sO)$ be the set of all smooth functions with compact support in $\sO$.     
 First we will prove that for $s<\frac12$ and any $\bu\in \mathcal{D}(\sO_\eta)$,
            \begin{align}\label{dist}
                \int_{\sO_\eta}\text{dist}(x,\partial\sO_\eta)^{-2s\alpha}|\bu(x)|^2 \leq C_{s,L,\alpha}\|\bu\|^2_{H^s(\sO_\eta)}.
            \end{align}
            To prove this we consider any $y =(y',y_3)\in \partial\sO_\eta$ and use the triangle inequality to obtain for any $x =(x',x_3)\in \sO_\eta$ that
            \begin{align*}
                |\eta(x')-x_3|& = |\eta(x')-\eta(y') + y_3-x_3| \leq C_\alpha |x'- y'|^\alpha + |x_3-y_3|\\
                &\leq (1+C_\alpha)(|x-y|^\alpha + |x-y|).
            \end{align*}
            This implies, for any $x=(x',x_3)\in\sO_\eta$, that
            \begin{align}
                \text{dist}(x,\partial\sO_\eta)^\alpha \geq \frac{|\eta(x')-x_3|}{(1+C_\alpha)(1+(\text{dist}(x,\partial\sO_\eta))^{1-\alpha})} \geq \frac1{(1+L^{1-\alpha})(1+C_\alpha)}|\eta(x')-x_3| .
            \end{align}
        Applying Lemma 3.31 of \cite{Mc00} we obtain
            \begin{align*}
                \int_{\sO_\eta}\text{dist}(x,\partial\sO_\eta)^{-2s\alpha}|\bu(x)|^2 &\leq \frac1{(1+L^{1-\alpha})(1+C_\alpha)}\int_{\sO_\eta}|\eta(x')-x_3|^{-2s}|\bu(x)|^2\\
                &= \frac{C}{(1+L^{1-\alpha})(1+C_\alpha)}\int_{\mathbb{R}^2}\int_0^\infty t^{-2s}|\bu(x',\eta(x')-t)|^2dtdx'\\
                & \leq \frac{C_s}{(1+L^{1-\alpha})(1+C_\alpha)}\int_{\mathbb{R}^2}\int_{y<\eta(x')} \int_{z<\eta(x')} \frac{|\bu(x',y)-\bu(x',z)|^2}{|x-y|^{1+2s}}dydzdx'\\
                & \leq C_{s,L,\alpha} \|\bu\|_{H^s(\sO_\eta)}
            \end{align*}
       The second to last inequality above follows using identity (1.4.4.9) in \cite{G08} and the calculations that follow this equation on page 30 (see also Theorem 3.31 in \cite{Mc00}).    
This proves \eqref{dist}.

Next, by elementary calculations we have (see e.g. Lemma 1.3.2.6 in \cite{G08} or Theorem 3.33 in \cite{Mc00}):
  \begin{align*}
 \|\tilde\bu\|_{H^s(\mathbb{R}^3)}=\left( \|\bu\|^2_{H^s(\sO_\eta)} + \int_{\sO_\eta}|\bu(x)|^2 w_{s}(x)dx \right)^\frac12,
  \end{align*}
  where the weight $\displaystyle w_s(x)=2\int_{\mathbb{R}^3\setminus \sO_\eta}\frac1{|x-y|^{3+2s}}dy$ satisfies
  $\displaystyle { |w_s(x)|\leq C\text{dist}(x,\partial\sO_\eta)^{-2s}}. $\\

Hence, thanks to \eqref{dist}, for any $\bu\in \mathcal{D}(\sO_\eta)$ we have
\begin{align*}
    \|\tilde\bu\|_{H^{s\alpha}(\mathbb{R}^3)} \leq C_{s,L,\alpha}\|\bu\|_{H^s(\sO_\eta)}.
\end{align*}
Hence, \eqref{extendHs} will follow using the following two density results:
       
\begin{itemize}
\item For any $\sO$ with $C^0$ boundary, Theorem 1.4.2.2 in \cite{G08}, states that
\begin{align*}
\mathcal{D}(\sO) \text{ is dense in } \tilde H^s(\sO):=\{\bu \in H^s(\sO):  \tilde\bu\in H^s(\mathbb{R}^3)\},\quad\forall s>0,
\end{align*}
endowed with the norm $\|\bu\|_{\tilde H^s(\sO)}=\|\tilde\bu\|_{H^s(\mathbb{R}^n)}$.
\item Moreover, if $\sO$ is a $C^{0,\alpha}$ domain, then thanks to Corollary 3.29 (viii) in \cite{CHM17}, we have that 
\begin{align*}
\mathcal{D}(\sO) \text{ is dense in } H^s(\sO)\quad\forall 0<s<\frac\alpha2.
\end{align*}
\end{itemize}

 \end{proof}

\bibliographystyle{plain}
\bibliography{CompressibleFSIBibliography}

\end{document}